\newcommand{\ubar}[1]{\underaccent{\bar}{#1}}
\newcommand{\mylabel}[2]{#2\def\@currentlabel{#2}\label{#1}}
\newcommand{\Cov}[0]{\text{Cov}}
\newcommand{\Var}[0]{\text{Var}}
\newcommand{\calS}[0]{\mathcal{S}}
\newcommand{\R}[0]{\mathbb{R}}
\newcommand{\kjack}{I_{n-1,r-1}^{(k)}}
\newcommand{\Pro}{\mathbb{P}}
\newcommand{\Exp}{\mathbb{E}}
\theoremstyle{plain}
\newtheorem{theorem}{Theorem}[section]
\newtheorem{lemma}[theorem]{Lemma}
\newtheorem{corollary}[theorem]{Corollary}
\theoremstyle{definition}
\newtheorem{definition}[theorem]{Definition}
\newtheorem{remark}[theorem]{Remark}
\renewcommand{\leq}{\leqslant} 
\renewcommand{\geq}{\geqslant}
\newcommand{\ind}{\mathds{1}}
\newcommand{\cA}{\mathcal{A}}\newcommand{\cB}{\mathcal{B}}\newcommand{\cC}{\mathcal{C}}
\newcommand{\cD}{\mathcal{D}}\newcommand{\cE}{\mathcal{E}}\newcommand{\cF}{\mathcal{F}}
\newcommand{\cH}{\mathcal{H}}
\newcommand{\cM}{\mathcal{M}}\newcommand{\cN}{\mathcal{N}}
\newcommand{\cR}{\mathcal{R}}
\newcommand{\cS}{\mathcal{S}}\newcommand{\cT}{\mathcal{T}}\newcommand{\cU}{\mathcal{U}}
\newcommand{\cV}{\mathcal{V}}\newcommand{\cX}{\mathcal{X}}
\newcommand{\mvxi}{\boldsymbol{\xi}}
\newcommand{\bD}{\mathbb{D}}
\newcommand{\bG}{\mathbb{G}}
\newcommand{\bM}{\mathbb{M}}\newcommand{\bN}{\mathbb{N}}
\newcommand{\bP}{\mathbb{P}}\newcommand{\bR}{\mathbb{R}}
\newcommand{\bU}{\mathbb{U}}
\newcommand{\bMn}{\mathbb{M}_n}     
\newcommand{\sV}{\mathscr{V}}
\DeclareMathOperator{\E}{\mathds{E}}
\begin{document}

\begin{frontmatter}
\title{Stratified incomplete local simplex tests for curvature of nonparametric multiple regression}
\runtitle{Testing for regression curvature}

\begin{aug}
\author[A]{\fnms{Yanglei} \snm{Song}  \ead[label=e1]{yanglei.song@queensu.ca}},
\author[B]{\fnms{Xiaohui} \snm{Chen}\ead[label=e2]{xhchen@illinois.edu}}
\and
\author[C]{\fnms{Kengo} \snm{Kato}\ead[label=e3]{kk976@cornell.edu}}
\address[A]{Department of Mathematics and Statistics, 
Queen's University, Jeffery Hall, Kingston, ON,  Canada, K7L 3N6, \printead{e1}}

\address[B]{Department of Statistics, 
University of Illinois at Urbana-Champaign,
725 S. Wright Street, Champaign, IL 61820, \printead{e2}}

\address[C]{Department of Statistics and Data Science,
Cornell University, 1194 Comstock Hall, Ithaca, NY 14853,
\printead{e3}
}
\end{aug}

\begin{abstract}
Principled nonparametric tests for regression curvature in $\bR^{d}$ are often statistically and computationally challenging. This paper introduces the stratified incomplete local simplex (SILS) tests for joint concavity of nonparametric multiple regression. The SILS tests with suitable bootstrap calibration are shown to achieve simultaneous guarantees on dimension-free computational complexity, polynomial decay of the uniform error-in-size, and power consistency for general (global and local) alternatives. To establish these results, we develop a general theory for incomplete $U$-processes with stratified random sparse weights.  Novel technical ingredients include maximal inequalities for the supremum of multiple incomplete $U$-processes.
\end{abstract}

\begin{keyword}
\kwd{Nonparametric regression}
\kwd{Curvature testing}
\kwd{Incomplete $U$-processes}
\kwd{Stratification}
\end{keyword}
\end{frontmatter}

\section{Introduction}

This paper concerns the hypothesis testing problem for curvature (i.e., concavity, convexity, or linearity) of a nonparametric multiple regression function. Testing the validity of such geometric hypothesis is important for performing high-quality subsequent shape-constrained statistical analysis. For instance, there has been considerable effort    in nonparametric estimation of a convex (concave) regression function, partly because estimation 
under convexity constraint requires no tuning parameter as opposed to e.g. standard kernel estimation whose performance depends critically on a user-chosen bandwidth parameter \cite{hildreth1954point,hanson1976,mammen1991,groeneboom2001,seijo2011nonparametric,lim2012consistency,chatterjee2014,
cai2015,guntuboyina2015,chen2016,chatterjee2016,han2016,kur2019optimality}. 
In empirical studies such as economics and finance, convex (concave) regressions have wide applications in modeling the relationship between wages and education \cite{MurphyWelch1990}, between firm value and product price \cite{BorensteinFarrell2007_RAND}, and between mutual fund return and multiple risk factors \cite{FAMA19933,abrevaya2005nonparametric}. 

Consider the nonparametric multiple regression model
\begin{equation}
\label{eqn:nonparametric_regression}
Y = f(V) + \varepsilon,
\end{equation}
where $Y$ is a scalar response variable, $V$ is a $d$-dimensional covariate vector, $\varepsilon$ is a random error term such that $\E[\varepsilon | V] = 0$ and $\Var(\epsilon) > 0$, and $f : \bR^{d} \to \bR$ is the conditional mean  (i.e., regression)  function.  Let $P$ be the joint distribution of $X = (V, Y) \in \R^{d+1}$ and $X_{i} := (V_{i}, Y_{i}), i \in [n] := \{1,\dots,n\}$ be a sample of independent random vectors with common distribution $P$. For a given 
convex,  compact subset $\cV \subset \R^{d}$, 
based on the observations $\{X_{i}\}_{i=1}^{n}$, 
we aim to test the following hypothesis:
\begin{equation}
\label{def:H_0}
H_{0}: f \text{ is concave on } \cV,
\end{equation}
against some (globally or locally) non-concave alternatives. In this work, we directly leverage the simplex characterization of concave functions, i.e., $f$ is concave on $\cV$ if and only if
\begin{equation}\label{def:concavity}
a_{1} f(v_{1}) + \cdots + a_{d+1} f(v_{d+1}) \leq f(a_{1} v_{1} + \cdots + a_{d+1} v_{d+1}),
\end{equation}
for any $v_{1},\dots,v_{d+1} \in \cV$ and  nonnegative reals $a_{1},\dots,a_{d+1}$ such that $a_{1}+\cdots+a_{d+1}=1$.
Working with this definition allows us to circumvent the need to estimate the regression function $f$,
and thus the resulting tests would be robust to model misspecification. Further, the concavity hypothesis can be quantitatively evaluated on the observed data, 
which is  the idea behind the {\it simplex statistic} in \cite{abrevaya2005nonparametric}.


Specifically, $d+1$ covariate vectors in $\bR^d$ form a simplex.
Consider $r := d+2$ data points $x_1:=(v_1,y_1),\ldots, x_{r}:=(v_{r}, y_{r}) \in \bR^{d+1}$  generated from the model \eqref{eqn:nonparametric_regression}. If for all $j \in [r]$,
$v_j$ is not in the simplex spanned by $\{v_i: i \neq j\}$, for example the vectors   $\{v_1,v_2,v_3,v_5\}$ in Figure \ref{fig:main_idea},  then we set $w(x_1,\ldots,x_{r}) = 0$. Otherwise, there exists a {unique} $j$ such that $v_j$ can be written as a convex combination of other covariate vectors, i.e., $v_j = \sum_{i\neq j} a_i v_i$ for some $a_i \geq 0, \sum_{i\neq j} a_i = 1$; in this case, we compare the response $y_j$ with the same  combination  of others, $\{y_i: i \neq j\}$, i.e., setting
$$w(x_1,\ldots,x_{r}) =  \sum\limits_{i\neq j} a_i y_i - y_j.
$$
For example, in Figure \ref{fig:main_idea},  $v_4$ is in the simplex spanned by $\{v_1,v_2,v_3\}$. We note that the index $j$ and the coefficients $\{a_i: i \neq j\}$ are functions of $\{v_i: i \in [r]\}$, and defer the precise definitions to Section \ref{sec:local_simplex_stat}.

If $f$ is indeed concave (i.e., in $H_{0}$) and $\varepsilon$ is symmetric about zero, then 
$\Exp\left[\text{sign}(w(X_1,\ldots,X_{r})\right] \leq 0$ due to \eqref{def:concavity}, 
where $\text{sign}(t) := \mathbbm{1}(t > 0) - \mathbbm{1}(t < 0)$ is the sign function. Thus \cite{abrevaya2005nonparametric} proposes to use the following global $U$-statistic of all $r$-tuples from $\{X_i: i \in [n]\}$ and reject the null if the  statistic is large:
\begin{equation*}
|I_{n,r}|^{-1}\sum_{\iota \in I_{n,r}} \text{sign}\left(w(X_\iota) \right), \text{ with } X_{\iota} = (X_{i_1},\ldots, X_{i_r}),
\end{equation*}
where  $I_{n,r} := \{\iota = (i_1,\ldots, i_r): 1 \leq i_1 < \ldots < i_r \leq n\}$, and $|\cdot|$  denotes the set cardinality.

\begin{figure}[htbp!]
\includegraphics[width=0.75\textwidth]{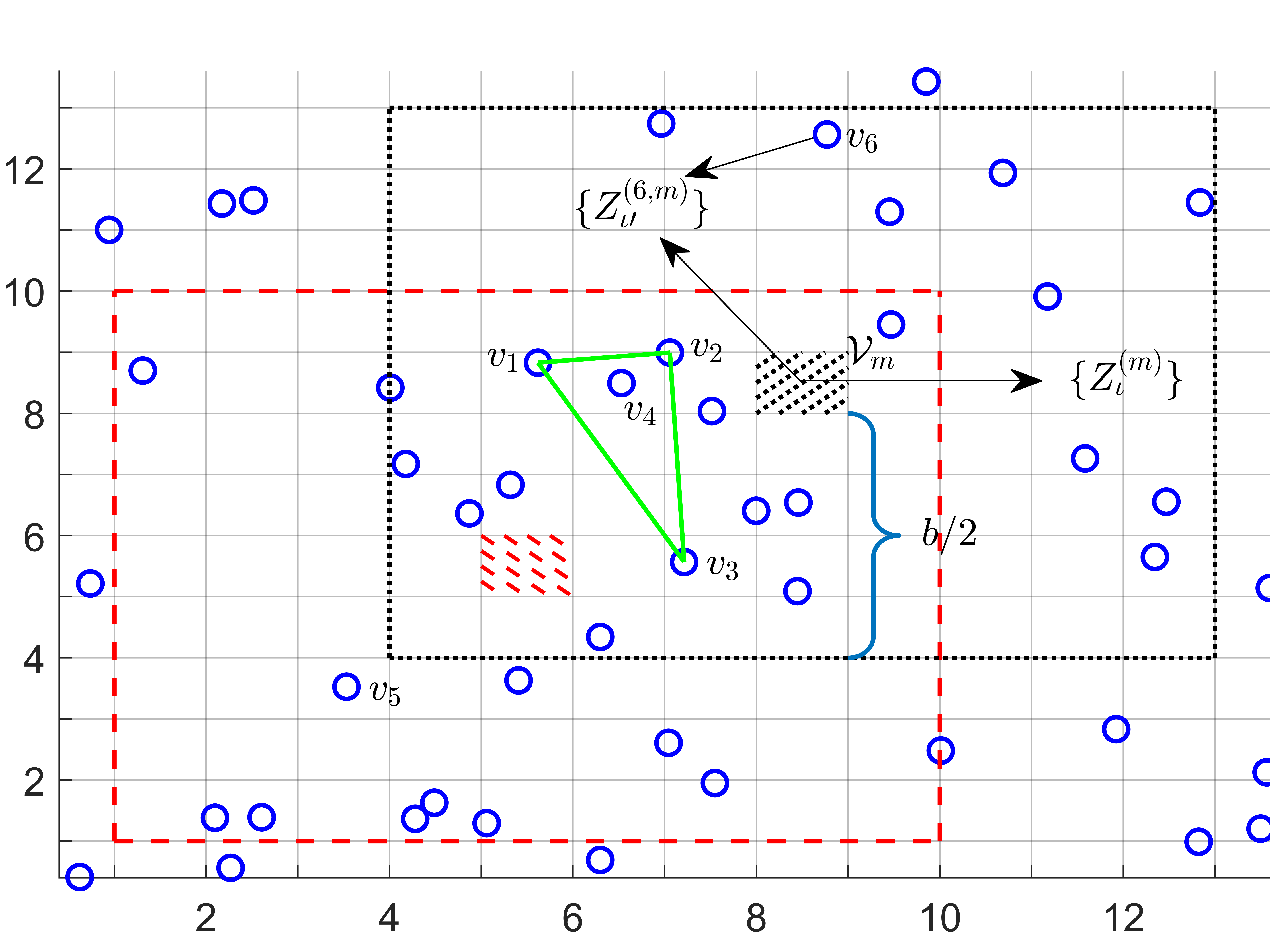}
  \captionsetup{width=\linewidth}
\caption{(i). Each circle represents a two-dimensional feature vector (i.e., $d=2$). For each query point $v \in \cV$, a sampling plan is a collection of Bernoulli random variables $\{Z_\iota(v): \iota \in I_{n,r}\}$, one for each subset of $r = d+2$ data points. If $Z_{\iota}(v) = 1$, then {$h_v^{\text{sg}}(X_{\iota})$} contributes to  the average in \eqref{def:preliminary_test_proc}. (ii).
 The space $\cV$ is {stratified} into disjoint regions (e.g., $1$-by-$1$ squares above). The query points in each region share the same sampling plan (e.g. {$\{Z_{\iota}^{(m)}\}$} for the dotted region $\cV_m$), while different regions have independent sampling plans. For example, 
the indicator for $\iota = (v_1,v_2,v_3,v_4)$ may be one for $\cV_m$, but zero for the dashed region. (iii). 
Due to the localizing kernel \eqref{def:sg_kernel}, for each query point $v \in \cV$, it suffices to consider data points that are within $b/2$ distance to $v$ in each coordinate. Thus for $\cV_m$ above, e.g., it suffices to consider  data points within the  dotted square ($b=8$). The key idea is that query points in a small region share similar nearby data points, and  the region-specific sampling plan allows us to  allocate the ``limited resources" only in ``important areas".
}
\label{fig:main_idea}
\end{figure}

\subsection{Local simplex statistics}
Since the above ``global" $U$-statistic  is not consistent against general alternatives, e.g., when $f$ is only non-concave in a small region, \cite{abrevaya2005nonparametric} also proposes the {\it localized simplex statistics}. Specifically,  let $L:\bR^{d} \to \bR$ be a function such that $L(z) = 0$ if $\|z\|_{\infty} := \max_{j \in [d]}|z_j| > 1/2$,
and $L_{b}(\cdot) := b^{-d} L(\cdot/b)$ for $b > 0$.
For $x_i := (v_i,y_i) \in \bR^{d+1}, i \in [r]$,  and a bandwidth parameter $b_n > 0$, define
\begin{equation}
\label{def:sg_kernel}
h_{v}^{\text{sg}}(x_{1},\dots,x_{r}) := \text{sign}\left(w(x_1, \dots, x_{r})\right) b_n^{d/2}\prod_{k=1}^{r} L_{b_n}(v-v_{k}), \quad v \in \cV.
\end{equation}
Thus for each $v \in \cV$, only nearby data points are utilized in constructing a local statistic. Note that $h_v^{\text{sg}}$ depends on $b_n$, which we omit in most places for simplicity of notations.

Given  a {finite} collection of {query (or design) points} $\sV_n \subset \cV$,  \cite{abrevaya2005nonparametric} proposes to reject the null if
\begin{align}\label{def:preliminary_test_proc}
\sup_{v \in \sV_n} U_n(h_v^{\text{sg}}) \text{ is large},\;\; \text{ where }\;\;
U_n(h_v^{\text{sg}}) := |I_{n,r}|^{-1}\sum_{\iota \in I_{n,r}} 
h_v^{\text{sg}}(X_{\iota}). 
\end{align}
In \cite{abrevaya2005nonparametric}, it  requires the query points in $\sV_n$ to be well separately, i.e., $\|v - v'\|_{\infty} > b_n$ for each pair of distinct $v,v' \in \sV_n$, which is restrictive when $d \geq 2$ and $b_n$ cannot be too small. Such a requirement is imposed since \cite{abrevaya2005nonparametric} uses extreme value theory to obtain the asymptotic distribution of the supremum, for which the convergence of approximation error is known to be logarithmically slow \cite{hall1991convergence}. 

In \cite{chen2017jackknife}, a valid jackknife multiplier bootstrap (JMB) is proposed to calibrate the distribution of the supremum of the (local) $U$-{process}, $\sup_{v \in \cV} U_n(h_v^{\text{sg}})$. 
Even though JMB tailored to the concavity test problem is statistically consistent, it requires tremendous, if not prohibitive, resources  to compute $\sup_{v \in \cV} U_n(h_v^{\text{sg}}) $, as well as calibrating its distribution via bootstrap, for $d \geq 2$. For instance, suppose that $V$ has a Lebesgue density that is bounded away from zero on $\cV$. Then  the number of data points within the $b_n$-neighbourhood of $v \in \cV$ is on average $O(n b_n^{d})$. Thus to compute $U_n(h_v^{\text{sg}})$ for a fixed $v \in \cV$, the required number of evaluations of $w(\cdot)$ is on average $O( (n b_n^{d} )^r)$,
which is computationally intensive, if $d \geq 2$ (thus $r=4$), and the bandwidth $b_n$ is not too small. In fact, in the numerical study (Section \ref{sec:simulation}), we estimate that for $d=3,n=1000,b_n = 0.6$ ($b_n/2$ is the half width), it would take more than $7$ days to use bootstrap for calibration even with $40$ computer cores.

It is tempting to break the computational bottleneck by using the incomplete version of the $U$-process $\{ U_n(h_v^{\text{sg}}) : v \in \cV \}$, which has been studied for high-dimensional $U$-statistics \cite{chen2017randomized,song2019approximating}. Specifically, we may associate each subset of $r$ data points, $\iota \in I_{n,r}$, 
with an independent Bernoulli random variable $Z_{\iota}$, and only include $h_v^{\text{sg}}(X_{\iota})$ in the average in \eqref{def:preliminary_test_proc} if $Z_{\iota} = 1$. 
Note that this is a ``centralized" sampling plan, in the sense that $\{Z_{\iota}: \iota \in I_{n,r}\}$ is shared by each $v \in \cV$. Here, we explain intuitively why such a plan does not solve the computational challenge, and postpone the detailed discussion until Section \ref{sub:comp_complex}. First, for each $\iota = (i_1,\ldots,i_r) \in I_{n,r}$, if $\|v_{i_j} - v_{i_k}\|_{\infty} > b_n$ for some $j,k \in [r]$ (e.g., $v_5,v_6$ in Figure \ref{fig:main_idea}), then $h_v^{\text{sg}}(X_{\iota}) = 0$ {for each} $v \in \cV$. As a result, with a very high probability,  a randomly selected $r$-tuples $X_{\iota}$ is ``wasted". Second, if two query points $v,v'$ are not close, in the sense that $\|v-v'\|_{\infty} > b_n$ (e.g. $v_5,v_6$ in Figure \ref{fig:main_idea} if they are used as query points),  then they share no nearby data points as defined by the localizing kernel in \eqref{def:sg_kernel}, which is a property {ignored} by the centralized sampling.

\subsection{Our contributions}

In this paper, we introduce the {\it stratified incomplete local simplex} (SILS) statistics for testing the concavity assumption in nonparametric multiple regression. 
We  show that SILS tests have simultaneous guarantees on {\it dimension-free} computational complexity, {\it polynomial decay} of the uniform error-in-size, and {\it power consistency} against  general alternatives. We elaborate below our contributions, and also refer readers to  Figure \ref{fig:main_idea} for a pictorial illustration of key ideas.\\

\noindent \underline{\bf Computational contributions.} The SILS test is proposed to address the computational issue with  the test statistic \eqref{def:preliminary_test_proc}, as well as
calibrating its distribution.
Specifically, we first partition the space $\cV$ into disjoint regions $\{\cV_m: m \in [M]\}$ for some integer $M \geq 1$. Let $N := n^{\kappa} b_n^{-dr}$ be a computational parameter for some $\kappa > 0$, and for each $m \in [M]$, let $\{Z_{\iota}^{(m)}: \iota \in I_{n,r}\}$ be a  collection of independent Bernoulli random variables with success probability $p_n := N/|I_{n,r}|$, which is called a {\it sampling plan}. For different regions, the sampling plans are {independent}. Then we consider the stratified, incomplete version of \eqref{def:preliminary_test_proc} as our statistic for testing the hypothesis \eqref{def:H_0}:
\begin{align*}
\sup_{m \in [M]} \; \sup_{v \in \cV_m}\;\;
\left(\sum\nolimits_{\iota \in I_{n,r}} Z_{\iota}^{(m)}  h_v^{\text{sg}}(X_{\iota})  
\right)/\left( \sum\nolimits_{\iota \in I_{n,r}} Z_{\iota}^{(m)} \right).
\end{align*}
Similar idea is applied to bootstrap calibration (see Subsection \ref{sec:bootstrap_uproc}), which involves another computational parameter $N_2 := n^{\kappa'} b_n^{-dr}$ for some $\kappa' > 0$. 
Due to the localization by the kernel \eqref{def:sg_kernel} and the stratification (see Figure \ref{fig:main_idea}), we show in Section \ref{subsec:implementation} that the overall  computational cost is $O( M n^{\kappa} \log(n) + M n^{1+\kappa'} b_n^{-d}\log(n) + BMn)$, where $B$ is the number of bootstrap iterations. 
Our theory allows  $\kappa, \kappa'$ to be arbitrarily small, but due to power analysis, we recommend $\kappa = \kappa' = 1$. In addition, $M$ is usually chosen so that $M = O(b_n^{-d})$, and to ensure a non-vanishing number of local data points, we must have 
$b_n^{-d} = O(n)$; thus the cost is independent of the dimension $d$.  

Further, to alleviate the burden of selecting a single bandwidth, we propose  to use the supremum of  the statistics  associated with \textit{multiple} $b_n$ (Subsection \ref{subsec:unif_bandwidth}). Finally, we conduct  extensive simulations to demonstrate the computational feasibility  of the proposed method, and to corroborate  our theory.\footnote{The implementation can be found on the  github (\url{https://github.com/ysong44/Stratified-incomplete-local-simplex-tests}).}\\


\noindent \underline{\bf Statistical contributions.} 
In addition to the function class $\cH^{\text{sg}} := \{h_v^{\text{sg}}\}$, which uses the sign of simplex statistics, we also consider another class of functions  $\cH^{\text{id}} := \{h_v^{\text{id}}\}$, where $h_v^{\text{id}}$ uses $w(\cdot)$ instead of its sign (see \eqref{def:h_x_concavity});
note that $h_v^{\text{id}}$ is unbounded unless $\varepsilon$ has bounded support. On one hand, $\cH^{\text{sg}}$ requires the observation noise $\varepsilon$ to be conditionally symmetric about  zero  \cite{abrevaya2005nonparametric}, but otherwise is robust to heavy tailed $\varepsilon$. On the other hand, $\cH^{\text{id}}$ requires $\varepsilon$ to have a light tail, but otherwise imposes no restrictions \cite{chen2017jackknife}.
For both classes of functions, we establish the {size validity}, as well as {power consistency} against general alternatives, for the proposed procedure, under no  smoothness assumption on the regression function.

In fact, under fairly general moment assumptions, we derive a unified Gaussian approximation and bootstrap theory for stratified, incomplete $U$-processes (Section \ref{sec:prelim} and \ref{sec:GAR_Bootstrap}), associated with a general function class $\cH$, where the SILS test for regression concavity  is an application of the general results.\\

\noindent \underline{\bf Technical contributions.}
The analysis of the stratified, incomplete $U$-processes requires a strategy different from the coupling approach used for  complete $U$-processes \cite{chen2017jackknife}: (i) we establish corresponding results for high dimensional stratified, incomplete  $U$-statistics (Appendix \ref{sec:GAR_incomp_U_stat}); (ii) we show that the supremum of the  process is well approximated by the supremum over a finite, but diverging, collection of $v \in \cV$. The main novelty are  {local and non-local maximal inequalities} to bound the supremum difference between a complete $U$-process and its
stratified, incomplete version
 (Appendix \ref{app:local_max_ineq_incom}), which can also be useful for other applications involving sampling, such as estimating the density of functions of several random  variables \cite{gine2007local}.
 
We note that the developed maximal inequalities are novel compared to \cite{chen2017jackknife} and \cite{chen2017randomized}. First, \cite{chen2017jackknife} studies {complete} $U$-processes, and neither stratification nor sampling is involved. Second, \cite{chen2017randomized} establishes inequalities for {incomplete} high dimensional $U$-{vectors}, 
whose proofs are fundamentally different from those for {processes}, and which does not have the  stratification component.  See also Remark \ref{rk:main_challenge} for technical challenges associated with {local} $U$-processes.

\subsection{Related work}
\label{subsec:related_work}

Regression under concave/convex restrictions has a long and rich history dating back to \cite{hildreth1954point}. 
Traditionally, the literature  focused on the univariate ($d=1$) case \cite{hanson1976,mammen1991,groeneboom2001,cai2015,guntuboyina2015,chen2016}, but there is a significant recent theoretical progress in the multivariate case \cite{seijo2011nonparametric,lim2012consistency,han2016,kur2019optimality}; see also \cite{matzkin1991,kuosmanen2008representation,hannah2013multivariate,Mazumder2019}. We refer readers to \cite{chetverikov2018econometrics,guntuboyina2018nonparametric} for a review on estimation and inference under shape constraints including concave/convexity constraints. 

The literature on testing the hypotheses of regression concavity is relatively scarce, especially for multiple regression, i.e., $d \geq 2$. Simplex statistic and its local version are introduced in \cite{abrevaya2005nonparametric}, and the bootstrap calibration (without computational concerns) is investigated in \cite{chen2017jackknife}. 
Several testing procedures  based on splines \cite{diack1998nonparametric,wang2011testing,komarova2019testing} have been proposed, 
which, however, are only proven to work 
for the univariate case since they are essentially second-derivative tests at the spline knots. Thus such methods can only test marginal concavity in the presence of multiple covariates, and multi-dimensional spline interpolation is much less understood in the nonparametric regression setting.
Further, in the univariate case with a white-noise model, multi-scale testing for qualitative hypotheses  is considered in \cite{dumbgen2001multiscale}, and minimax risks for estimating the $L^{q}$ distance ($1 \leq q < \infty$)  between an unknown signal and the cones of positive/monotone/convex functions are established in \cite{juditsky2002}. 

A very recent work by \cite{fang2020projection} proposes a projection framework for testing shape restrictions including concavity, which we call ``FS" test. Specifically, the FS test \cite{fang2020projection} first  estimates the regression function $f$ using {unconstrained, nonparametric} methods (e.g. by sieved splines), and then evaluate and calibrate the $L^2$ distance between the estimator and the space of concave functions. 
As discussed in Appendix \ref{subsec:minimax_FS}, 
the FS test is expected to achieve descent power, but fails to control the size properly when $f$ is not smooth; this is because if $f$ is not smooth enough,  there is {no choice} of tuning parameter (e.g., the number of terms in sieved B-splines) that can meet its two requirements simultaneously: under-smoothing and strong approximation (see Appendix \ref{subsec:minimax_FS}). In simulation studies (Section \ref{sec:simulation}), we observe that the FS test rejects $H_0$  with a very large probability when $f$ is concave, piecewise affine. In contrast, for our procedure,
the probability of rejecting the null  attains the maximum when $f$ is an affine function, as the equality in \eqref{def:concavity} is achieved if and only if $f$ is affine; 
thus, the size validity requires no additional assumption on $f$. Finally, we show in Section \ref{sec:simulation} that the proposed method achieves a comparable power to the FS test.


We postpone the discussion of related work  on the distribution approximation and bootstrap for $U$-processes until Subsection \ref{subsec:related_U_proc}.

\subsection{Organization of the paper} In Section \ref{sec:prelim}, we introduce stratified, incomplete $U$-processes, as well as bootstrap calibration, for a general function class $\cH$. In Section \ref{sec:local_simplex_stat}, we apply the general theory to the concavity test application, and  establish its size validity and power consistency.
In Section \ref{sub:comp_complex}, we  discuss the computational complexity of the proposed procedure as well as its implementation. In Section \ref{sec:simulation}, we present simulation results for $d=2$, with the cases of $d=3,4$ presented in Appendix \ref{app:simulations}. In Section \ref{sec:GAR_Bootstrap}, we establish the validity of Gaussian approximation and bootstrap for stratified, incomplete $U$-processes. 
The additional results, proofs, and discussions  are presented in Appendix.


\subsection{Notation} \label{subsec:notation}

We denote $X_i,  \ldots X_{i'}$ by $X_i^{i'}$ for $i \leq i'$. 
For any integer $n$, we denote by $[n]$ the set $\{1,2,\ldots,n\}$.
 For $a,b \in \bR$, let 
$\lfloor a \rfloor$ denote the largest integer that does not exceed $a$,
$a \vee b = \max\{a,b\}$ and 
$a \wedge b = \min\{a,b\}$.
For $a \in \bR^d$ and $q \in [1,\infty)$, denote
$\|a\|_{q} = \left( \sum_{i=1}^{d} |a_i|^q\right)^{1/q}$,
and $\|a\|_{\infty} = \max_{i \in [d]} |a_i|$.
 For $a,b \in \bR^d$, we write $a \leq b$ if
$a_j \leq b_j$ for $1 \leq j \leq d$, and write $[a,b]$ for the hyperrectangle $\prod_{j=1}^{d}[a_j,b_j]$ if $a \leq b$. 
For $\beta > 0$, let $\psi_{\beta}: [0,\infty) \to \bR$ be a function defined by
$\psi_{\beta}(x) = e^{x^\beta}-1$, and for any real-valued random variable $\xi$, define
$\|\xi\|_{\psi_{\beta}} = \inf\{C > 0: \Exp[\psi_{\beta}(|\xi|/C)] \leq 1\}$. Denote by $I_{n,r} := \{\iota = (i_1,\ldots, i_r): 1 \leq i_1 < \ldots < i_r \leq n\}$ the set of all ordered $r$-tuples of $[n]$ and denote by $|\cdot|$ the set cardinality.


For a nonempty set $T$, denote $\ell^{\infty}(T)$ the Banach space of real-valued functions $f:T\to \bR$ equipped with the sup norm $\|f\|_T := \sup_{t \in T}|f(t)|$. For a semi-metric space $(T,d)$, denote by $N(T,d,\epsilon)$ its $\epsilon$-covering number, i.e., the minimum number of closed $d$-balls with radius $\epsilon$ that cover $T$; see \cite[Section 2.1]{van1996weak}.  For a probability space $(T,\cT,Q)$ and a measurable function $f: T \to \bR$, denote 
$Qf = \int f dQ$ whenever it is well defined. For $q \in [1,\infty]$, denote by $\|\cdot\|_{Q,q}$ the $L^q(Q)$-seminorm, i.e., $\|f\|_{Q,q} = \left(Q|f|^q\right)^{1/q}$ for $q < \infty$ and
$\|f\|_{Q,\infty}$ for the essential supremum.

For $k = 0,1,\ldots,r$ and a measurable function $f:(S^r,\cS^r) \to (\bR,\cB(\bR))$, let $P^{r-k} f$ denote the function on $S^k$ such that
$P^{r-k}f(x_1,\ldots,x_k)\; =\; \Exp[f(x_1,\ldots,x_k, X_{k+1},\ldots,X_{r})],
$,
whenever it is well defined. For a generic random variable $Y$ , let $\Pro_{\vert Y}(\cdot)$ and $\Exp_{\vert Y}[\cdot]$ denote the conditional probability and expectation given $Y$, respectively. 
Throughout the paper, we assume that 
\begin{equation*}
r \geq 2,  \;\; n \geq 4, \;\; N\geq 4, \;\;
p_n := N/|I_{n,r}|\leq 1/2, \;\; N \geq n/r \geq 1.
\end{equation*}
Also, we assume the probability space  is rich enough in the sense that there exists a random variable that has the uniform distribution on $(0,1)$ and is independent of all other random variables.

\section{Stratified incomplete U-processes}
\label{sec:prelim}

In this section, we introduce stratified, incomplete $U$-processes, as well as bootstrap calibration, for a general function class $\cH$. For intuitions, it may help to think $\cH$ as the collection of functions 
$h_{v}^{\text{sg}}$ in \eqref{def:sg_kernel} indexed by $v \in \cV$, and refer to Figure \ref{fig:main_idea}.

Thus, let $X_1^n := \{X_{1},\dots,X_{n}\}$ be independent and identically distributed (i.i.d.) random variables taking value in a measurable space $(S,\cS)$ with common distribution $P$. Fix $r \geq 2$, and let $\cH$ be a collection of symmetric, measurable functions $h: (S^{r},\cS^r) \to (\bR,\cB(\bR))$.
Define the $U$-process and its standardized version as follows: for $h \in \cH$,
\begin{align*}
U_n(h) := {|I_{n,r}|^{-1}} \sum_{\iota \in I_{n,r}} h(X_{\iota}), \quad
\bU_n(h) := \sqrt{n}\left(U_n(h) - \Exp\left[U_n(h) \right] \right),
\end{align*}
where recall that  $X_{\iota} = (X_{i_1},\ldots,X_{i_r})$ if $\iota=(i_1,\ldots,i_r) \in I_{n,r}$. 
The summation in the above complete $U$-process involves $\sim n^r$ terms, and thus is computationally expensive even for a moderate $r$ (say $\geq 3$), which motivates  its stratified incomplete version.

\subsection{Test statistics}

Let $\{\cH_m: m \in [M]\}$ be a partition of $\cH$, i.e., $\cH_{m_1} \cap \cH_{m_2} = \emptyset$ for $m_1 \neq m_2$, and $\cup_{m=1}^{M} \cH_m = \cH$. The partition,
and thus $M$, may  depend on the sample size $n$. Given a positive integer $N$, which represents a computational parameter, define 
\[
\left\{ Z^{(m)}_{\iota}:   m \in [M], \;\; \iota \in I_{n,r} 
\right\} \;\;\;  \overset {i.i.d.}{\mathlarger{\sim}} \;\;\; \text{Bernoulli}(p_n), \quad \text{ with } p_n := N/|I_{n,r}|,
\] 
which are independent of the data $X_1^n$. For $m \in [M]$, denote by
$\widehat{N}^{(m)} := \sum_{\iota \in I_{n,r}} Z^{(m)}_{\iota}$ the total number of sampled $r$-tuples for the subclass $\cH_m$.
Further, define a function $\sigma: \cH \to \{1,\ldots,M\}$ that maps $h \in \cH$ to the index of the partition to which $h$ belongs, i.e.,
$
\sigma(h) = m  \Leftrightarrow  h \in \cH_m.
$
Finally, we define
the \textit{stratified, incomplete $U$-process} and its standardized version: for $h \in \cH$, if $\sigma(h)=m$,
\begin{align}\label{def:incomplete_U_proc}
U_{n,N}'(h) := \left(\widehat{N}^{(m)}\right)^{-1} \sum_{\iota \in I_{n,r}} Z_{\iota}^{(m)} h(X_{\iota}), \quad
\bU_{n,N}'(h) := \sqrt{n}\left(U_{n,N}'(h) - \Exp\left[U_{n,N}'(h) \right] \right).
\end{align}

An important goal of the paper is to develop bootstrap methods to calibrate the distribution of the supremum of the stratified incomplete $U$-process, i.e., $\bMn := \sup_{h \in \cH} \bU_{n,N}'(h)$. 

\noindent \textbf{\underline{Statistical tests.}} We will use $\sqrt{n}\sup_{h \in \cH}{U_{n,N}'(h)}$ as the \textit{test statistic}, which can be evaluated given the data $X_1^n$ and sampling plans $\{Z_\iota^{(m)}\}$. If under the null, $P^r h \leq 0$ for each $h \in \cH$, then
\[
\sqrt{n}\sup_{h \in \cH}{U_{n,N}'(h)} \;\leq\; \sup_{h \in \cH} \bU_{n,N}'(h) \;=\;\bMn.
\]
Thus a test based on the $\alpha$-th upper quantile of $\bMn$ controls the size below $\alpha$. If, in addition, under certain configuration in the null,  $P^r h = 0$ for each $h \in \cH$, then the test is \textit{non-conservative}, i.e., controlling the size at $\alpha$.

\begin{remark}
A stratification of 
$\cH=\{h_{v}^{\text{sg}}: v \in \cV\}$ is equivalent to partitioning $\cV$ into sub-regions $\{\cV_m: m \in [M]\}$ and letting $\cH_m = \{h_{v}^{\text{sg}}: v \in \cV_m\}$ (see Figure \ref{fig:main_idea}). Query points in $\cV_m$ share the same sampling plan $\{Z_{\iota}^{(m)}:\iota \in I_{n,r} \}$. 
%
As we shall see in Section \ref{sub:comp_complex}, it is {computationally} important to partition the function class $\cH$ so that each partition has its individual sampling plan. Our analysis is non-asymptotic, so no stratification ($M=1$) is also allowed.
\end{remark}

\subsection{Bootstrap calibration}\label{sec:bootstrap_uproc}

To operationalize the above test, we  use multiplier bootstrap to calibrate the distribution of $\bMn$. To gain intuition, assume for a moment $P^r h = 0$ for $h \in \cH$, and observe that
\begin{equation}\label{bootstrap_intuition}
({\widehat{N}^{\sigma(h)}}{N}^{-1})  \bU_{n,N}'(h) =   \bU_{n}(h) + \alpha_n^{1/2}  N^{-1/2} \sum_{\iota \in I_{n,r}} \left(Z_{\iota}^{(\sigma(h))} -p_n\right) h(X_{\iota}),
\end{equation}
where $\alpha_n := n/N$. The first term on the right is a complete $U$-statistic, and thus is  approximated by its H\'ajek projection $r n^{-1/2}\sum_{k \in [n]} P^{r-1}h(X_k)$. The second term is due to stratified sampling:  {conditional} on data $X_1^r$,  it is a sum of independent centered Bernoulli random variables, 
 with variance approximately given by $\alpha_n U_n(h^2)$. We will handle these two sources of  variation.

 
The H\'ajek projection part requires  additional notations. 
Let $\cD_{n} := X_1^n \cup \{Z_{\iota}^{(m)}: \iota \in I_{n,r}, m \in [M]\}$ be the data involved in the definition of $\bU'_{n,N}$ in  \eqref{def:incomplete_U_proc}. For each $k \in [n]$, denote by 
\begin{align*}
\kjack:=
\{\;(i_1,\ldots,i_{r-1}): 1 \leq i_1 < \cdots < i_{r-1} \leq n, \quad {i_j} \neq k \;\text{ for } 1 \leq j \leq r-1 \;\},
\end{align*}
the collection of all ordered $r-1$ tuples in the set $\{1,\ldots,n\}\setminus \{k\}$. Let $N_2$ be another computational budget,
and define
\[
\left\{Z^{(k, m)}_{\iota}: \;k \in [n], \; m \in [M], \; \iota \in  \kjack  \right\}  \;  \overset {i.i.d.}{\mathlarger{\sim}} \; \text{Bernoulli}(q_n), \;\;\; q_n := N_2/|I_{n-1,r-1}|,
\]
that are independent of $\cD_n$. For example, if $\cH=\{h_{v}^{\text{sg}}: v \in \cV\}$, {each pair} of data point $X_k$ and region $\cV_m$ is associated with an independent sampling plan $\{Z^{(k, m)}_{\iota}:\iota \in \kjack  \}$; see Figure \ref{fig:main_idea}.

For $k \in [n]$ and $m \in [M]$, define
$
\widehat{N}_2^{(k, m)} := \sum_{\iota \in \kjack} Z^{(k, m)}_{\iota}$,  the number of selected $r-1$ tuples from $[n]\setminus \{k\}$ for $X_k$ and $\cH_{m}$. Further, 
for $h \in \cH$ with $\sigma(h) = m$,
\begin{equation}
\label{def:G_k_h}
\bG^{(k)}(h) := \left(\widehat{N}_2^{(k, m)}\right)^{-1} \sum_{\iota \in \kjack} 
Z^{(k, m)}_{\iota} h( X_{\iota^{(k)}}), \quad
\overline{\bG}(h) := n^{-1} \sum_{k = 1}^{n} \bG^{(k)}(h),
\end{equation}
where  $\iota^{(k)} := \{ k \} \cup \iota$. 
Here, $\bG^{(k)}(h)$ is intended as an estimator for the $k^{th}$ term  in the H\'ajek projection, since by definition
$\Exp\left[h( X_{\iota^{(k)}}) \, \vert X_k \right]=P^{r-1}h(X_k)$.

\noindent  \textbf{\underline{Multiplier bootstrap.}}
Now let  $\left\{\xi_k,\; \xi^{(m)}_{\iota}: \;k \in [n],\;m \in [M], \iota \in I_{n,r} \right\}  \;  \overset {i.i.d.}{\mathlarger{\sim}} \; N(0,1)$ be independent standard Gaussian multipliers, independent from the data $X_1^n$ and the sampling plans, i.e., 
 \begin{equation}
 \label{def:Dn_prime}
 \cD_n' := \cD_n \cup \{Z^{(k, m)}_{\iota}: k \in [n],\; m \in [M], \;  \iota \in  \kjack   \}.
 \end{equation}
Define for $h \in \cH$ with $\sigma(h) = m$,
\begin{align} \label{def:bootstrap_A_and_B}
\begin{split}
\bU_{n,A}^{\#}(h) &:= n^{-1/2}\sum_{k =1}^{n} \xi_{k}\left(\bG^{(k)}(h) - \overline{\bG}(h) \right),\\
\bU_{n,B}^{\#}(h) &:= \left(\widehat{N}^{(m)}\right)^{-1/2} \sum_{\iota \in I_{n,r}} \xi^{(m)}_{\iota} \sqrt{Z_{\iota}^{(m)}} \left(
h(X_{\iota}) - U'_{n,N}(h)
\right),
\end{split}
\end{align}
where $0/0$ is interpreted as $0$. Note that the multipliers $\{\xi_k\}$ are shared across regions, while $\{\xi^{(m)}_{\iota}\}$ are region-specific. Further,  conditional on $\cD'_n$, $\bU^{\#}_{n,A}$ and $\bU^{\#}_{n,B}$ are centered Gaussian processes with covariance functions  $\widehat{\gamma}_A(h,h') := {n}^{-1}\sum_{k = 1}^{n}  (\bG^{(k)}(h) - \overline{\bG}(h))(\bG^{(k)}(h') - \overline{\bG}(h'))$ and
$\widehat{\gamma}_B(h,h') := (\widehat{N}^{(\sigma(h))})^{-1} 
\sum_{\iota \in I_{n,r}} Z_{\iota}^{(\sigma(h))} (
h(X_{\iota}) - U'_{n,N}(h)
)(
h'(X_{\iota}) - U'_{n,N}(h')
) \mathbbm{1}\{\sigma(h) = \sigma(h')\}$ for any $h, h' \in \cH$.
In view of \eqref{bootstrap_intuition}, we  combine these two processes and define 
\begin{equation}
\label{def:bootstrap_combined}
\bU_{n,*}^{\#}(h) := r \bU_{n,A}^{\#}(h) + \alpha_n^{1/2} \bU_{n,B}^{\#}(h)\;\; \text{ for } h \in \cH,  \qquad
\bMn^{\#} := \sup_{h \in \cH} \bU_{n,*}^{\#}(h).
\end{equation}
Finally, we estimate the conditional (given $\cD_n'$) distribution of $\bM_n^{\#}$ by  bootstrap, i.e., by repeatedly generating independent realizations of
the multipliers $\{\xi_k,   \xi^{(m)}_{\iota}\}$ with the data $X_1^n$ and the sampling plans  $\{Z_{\iota}^{(m)}, Z_{\iota}^{(k,m)}\}$  fixed, and   obtain the critical value for the previous test statistic from the conditional distribution of $\bMn^{\#}$.



\subsection{A simplified version of approximation results}\label{subsec:simplified_results}
To justify the bootstrap procedure, we need to show that conditional on $\cD_n'$, the distribution of $\bMn^{\#}$ is close to that of $\bMn$, which is the main result  in Section \ref{sec:GAR_Bootstrap}. Here we state a simplified version of the approximation results for a {uniformly bounded} function class $\cH$. Note that the bound on $\cH$ is allowed to vary with $n$.



\begin{definition}[VC type function class \cite{chen2017jackknife,chernozhukov2014gaussian}] \label{def:VC_type}
A collection,  $\cH$, of functions on $S^r$ with a measurable envelope function $H$ (i.e. $H \geq \sup_{h \in \cH} |h|$ pointwise) is said to be {VC type} with characteristics $(A,\nu)$ if $\sup_{Q} N(\cH, \|\cdot\|_{Q,2}, \epsilon \|H\|_{Q,2}) \leq (A/\epsilon)^{\nu}$ for any $\epsilon \in (0,1)$, where $\sup_{Q}$ is taken over all finitely discrete probability measures on $S^r$.
\end{definition}

We  work with the following assumptions.\\

\noindent \mylabel{cond:PM}{(PM)}.  $\cH$ is pointwise measurable in the sense that for any $n \in \bN$, there exists a countable subset $\cH'_n \subset \cH$ such that, almost surely, for every $h \in \cH$, there exists a sequence $\{h_{m}\} \subset \cH'_n$ with $\lim_{m} h_m(X_i)= h(X_i)$ for $i \in [n]$.  \\


\noindent \mylabel{cond:VC}{(VC)}. $\cH$ is VC type with envelope $H$ and characteristics $A \geq e \vee (e^{2(r-1)}/16)$ and $\nu \geq 1$. \\

\noindent \mylabel{cond:MB}{(MB)}. For some absolute constant $C_0 > 0$,
$\log(M) \leq C_0 \log(n)$. \\

\noindent \mylabel{cond:MT_inf}{(MT-$\infty$)}. There exist  absolute constants $\ubar{\sigma} > 0$, $c_0 \in (0,1)$,  and a sequence of reals $D_n \geq 1$ such that for each  $0 \leq \ell \leq r$ and $1 \leq s \leq 4$,
\begin{align}
&\Var\left(P^{r-1}h(X_1) \right) \geq \ubar{\sigma}^2,\qquad
\Var\left(h(X_1^r) \right) \geq c_0 D_n^{2r-2}, \nonumber \\
&\|P^{r-\ell} |h|^{s}\|_{P^{\ell}, \bar{q}} \; \leq  \; 
 D_n^{2r(s-1) + 2\ell - s -2\ell/\bar{q}}, \quad \text{ for }\; \bar{q} \in \{2,3,4\},\; h \in \cH, \nonumber\\
&\|P^{r-\ell} H^{s}\|_{P^{\ell}, \bar{q}} \; \leq  \; 
D_n^{2r(s-1) + 2\ell - s - (2\ell-2)/\bar{q}}\; \text{ for }\; \bar{q} \in \{2,\infty\},\quad \|(P^{r-2}H)^{\bigodot 2}\|_{P^2,\infty} \leq
D_n^{4}, \nonumber
\end{align}
where  for a function $f:S^2 \to \bR$, define 
$f^{\bigodot 2}(x_1,x_2) := \int f(x_1,x)f(x_2,x) dP(x)$.

\begin{theorem}
\label{thm:calibration_inf}
Assume the conditions \ref{cond:PM}, \ref{cond:VC}, \ref{cond:MB}  and \ref{cond:MT_inf}. 
Then there exists a constant $C$, depending only on constants $r, \ubar{\sigma}, c_0, C_0$, such that with probability at least $1 - C \varrho_n'$,
$$
\sup_{t \in \bR}\left|  \Pro({\bMn} \leq t) - 
\Pro_{\vert \cD_n'}(\bMn^{\#} \leq t)
\right| \leq C \varrho_n', 
$$
where $\varrho_n' :=  \left( \frac{D_n^{2r} K_n^{7}}{N \wedge N_2} \right)^{1/8}  +  \left( \frac{D_n^2 K_n^{7}}{n} \right)^{1/8} + 
\left( \frac{D_n^{3} K_n^{4}}{n} \right)^{2/7}$ and $K_n := \nu\log(A \vee n)$.
\end{theorem}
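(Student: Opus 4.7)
The plan is to bridge $\bMn$ and the conditional distribution of $\bMn^{\#}$ by showing each is close in Kolmogorov distance to the law of $\sup_{h \in \cH} \bZ(h)$ for a common centered Gaussian process $\bZ$ with covariance
\[
\Gamma(h,h') = r^2 \Cov\!\left(P^{r-1}h(X_1), P^{r-1}h'(X_1)\right) + \alpha_n\, \Cov\!\left(h(X_1^r),h'(X_1^r)\right)\mathbbm{1}\{\sigma(h)=\sigma(h')\}.
\]
The starting identity is \eqref{bootstrap_intuition}: after absorbing the prefactor $\widehat{N}^{(\sigma(h))}/N$ (which concentrates uniformly at $1$ via a Bernstein argument across the $M \leq n^{C_0}$ strata, using \ref{cond:MB}), one gets $\bU_{n,N}'(h)\approx \bU_n(h)+\alpha_n^{1/2}\bS_n(h)$, where $\bS_n(h) := N^{-1/2}\sum_\iota (Z_\iota^{(\sigma(h))}-p_n)h(X_\iota)$. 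I would H\'ajek-project $\bU_n$, controlling the degenerate remainder uniformly by a maximal inequality for degenerate $U$-processes of VC type (using \ref{cond:PM}--\ref{cond:VC} and the moment bounds in \ref{cond:MT_inf}). Conditional on $X_1^n$, $\bS_n$ is a centered sum of independent bounded variables, with cross-stratum independence built into $\{Z_\iota^{(m)}\}$.

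To approximate the law of $\bMn$ by that of $\sup_h \bZ(h)$ I would combine: (a) a Chernozhukov--Chetverikov--Kato (CCK) high-dimensional CLT applied to the $n$-sample H\'ajek sum on an $\epsilon$-net of $\cH$; (b) a conditional CCK Gaussian approximation for the stratified Bernoulli sums $\bS_n$, yielding a conditionally Gaussian coupling with empirical covariance proportional to $U_n(hh')\mathbbm{1}\{\sigma(h)=\sigma(h')\}$; and (c) uniform concentration $\sup_{h,h'}|U_n(hh')-P^r(hh')|\to 0$ via a maximal inequality for second-order complete $U$-processes. The VC-type entropy bound is used throughout to re-lift discretized bounds to the supremum. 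Summing errors gives coupling of order $(D_n^{2r}K_n^7/(N\wedge N_2))^{1/8}+(D_n^2K_n^7/n)^{1/8}$, matching the first two summands of $\varrho_n'$.

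For the bootstrap side, conditional on $\cD_n'$ the process $\bU_{n,*}^{\#}$ is centered Gaussian with covariance $r^2\widehat{\gamma}_A + \alpha_n\widehat{\gamma}_B$. A CCK Gaussian comparison combined with anti-concentration of $\sup_h\bZ(h)$ (nondegenerate thanks to $\Var(P^{r-1}h(X_1))\geq \ubar{\sigma}^2$) reduces matters to showing that $\sup_{h,h'}|\widehat{\gamma}_A-\gamma_A|$ and $\sup_{h,h'}|\widehat{\gamma}_B-\gamma_B|$ are small with high probability, where $\gamma_A,\gamma_B$ denote the two summands of $\Gamma$ up to the obvious $r^2$/$\alpha_n$ scaling. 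The $\widehat{\gamma}_B$ bound follows from uniform concentration of the stratified incomplete $U$-statistic $U_{n,N}'(hh')$ indexed by $\cH\otimes\cH$. The $\widehat{\gamma}_A$ bound requires $\bG^{(k)}(h)\approx P^{r-1}h(X_k)$ uniformly in $(k,h)$, where $\bG^{(k)}(h)$ is itself a stratified incomplete $U$-statistic carrying the jackknife-style plan $\{Z^{(k,m)}_\iota\}$. Passing the covariance bound to Kolmogorov distance via the Gaussian comparison step produces the third, $(D_n^3 K_n^4/n)^{2/7}$, contribution, whose $2/7$ exponent is the standard tradeoff between the Gaussian-comparison rate and the anti-concentration exponent.

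The main obstacle is the uniform concentration of $\widehat{\gamma}_A$ on $\cH\times\cH$: each $\bG^{(k)}(h)$ carries its own independent sampling plan $\{Z^{(k,m)}_\iota\}$, so the sup-over-$\cH$ behaviour of $\bG^{(k)}$ sits outside the scope of complete-$U$-process techniques as in \cite{chen2017jackknife} and of the vector-valued incomplete $U$-statistic inequalities of \cite{chen2017randomized}, which deliver only pointwise rather than functional bounds. This is precisely where the paper's new local and non-local maximal inequalities for incomplete $U$-processes are required, and they determine the precise form of the third term of the rate $\varrho_n'$.
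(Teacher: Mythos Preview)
Your plan matches the paper's approach: Theorem~\ref{thm:calibration_inf} is deduced from the general Theorems~\ref{thm:GAR_sup_incomplete_Uproc} and~\ref{thm:uproc_bootstrap} (by checking that \ref{cond:MT_inf} implies \ref{cond:MT} with $q=\infty$, $B_n=D_n^r$), and those are proved precisely via the Gaussian-process bridge, discretization plus high-dimensional CCK, and Gaussian comparison with covariance-estimation steps you outline. One correction on the rate attribution: the $(D_n^3K_n^4/n)^{2/7}$ term stems from the \emph{complete} jackknife piece of the $\widehat{\gamma}_A$ analysis (the term $T_1$ in Lemma~\ref{lemma:jacknife_complete_part}, handled by the techniques of \cite{chen2017jackknife}), whereas the new incomplete-process maximal inequalities feed into the $(D_n^{2r}K_n^7/(N\wedge N_2))^{1/8}$ term through the sampling pieces $T_2^{(k)}$, the $\widehat{\gamma}_B$ bound, and the discretization-residual control on $\partial\cH_{m,\epsilon}$.
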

\begin{proof}
It follows  from Theorem \ref{thm:GAR_sup_incomplete_Uproc} and Theorem \ref{thm:uproc_bootstrap}. Specifically,  \ref{cond:MT_inf} verifies \ref{cond:MT} with $q = \infty$ and $B_n = D_n^r$. Further, we may without loss of generality assume that  $\eta_n^{(1)}$, $\eta_n^{(2)}$  and $\rho_n$, in Theorem \ref{thm:GAR_sup_incomplete_Uproc} and   \ref{thm:uproc_bootstrap}, are bounded by 1, and then it is clear that $\eta_n^{(1)}+\eta_n^{(2)} + \rho_n \leq C \varrho'_n$.
\end{proof}

\begin{remark}\label{general_MB_discussion}
The condition \ref{cond:MB} requires  $\log(M) \leq C_0 \log(n)$, and the impact of $M$ has been  absorbed into $K_n$, since $K_n \geq \log(n)$.


The condition \ref{cond:MT_inf} are motivated by  the application of testing the concavity of a regression function in Section \ref{sec:local_simplex_stat}. It holds if we use (i). the sign kernel $\{h_{v}^{sg}: v \in \cV\}$ in \eqref{def:sg_kernel}
 or  (ii). the identity kernel $\{h_{v}^{sg}: v \in \cV\}$ in \eqref{def:h_x_concavity} under the additional assumption that the observation noise $\varepsilon$ in \eqref{eqn:nonparametric_regression} is {bounded}; the more general results in Section \ref{sec:GAR_Bootstrap} are required to remove this assumption.
\end{remark}

\section{Stratified incomplete local simplex tests: statistical guarantees}
\label{sec:local_simplex_stat}

In this section, we apply the general theory in Section \ref{sec:prelim} to   the concavity test of a regression function, i.e., $H_0$ in \eqref{def:H_0}, formally introduce  
stratified incomplete local simplex tests, and establish the size validity and power consistency. Finally, we propose tests that combine  multiple bandwidths.

We first recall the {\it simplex statistics}   proposed in \cite{abrevaya2005nonparametric}. 
For $v_{1},\dots,v_{d+1} \in \R^{d}$, denote by 
\[
\Delta^{\circ} (v_{1},\dots,v_{d+1}) := \Big\{ \sum_{i=1}^{d+1} a_{i} v_{i} :  \sum_{i=1}^{d+1}a_{i} = 1,\;\; a_i > 0 \text{ for } i \in [d+1]\Big\}
\]
the interior of the simplex spanned by $v_{1},\dots,v_{d+1}$, and define $\calS := \bigcup_{j=1}^{r} \calS_{j}$, where $r := d+ 2$ and
\begin{align*}
\calS_{j} =  \Big \{ (v_{1},\dots,v_{r}) \in \R^{d \times r}:  \;
\begin{split} 
&v_{1},\dots,v_{j-1},v_{j+1},\dots,v_{r} \ \text{are affinely independent} \\ 
&\text{ and }  v_{j} \in \Delta^{\circ}(v_{1},\dots,v_{j-1},v_{j+1},\dots,v_{r})
\end{split}
\Big \}.
\end{align*}
Clearly, $\calS_{1},\dots,\calS_{r}$ are disjoint. To illustrate, in Figure \ref{fig:main_idea}, $(v_1,v_2,v_3,v_4) \in \cS_4$, but $(v_1,v_2,v_3,v_5) \not\in \cS$.

For $j \in [r]$, there exists a unique collection of functions $\{\tau^{(j)}_i: \calS_{j} \to (0,1): i \in [r]\setminus\{j\}\}$ such that for any $v_1^r := (v_{1},\dots,v_{r}) \in \calS_j$, 
\begin{align}
\label{tau_functions}
v_{j} = \sum_{ i \in [r]\setminus\{j\}}  \tau^{(j)}_i(v_1^r)\, v_i, \qquad
\sum_{ i \in [r]\setminus\{j\}}  \tau^{(j)}_i(v_1^r)\,  = 1.
\end{align}
Now define $w: \bR^{(d+1)\times r} \to \bR$ as follows: for $x_i := (v_i,y_i) \in \bR^{d+1}, i \in [r]$,
\begin{align}\label{def:id_kernel}
w(x_{1},\dots,x_{r}) := \sum_{j=1}^{r} 
\left(\sum_{ i \in [r]\setminus\{j\}}  \tau^{(j)}_i(v_1^r)\, y_i - y_j \right) \mathbbm{1}\left\{v_1^r \in \calS_{j} \right\}.
\end{align}
It is clear that $\calS$ is permutation invariant for $v_{1},\dots,v_{r}$, and that $w(\cdot)$ is symmetric in its arguments. Key observations are that if 
the regression function $f$ is concave (i.e.~$H_0$ holds), then $P^r w \leq 0$, 
and that if $f$ is an affine function, $P^r w = 0$, where recall that $P$ is the distribution of $X := (V,Y)$.

Let $L(\cdot)$ be a kernel function and $b_n > 0$  a bandwidth parameter.  Recall that $L_{b}(\cdot) := b^{-d} L(\cdot/b)$ for $b > 0$, and define $\cH^{\text{id}} := \{h_v^{\text{id}}: v \in \cV\}$, where
 for each $x_i = (v_i,y_i) \in \bR^{d+1}, i \in [r]$,
\begin{equation}
\label{def:h_x_concavity}
h_{v}^{\text{id}}(x_{1},\dots,x_{r}) := w(x_1, \dots, x_{r}) b_n^{d/2}\prod_{k=1}^{r} L_{b_n}(v-v_{k}), \quad v \in \cV.
\end{equation}
Now consider a partition of $\cV$,  $\{\cV_m: m \in [M]\}$, which induces a partition of 
$\cH^{\text{id}}$, i.e., $\cH_m^{\text{id}} := \{h_v^{\text{id}}: v \in \cV_m\}, m \in [M]$; see Figure \ref{fig:main_idea}.


Finally, recall the definitions of $U_{n,N}'(\cdot)$ in \eqref{def:incomplete_U_proc}, $\cD_n'$ in \eqref{def:Dn_prime}, and $\bMn^{\#}$ in 
\eqref{def:bootstrap_combined}. Given a nominal level $\alpha \in (0,1)$,
 we propose to reject the null  in \eqref{def:H_0} if and only if
\begin{equation}
\label{def:test_proc}
\sup_{v \in \cV} \sqrt{n} U_{n,N}'(h_{v}^{\text{id}}) \;\; \geq \;\; q^{\#}_\alpha,
\end{equation}
where $q^{\#}_\alpha$ is the $(1-\alpha)$-th quantile of $\bMn^\#$ conditional on $\cD_n'$.

\noindent \underline{\textbf{Sign function.}}
We also consider the  function class $\cH^{\text{sg}} := \{h_{v}^{\text{sg}}: v \in \cV\}$, where $h_{v}^{\text{sg}}$ is defined in \eqref{def:sg_kernel}. As we shall see, $\{h_{v}^{\text{sg}}: v \in \cV\}$ has the advantage of being bounded, but it requires that the conditional distribution of $\varepsilon$ given $V$ is symmetric about   zero. On the other hand,   $\{h_{v}^{\text{id}}: v \in \cV\}$ imposes no assumption on the shape of the conditional distribution, but requires $\varepsilon$ to have a light tail; in this Section, we  assume $\varepsilon$ to be bounded for $\{h_{v}^{\text{id}}: v \in \cV\}$ so that we can apply Theorem \ref{thm:calibration_inf}, and   relax this assumption in Appendix.

\subsection{Assumptions for concavity tests}\label{subsec:SILS_assumptions}
We assume the distribution of $(V,\varepsilon)$ in \eqref{eqn:nonparametric_regression} to be fixed, but allow $f$ to depend on the sample size $n$, which permits the study of local alternatives. We  make the following assumptions: for some absolute constant  $C_0 > 1$,\\ 

\noindent \mylabel{C:kernel}{(C1)}. The kernel $L: \bR^d \to \bR$ is continuous, of bounded variation, and
has support $[-1/2,1/2]^d$. {Or} $L(\cdot)$ is the uniform kernel on $[-1/2,1/2]^d$, i.e., $L(v) = \mathbbm{1}\{v \in (-1/2,1/2)^d\}$ for $v \in \bR^d$. \\


 


\noindent \mylabel{C:M}{(C2)}. The number of partitions, $M$, grows at most polynomially in  $n$, i.e., $\log(M) \leq C_0 \log(n)$.\\

\noindent \mylabel{C:bn}{(C3)}. The bandwidth $b_n$ does not vanish too fast in $n$, i.e.,
$1 \leq b_n^{-3d/2} \leq C_0 n^{1-1/C_0}$.\\

\noindent \mylabel{C:X}{(C4)}. $V$ has a Lebesgue density $p$ such that $C_0^{-1} \leq p(v) \leq C_0$ for $v \in \cV^{2b_n}$, where  $\cV^{b}:= \{ v' \in \bR^d: \inf_{v'' \in \cV}\|v'  - v'' \|_{\infty} \leq b \}$ is the $b$-enlargement of $\cV$. \\

\noindent \mylabel{C:non_degenerate}{(C5)}. Assume for 
$* = \text{id or sg}$ and $n \geq C_0$, 
$
\inf_{v \in \cV} \Var\left(
P^{r-1} h_v^{*}(X_1)
\right) \geq C_0^{-1}
$.\\

\noindent \mylabel{C:id_noise_prime}{(C6-id')} Assume that 
$\sup_{v \in \cV^{b_n}} |f(v)| \leq C_0$ and that 
$\varepsilon$ is bounded by $C_0$ almost surely.\\

\noindent \mylabel{C:sg_noise_prime}{(C6-sg')}
Assume that 
$\sup_{v \in \cV^{b_n}} |f(v)| \leq C_0$, and that $\varepsilon$ is independent of $V$, symmetric about zero, i.e., $\Pro\left( \varepsilon > t  \right) = \Pro\left( \varepsilon < -t \right)$ for any $t > 0$, and $\Pro\left(\varepsilon > 2 C_0\right) > 0$.\\

Some comments  are in order. 
\ref{C:kernel} is a standard assumption on the kernel $L$, which is satisfied by many commonly used kernels.  
Recall that $\cV$ is compact, so \ref{C:M} is satisfied if we partition each coordinate into segments of length $\eta b_n$ for some small $\eta \in (0,1)$. \ref{C:bn} imposes the same condition on the bandwidth $b_n$ as for the procedure using the complete $U$-process \cite[(T5) in Section 4]{chen2017jackknife}, which holds as long as $n^{-2/(3d) + \eta} \lesssim b_n$ for arbitrarily small $\eta \in (0,1)$; in comparison, \cite{abrevaya2005nonparametric} has a (slightly) milder condition on the bandwidth, $n^{-1/d  + \eta} \lesssim b_n$ for the discretized $U$-statistics. \ref{C:X} is necessary that for each $v \in \cV$, there are enough data points in the $b_n$-neighbourhood of $v$. The condition \ref{C:id_noise_prime} is assumed for the class $\cH^{\text{id}}$, while \ref{C:sg_noise_prime} for $\cH^{\text{sg}}$. 


Now we focus on \ref{C:non_degenerate} with the function class $\cH^{\text{id}}$, as the discussion for $\cH^{\text{sg}}$ is similar. For simplicity, assume $\varepsilon$ and $V$ in \eqref{eqn:nonparametric_regression} are independent. 
By a change-of-variable and due to the fact that 
$\tau_i^{(j)}$ in \eqref{tau_functions} is invariant under affine transformations (in particular $\tau_i^{(j)}(v-b_n u_1,\ldots, v-b_n u_r) = \tau_i^{(j)}( u_1,\ldots, u_r)$),
\begin{align*}
&\Exp\left[ \Var\left(
P^{r-1} h_v^{\text{id}}(V_1,Y_1) \vert V_1 \right)
\right] = \Var(\varepsilon_1) \int p(v-b_n u_1) L^2(u_1) \mathcal{T}_v^2(u_1) du_1, \quad \text{ where }\\
&\mathcal{T}_v(u_1) =
\int \left(\mathbbm{1}\{u_1^r \in \calS_1\} - \sum_{j=2}^{r} \tau^{(j)}_1(u_1^r) \mathbbm{1}\{u_1^r \in \calS_j\}\right) \prod_{i=2}^{r} L(u_i) p(v - b_n u_i) du_i.
\end{align*}
The key observation is that $\Var(
P^{r-1} h_v^{*}(X_1))$ does not vanish as $b_n \to 0$. Then we can find more primitive conditions for \ref{C:non_degenerate}. For example, \ref{C:non_degenerate} holds if $L(\cdot) = \mathbbm{1}\{\cdot \in (-1/2,1/2)^d\}$,  $p$ is continuous on $\cV$, and $\lim_{n \to \infty} b_n = 0$.

\begin{remark}
In Appendix \ref{proof:thm:id_kernel}, we relax the condition 
\ref{C:id_noise_prime}, requiring $\varepsilon$ to have a light tail, instead of being bounded. Further, in Appendix  \ref{proof:thm:sg_kernel}, we relax the condition \ref{C:sg_noise_prime}, allowing $\varepsilon$ and $V$ to be dependent.
\end{remark}
\subsection{Size validity and power consistency}\label{subsec:sz_power}
The following is the master theorem for the statistical guarantees for the stratified incomplete local simplex tests.

\begin{theorem}\label{thm:concavity_master}
Consider the function class  $\cH^{\text{id}}$  or $\cH^{\text{sg}}$.
Assume that \ref{C:kernel}-\ref{C:non_degenerate} hold and that
\ref{C:id_noise_prime} (resp. \ref{C:sg_noise_prime}) holds for  $\cH^{\text{id}}$ (resp.  $\cH^{\text{sg}}$). Further,  for some $\kappa, \kappa' > 0$, 
\begin{align}\label{cond:N_N_2_concavity}
N  := n^{\kappa} b_n^{-dr}, \quad
N_2  := n^{\kappa'} b_n^{-dr}.
\end{align}
Then there exists a constant $C$, depending only on $C_0, d, \kappa, \kappa'$, such that with probability at least $1 - C n^{-1/C}$,
\begin{align*}
\sup_{t \in \bR}\left|  \Pro({\bMn} \leq t) - 
\Pro_{\vert \cD_n'}(\bMn^{\#} \leq t)
\right| \leq C n^{-1/C}.
\end{align*}
\end{theorem}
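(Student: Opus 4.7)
My plan is to apply Theorem \ref{thm:calibration_inf} to $\cH \in \{\cH^{\text{id}}, \cH^{\text{sg}}\}$ with envelope scale $D_n := C\, b_n^{-d/2}$, and then to show that the resulting rate $\varrho_n'$ decays polynomially in $n$ under \ref{C:bn} and \eqref{cond:N_N_2_concavity}. The work splits into verifying the four structural hypotheses \ref{cond:PM}, \ref{cond:VC}, \ref{cond:MB}, \ref{cond:MT_inf}, and then a direct rate computation.

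\textbf{Structural conditions.} Condition \ref{cond:PM} follows from the continuity of $L$ in \ref{C:kernel} by restricting $v$ to a countable dense subset of $\cV$; the uniform-kernel case is handled by a monotone approximation. Condition \ref{cond:MB} is exactly \ref{C:M}. For \ref{cond:VC}, since $L$ is of bounded variation or is the indicator of a hyperrectangle, the class $\{v' \mapsto L_{b_n}(v - v'): v \in \cV\}$ on $\bR^d$ is VC subgraph type with characteristics independent of $b_n$ by standard VC-preservation arguments; forming the $r$-fold product over the coordinates of a tuple in $S^r$ and multiplying by the $v$-independent factor $w$ or $\sign(w)$ preserves VC type with the common envelope $H$ described below.

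\textbf{Moment condition.} The key observation is that $L_{b_n}(v - v_k)$ vanishes unless $\|v - v_k\|_\infty \leq b_n/2$, so $\prod_k L_{b_n}(v - v_k)$ is nonzero only when all $v_k$ lie within $b_n$ of each other. Combined with the uniform bound on the nonlinear factor $w$ (or $\sign(w)$) granted by \ref{C:id_noise_prime} or \ref{C:sg_noise_prime}, this yields the envelope estimate
\begin{equation*}
H(x_1,\ldots,x_r) \;\leq\; C\, b_n^{d/2 - dr}\, \mathbbm{1}\!\left\{\max_{j,k}\|v_j - v_k\|_\infty \leq b_n\right\}.
\end{equation*}
After the change of variables $v_k = v + b_n u_k$ for each integrated coordinate and bounding $p$ above on $\cV^{2b_n}$ via \ref{C:X}, I would verify that all the $L^{\bar q}$ norms of $P^{r-\ell}|h|^s$, $P^{r-\ell} H^s$ and $(P^{r-2}H)^{\bigodot 2}$ meet the exponents required by \ref{cond:MT_inf} with $D_n = C\, b_n^{-d/2}$. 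The lower bound $\Var(P^{r-1}h(X_1)) \geq \ubar\sigma^2$ is exactly \ref{C:non_degenerate}, and $\Var(h) \geq c_0 D_n^{2r-2} \asymp b_n^{-d(r-1)}$ follows from the same change-of-variables computation, using that the integrand $\mathcal{T}_v(u_1)$ introduced in Subsection \ref{subsec:SILS_assumptions} admits a non-vanishing lower bound under \ref{C:X} and \ref{C:kernel}.

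\textbf{Rate and main obstacle.} Substituting $D_n = C\, b_n^{-d/2}$ and $K_n \leq C \log n$ into Theorem \ref{thm:calibration_inf} and using that $D_n^{2r}/(N \wedge N_2) = n^{-\kappa \wedge \kappa'}$ by \eqref{cond:N_N_2_concavity} produces
\begin{equation*}
\varrho_n' \;\leq\; C\!\left(\frac{\log^{7} n}{n^{\kappa \wedge \kappa'}}\right)^{1/8} + C\!\left(\frac{b_n^{-d}\log^{7} n}{n}\right)^{1/8} + C\!\left(\frac{b_n^{-3d/2}\log^{4} n}{n}\right)^{2/7},
\end{equation*}
where the last two summands are bounded by $n^{-c}$ for some $c > 0$ depending only on $C_0$ by \ref{C:bn}; all three decay polynomially, yielding $\varrho_n' \leq C n^{-1/C}$. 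The main difficulty will be the bookkeeping in \ref{cond:MT_inf}: the localized kernels interact non-trivially with the partial integrations $P^{r-\ell}$, and the mixed term $(P^{r-2}H)^{\bigodot 2}$ in particular requires balancing the envelope scale $b_n^{d/2 - dr}$ against the measure $\sim b_n^d$ of the overlap set $\{\|v_1 - v_2\|_\infty \leq b_n\}$ to match $D_n^4 = b_n^{-2d}$. For $\cH^{\text{id}}$ with unbounded $\varepsilon$ (Appendix), the uniformly bounded envelope is unavailable, and the more general Theorem \ref{thm:GAR_sup_incomplete_Uproc} must be invoked together with a truncation argument on the heavy tails of $w$.
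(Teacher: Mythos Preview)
Your proposal is correct and mirrors the paper's proof: verify \ref{cond:PM}, \ref{cond:VC}, \ref{cond:MB}, \ref{cond:MT_inf} with $D_n = C\,b_n^{-d/2}$, invoke Theorem~\ref{thm:calibration_inf}, and use \ref{C:bn} together with \eqref{cond:N_N_2_concavity} to bound $\varrho_n'$ by a negative power of $n$. One small correction: the lower bound $\Var(h(X_1^r)) \geq c_0 D_n^{2r-2}$ does not come from $\mathcal{T}_v$ (that quantity controls $\Var(P^{r-1}h(X_1))$, i.e.\ \ref{C:non_degenerate}); instead you condition on $V_1^r$, use the noise non-degeneracy built into \ref{C:id_noise_prime} or \ref{C:sg_noise_prime} to lower-bound the conditional variance of $w$ (respectively $\sign(w)$) on the event $\{V_1^r \in \calS\}$, and then the change of variables $v_k = v - b_n u_k$ yields the factor $b_n^{-d(r-1)}$.
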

\begin{proof}
In Appendix \ref{proof:thm:id_kernel} and \ref{proof:thm:sg_kernel}, we show that  \ref{cond:PM} and \ref{cond:VC} is implied by \ref{C:kernel}, where the latter is due to \cite[Proposition 3.6.12]{gine2016mathematical}. \ref{cond:MB}  is the same as  \ref{C:M}. Further, we verify that \ref{cond:MT_inf} holds  with $D_n = C b_n^{-d/2}$. Then the proof is complete by Theorem \ref{thm:calibration_inf} and due to the requirement on the bandwidth, i.e., \ref{C:bn}.
\end{proof}

\begin{remark}\label{rk:main_challenge}
The main challenge in working with $\{h_v^{\text{sg}}: v \in \cV\}$ (and also with $\cH^{\text{id}}$) is that the size of the projections of the kernels, $\{P^{r-\ell} |h_v^{\text{sg}}|: \ell = 0,1,\ldots, r\}$  has different orders of magnitude due to localization.
The same is true for the absolute moments of $\{|h_v^{\text{sg}}|^{s}\}$  for $s \geq 1$. 
Specifically, in Appendix \ref{proof:thm:sg_kernel}, we verify \ref{cond:MT_inf} holds for any $\bar{q} \geq 1$.  
Thus for a fixed $s$, projections onto consecutive levels differ by a factor of $b_n^{-d(1-1/\bar{q})}$. On the other hand, for a fixed $\ell$, the second moment ($s=2$) is greater than the first moment $(s=1)$ by a factor 
$b_n^{-d(r-1/2)}$.
\end{remark}


The next Corollary establishes the size validity of the proposed procedure. Among all concave functions, affine functions have the (asymptotically) largest rejection probabilities, which {attain} the nominal levels uniformly over $(0,1)$ for large $n$.

\begin{corollary}[Size validity]\label{cor:size_valid}
Consider the procedure \eqref{def:test_proc} for testing the hypothesis \eqref{def:H_0} with $\cH^{*}$ for 
$* = \text{id or sg}$. Assume  the conditions in  Theorem \ref{thm:concavity_master} hold. If the regression function $f$ is concave, i.e., $H_0$ holds, then for 
some constant $C$, depending only on $C_0, d, \kappa, \kappa'$, 
\begin{align*}
\Pro\left(
\sup_{v \in \cV} \sqrt{n} U_{n,N}'(h^{*}_v) \;\; \geq \;\; q^{\#}_\alpha\right)
\leq \alpha + C n^{-1/C}, \text{ for any  } \alpha \in (0,1).
\end{align*}
Further, if $f$ is an affine function, then 
\begin{align*}
\sup_{\alpha \in (0,1)}\left\vert \Pro\left(
\sup_{v \in \cV} \sqrt{n} U_{n,N}'(h^{*}_v) \;\; \geq \;\; q^{\#}_\alpha\right)
-\alpha  \right\vert \leq C n^{-1/C}.
\end{align*}
\end{corollary}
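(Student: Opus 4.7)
The plan is to combine Theorem~\ref{thm:concavity_master} with two shape-based observations: concavity of $f$ forces $P^r h^*_v\leq 0$, while affinity forces $P^r h^*_v=0$. Throughout, write $F(t):=\Pro(\bMn\leq t)$ and $\widehat{F}(t):=\Pro_{|\cD_n'}(\bMn^{\#}\leq t)$, and let $\epsilon_n:=Cn^{-1/C}$ denote the rate furnished by Theorem~\ref{thm:concavity_master}.

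The first step is to translate the null into a mean inequality on the process. For $\cH^{\text{id}}$, concavity of $f$ combined with $\Exp[\varepsilon\mid V]=0$ yields, on $\{V_1^r\in\cS_j\}$,
$$\Exp[w(X_1^r)\mid V_1^r]=\sum_{i\neq j}\tau_i^{(j)}(V_1^r)f(V_i)-f(V_j)\leq 0$$
by the simplex inequality~\eqref{def:concavity} applied to $f\bigl(\sum_i\tau_i^{(j)}v_i\bigr)$. For $\cH^{\text{sg}}$, the symmetry and independence of $\varepsilon$ in~\ref{C:sg_noise_prime} imply that, on $\{V_1^r\in\cS_j\}$, $w$ equals a nonpositive drift plus a symmetric noise, so $\Exp[\mathrm{sign}(w)\mid V_1^r]\leq 0$. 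Multiplying by the nonnegative kernel product gives $P^r h^*_v\leq 0$ for every $v\in\cV$, whence $T_n:=\sup_{v\in\cV}\sqrt{n}\,U'_{n,N}(h^*_v)\leq \bMn$. When $f$ is affine the drift vanishes, both inequalities become equalities, and $T_n=\bMn$ almost surely.

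The second step is a quantile-transfer argument on the event $E:=\{\sup_t|F(t)-\widehat{F}(t)|\leq \epsilon_n\}$, which has probability at least $1-\epsilon_n$ by Theorem~\ref{thm:concavity_master}. The definition of the upper $(1-\alpha)$-quantile gives $\widehat{F}(q^\#_\alpha-)\leq 1-\alpha\leq \widehat{F}(q^\#_\alpha)$, which together with the Kolmogorov bound on $E$ yields
$$F(q^\#_\alpha)\geq 1-\alpha-\epsilon_n,\qquad F(q^\#_\alpha-)\leq 1-\alpha+\epsilon_n.$$
Introducing the deterministic quantiles $\tilde q_\beta:=\inf\{t:F(t)\geq 1-\beta\}$, these bounds give $q^\#_\alpha\in[\tilde q_{\alpha+\epsilon_n},\tilde q_{\alpha-\epsilon_n}]$ on $E$, and hence
$$\alpha-2\epsilon_n\leq \Pro(\bMn\geq q^\#_\alpha)\leq \alpha+2\epsilon_n.$$
Combined with $T_n\leq \bMn$ under concavity, the upper bound yields the first assertion; combined with $T_n=\bMn$ under the affine hypothesis, the two-sided bound yields the uniform statement, the uniformity in $\alpha\in(0,1)$ being inherited from the uniformity in $t$ of the Kolmogorov bound.

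The main obstacle is the joint dependence of $\bMn$ and $q^\#_\alpha$: both are $\cD_n'$-measurable, so there is no independence structure to condition on, and one cannot read $\Pro(\bMn\geq q^\#_\alpha)$ off Theorem~\ref{thm:concavity_master} directly. The inversion above sidesteps this by working event-wise on $E$ with the deterministic law $F$. A secondary technicality is possible atoms of $F$ near $\tilde q_{\alpha\pm\epsilon_n}$; these are controlled by an anti-concentration estimate of order $\epsilon_n$ inherited from the Gaussian approximation that underlies Theorem~\ref{thm:concavity_master}, and absorbed into the final rate.
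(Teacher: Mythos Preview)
Your proof is correct and follows the same route as the paper's: concavity (resp.\ affinity) of $f$ gives $P^r h^*_v\leq 0$ (resp.\ $=0$), hence $T_n\leq \bMn$ (resp.\ $T_n=\bMn$), and the conclusion then follows from Theorem~\ref{thm:concavity_master}. The paper's own proof is a two-line sketch; you have supplied the quantile-transfer details that the paper leaves implicit, and your handling of the joint $\cD_n'$-dependence and possible atoms is sound.
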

\begin{proof}
If $f$ is concave, then $P^r h_v^* \leq 0$ for $v \in \cV$. Further, if $f$ is affine, then $P^r h_v^* = 0$ for $v \in \cV$. Then the results follow   from Theorem \ref{thm:concavity_master}.
\end{proof}

The next Corollaries concern the power of the proposed procedure. The proofs can be found in Appendix \ref{proof:cor:power}.


\begin{corollary}[Power]\label{cor:power}
Consider the setup as in Corollary \ref{cor:size_valid}. If in addition
\begin{align}\label{equ:power_cond}
\sqrt{n} P^r h^{*}_{v_n} \geq (C_0)^{-1} n^{\kappa''}, \text{ for some }
v_n \in \cV, \;\; \kappa'' > \max\{(1 - \kappa)/2, 0\},
\end{align}
then  for  some constant $C$, depending only on $ C_0,   d, \kappa, \kappa', \kappa''$, 
\begin{align*}
\Pro\left(
\sup_{v \in \cV} \sqrt{n} U_{n,N}'(h^{*}_v) \;\; \geq \;\; q^{\#}_\alpha\right)
\geq 1 - Cn^{-1/C}, \text{ for any } \alpha \in (0,1).
\end{align*}
\end{corollary}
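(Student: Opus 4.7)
The plan is to lower-bound the rejection probability by the event that the stratified test statistic, evaluated at the single point $v_n$, already exceeds the bootstrap critical value. Writing $m^\star := \sigma(h^\star_{v_n})$ and $T_n := \sqrt{n}\bigl(U'_{n,N}(h^\star_{v_n}) - P^r h^\star_{v_n}\bigr)$, on the event $\{\widehat{N}^{(m^\star)} \geq 1\}$ (whose complement has probability at most $(1-p_n)^{|I_{n,r}|} \leq e^{-N}$, which is negligible) we have
\[
\sup_{v \in \cV} \sqrt{n}\,U'_{n,N}(h^\star_v) \;\geq\; \sqrt{n}\,U'_{n,N}(h^\star_{v_n}) \;=\; T_n + \sqrt{n}\,P^r h^\star_{v_n} \;\geq\; T_n + C_0^{-1}\,n^{\kappa''}.
\]
So it is enough to prove $|T_n| = o_p(n^{\kappa''})$ and $q^{\#}_\alpha = o_p(n^{\kappa''})$. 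Since $\kappa'' > \max\{(1-\kappa)/2,0\}$ is a \emph{strict} inequality, the polynomial gap $n^{\kappa''-\max\{(1-\kappa)/2,0\}}$ beats any polylog factor, so any bound of the form $O_p\bigl(n^{\max\{(1-\kappa)/2,0\}}\,K_n^{O(1)}\bigr)$ for $T_n$ and $q^\#_\alpha$ will do.

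For the single-point fluctuation $T_n$, I would invoke the identity \eqref{bootstrap_intuition} with $h = h^\star_{v_n}$, giving
\[
\frac{\widehat{N}^{(m^\star)}}{N}\,T_n \;=\; \bU_n(h^\star_{v_n}) \;+\; \alpha_n^{1/2} N^{-1/2}\!\!\sum_{\iota \in I_{n,r}} \bigl(Z^{(m^\star)}_\iota - p_n\bigr)\,h^\star_{v_n}(X_\iota).
\]
The complete-$U$ part has variance $r^2\Var(P^{r-1}h^\star_{v_n}) + o(1) = O(1)$ by \ref{cond:MT_inf} (take $\ell=1, s=1, \bar{q}=2$). The Bernoulli sampling part has conditional variance $\alpha_n(1-p_n)|I_{n,r}|^{-1}\sum_\iota h^\star_{v_n}(X_\iota)^2$, whose expectation is of order $\alpha_n\,P^r (h^\star_{v_n})^2 = O(n^{1-\kappa}b_n^{d}) = O(n^{1-\kappa})$. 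Combined with the standard Bernstein concentration $\widehat{N}^{(m^\star)}/N = 1 + o_p(1)$ (as $N \geq n/r \to \infty$), Chebyshev yields $T_n = O_p\bigl(1 + n^{(1-\kappa)/2}\bigr)$.

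For the critical value, conditional on $\cD'_n$ the process $\bU^{\#}_{n,*}$ in \eqref{def:bootstrap_combined} is a centered Gaussian process, so the Borell–TIS inequality gives
\[
q^{\#}_\alpha \;\leq\; \E\bigl[\bMn^{\#} \mid \cD'_n\bigr] + \widehat{\sigma}\sqrt{2\log(1/\alpha)},
\]
where $\widehat{\sigma}^2 := \sup_{h \in \cH^\star} \widehat{\Var}\bigl(\bU^{\#}_{n,*}(h)\bigr) \leq r^2 \sup_h \widehat{\gamma}_A(h,h) + \alpha_n \sup_h \widehat{\gamma}_B(h,h)$. By \ref{cond:MT_inf} the population analogues of these two suprema are $O(1)$ and $O(\alpha_n D_n^{2r-2}) = O(n^{1-\kappa}b_n^d)$ respectively; routine concentration (using the moment bounds in \ref{cond:MT_inf} and the envelope controls in the same condition) upgrades these to in-probability bounds on $\widehat{\gamma}_A, \widehat{\gamma}_B$. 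Dudley's entropy integral applied to the VC-type class $\cH^\star$ (characteristics $(A,\nu)$) then gives $\E[\bMn^{\#} \mid \cD'_n] = O_p\bigl(\widehat{\sigma}\sqrt{K_n}\bigr)$, so altogether $q^{\#}_\alpha = O_p\bigl(n^{\max\{(1-\kappa)/2,0\}}\,\sqrt{K_n}\bigr)$.

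The bounds on $T_n$ and on $q^{\#}_\alpha$ together with the signal lower bound \eqref{equ:power_cond} finish the proof by a union bound, since $n^{\kappa''}$ dominates $n^{\max\{(1-\kappa)/2,0\}}K_n^{1/2}$ by a polynomial margin. The main obstacle I anticipate is the uniform control of $q^{\#}_\alpha$: the bootstrap process has a \emph{random} conditional covariance, so one cannot apply a vanilla Gaussian maximal inequality directly. The cleanest fix is to piggy-back on the intermediate Gaussian/multiplier supremum bounds already used in the proof of Theorem~\ref{thm:calibration_inf} (which themselves build on the local and non-local maximal inequalities for incomplete $U$-processes advertised in the introduction), thereby inheriting the $K_n$ polylog factor without redoing the chaining argument.
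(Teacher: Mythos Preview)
Your proposal is correct and follows the same high-level strategy as the paper: restrict to the single point $v_n$, then show that the signal $\sqrt{n}\,P^r h^{*}_{v_n}\gtrsim n^{\kappa''}$ dominates both the fluctuation $T_n$ and the bootstrap critical value $q^{\#}_\alpha$, each of which is $O_p\bigl(n^{\max\{(1-\kappa)/2,0\}}K_n^{1/2}\bigr)$.

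The tactical choices differ in two places. For $T_n$, you bound it by the variance decomposition in \eqref{bootstrap_intuition} plus Chebyshev; the paper instead invokes the one-dimensional Gaussian approximation (Theorem~\ref{thm:GAR_hd_U} with $d=1$) to replace $T_n$ by a Gaussian $Y\sim N\bigl(0,\,r^2\gamma_A(h^{*}_{v_n})+\alpha_n\gamma_B(h^{*}_{v_n})\bigr)$. Your route is more elementary and avoids the full approximation machinery for a single coordinate. For $q^{\#}_\alpha$, the paper does not apply Borell--TIS to the random-covariance bootstrap process $\bU^{\#}_{n,*}$ directly; instead it first uses the bootstrap validity Theorem~\ref{thm:uproc_bootstrap} to identify $q^{\#}_\alpha$ (up to $Cn^{-1/C}$) with the $(1-\alpha)$-quantile of $\widetilde{\bM}_n=\sup_{h}W_P(h)$, and then applies Lemma~\ref{lemma:WP_size} (Borell--TIS plus entropy on the \emph{population} Gaussian $W_P$) together with the variance bound $\sup_v\gamma_*(h^*_v)\leq C+Cn^{1-\kappa}b_n^{d}$. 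This sidesteps precisely the obstacle you flag---uniform control of $\widehat{\gamma}_A,\widehat{\gamma}_B$---at the cost of invoking the heavier theorem once. Your suggested fix (piggy-back on the intermediate supremum bounds) would work too but is less direct.
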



\begin{remark}
The condition \eqref{equ:power_cond} ensures that the bias $\sqrt{n} P^r h^{*}_{v_n}$ is significantly larger than the standard deviation of $\bMn$.  
If $\kappa \geq 1$, then $\kappa''$ in \eqref{equ:power_cond} can be arbitrarily small. 
Note that due to \ref{C:M}, the impact of $M$ is absorbed into the constant $C$.
\end{remark}

Next we provide examples for which \eqref{equ:power_cond} holds, and focus on the class $\cH^{\text{id}}$. 
The discussion for $\cH^{\text{sg}}$ is similar.

%

\begin{corollary}[Power - smooth $f$]\label{cor:power_smooth}
Consider the setup as in Corollary \ref{cor:size_valid}. 
Assume that $f$ is fixed and  twice continuously differentiable at some $v_0 \in \cV$ with a positive definite Hessian matrix at $v_0$, and that $\lim_{n \to \infty} b_n = 0$. Then $\liminf_{n \to \infty} {P^r h^{\text{id}}_{v_0}}/{\left(b_n^{2+d/2} \right)} > 0$. Thus if  $b_n^{-(d+4)} \leq C_0 n^{1-2\kappa''}$ for some $\kappa'' > \max\{(1-\kappa)/2,0\}$, then for any $\alpha \in (0,1)$,
the power converges to one as $n \to \infty$. 
\end{corollary}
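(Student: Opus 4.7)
The overall plan is to first verify the bias expansion $\liminf_{n \to \infty} P^r h^{\text{id}}_{v_0}/b_n^{2+d/2} > 0$ via a change-of-variables argument, and then show that the assumed rate on $b_n$ is exactly what is needed to invoke Corollary \ref{cor:power} with $v_n \equiv v_0$.

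For the bias expansion, the first step is to make the change of variables $V_k = v_0 - b_n U_k$ in the integral defining $P^r h^{\text{id}}_{v_0}$ and use $\E[\varepsilon \mid V] = 0$, along with the linearity of $w$ in the $y$-arguments, to eliminate the noise. This yields
\begin{equation*}
\frac{P^r h^{\text{id}}_{v_0}}{b_n^{d/2}} = \int w_f\bigl(v_0 - b_n u_1, \ldots, v_0 - b_n u_r\bigr) \prod_{k=1}^{r} L(u_k)\, p(v_0 - b_n u_k)\, du_k,
\end{equation*}
where $w_f$ denotes the simplex statistic evaluated with $y_i$ replaced by $f(v_i)$. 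The key observation is that the indicators $\mathbbm{1}\{v_1^r \in \cS_j\}$ and the coefficients $\tau_i^{(j)}$ are invariant under affine transformations, so $\mathbbm{1}\{(v_0 - b_n u_1, \ldots) \in \cS_j\} = \mathbbm{1}\{u_1^r \in \cS_j\}$ and similarly $\tau_i^{(j)}(v_0 - b_n u_1, \ldots) = \tau_i^{(j)}(u_1, \ldots, u_r)$.

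Next I Taylor expand $f(v_0 - b_n u_i) = f(v_0) - b_n \nabla f(v_0)^\top u_i + \tfrac{b_n^2}{2} u_i^\top H u_i + o(b_n^2)$ with $H = \Hess f(v_0)$. On each $\cS_j$, using the identities $\sum_{i\neq j}\tau_i^{(j)} = 1$ and $u_j = \sum_{i\neq j} \tau_i^{(j)} u_i$, the constant and linear terms cancel, leaving
\begin{equation*}
w_f(v_0 - b_n u_1, \ldots) = \frac{b_n^2}{2}\sum_{j=1}^{r}\mathbbm{1}\{u_1^r \in \cS_j\}\left(\sum_{i \neq j}\tau_i^{(j)}(u_1^r)\, u_i^\top H u_i - u_j^\top H u_j\right) + o(b_n^2),
\end{equation*}
uniformly over $u_1^r$ in the compact support of $\prod_k L(u_k)$. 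Denote the leading bracket by $Q(u_1^r)$. Positive definiteness of $H$ makes $u \mapsto u^\top H u$ strictly convex, so Jensen's inequality applied to the representation $u_j = \sum_{i\neq j}\tau_i^{(j)}u_i$ gives $Q(u_1^r) > 0$ whenever $u_1^r \in \cS$, with strict inequality because the $u_i$ with $i \neq j$ are affinely independent on $\cS_j$. Since $\cS$ has positive Lebesgue measure inside $[-1/2,1/2]^{d\times r}$ and $p(v_0 - b_n u_k) \to p(v_0) > 0$ uniformly by \ref{C:X}, dominated convergence yields
\begin{equation*}
\lim_{n\to\infty}\frac{P^r h^{\text{id}}_{v_0}}{b_n^{2+d/2}} = p(v_0)^{r} \int Q(u_1^r)\prod_{k=1}^{r}L(u_k)\,du_1 \cdots du_r \; > \; 0.
\end{equation*}

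To conclude, under the assumption $b_n^{-(d+4)} \leq C_0 n^{1-2\kappa''}$, the bias lower bound yields $\sqrt{n}\, P^r h^{\text{id}}_{v_0} \geq c\, n^{1/2} b_n^{2+d/2} \geq c\, n^{\kappa''}$ for some $c > 0$ and $n$ large, which verifies condition \eqref{equ:power_cond} with $v_n \equiv v_0$. Corollary \ref{cor:power} then delivers power tending to one. The only delicate step is the strict positivity of the limiting integral; the main obstacle is ensuring that $\cS$ carries enough mass under the product kernel weight, which follows because $\cS$ is open and non-empty inside $[-1/2,1/2]^{d\times r}$ (any generic configuration of $r = d+2$ points in general position lies in some $\cS_j$) and because $L$ is either a non-degenerate kernel supported on $[-1/2,1/2]^d$ or the uniform kernel on the same cube by \ref{C:kernel}.
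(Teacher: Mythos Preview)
Your approach is essentially the same as the paper's: change variables, use affine invariance of $\tau_i^{(j)}$ and $\cS_j$, Taylor expand, observe the constant and linear terms cancel, and reduce to the quadratic form $Q(u_1^r)$, then invoke Corollary~\ref{cor:power}. You add the explicit Jensen's-inequality justification for $Q(u_1^r)>0$, which the paper leaves implicit.

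One minor technical inaccuracy: you claim $p(v_0 - b_n u_k)\to p(v_0)$ uniformly ``by \ref{C:X}'', but \ref{C:X} only gives $C_0^{-1}\le p\le C_0$ on $\cV^{2b_n}$, not continuity of $p$. Hence the exact limit $p(v_0)^r\int Q\prod L$ is not justified. The paper avoids this by using the lower bound $p\ge C_0^{-1}$ directly: since $Q(u_1^r)\ge 0$ on $\cS$ (and, with $L\ge 0$, the integrand is nonnegative), one gets $P^r h_{v_0}^{\text{id}}\ge C^{-1}b_n^{2+d/2}\int Q\prod L\,du - C b_n^{2+d/2}\mathcal R(b_n)$ with $\mathcal R(b_n)\to 0$, which is enough for $\liminf>0$. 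Replacing your dominated-convergence step with this pointwise lower bound on $p$ fixes the issue without changing anything else.
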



\begin{remark}
Note that Theorem 7 in \cite{abrevaya2005nonparametric} establishes the consistency of their test using $\cH^{\text{sg}}$ (discrete, complete version)
under the condition that $nb_n^{d+4}/\log(n) \to \infty$, which is in the same spirit as the requirement on $b_n$ in Corollary \ref{cor:power_smooth}. 
\end{remark}

\begin{corollary}[Power - piecewise affine $f$]\label{cor:power_pieceAffine}
Consider the setup as in Corollary \ref{cor:size_valid}. For $j \in \{1,2\}$, let $\theta_{n,j} \in \bR^d$ and $\omega_{n,j} \in \bR$ such that $\theta_{n,1}\neq \theta_{n,2}$. Let
$$
f(v) = f_{n}(v) := \max \left\{f_{n,1}(v), f_{n,2}(v)\right\}, \;\;\text{ where }\;\; f_{n,j}(v) := \theta_{n,j}^T v + \omega_{n,j}\;\; \text{ for } j = 1,2.
$$
If there exists $v_n \in \cV$ such that $f_{n,1}(v_n) = f_{n,2}(v_n)$ for each $n$, then 
$$\liminf_{n \to \infty} {P^r h^{\text{id}}_{v_n}}/{\left(b_n^{1+d/2} \|\theta_{n,1}-\theta_{n,2}\|_2\right)} > 0.
$$ 
Thus if  $b_n^{-(d+2)} \leq C_0 n^{1-2\kappa''}\|\theta_{n,1}-\theta_{n,2}\|_2^2$  for some $\kappa'' > \max\{(1-\kappa)/2,0\}$, then for any $\alpha \in (0,1)$,
the power converges to one as $n \to \infty$.

\end{corollary}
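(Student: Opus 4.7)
The plan is to reduce to Corollary~\ref{cor:power} by establishing the sharp lower bound
\[
P^r h^{\text{id}}_{v_n} \;\geq\; c_{\star}\, b_n^{1+d/2}\,\|\theta_{n,1}-\theta_{n,2}\|_2
\]
for some constant $c_{\star}>0$ depending only on $C_0, d$. Once this holds, the assumed scaling $b_n^{-(d+2)} \leq C_0 n^{1-2\kappa''}\|\theta_{n,1}-\theta_{n,2}\|_2^2$ rearranges to $\sqrt{n}\, b_n^{1+d/2}\|\theta_{n,1}-\theta_{n,2}\|_2 \geq C_0^{-1/2} n^{\kappa''}$, verifying hypothesis \eqref{equ:power_cond} and delivering the power conclusion.

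First I would compute $P^r h^{\text{id}}_{v_n}$ by the change of variables $v_k = v_n - b_n u_k$. Taking the inner expectation over the noise first, the $y$-affine structure of $w$ reduces its conditional mean to $\sum_j \mathbbm{1}\{v_1^r\in\calS_j\}\bigl(\sum_{i\neq j}\tau_i^{(j)}(v_1^r)f(v_i)-f(v_j)\bigr)$. Using the affine invariance $\tau_i^{(j)}(v_n-b_n u_1,\dots,v_n-b_n u_r)=\tau_i^{(j)}(u_1,\dots,u_r)$ and the decomposition $f(v)=f_{n,1}(v_n)+\theta_{n,1}^T(v-v_n)+\max(0,\Delta\theta^T(v-v_n))$ with $\Delta\theta:=\theta_{n,2}-\theta_{n,1}$, the constant and linear parts cancel (because $\sum_i \tau_i^{(j)}=1$ and $\sum_{i\neq j}\tau_i^{(j)}u_i=u_j$ on $\calS_j$), leaving only the convex correction. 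Factoring out $\|\Delta\theta\|_2$ and collecting the Jacobian yields
\begin{equation*}
P^r h^{\text{id}}_{v_n} \;=\; b_n^{1+d/2}\|\Delta\theta\|_2 \int G(u_1^r;e_n)\prod_{k=1}^{r} L(u_k)\,p(v_n-b_n u_k)\,du_k,
\end{equation*}
where $e_n:=\Delta\theta/\|\Delta\theta\|_2$ and
\[
G(u_1^r;e):=\sum_{j=1}^r \mathbbm{1}\{u_1^r\in\calS_j\}\Big[\sum_{i\neq j}\tau_i^{(j)}(u_1^r)\max(0,-e^Tu_i)-\max(0,-e^Tu_j)\Big].
\]

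The remaining step is to bound this integral below uniformly in $n$ and in the unit direction $e_n$. By convexity of $t\mapsto\max(0,t)$ combined with $u_j=\sum_{i\neq j}\tau_i^{(j)}u_i$ on $\calS_j$, Jensen's inequality gives $G(u_1^r;e)\geq 0$ pointwise, with strict inequality whenever the quantities $\{-e^Tu_i:i\neq j\}$ straddle zero. For any fixed unit vector $e$, I would exhibit a set of positive Lebesgue measure of configurations $u_1^r$ in some $\calS_j$ for which the hyperplane $\{x:e^Tx=0\}$ properly separates the $u_i$'s, which forces $G(\cdot;e)>0$ on this set. Combined with the lower bound $p\geq C_0^{-1}$ from \ref{C:X} and the positivity of $L$ (either the uniform kernel or a continuous non-negative kernel supported on $[-1/2,1/2]^d$), this gives $\int G(u_1^r;e)\prod_k L(u_k)\,p(v_n-b_n u_k)\,du_k \geq c(e_n)>0$. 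A dominated-convergence argument shows $e\mapsto c(e)$ is continuous, and compactness of the unit sphere in $\bR^d$ upgrades pointwise positivity to a uniform lower bound $\inf_{\|e\|_2=1} c(e)\geq c_{\star}>0$.

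The main obstacle is the last geometric step: verifying pointwise strict positivity of $c(e)$ for every direction $e$, which requires a careful local construction of $u_1^r$'s in generic position relative to the separating hyperplane while lying in the interior of some $\calS_j$ and in the support of $\prod_k L(u_k)$. Once this is established, substituting the resulting bound into the scaling condition gives $\sqrt{n}P^r h^{\text{id}}_{v_n}\geq c_{\star}C_0^{-1/2}n^{\kappa''}$, so \eqref{equ:power_cond} is satisfied and Corollary~\ref{cor:power} completes the proof.
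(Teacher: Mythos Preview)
Your proposal is correct and follows exactly the same reduction as the paper: compute $P^r h^{\text{id}}_{v_n}$ via the change of variables $v_k=v_n-b_n u_k$, decompose $f_n(v_n-b_n u)=(\text{affine in }u)+b_n\|\Delta\theta\|_2\max(0,-e_n^Tu)$, cancel the affine part using $\sum_{i\neq j}\tau_i^{(j)}u_i=u_j$, and reduce to showing that $\int G(u_1^r;e_n)\prod_k L(u_k)\,du_k$ is bounded below uniformly in the unit direction $e_n$. The only difference is in this last uniformity step. You propose continuity of $e\mapsto\int G(\cdot;e)\prod_k L(u_k)\,du_k$ (by dominated convergence, since $|G|\leq r\max_i\|u_i\|$ on the support) together with compactness of the unit sphere. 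The paper instead restricts the integration domain to the Euclidean ball $\{\sum_i\|u_i\|_2^2\leq 1/4\}$---which is legitimate because $G\geq 0$---and then observes that the diagonal rotation $(u_1,\dots,u_r)\mapsto(Ru_1,\dots,Ru_r)$ sending $e_n$ to a fixed unit vector $\theta_*$ preserves this ball, the sets $\calS_j$, and the barycentric coefficients $\tau_i^{(j)}$, so the integral equals one with $\theta_*$ in place of $e_n$ and is therefore independent of $n$ outright. The rotation trick is cleaner when $L$ is the uniform kernel (so that $\prod_k L(u_k)\equiv 1$ on the ball), while your compactness argument works for any nonnegative continuous $L$ without invoking symmetry; both leave the strict positivity of the final integral as a routine check via strict Jensen on a positive-measure subset of some $\calS_j$ where the signs of $e^Tu_i$ differ.
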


\begin{remark}
If $f$ does not depend on $n$, in particular $\theta_{n,j} = \theta_{j}$ for each $n$, then the requirement on $b_n$ becomes 
$b_n^{-(d+2)} \leq C_0 n^{1-2\kappa''}$, which is weaker than that for smooth functions $f$ in Corollary \ref{cor:power_smooth}.

On the other, if we choose $b_n = b > 0$ 
for each $n$, then to achieve power consistency, we require $\|\theta_{n,1}-\theta_{n,2}\|_2 \geq C_{0}^{-1} n^{-1/2+\kappa''}$. Observe that $f_n$ is convex if $\theta_{n,1} \neq \theta_{n,2}$, and affine if $\theta_{n,1} = \theta_{n,2}$. Thus this allows ``local alternatives" that approach the null at the rate of  $n^{-1/2+\kappa''}$.
\end{remark}

\subsubsection{Discussions}\label{subsubsec:test_discussion}
The stratified incomplete local simplex test  (SILS) 
is a least favorable configuration test, with affine functions being (asymptotically) least favorable. 
This type of test was first proposed for testing the monotonicity of a (univariate) regression function by \cite{ghosal2000}, and then extended to test the (multivariate, coordinate-wise) {stochastic monotonicity} by \cite{lee2009testing}, and to test the (multivariate) convexity by \cite{abrevaya2005nonparametric}. See also \cite{chen2017jackknife} for the distribution  approximation of these test statistics. It is not clear how to extend this idea to test other shape constraints, such as quasi-convexity \cite{komarova2019testing}, because it seems difficult to identify the least favourable configuration, or to compute the expectation of test statistics under it.

From Corollary \ref{cor:size_valid}, the SILS test is asymptotically {non-conservative}; however, it is {non-similar} \cite{lehmann2006testing}, in the sense that for strictly concave functions, the probability of rejection is {strictly} less than the nominal level $\alpha$. Being non-similar alone is not evidence again the SILS test (e.g., Z-test for normal means is optimal despite being non-similar), but  a least favorable configuration test may be less powerful than alternative tests. The condition \eqref{equ:power_cond} requires ``local convex curvature" of $f$ for the test to be power consistent; see Corollary \ref{cor:power_smooth} and \ref{cor:power_pieceAffine} for examples. 
The question of how \eqref{equ:power_cond} is related to the global $L_2$ separation rate (see Appendix \ref{subsec:minimax_FS}) is left for future research.

In Appendix \ref{subsec:minimax_FS}, we discuss the $L_2$ minimax separation rate  for concavity test, and an alternative test (``FS" test) \cite{fang2020projection},  which  (almost) achieves the minimax rate for {smooth} functions for $d=1$ and may do so for  $d\geq 2$;
thus the FS test is expected to have decent power. We note that the validity of our SILS test does not require $f$ being smooth, and that in simulation studies (Section \ref{sec:simulation}) it achieves comparable power to the FS test. In contrast,  the FS test fails to control the size properly when $f$ is not smooth (e.g., piecewise affine); this is observed  in Section \ref{sec:simulation}, and we also provide a detailed  explanation in Appendix \ref{subsec:minimax_FS} (e.g., if $d=2$, it requires $f$ to be H\"{o}lder continuous with smoothness parameter $s >4$).

\subsection{Combining multiple bandwidths} \label{subsec:unif_bandwidth}
The theory in Subsection \ref{subsec:sz_power}  does not suggest a particular choice for the bandwidth $b_n$. 
Since the size validity holds for a wide range of $b_n$, its selection  depends on the targeted alternatives. If the targets are ``globally" convex, then $b_n$ should be large in order for the bias, $\{\sqrt{n} P^r h^{*}_{v}: v \in \cV\}$, to be large. On the other hand, if the targets are only convex in a small region, then $b_n$ should be able to localize those convex regions. See Subsection \ref{sim:multi_b} for  concrete examples.

One possible remedy is to use multiple bandwidths. Let $\cB_n \subset (0,\infty)$ be a \textit{finite} collection of bandwidths. For each $b \in \cB_n$, we denote the function $h_{v}^{\text{id}}$ in \eqref{def:h_x_concavity} (resp.~$h_{v}^{\text{sg}}$ in \eqref{def:sg_kernel}) by $h_{v,b}^{\text{id}}$  (resp.~$h_{v,b}^{\text{sg}}$) to emphasize the dependence on the bandwidth, and
$\cH^*_{b} = \{h_{v,b}^{*}: v \in \cV\}$ for $* = \text{id or sg}$.  
Further, for each $b \in \cB_n$, let $N_{b}$ and $N_{2,b}$ be two computational parameters, and consider two independent collections of Bernoulli random variables
\begin{align*}
\begin{split}
&\mathcal{S}_b := \left\{Z^{(m,b)}_{\iota}:  \; m \in [M], \;\iota \in  I_{n,r} \right\}  \;  \overset {i.i.d.}{\mathlarger{\sim}} \; \text{Bernoulli}(p_{n,b}),  \\
&\mathcal{S}_b' := \left\{ Z^{(k, m, b)}_{\iota}: \;k \in [n],\; m \in [M], \; \iota \in  \kjack   \right\}  \;  \overset {i.i.d.}{\mathlarger{\sim}} \; \text{Bernoulli}(q_{n,b}),
\end{split}
\end{align*}
where $p_{n,b} := N_b/|I_{n,r}|$, $q_{n,b} := N_{2,b}/|I_{n-1,r-1}|$, and they are independent of $X_1^n$. In other words, the sampling plan is independent for each $b \in \cB_n$.

Then for each $b \in \cB_n$, we denote $U_{n,N}'(h)$ in \eqref{def:incomplete_U_proc} by $U_{n,N,b}'(h)$ with the sampling plan given by $\mathcal{S}_b$. Similarly, we denote $\bG^{(k)}(h)$ and $\bar{\bG}(h)$ in \eqref{def:G_k_h} by 
$\bG^{(k,b)}(h)$ and $\overline{\bG}^{(b)}(h)$ respectively with  the sampling plan given by $\mathcal{S}_b'$.

Now let $\cD_n' := X_1^n \cup \{\mathcal{S}_b, \mathcal{S}_b': b \in \cB_n\}$, and denote Gaussian multipliers by
\[
\{\xi_k: k \in [n]\},\;\; \left\{\xi^{(m,b)}_{\iota}: m \in [M],\;\iota \in I_{n,r},  b \in \cB_n \right\} \;  \;\;\;  \overset {i.i.d.}{\mathlarger{\sim}} \;\;\; N(0,1),
\]
independent of $\cD_n'$. Define for $b \in \cB_n$ and $v \in \cV_m$,
\begin{align*}
\bU_{n,*,b}^{\#}(h_{v,b}^{*})
:= r  \bU_{n,A,b}^{\#}(h_{v,b}^{*}) + (n/N_b)^{1/2} \bU_{n,B,b}^{\#}(h_{v,b}^{*}),
\end{align*}
where  $\bU_{n,A,b}^{\#}(h_{v,b}^{*})
:= n^{-1/2} \sum_{k=1}^{n} \xi_k (
\bG^{(k,b)}(h_{v,b}^{*}) - \overline{\bG}^{(b)}(h_{v,b}^{*}))$, and 
\begin{align*}
\bU_{n,B,b}^{\#}(h_{v,b}^{*})
:= \left(\sum_{\iota \in I_{n,r}} Z_{\iota}^{(m,b)}\right)^{-1/2} \sum_{\iota \in I_{n,r}} \xi^{(m,b)}_{\iota} \sqrt{Z_{\iota}^{(m,b)}} (
h_{v,b}^{*}(X_{\iota}) - U'_{n,N,b}(h_{v,b}^{*})
).
\end{align*}


Finally, for each $\alpha \in (0,1)$, denote by $q^{\#}_{\alpha}$ the $(1-\alpha)^{th}$ quantile of 
$\sup_{b \in \cB_n, v \in \cV}  \bU_{n,*,b}^{\#}(h_{v,b}^{*})$, conditional on $\cD_n'$. Then we propose to reject the null in \eqref{def:H_0} if and only if
\begin{equation}
\label{def:test_proc_with_multi_b}
\sup_{b \in \cB_n, v \in \cV} \sqrt{n} U_{n,N,b}'(h_{v,b}^{\text{*}}) \;\; \geq \;\; q^{\#}_\alpha.
\end{equation}

\begin{remark}
It is possible to allow $\cB_n$ to be uncountable, for example, $\cB_n := [\ell_n, u_n]$, which corresponds to the uniform in bandwidth results \cite{einmahl2005uniform,chen2017jackknife}. However, we choose to present the results for a finite $\cB_n$ for simplicity, since otherwise we need to also stratify $\cB_n$. This approach has a similar spirit to the multi-scale testing of qualitative hypotheses \cite{dumbgen2001multiscale}. 

To establish the size validity and analyze the power of the test \eqref{def:test_proc_with_multi_b}, we need a more general theory than those in Section \ref{sec:prelim} for a function class $\{h_{v,b}: v \in \cV, b \in \cB_n\}$, where $\cV$ is an index set. The key difference is that for each $b \in \cB_n$, the computational parameters $N_b$ and $N_{2,b}$ may be of a different order (see, e.g., \eqref{cond:N_N_2_concavity}).  The rigorous statements for $\{h_{v,b}: v \in \cV, b \in \cB_n\}$, which follow  from similar arguments as those in Section \ref{sec:GAR_Bootstrap}, are not included  for simplicity of the presentation. In Subsection \ref{sim:multi_b}, we conduct a simulation study to investigate the empirical performance of the SILS test with multiple bandwidths.
\end{remark}

\section{Stratified incomplete local simplex tests: computation} \label{sub:comp_complex}
In this section, we discuss the computational complexity and implementation  for the stratified incomplete local simplex tests. 
We focus on $\cH^{\text{id}}$ in our discussion and omit the superscript for simplicity.  Assume that \ref{C:kernel}-\ref{C:id_noise} hold, and that the computational parameters $N,N_2$ are given in \eqref{cond:N_N_2_concavity}. Further, as $\cV$ is compact, we assume $\cV \subset [0,1]^d$ without loss of generality.

For some small $\eta \in (0,1/2)$, let $t := \lfloor 1/(\eta b_n) \rfloor$, and $\tau_i = i \eta b_n$ for $i = 0,1,\ldots,t$ and $\tau_{t+1} = 1$. Now we partition each coordinate into segments of length $\eta b_n$ (except for the rightmost one), i.e., each $\cV_m$ is determined by an ordered list $(j_1,\ldots,j_d)$ such that $0 \leq j_k < t$ for $k \in [d]$ and $\cV_m = \{v \in \cV: \tau_{j_k} \leq v_k < \tau_{j_k+1} \text{ for } k \in [d]\}$.
%
Then the number of partitions $M \leq (1 + \eta^{-1} b_n^{-1})^d$. 

For any $v \in \bR^d$ and $A \subset \bR^d$, we denote the $b_n$-neighbourhood by
\begin{align*}
\cN(v, b_n) := \{v' \in \bR^d: \|v - v'\|_{\infty} \leq b_n/2 \}, \quad
\cN(A, b_n) := \bigcup_{v \in A} \cN(v, b_n). 
\end{align*}
Denote by $\text{ND}(v, b_n) := \{ i \in [n]: V_i \in \cN(v, b_n)\}$ 
and $\text{ND}(A, b_n) := \bigcup_{v \in A} \text{ND}(v, b_n)$
the indices for data points within $b_n$-neighbourhood of $v$ and $A$ respectively.

As an illustration, in Figure \ref{fig:main_idea} (where $b=8, \eta=1/8$), $\cV$ is partitioned into small squares of size $1$. For the dotted region $\cV_m$,
$\cN(\cV_m, b_n)$ is area encompassed by the big dotted square, so $v_4 \in \cN(\cV_m, b_n)$, but $v_5 \not \in \cN(\cV_m, b_n)$. Further, $\text{ND}(\cV_m, b_n)$ are indices for  data points within the dotted square.

\subsection{Stratified sampling}  
For $m \in [M]$, let
$
\cA(\cV_m) := \left\{\iota =(i_1,\ldots,i_r) \in I_{n,r} \; : \; i_j \in \text{ND}(\cV_m, b_n)  \text{ for } j \in [r]\right\}
$
be the collection of $r$-tuples whose members are all within  $b_n$-neighbourhood of $\cV_m$. For example, in Figure \ref{fig:main_idea}, $(v_1,v_2,v_3,v_4) \in \cA(\cV_m)$, but $(v_1,v_2,v_3,v_5) \not \in \cA(\cV_m)$. 
Due to the localization by $L(\cdot)$ 
(cf. \ref{C:kernel}), 
\begin{align*}
h_v(x_\iota) = 0, \text{ for any } v \in \cV_m \text{ and } \iota \in |I_{n,r}| \setminus \cA(\cV_m).
\end{align*}
As a result, the individual values of $\{Z^{(m)}_{\iota}: \iota \in |I_{n,r}| \setminus \cA(\cV_m) \}$ are irrelevant, except for their sum, which is a part of $\widehat{N}^{(m)}$. Thus, we generate a Binomial$(|I_{n,r}\setminus \cA(\cV_m)|, p_n)$ random variable, that accounts for $ \sum_{\iota \in |I_{n,r}| \setminus \cA(\cV_m)} Z^{(m)}_\iota$.

 On the other hand, the number of selected $r$-tuples in $\cA(\cV_m)$ is on average
\begin{align*}
\Exp\left[ \sum_{\iota \in \cA(\cV_m)} Z^{(m)}_\iota \right] \lesssim 
 \binom{n (1+ \eta)^{d} b_n^{d} }{r} \frac{n^{\kappa} b_n^{-dr}}{|I_{n,r}|} \lesssim (1+ \eta)^{dr} n^{\kappa},
\end{align*}
since the $\|\cdot\|_{\infty}$-diameter of $\cV_m$ is $\eta b_n$, and the density of $V$ is bounded (see \ref{C:X}). Thus to compute $\sup_{v \in \cV_m} \sqrt{n} U_{n,N}'(h_v)$, the number of evaluations of $w(\cdot)$ is on average $\lesssim n^{\kappa}$, and the computational complexity can be made independent of the dimension $d$  (as $\eta$ can be chosen to be small).

\begin{remark}
Above calculation of complexity does not include the cost of maximizing over $\cV_m$.  
In practice, we  select a finite number of query points 
as in  Subsection \ref{subsec:implementation}.  
The discussion for the bootstrap part is similar, and we analyze below the complexity of its actual implementation.
\end{remark}

\noindent \underline{\bf Why stratification?} Without stratifying $\cV$, each $v \in \cV$ share the same sampling plan $\{Z_\iota : \iota \in I_{n,r}\}$. 
However, we cannot afford to generate  all $\{Z_\iota : \iota \in I_{n,r}\}$, as on average there are $N = n^{\kappa} b_n^{dr}$ non-zero terms. 
We may attempt to use the above short-cut. For $v_1, v_2 \in \cV$, to compute
$U_{n,N}'(h_{v_i})$ (for $i=1,2$),
 we only generate 
 $\{Z_\iota : \iota \in  \cA(\{v_i\})\}$, and the individual values of
 $\{Z_\iota : \iota \in I_{n,r} \setminus \cA(\{v_i\})\}$ are not explicitly generated.
 
However, the issue is to ensure consistency. (i)~In computing  $U_{n,N}'(h_{v_1})$, although the individual values of  $\{Z_\iota : \iota \in I_{n,r} \setminus \cA(\{v_1\})\}$ are irrelevant, we still need to  generate a Binomial random variable to account for their sum. However, $\left( I_{n,r} \setminus \cA(\{v_1\}) \right) \cap \cA(\{v_2\})$ in many cases is non-empty, and thus $ \sum_{\iota \in |I_{n,r}| \setminus \cA(\{v_1\})} Z^{(m)}_\iota$ and $\{Z_{\iota}: \iota \in \cA(\{v_2\})\}$ are not independent. (ii)~In many cases, $\cA(\{v_1\}) \cap \cA(\{v_2\})$ is non-empty, so we cannot independently generate  $\{Z_\iota : \iota \in  \cA(\{v_1\})\}$ and $\{Z_\iota : \iota \in  \cA(\{v_2\})\}$. Note also that the calculation is needed for multiple $v \in \cV$  instead of only $v_1,v_2$.

\begin{remark}
In Appendix \ref{subsec:without_partitioning}, we present an algorithm without stratification that addresses the above consistency issue. Its computational complexity is $ \lesssim 2^{dr} n^{\kappa} b_n^{-d}$ evaluations of $w(\cdot)$. If $d$ is fixed, it only loses a $b_n^{-d}$ factor in theory, but $2^{dr}$ can be very large in practice, and thus it is not computationally feasible (e.g.,~$2^{dr} = 32768$ if $d=3$).
\end{remark}

\subsection{Implementation of SILS}\label{subsec:implementation}
In practice, instead of taking the supremum over $\cV$, we choose a (finite) collection of query points, $\sV_n$, {one from each partition} $\{\cV_m: m \in [M]\}$, 
and approximate the supremum over $\cV$ by that over $\sV_n$. 
As a result,  each $v \in \sV_n$ has its individual sampling plan ($\{Z_{\iota}^{(m)}: \iota \in I_{n,r}\}$ if $v \in \cV_m$), which can be generated independently for different query points. Further, the test still takes the form of \eqref{def:test_proc} with a finite function class $\cH = \left\{h_v: v \in \sV_n\right\}$ .

\begin{remark}
It is without loss of generality to pick one query point from each region, since we could always decrease $\eta$, i.e., making each region smaller. Further, unlike \cite{abrevaya2005nonparametric}, we do not require query points to be well separately, that is, for small $\eta$, there are pairs fo queries points $v,v' \in \sV_n$, such that $\|v-v'\|_{\infty} \ll b_n$. Finally, since only one element is picked, if $v \in \cV_m$, instead of considering $\text{ND}(\cV_m, b_n)$,  we can focus on $\text{ND}(\{v\}, b_n)$. 
\end{remark}

\begin{remark}
In establishing the bootstrap validity for stratified, incomplete $U$-processes, we first consider the corresponding results for  high-dimensional $U$-statistics, and then approximate the supremum of a $U$-process by that of its  discretized version. Thus the above procedure, which can be viewed as a practical implementation of approximating the supremum of a process,  can also be directly justified by  Theorems in Appendix  \ref{sec:GAR_incomp_U_stat}.
\end{remark}

\noindent  \textbf{\underline{Computing the test statistic.}}  
In Algorithm \ref{alg:test_stat_gamma_B}, we show the pseudo-code to compute, for each $v \in \sV_n$, the statistic $U_{n,N}'(h_v)$, and at the same time the conditional (given $\cD_n'$) variance $\widehat{\gamma}_B(h_v)$ of $\bU_{n,B}^{\#}(h_v)$ in \eqref{def:bootstrap_A_and_B}; note that we write $\widehat{\gamma}_B(h_v)$ for $\widehat{\gamma}_B(h_v, h_v)$, which is defined following \eqref{def:bootstrap_A_and_B}. It is well known that
sampling $T$ items without replacement from $S$ elements ($S \gg T$) can be done in $O(T\log(T))$ time \cite{gupta1984efficient}. Then based on the discussions in the previous subsection,  the computational complexity for Algorithm \ref{alg:test_stat_gamma_B} is $O( M n^{\kappa} \log(n))$.\\

\noindent  \textbf{\underline{Bootstrap.}}  
For a fixed $k \in [n]$,  notice that $\{\bG^{(k)}(h_v): v \in \sV_n\} $ in \eqref{def:G_k_h} takes the same form of stratified, incomplete $U$-processes as the test statistics, and thus we can apply Algorithm \ref{alg:test_stat_gamma_B}, with appropriate inputs, to compute it. Since we need to compute $\bG^{(k)}$ for each $k \in [n]$,  the  complexity is
\begin{align*}
n \times M \times \binom{(n-1) b_n^{d} }{r-1} \frac{n^{\kappa'} b_n^{-dr}}{|I_{n-1,r-1}|}\log(n) \lesssim M n^{1+\kappa'} b_n^{-d}\log(n).
\end{align*}
Further, as we pick one element from each $\cV_m$, 
given $\cD_n'$, $\left\{\bU_{n,B}^{\#}(h_v): v \in \sV_n\right\}$ are {conditionally independent},
with  variances
$\{\widehat{\gamma}_B(h_v): v \in \sV_n \}$ already 
computed in Algorithm \ref{alg:test_stat_gamma_B}, and we no longer need to generate Gaussian multipliers $\{\xi^{(m)}_{\iota}\}$ for each summand indexed by $\iota$ in \eqref{def:bootstrap_A_and_B}.

Finally, for independent standard Gaussian multipliers $\{\xi_k, \xi^{(m)}: k \in [n], m \in [M]\}$, we compute for each $v\in \sV_n$,
\[
\frac{1}{\sqrt{n}} \sum_{k =1}^{n} \xi_{k}\left(\bG^{(k)}(h_v) - \overline{\bG}(h_v) \right) + \xi^{(\sigma(h_v))} \sqrt{\widehat{\gamma}_B(h_v)}.
\]
Since $\{\bG^{(k)}: k \in [n]\}$ and $\{\widehat{\gamma}_B(h_v): v \in \sV_n \}$ have already been computed,  the  complexity  is $O(BMn)$, where $B$ is the number of bootstrap iterations. 
Hence the overall computational cost is $O( M n^{\kappa} \log(n) + M n^{1+\kappa'} b_n^{-d}\log(n) + BMn)$.

\begin{remark}
The computational bottleneck is in computing $\{\bG^{(k)}: k \in [n]\}$, which, however, is outside the bootstrap iterations. Thus we can afford large $B$ in the bootstrap calibration. The above algorithms can be implemented in a parallel manner using clusters; in particular, $\bG^{(k)}$ can be computed separately for each $k \in [n]$. As a result, the efficiency scales linearly in the number of computing cores.
\end{remark} 

\begin{algorithm}[htbp!]
  \KwIn{Observations $\{X_i = (V_i, Y_i) \in \bR^{d+1}: i \in [n] \}$, budget $N$, kernel $L(\cdot)$, bandwidth $b_n$,  query points $\sV_n$ (size $M$).}
  
  \KwOut{ ${U}_{n,N}',\; {\widehat{\gamma}}_{B}$: two vectors of length $M$}
  
   \KwSty{Initialization:} $p_n = N/\binom{n}{r}$, ${U}_{n,N}', \widehat{\gamma}_B$ both set   zero \;

\For{$m \leftarrow 1$ \KwTo $M$ }{
 $v = \sV_n[m]$\;
 Generate $T_{1}  \sim  \text{Binom}(\binom{|\text{ND}(v, b_n)|}{r}, \;p_n), \quad  T_{2}  \sim  \text{Binom}(\binom{n}{r}-\binom{|\text{ND}(v, b_n)|}{r}, \;p_n)$\;
$\widehat{N} \leftarrow  T_{1} + T_{2}$\;
Sample without replacement $T_{1}$ terms, $\{\iota_\ell: 1 \leq \ell \leq T_{1}\}$,  from $\cA(\{v\})$\;
	\For{$\ell \leftarrow 1$ \KwTo $T_1$}{	
			 ${U}_{n,N}'[m] \leftarrow {U}_{n,N}'[m] +h_v(X_{\iota_{\ell}})$\;
			 ${\widehat{\gamma}}_{B}[m] \leftarrow{\widehat{\gamma}}_{B}[m] + (h_v(X_{\iota_{\ell}}))^2$\;
	}
	${U}_{n,N}'[m] \leftarrow {U}_{n,N}'[m]/\widehat{N}, \;\;
{\widehat{\gamma}}_{B}[m] \leftarrow {\widehat{\gamma}}_{B}[m]/\widehat{N} - ({U}_{n,N}'[m])^2$ 
}

   \caption{ compute $U_{n,N}'$ and $\widehat{\gamma}_B$ over $\sV_n$ for the concavity test.}
   \label{alg:test_stat_gamma_B}
\end{algorithm}

\section{Simulation results} \label{sec:simulation}


In the simulation studies,  we consider  setups  where  the regression function  $f$ in \eqref{eqn:nonparametric_regression} is defined on $(0,1)^d$,
and   the covariates $V = (V_1,\ldots, V_d)$ have a uniform distribution on $(0,1)^d$, for $d = 2,3, 4$. 
In this section, the error term $\varepsilon$ in \eqref{eqn:nonparametric_regression} has a {Gaussian distribution} with zero mean and variance $\sigma^2$.

\begin{remark}
The results for $d = 3 \text{ and } 4$ are qualitatively similar, and  presented mostly in Appendix \ref{app:simulations}, where  we also study {asymmetric} or {heavy tailed}  distributions for the noise $\varepsilon$.
\end{remark}

We compare our proposed procedure with the method in \cite{fang2020projection}, denoted by ``FS".

\noindent \underline{\it Proposed procedure.} 
We use the uniform localization kernel $L(\cdot) = \mathbbm{1}\{\cdot \in (-1/2,1/2)^d \}$. The query points are $\sV_n :=\{0.3,0.4,0.5,0.6,0.7\}^2$ for $d=2$, and $\sV_n :=\{0.3,0.5,0.7\}^d$ for $d = 3,4$. For parameters related to the computational budget, we set $N = 10\times 25 \times n \times b_n^{-d\times r}$ for $d=2,3,4$, $N_2 = 10^4\times b_n^{-d\times r}$ for $d=2,3$ and $N_2 = 2\times 10^4 \times b_n^{-d\times r}$ for $d=4$, and the Bootstrap iterations $B=1500$. The $N$ is selected so that $\alpha_n := n/N$ is very small, and further increasing it will not improve the power of the test. The estimation of $\{\bG^{(k)}(h_v^{*}): k \in [n], v \in \sV_n\}$ is the computational bottleneck, and empirically we find that further increasing the selected value for $N_2$ does not improve the accuracy in terms of the size of the proposed procedure. We   consider two types of kernels, $\cH^{\text{id}}$ and $\cH^{\text{sg}}$, and
use below ``ID" for the former and  ``SG" for later.  For each parameter configuration below, we independently generate (at least) 1,000 datasets, apply our procedure, and estimate the rejection probability.

\noindent \underline{\it FS method \cite{fang2020projection}.} We use the implementation provided by the authors\footnote{code for \cite{fang2020projection}: \url{https://www.dropbox.com/s/jmjshxznu31tnn2/ShapeCode.zip?dl=0}.  
}, where either quadratic or cube splines with $j$ knots {in each coordinate} are used in constructing an initial estimator for the regression function; we denote the former by FS-Q$j$ and later by FS-C$j$. We set the tuning parameter $\gamma_n = 0.01/ \log(n)$ and the Bootstrap iteration $B=200$ as recommended by \cite{fang2020projection}.  Below, the rejection probabilities are estimated based on 1,500 independently generated datasets.

\subsection{Running times}

The computational savings compared to using the complete $U$-process, 
$p_n := N/\binom{n}{r}$ and $q_n = N_2/\binom{n-1}{r-1}$,
are listed in Table \ref{tab:comp_eff} for several typical configurations. It is clear that for a moderate size dataset (say $n \sim\, 1000$), using the complete $U$-process has a very high, if not prohibitive, computational cost (see Table \ref{tab:running_time} for the running time using the stratified, incomplete $U$-process). For example, for $d =3, n = 1000,  b_n = 0.6$, it takes on average $5.26$ minutes to run our procedure with 40 cores, which implies that with the complete version it would take at least $7.2$ days ($=5.26 \text{ mins}/q_n)$.

In contrast, the FS method \cite{fang2020projection} has a much shorter running time. For example, with $d=2, n=1000$, it takes less than $20$ seconds with $4$ cores (see Table E4 in \cite{fang2020projection}). For $d \geq 3$, it could be challenging to apply the FS method due to the accuracy of estimating the regression function, the projection onto a function space, and the numerical integration needed to compute the distance etc. 



\begin{table}[htbp!]
\begin{tabular}{ccccc}
\hline
$d=2, b_n = 0.5$ &\multicolumn{3}{c}{$d=3, b_n = 0.6, |\sV_n| = 27 $} & $d=4, b_n = 0.7$ \\ \cline{2-4}
$n=1000, |\sV_n| = 25$ & $n = 500$   & $n = 1000$  & $n = 1500$  & $n = 2000, |\sV_n| = 81$       \\ \hline
5.06 mins &1.31 mins   & 5.26 mins   & 9.12 mins   & 33.4 mins        \\ \hline
\end{tabular}
\caption{ Running time of the proposed procedure in minutes  using 40 computer cores, where $N$ and $N_2$ are described in the introduction of Section \ref{sec:simulation}.}
\label{tab:running_time}
\begin{tabular}{cccccccc}
\hline
                & \multicolumn{3}{c}{$p_n$}  &  & \multicolumn{3}{c}{$q_n$}  \\ 
                                   \cline{2-4} \cline{6-8}
                & $n=500$  & $n=1000$ & $n = 1500$ &  & $n=500$  & $n=1000$ & $n = 1500$ \\ 
                                   \hline
$d=2, b_n = 0.5$ & 1.2E-2 & 1.5E-3 & 4.5E-4   &  & 1.2E-1 & 1.5E-2 & 4.6E-3   \\ 
$d=3, b_n = 0.6$ & 1.0e-3 & 6.4E-5 & 1.3E-5   &  & 8.3E-3 & 5.1E-4 & 1.0E-4     \\ \hline
\end{tabular}
\caption{Computational efficiency for typical configurations, where $N$ and $N_2$ are described in the introduction of Section \ref{sec:simulation}. For $d =4, b_n = 0.7, n =2000$, we have $p_n =$5.9E-8, $q_n =$ 3.9E-7. Here, sE-t = s$\times 10^{-t}$.}
\label{tab:comp_eff}
\end{table}

\subsection{Size validity}
We start with our proposed procedure, and consider {concave} functions given by
\begin{align}
\label{alt:poly}
f(v) = v_1^{\kappa_0} + \ldots + v_d^{\kappa_0}, \;\; \text{ for } v := (v_1,\ldots,v_d) \in (0,1)^d,
\end{align}
for  $0<\kappa_0 \leq 1$. Here, we consider the rejection probabilities for $\kappa_0=1$; that is, $f$ is affine and thus the  (asymptotically) least favourable configuration in the null. In Appendix \ref{app:d2_strictly_concave}, we present results for strictly concave function for $0 < \kappa_0 < 1$.




For each query point, the average number of data points within its $b_n$ neighbourhood is $n\times b_n^{-d}$. Since a decent size of local points is necessary for the validity of Gaussian approximation, we select $b_n$ so that locally there are at least $150$ data points. 
As we shall see in Subsection \ref{sim:d2_power},
smaller $b_n$ has a better localization power, while larger $b_n$ is suitable if the targeted alternatives are globally convex.


In Table \ref{tab:sv_d2_n5_n10}, we list the size for different bandwidth $b_n$ and error variance $\sigma^2$ at levels $5\%$ and $10\%$ for $f$ in \eqref{alt:poly} with $\kappa_0 = 1$. From the Table 
\ref{tab:sv_d2_n5_n10},  it is clear that the proposed procedure is consistently on the conservative side. We note that the conservativeness is not due to the stratified sampling.
For $d=1$, we were able to implement the complete version, and observed a similar phenomenon. Further,  \cite{chen2017jackknife} uses  complete $U$-processes  to test regression monotonicity, which are also conservative
(see Table 1 therein). 

\noindent \underline{\it FS method \cite{fang2020projection}.} In Table E2 of \cite{fang2020projection}, we can observe the slight inflation of the empirical size of the FS method when the function is linear. Here,
we consider the following \textit{concave}, piecewise affine regression function:
\begin{align}\label{equ:FS_d2_example}
f(v_1,v_2) = -|v_1 -0.8| - |v_2 - 0.8|, \;\; \text{ for } v_1,v_2 \in (0,1).
\end{align}


The rejection probabilities  of the FS method \cite{fang2020projection} at the nominal level $5\%$ are listed in Table \ref{tab:FS_d2_SIZE}. Recall that FS-Q$j$ (resp. FS-C$j$) is for using quadratic (resp. cubic) splines with $j$ knots in each coordinate as the initial estimator for the regression function. Except for the global test FS-Q0, which places no interior knots, these probabilities far exceed the nominal level. We provide explanations for the significant size inflation of the FS method \cite{fang2020projection} in Appendix \ref{subsec:minimax_FS}.

%

\begin{table}[htbp!]
\begin{tabular}{cccccccc}
\hline
                   & \multicolumn{3}{c}{$n=500$}       &  & \multicolumn{3}{c}{$n=1000$}      \\ 
                   \cline{2-4} \cline{6-8}
$d =2$, Level = 5\%  & $b_n=0.6$ & $b_n=0.55$ & $b_n=0.5$ &  & $b_n=0.5$ & $b_n=0.45$ & $b_n=0.4$ \\ 
                   \hline
ID, $\sigma= 0.1$  & 3.1       & 2.9        & 2.8       &  & 4.1       & 2.6        & 3.4       \\                
SG, $\sigma=0.1$   & 3.1       & 4.1        & 2.8       &  & 3.0       & 3.8        & 2.7       \\ 
ID, $\sigma= 0.2$  & 3.2       & 3.7        & 3.0       &  & 2.9       & 3.1        & 2.4       \\ 
SG, $\sigma=0.2$   & 3.6       & 3.3        & 2.9       &  & 3.6       & 3.1        & 2.5       \\ \hline
                   &           &            &           &  &           &            &           \\ 
$d =2$, Level = 10\% & $b_n=0.6$ & $b_n=0.55$ & $b_n=0.5$ &  & $b_n=0.5$ & $b_n=0.45$ & $b_n=0.4$ \\ \hline
ID, $\sigma= 0.1$  & 8.3       & 6.8        & 6.7       &  & 8.1       & 7.5        & 7.7       \\ 
SG, $\sigma=0.1$   & 7.4       & 8.0        & 6.6       &  & 8.1       & 7.7        & 6.1       \\ 
                  
ID, $\sigma= 0.2$  & 7.8       & 8.0        & 6.2       &  & 7.6       & 7.8        & 6.0       \\ 
SG, $\sigma=0.2$   & 8.7       & 7.1        & 7.0       &  & 8.6       & 7.0        & 6.7       \\ \hline
\end{tabular}
%
\caption{Size validity of the proposed procedure for $d=2$. The probabilities of rejection under {affine} regression functions, are in the unit of percentage.}
\label{tab:sv_d2_n5_n10}
\medskip
\begin{tabular}{ccccccccccl}
\hline
Knots $j$    & 0    & 1    & 2    & 3    & 4    & 5 & 6    & 7  &8 & 9  \\ \hline
FS-Q$j$ & 0    & 99.5 & 46.5 & 9.2  & 23.6 & 33.9  & 16.0 & 16.0 & 23.1 & 20.3\\ 
FS-C$j$ & 97.1 & 50.7 & 15.3 & 32.8 & 23.9 &  13.6 & 17.7 & 21.0 & 19.9 & 22.2\\ \hline
\end{tabular}
\caption{ The rejection probabilities (in percentage) for FS-Q$j$ and FS-C$j$ \cite{fang2020projection} at  level $5\%$ for the concave (i.e. $H_0$ holds) function in \eqref{equ:FS_d2_example}, where $n = 1000$ and $\sigma = 0.1$.}
\label{tab:FS_d2_SIZE}
\end{table}

\subsection{Power comparison} \label{sim:d2_power}
We study two types of alternatives for the regression function.


\noindent \underline{\it Polynomial functions.} In the first, we consider $f$ in \eqref{alt:poly} for  $\kappa_0 \in \{1.2,1.5\}$.


\noindent \underline{\it Locally convex functions.} 
For the second, we consider regression functions that are mostly concave over $(0,1)^d$, but convex in a small region. Specifically, let $\varphi(v) := \exp\left(-\|v\|^2/2\right)$ for $v \in \bR^d$, which is concave on the region $\{v \in \bR^d: \|v\|_{\infty} < 1\}$. Then for $c_1, c_2, \omega_1, \omega_2 > 0$ and $\mu_1, \mu_2 \in \bR^d$,
we consider
\begin{equation}
\label{alt:local_conv}
f(v) = c_1 \varphi\left( (v-\mu_1)/\omega_1\right) - c_2 \varphi \left((v-\mu_2)/\omega_2 \right), \text{ for } v \in (0,1)^d.
\end{equation}
We let $c_1 = 1$, $\omega_1 = 1.5$, and $\mu_1 = (0.75,\ldots, 0.75)$ so that without the second term, $f$ would be concave in the entire region $(0,1)^d$. We let $\mu_2 = (0.25,\ldots,0.25)$, and set $c_2$ and $\omega_2$ to be small so that $f$ is mostly concave and locally convex in a small neighbourhood of $\mu_2$. 
(In  Appendix \ref{app:figure}, we plot the regression function $f$ together with one realization of dataset.)


In Table \ref{tab:d2_Power_Comparison} (a) and (b), for the two types of alternatives, we list the power of $\cH^{id}$ with different bandwidth parameters $b_n$, and the FS method \cite{fang2020projection} using either quadratic (Q) or cubic (C) splines with $j=0,1,2,5$ knots in each coordinate\footnote{$j=0,1$ is used in \cite{fang2020projection}}.  
For our proposed method, if $f$ is a polynomial function \eqref{alt:poly}, the power   increases as $b_n$ increases, as $f$ is globally convex. However, for the locally convex function $f$ \eqref{alt:local_conv}, the power initially increases as $b_n$ increases, but later drops significantly, as $f$ is only locally convex, but  ``globally concave". Thus the choice of bandwidth depends on the targeted alternatives. Similar statements can be made about the FS method \cite{fang2020projection}. Adding knots decreases its power for \eqref{alt:poly}, while
a ``global" test such as FS-Q0  has little power against \eqref{alt:local_conv}.

In summary, the proposed procedure has a comparable power to the FS method \cite{fang2020projection}, which however fails to control the size in general. Further, we show next that the issue with selecting $b_n$  can be partly solved by combining multiple bandwidths.

\begin{table}[htbp!]
\subfloat[Polynomial $f$ \eqref{alt:poly} with $\kappa_0=1.5$]{
\begin{tabular}{ccccccccccccc}
\hline
           & \multicolumn{3}{c}{$\cH^{\text{id}}$ with single $b_n$} &  & \multicolumn{8}{c}{FS method \cite{fang2020projection}} \\ \cline{2-4} \cline{6-13} 
Level 5\%  & $b_n = 0.6$         & $b_n=0.8$         & $b_n=1$        &  & Q0         & Q1         & Q2        &Q5 & C0         & C1        & C2 & C5       \\ \hline
Rej. Prob. & 25.6                & 69.1              & 96.6           &  & 93.8       & 81.6       & 57.7 &37.7       & 85.6       & 60.5      & 50.7 & 36.3      \\ \hline
\end{tabular}
}

\subfloat[Locally convex  $f$ \eqref{alt:local_conv} with $\omega_2 = 0.15, c_2 = 0.3$]{
\begin{tabular}{ccccccccccccc}
\hline
           & \multicolumn{3}{c}{$\cH^{\text{id}}$ with single $b_n$} &  & \multicolumn{8}{c}{FS method \cite{fang2020projection}} \\ \cline{2-4} \cline{6-13} 
Level 10\% & $b_n = 0.5$        & $b_n=0.6$        & $b_n=0.7$        &  & Q0        & Q1         & Q2        &Q5 & C0         & C1         & C2     & C5   \\ \hline
Rej. Prob. & 20.3               & 40.3             & 15.8             &  & 7.1       & 49.7       & 28.5 & 30.4       & 44.3       & 26.5       & 27.7    & 29.4  \\ \hline
\end{tabular}
}


\subfloat[Multiple bandwidths $\{\cH^{\text{id}}_b: b \in \{0.6,0.8,1\}\}$]{
\begin{tabular}{ccccc}
\hline
 & & Poly $f$ \eqref{alt:poly} with $\kappa_0=1.5$ at $5\%$ &  &                            Locally convex  $f$ \eqref{alt:local_conv}  with $\omega_2 = 0.15, c_2 = 0.3$ at $10\%$ \\ \hline 
Rej. Prob. & & 71.7              &  & 39.8 \\ \hline
\end{tabular}
}
\caption{The rejection probabilities (in percentage) of  the proposed method $\cH^{\text{id}}$, the FS method \cite{fang2020projection}, and the proposed method with multiple bandwidth
$\{\cH^{\text{id}}_b: b \in \{0.6,0.8,1\}\}$ 
 for $d = 2$, $n =1000$,  $\sigma = 0.5$.}
\label{tab:d2_Power_Comparison}
\end{table}

\subsection{Combining multiple bandwidths}\label{sim:multi_b}
We consider the procedure \eqref{def:test_proc_with_multi_b} in Subsection \ref{subsec:unif_bandwidth}  that combines multiple bandwidths, $\{\cH^{\text{id}}_b: b \in \cB_n\}$  with $\cB_n = \{0.6,0.8,1\}$. 
For $f$ in \eqref{alt:poly} with $\kappa_0 = 1$ (i.e. affine functions), the probability of rejection is $8.4\%$ when the nominal level is $10\%$; for strictly concave functions with $\kappa_0 < 1$, the probabilities of rejection
are listed in Appendix \ref{app:d2_strictly_concave}.
In Table \ref{tab:d2_Power_Comparison} (c), we present its  power against the two alternatives. 

With a range of bandwidths,  $\{\cH^{\text{id}}_b: b \in \cB_n\}$ achieves a reasonable power, and is adaptive to the properties of the regression function $f$, with its computational cost {linear} in $|\cB_n|$. As expected, it is not as powerful as the best performance achievable by $\cH^{\text{id}}_{b_n}$ with a single bandwidth $b_n$, which,  however, is unknown in practice.

Thus, we would recommend  the procedure \eqref{def:test_proc_with_multi_b} with multiple bandwidths $\cB_n$. In choosing $\cB_n$, one approach  is to first decide reasonable lower and upper bounds, $b_{\text{min}}$ and $b_{\text{max}}$, for the bandwidth, and then based on  available computational resource, select a few bandwidths in $[b_{\text{min}},b_{\text{max}}]$ (say equally spaced) to form $\cB_n$. As a rule of thumb, one may  choose $b_{\text{min}}$ so that there are enough data points in the $b_n$ neighbourhood
of each query point (say $\geq 120$). On the other, $b_{\text{max}}$ could be decided based on the diameter of the region of interest, $\cV$.

\section{Gaussian approximation and bootstrap  for stratified, incomplete U-processes}
\label{sec:GAR_Bootstrap}
In this section, we   consider a general function class $\cH$, and establish Gaussian approximation and bootstrap results for its associated stratified, incomplete $U$-processes  in Section \ref{sec:prelim}, under the more general moment assumptions \ref{cond:MT} instead of \ref{cond:MT_inf}. In particular, the condition \ref{cond:MT} does not require the envelope function $H$ in \ref{cond:VC} to be bounded.\\


%




%

\noindent \mylabel{cond:MT}{(MT)}. There exist  absolute constants $\ubar{\sigma} > 0$, $c_0 \in (0,1)$, $q \in [4,\infty]$, and $B_n\geq  D_n \geq 1$ such that
\begin{align}
\label{MT0}
&\Var\left(P^{r-1}h(X_1) \right) \geq \ubar{\sigma}^2,\; \text{ for } h\in \cH, \tag{MT-0}\\
\label{MT1} 
&\sup_{h \in \cH} \Exp\left|P^{r-1}h(X_1) -P^rh \right|^{2+k} \;\leq\; D_n^{k} \text{ for }\;\;  k = 1,2, \quad
\|P^{r-1}H\|_{P,q} \leq D_n,
 \tag{MT-1}\\ 
\label{MT2}
\begin{split}
&\|P^{r-\ell} {H}^s\|_{P^{\ell}, 2} \leq B_{n}^{2s-2} D_n^{\ell + 1-s}, \text{ for } \ell \geq 2, s = 1,2,3,4, \\
&\|P^{r-\ell} {H}^s\|_{P^{\ell}, q} \leq B_{n}^{2s-2} D_n^{\ell(2-2/q)+2/q-s}, \text{ for } \ell =1,2,\; s = 2,3,4, \;\;
\|P^{r-2} H\|_{P^{2}, q}  \leq D_n^{3-2/q},
\\
&\|P^{r-\ell} {|h|}^s\|_{P^{\ell}, 2} \leq 
B_{n}^{2s-2} D_n^{\ell-s}, \text{ for } \ell =0,1,2,\; s\in [4]\text{ with } \ell + s >2, \; h \in \cH,
\end{split} \tag{MT-2}\\
\label{MT3}
&\|{H}\|_{P^{r},q} \leq B_n^{2-2/q} D_n^{2/q-1},  \quad \|H\|_{P^r,2} \leq B_n, \tag{MT-3}\\
\label{MT4}
&c_0 B_n^2 D_n^{-2} \leq \Var(h(X_1^r)) \leq \min\{D_n^{2(r-1)},\;\; B_n^2  D_n^{-2}\}, \text{ for } h \in \cH, \tag{MT-4}\\
\label{MT5}
& \sup_{h \in \cH } \|P^{r-2} h\|_{P^2,4} \leq D_n^2,\quad
\|(P^{r-2}H)^{\bigodot 2}\|_{P^2,q/2} \leq
D_n^{4-4/q}, \tag{MT-5}
\end{align}
where $1/q = 0$ if $q = \infty$, and recall that for a measurable function $f:S^2 \to \bR$, define $f^{\bigodot 2}$ to be a function on $S^2$ such that
$f^{\bigodot 2}(x_1,x_2) := \int f(x_1,x)f(x_2,x) dP(x)$.


\subsection{Gaussian approximation}\label{sec:GaussApprox_Proce}
We first approximate the supremum of the stratified, incomplete $U$-process \eqref{def:incomplete_U_proc} by that of an appropriate Gaussian process. Specifically, denote by $P^{r-1} h$ the function on $S$ such that $P^{r-1} f(x_1) := \Exp[h(x_1, X_{2},\ldots,X_r)]$, and $(\ell^{\infty}(\cH),\|\cdot\|_{\infty})$ the space of bounded functions indexed by $\cH$ equipped with the supremum norm. Assume there exists a tight Gaussian random variable 
$W_P$ in $\ell^{\infty}(\cH)$ with zero mean and covariance function $\gamma_{*}(h,h') := \Cov\left(W_P(h), W_P(h')\right) = r^2 \gamma_A(h,h') + \alpha_n \gamma_B(h,h')$ for $h,h' \in \cH$ where 
$\alpha_n:= n/N$, $\gamma_A(h,h'):= \Cov\left(P^{r-1}h(X_1),\; P^{r-1}h'(X_1)\right)$, and 
$\gamma_B(h,h') := \Cov\left(h(X_1^r),\; h'(X_1^r)\right) \mathbbm{1}\{\sigma(h) = \sigma(h')\}$. 
Note that $\gamma_B(h,h') = 0$ if $\sigma(h) \neq \sigma(h')$, which is due to the stratification. The existence of $W_P$ is implied by \ref{cond:VC} and \ref{cond:MT} (see \cite{chernozhukov2014gaussian}[Lemma 2.1]). 
Further, denote $\widetilde{\bM}_n := \sup_{h \in \cH} W_P(h)$.
We  bound the Kolmogorov distance between the two suprema.

\begin{theorem}\label{thm:GAR_sup_incomplete_Uproc}
Assume the conditions \ref{cond:PM}, \ref{cond:VC}, \ref{cond:MB}, and \eqref{MT0}-\eqref{MT4}. Then there exists a constant $C$, depending only on $r, q, \ubar{\sigma},c_0, C_0$, such that
\begin{align*}
&\sup_{t \in \bR}\left|
\Pro(\bMn \leq t) - \Pro(\widetilde{\bM}_n \leq t)\right| \; \leq \;
C\eta_n^{(1)} + C \eta_n^{(2)}, \;\; \text{ with }\\
& \eta_n^{(1)}:=
\left( \frac{D_n^2 K_n^7}{n}\right)^{1/8} + 
\left(\frac{D_n^2 K_n^4}{n^{1-2/q}}  \right)^{1/4} +
\left(\frac{D_n^{3-2/q} K_n^{5/2}}{n^{1-1/q}}\right)^{1/2},\\
& \eta_n^{(2)}:=
\left( \frac{B_n^2 K_n^7}N \right)^{1/8} +  \left( \frac{n^{4r/q} K_n^5 B_n^{2-8/q} D_n^{8/q}}{N}
\right)^{1/4} + \left( \frac{M^{2/q}B_n^{2-4/q}D_n^{4/q} K_n^{5}}{N^{1-2/q}}\right)^{1/4}, 
\end{align*}
where  $1/q = 0$ if $q = \infty$.
\end{theorem}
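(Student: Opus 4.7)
I plan to build on the decomposition hinted at in \eqref{bootstrap_intuition}. Set $R_n(h) := N^{-1/2}\sum_{\iota \in I_{n,r}}(Z_\iota^{(\sigma(h))}-p_n)h(X_\iota)$. Repeating the algebraic manipulation of \eqref{bootstrap_intuition} without imposing $P^rh=0$ yields the exact identity $(\widehat{N}^{(\sigma(h))}/N)\,\bU'_{n,N}(h) = \bU_n(h) + \alpha_n^{1/2}R_n(h)$. A Bernstein-type bound gives $\max_m |\widehat{N}^{(m)}/N-1| = o_{\bP}(1)$ at a polynomial rate, so up to a negligible multiplicative error
\[
\bU'_{n,N}(h) \;\approx\; \bU_n(h) + \alpha_n^{1/2}\,R_n(h).
\]
Since the sampling plans $\{Z_\iota^{(m)}\}$ are independent of the data $X_1^n$, the two summands will be handled separately; summing two independent Gaussian approximations, with covariances $r^2\gamma_A$ and $\alpha_n\gamma_B$, reproduces the target covariance $\gamma_*$ of $W_P$, with errors $\eta_n^{(1)}$ and $\eta_n^{(2)}$ respectively.

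\textbf{The complete $U$-process term.} I would approximate $\bU_n(h)$ by its H\'ajek projection $rn^{-1/2}\sum_{k=1}^n(P^{r-1}h(X_k)-P^rh)$ using maximal inequalities for the higher-order degenerate components of the Hoeffding decomposition. The moment hypotheses \eqref{MT2}--\eqref{MT5} together with the VC type property are calibrated so that this residual is of order at most $(D_n^2K_n^7/n)^{1/8}$. The H\'ajek projection is an empirical process indexed by a VC-type class whose envelope is controlled in $L^q$ via \eqref{MT1}, and a standard high-dimensional Gaussian approximation (in the spirit of \cite{chernozhukov2014gaussian} and \cite{chen2017jackknife}) delivers the remaining two summands of $\eta_n^{(1)}$.

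\textbf{The sampling term.} Condition on $X_1^n$: then $R_n(h)$ is a centered sum of independent Bernoullis with conditional variance $p_n(1-p_n)N^{-1}\sum_\iota h^2(X_\iota) \approx U_n(h^2)$, converging to $\gamma_B(h,h)$. Fluctuations of this conditional variance around the limit are controlled by the moment bounds on $P^{r-\ell}H^s$ and $P^{r-\ell}|h|^s$ in \eqref{MT2}--\eqref{MT5}; in particular $\|(P^{r-2}H)^{\bigodot 2}\|_{P^2,q/2}$ from \eqref{MT5} is exactly the quantity governing the $L^2$-fluctuation of $U_n(h^2)$. Because different strata use independent sampling plans, the stratified structure of $\gamma_B$ is inherited automatically. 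A conditional high-dimensional CLT on a finite $L^2$-net of $\cH$ (truncation at scale $B_n$, Lindeberg on the body, tail control via the $q$-th envelope moment, and a union bound across the $M$ strata giving the $M^{2/q}$ factor) yields the three summands of $\eta_n^{(2)}$.

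\textbf{Main obstacle: discretization.} The genuinely hard step is upgrading these two finite-dimensional Gaussian approximations to a uniform statement over $\cH$. For the $U$-process piece classical chaining under the VC hypothesis suffices. For $R_n$, however, one needs a chaining bound for a randomly subsampled empirical sum, and this is exactly what the local and non-local maximal inequalities for stratified incomplete $U$-processes in Appendix \ref{app:local_max_ineq_incom} are designed to provide: they bound $\sup_{\|h-h'\|_{P^r,2}\le\delta}|R_n(h)-R_n(h')|$ in terms of $\delta$, $N$, $M$, and the envelope moments. Gaussian anti-concentration applied to $W_P$ on the same net then absorbs both the discretization residual and the multiplicative error from $\widehat{N}^{(m)}/N-1$ into terms of the same order as $\eta_n^{(1)}+\eta_n^{(2)}$, and a final Slutsky-type comparison gives the stated Kolmogorov bound.
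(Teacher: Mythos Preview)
Your proposal is essentially the paper's proof: discretize $\cH$ to a finite net, apply the finite-dimensional Gaussian approximation of Appendix~\ref{sec:GAR_incomp_U_stat} (which itself splits into the complete $U$-statistic piece via H\'ajek projection and the conditional-Bernoulli sampling piece, exactly as you outline), and then control the passage from net to full class using the local maximal inequalities of Appendix~\ref{app:local_max_ineq_incom} together with Gaussian anti-concentration for $W_P$. One correction: Theorem~\ref{thm:GAR_sup_incomplete_Uproc} does \emph{not} assume \eqref{MT5}, so you cannot invoke $\|(P^{r-2}H)^{\bigodot 2}\|_{P^2,q/2}$; the fluctuation of $U_n(h^2)$ around $\gamma_B(h)$ is controlled entirely through the bounds on $\|P^{r-\ell}|h|^s\|_{P^\ell,2}$ and $\|P^{r-\ell}H^s\|_{P^\ell,q}$ in \eqref{MT2}--\eqref{MT4} (cf.\ Lemma~\ref{lemma:higher_order}), while \eqref{MT5} is needed only for the jackknife part of the bootstrap in Theorem~\ref{thm:uproc_bootstrap}.
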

\begin{proof}
The strategy is to first establish Gaussian approximation results for a finite, yet ``dense", subset $\cH'$ of $\cH$ (Appendix \ref{sec:GAR_incomp_U_stat}), and then approximate the supremum over $\cH$ by that over $\cH'$, which requires the local maximal inequalities developed in Appendix \ref{app:local_max_ineq_incom}.
See details in Appendix \ref{proof:thm:GAR_sup_incomplete_Uproc}.
\end{proof}



\subsection{Bootstrap validity}
The next Theorem shows that conditional on $\cD'_n$, the maximum of the bootstrap process, $\bMn^{\#}$ in \eqref{def:bootstrap_combined}, is well approximated by the maximum of $W_P$, $\widetilde{\bM}_n$, in distribution.

\begin{theorem}\label{thm:uproc_bootstrap}
Assume the conditions \ref{cond:PM}, \ref{cond:VC}, \ref{cond:MB}, and \eqref{MT0}-\eqref{MT5}. Let
\begin{align*}
&\varrho_n :=  \left(\frac{M^{2/q}B_n^{2-4/q} D_n^{4/q} K_n^{5}}{(N \wedge N_2)^{1-2/q}} \right)^{1/4} + \left( \frac{B_n^2 K_n^{7}}{N \wedge N_2} \right)^{1/8}  +\\
& \left( \frac{D_n^2 K_n^{7}}{n} \right)^{1/8} +
\left( \frac{D_n^2 K_n^{4}}{n^{1-2/q}} \right)^{1/4}+ \left( \frac{D_n^{8-8/q} K_n^{15}}{n^{3-4/q}} \right)^{1/14} +
\left( \frac{D_n^{3-2/q} K_n^{4}}{n^{1-1/q}} \right)^{2/7}.
\end{align*}
There exists a constant $C$, depending only on  $r,q, \ubar{\sigma}, c_0,C_0$, such that with probability at least $1 - C \varrho_n$,
$$
\sup_{t \in \bR}\left|
\Pro_{\vert \cD_n'}(\bMn^{\#} \leq t) - \Pro(\widetilde{\bM}_n \leq t)
\right| \leq C \varrho_n.
$$
\end{theorem}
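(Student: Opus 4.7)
The plan is to first reduce to a finite, dense subclass $\cH' \subset \cH$ (as in the proof of Theorem~\ref{thm:GAR_sup_incomplete_Uproc}), so that both $\sup_{h\in\cH} \bU_{n,*}^\#(h)$ and $\widetilde{\bM}_n$ can be approximated by their suprema over $\cH'$. For the conditional process, this step uses a Gaussian maximal inequality applied to the process $\bU_{n,*}^\#$ conditionally on $\cD_n'$, whose intrinsic semi-metric is controlled once we have empirical bounds on $\widehat{\gamma}_A+\alpha_n\widehat{\gamma}_B$; the approximation error $\sup_{h\in\cH}\bU_{n,*}^\#(h) - \sup_{h\in\cH'}\bU_{n,*}^\#(h)$ is then estimated by the local maximal inequalities for (stratified) incomplete $U$-processes developed in Appendix~\ref{app:local_max_ineq_incom}. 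For $\widetilde{\bM}_n$ the analogous reduction is standard.

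Once the problem is finitized, conditional on $\cD_n'$ the vector $\{\bU_{n,*}^\#(h)\}_{h\in\cH'}$ is Gaussian (multipliers $\xi_k$ and $\xi_\iota^{(m)}$ are independent, so $\bU_{n,A}^\#$ and $\bU_{n,B}^\#$ are conditionally independent Gaussian processes) with covariance $r^2\widehat{\gamma}_A+\alpha_n\widehat{\gamma}_B$. The target Gaussian $W_P$ restricted to $\cH'$ has covariance $r^2\gamma_A+\alpha_n\gamma_B$. I would then invoke the Chernozhukov--Chetverikov--Kato Gaussian comparison inequality, which bounds the Kolmogorov distance between maxima of two centered Gaussian vectors in $\bR^{|\cH'|}$ by a constant times $\Delta^{1/3}(\log|\cH'|)^{2/3}$, where
\[
\Delta \;:=\; \sup_{h,h'\in\cH'} \Big| r^2\bigl(\widehat{\gamma}_A(h,h')-\gamma_A(h,h')\bigr) + \alpha_n\bigl(\widehat{\gamma}_B(h,h')-\gamma_B(h,h')\bigr)\Big|.
\]
Since $|\cH'|$ is polynomial in $n$, the bound reduces to controlling $\Delta$ with high probability.

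The core work is therefore to prove the two uniform concentration bounds for $\widehat{\gamma}_A$ and $\widehat{\gamma}_B$. For $\widehat{\gamma}_A$, write $\bG^{(k)}(h)=P^{r-1}h(X_k)+\bigl(\bG^{(k)}(h)-P^{r-1}h(X_k)\bigr)$; the first summands give an empirical version of $\gamma_A$ whose deviation is handled by a Talagrand/bounded-difference argument on a VC-type class of one-variable functions, while the residual is an incomplete $U$-process over $\kjack$-tuples whose supremum is bounded by a maximal inequality driven by $N_2$ and the moment profile in \ref{cond:MT}. For $\widehat{\gamma}_B$, I would note that (up to a stratification indicator) it is itself a stratified incomplete $U$-statistic built from the second-order kernels $(x_1^r)\mapsto h(x_1^r)h'(x_1^r)$ minus a product of incomplete $U$-statistics; the relevant class $\{h\cdot h': h,h'\in\cH,\;\sigma(h)=\sigma(h')\}$ is again VC-type with envelope $H^2$, and its supremum deviation from $\gamma_B$ is bounded by the local maximal inequalities of Appendix~\ref{app:local_max_ineq_incom} with the enlarged envelope and the fourth-moment controls in \eqref{MT2}, \eqref{MT5}. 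Multiplication by $\alpha_n=n/N$ then produces the $\left( M^{2/q} B_n^{2-4/q} D_n^{4/q} K_n^5 /(N\wedge N_2)^{1-2/q}\right)^{1/4}$ and $(B_n^2 K_n^7/(N\wedge N_2))^{1/8}$ terms in $\varrho_n$; the other summands arise from the $\widehat{\gamma}_A$ side and from the discretization step, matching the corresponding terms in $\eta_n^{(1)}$ of Theorem~\ref{thm:GAR_sup_incomplete_Uproc}.

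The main obstacle will be the uniform control of $\widehat{\gamma}_B-\gamma_B$ in the ``mixed'' regime where the kernels $h\cdot h'$ have very uneven projection sizes across levels $\ell=0,\dots,r$ (the issue highlighted in Remark~\ref{rk:main_challenge}): naive global envelope bounds lose powers of $b_n$, so one must use the level-wise moment profile in \eqref{MT2}, \eqref{MT5} inside the local maximal inequality, and simultaneously handle the stratification indicator $\mathbbm{1}\{\sigma(h)=\sigma(h')\}$, which couples $\cH'$ with the partition. Once this step is in place, combining the Gaussian comparison bound with the discretization errors and a union bound over the high-probability events yields $\sup_t|\Pro_{|\cD_n'}(\bMn^\#\le t)-\Pro(\widetilde{\bM}_n\le t)|\le C\varrho_n$ on an event of probability at least $1-C\varrho_n$.
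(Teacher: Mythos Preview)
Your proposal is correct and follows essentially the same strategy as the paper: discretize to a finite $\cH'$ with $\log|\cH'|\lesssim K_n$, apply a Gaussian comparison over $\cH'$ driven by uniform bounds on $\widehat{\gamma}_A-\gamma_A$ and $\widehat{\gamma}_B-\gamma_B$, and control the discretization remainder for the conditional Gaussian process $\bU_{n,*}^\#$ via Borell--Sudakov--Tsirelson together with the local maximal inequalities for stratified incomplete $U$-processes. Your decomposition of $\widehat{\gamma}_A$ (separating $P^{r-1}h(X_k)$ from the $\kjack$-incomplete residual) and of $\widehat{\gamma}_B$ (as a stratified incomplete statistic in the product class $\{h\cdot h'\}$ with envelope $H^2$) matches exactly what the paper does in Lemmas~\ref{lemma:Delta_A_bound} and~\ref{lemma:Delta_B_bound}.

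The only organizational difference is that the paper, in Theorem~\ref{thrm:appr_U_full}, does \emph{not} apply the Gaussian comparison to the combined covariance $r^2\widehat{\gamma}_A+\alpha_n\widehat{\gamma}_B$ in one shot. Instead it exploits the conditional independence of $\bU_{n,A}^\#$ and $\bU_{n,B}^\#$ (different multipliers) and of $Y_A,Y_B$, and compares the $B$-parts first (after normalizing by $\Lambda_B^{-1/2}$), then the $A$-parts (after normalizing by $\Lambda_A^{-1/2}$), chaining the two via a Fubini argument over hyperrectangles. Your combined route also works once you normalize by $\Lambda_*^{-1/2}$ and use $\gamma_*(h)\ge r^2\gamma_A(h)$ and $\gamma_*(h)\ge \alpha_n\gamma_B(h)\ge \alpha_n c_0 B_n^2 D_n^{-2}$ from \eqref{MT0} and \eqref{MT4} to convert the raw bound on $\widehat{\Delta}_B$ into a relative one; it yields the same $\chi_{n,A}+\chi_{n,B}$ rate. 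The paper's two-step version is slightly more modular, but there is no substantive gap in your plan.
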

\begin{proof}
The proof can be found in Appendix \ref{proof:thm:uproc_bootstrap}.
The key steps are to show that 
given $\cD'_n$, the conditional covariance functions $\widehat{\gamma}_A(\cdot,\cdot)$ and $\widehat{\gamma}_B(\cdot,\cdot)$, for $\bU^{\#}_{n,A}$ and $\bU^{\#}_{n,B}$ in \eqref{def:bootstrap_A_and_B},
 are good estimators for ${\gamma}_A(\cdot,\cdot)$ and ${\gamma}_B(\cdot,\cdot)$.
\end{proof}


\subsection{Related work} \label{subsec:related_U_proc}  
$U$-processes offer a general framework for many statistical applications such as testing for qualitative features (e.g., monotonicity, curvature) of regression functions \cite{ghosal2000,bowman1998testing, abrevaya2005nonparametric}, testing for stochastic monotonicity \cite{lee2009testing}, nonparametric density
estimation \cite{nolan1987u,frees1994,gine2007local}, and establishing limiting distributions of $M$-estimators \cite{arcones1993limit,sherman1994maximal}. When indexing function classes are fixed, it is known that the Uniform Central Limit Theorems (UCLTs) \cite{nolan1988functional,arcones1993limit,de2012decoupling,borovskikh1996}, as well as limit theorems for bootstrap \cite{arcones1994u,zhang2001bayesian}, hold for $U$-processes under metric (or bracketing) entropy conditions. These references~\cite{nolan1988functional,arcones1993limit,de2012decoupling,borovskikh1996,arcones1994u,zhang2001bayesian} cover both non-degenerate and degenerate $U$-processes where limiting processes of the latter are Gaussian chaoses rather than Gaussian processes. When the UCLTs do not hold for a possibly changing (in $n$) indexing function class (i.e., the function class cannot be embedded in any fixed Donsker class), \cite{chen2017jackknife} develops a general non-asymptotic theory for approximating the suprema of $U$-processes, extending the earlier work by \cite{chernozhukov2014gaussian} on empirical processes. Incomplete $U$-statistics for a fixed dimension are first considered in \cite{blom1976}, and the asymptotic distributions are studied in \cite{brownkildea1978,janson1984_PTRF}. In high dimensions, non-asymptotic Gaussian approximation and bootstrap results for randomized incomplete $U$-statistics are established in \cite{chen2017randomized} for a fixed order and in \cite{song2019approximating} for diverging orders. The current work considers randomized incomplete (local) $U$-processes with stratification.

\section*{Acknowledgements}
Y. Song is supported by
the Natural Sciences and Engineering Research Council of Canada (NSERC). This research is enabled in part by support provided by Compute Canada (\url{www.computecanada.ca}). X. Chen is supported in part by NSF CAREER Award DMS-1752614, UIUC Research Board Award RB18099, and a Simons Fellowship. X. Chen acknowledges that part of this work is carried out at the MIT Institute for Data, System, and Society (IDSS). K. Kato is partially supported by NSF grants DMS-1952306 and DMS-2014636.
%
%



\bibliographystyle{imsart-number} 
\bibliography{SILS_combined}       

\begin{thebibliography}{66}

\bibitem{abrevaya2005nonparametric}
\begin{barticle}[author]
\bauthor{\bsnm{Abrevaya},~\bfnm{Jason}\binits{J.}} \AND
  \bauthor{\bsnm{Jiang},~\bfnm{Wei}\binits{W.}}
(\byear{2005}).
\btitle{A nonparametric approach to measuring and testing curvature}.
\bjournal{Journal of Business \& Economic Statistics}
\bvolume{23}
\bpages{1--19}.
\end{barticle}
\endbibitem

\bibitem{arcones1993limit}
\begin{barticle}[author]
\bauthor{\bsnm{Arcones},~\bfnm{Miguel~A}\binits{M.~A.}} \AND
  \bauthor{\bsnm{Gine},~\bfnm{Evarist}\binits{E.}}
(\byear{1993}).
\btitle{Limit theorems for U-processes}.
\bjournal{The Annals of Probability}
\bpages{1494--1542}.
\end{barticle}
\endbibitem

\bibitem{arcones1994u}
\begin{barticle}[author]
\bauthor{\bsnm{Arcones},~\bfnm{Miguel~A}\binits{M.~A.}} \AND
  \bauthor{\bsnm{Gin{\'e}},~\bfnm{Evarist}\binits{E.}}
(\byear{1994}).
\btitle{U-processes indexed by Vapnik-{\v{C}}ervonenkis classes of functions
  with applications to asymptotics and bootstrap of U-statistics with estimated
  parameters}.
\bjournal{Stochastic Processes and their Applications}
\bvolume{52}
\bpages{17--38}.
\end{barticle}
\endbibitem

\bibitem{belloni2015some}
\begin{barticle}[author]
\bauthor{\bsnm{Belloni},~\bfnm{Alexandre}\binits{A.}},
  \bauthor{\bsnm{Chernozhukov},~\bfnm{Victor}\binits{V.}},
  \bauthor{\bsnm{Chetverikov},~\bfnm{Denis}\binits{D.}} \AND
  \bauthor{\bsnm{Kato},~\bfnm{Kengo}\binits{K.}}
(\byear{2015}).
\btitle{Some new asymptotic theory for least squares series: Pointwise and
  uniform results}.
\bjournal{Journal of Econometrics}
\bvolume{186}
\bpages{345--366}.
\end{barticle}
\endbibitem

\bibitem{blom1976}
\begin{barticle}[author]
\bauthor{\bsnm{Blom},~\bfnm{Gunnar}\binits{G.}}
(\byear{1976}).
\btitle{{Some properties of incomplete {$U$}-statistics}}.
\bjournal{Biometrika}
\bvolume{63}
\bpages{573-580}.
\end{barticle}
\endbibitem

\bibitem{BorensteinFarrell2007_RAND}
\begin{barticle}[author]
\bauthor{\bsnm{Borenstein},~\bfnm{Severin}\binits{S.}} \AND
  \bauthor{\bsnm{Farrell},~\bfnm{Joseph}\binits{J.}}
(\byear{2007}).
\btitle{{Do investors forecast fat firms? Evidence from the gold-mining
  industry}}.
\bjournal{RAND Journal of Economics}
\bvolume{38}
\bpages{626-647}.
\end{barticle}
\endbibitem

\bibitem{borovskikh1996}
\begin{bbook}[author]
\bauthor{\bsnm{Borovskikh},~\bfnm{Yu.~V.}\binits{Y.~V.}}
(\byear{1996}).
\btitle{U-Statistics in Banach Spaces}.
\bpublisher{V.S.P. Intl Science}.
\end{bbook}
\endbibitem

\bibitem{bowman1998testing}
\begin{barticle}[author]
\bauthor{\bsnm{Bowman},~\bfnm{AW}\binits{A.}},
  \bauthor{\bsnm{Jones},~\bfnm{MC}\binits{M.}} \AND
  \bauthor{\bsnm{Gijbels},~\bfnm{Irene}\binits{I.}}
(\byear{1998}).
\btitle{Testing monotonicity of regression}.
\bjournal{Journal of computational and Graphical Statistics}
\bvolume{7}
\bpages{489--500}.
\end{barticle}
\endbibitem

\bibitem{brownkildea1978}
\begin{barticle}[author]
\bauthor{\bsnm{Brown},~\bfnm{B.~M.}\binits{B.~M.}} \AND
  \bauthor{\bsnm{Kildea},~\bfnm{D.~G.}\binits{D.~G.}}
(\byear{1978}).
\btitle{Reduced {$U$}-statistics and the {H}odges-{L}ehmann estimator}.
\bjournal{Annals of Statistics}
\bvolume{6}
\bpages{828-835}.
\end{barticle}
\endbibitem

\bibitem{cai2015}
\begin{barticle}[author]
\bauthor{\bsnm{Cai},~\bfnm{T.~T.}\binits{T.~T.}} \AND
  \bauthor{\bsnm{Low},~\bfnm{M.~G.}\binits{M.~G.}}
(\byear{2015}).
\btitle{A FRAMEWORK FOR ESTIMATION OF CONVEX FUNCTIONS}.
\bjournal{Statistica Sinica}
\bvolume{25}
\bpages{423-456}.
\end{barticle}
\endbibitem

\bibitem{chatterjee2014}
\begin{barticle}[author]
\bauthor{\bsnm{Chatterjee},~\bfnm{Sourav}\binits{S.}}
(\byear{2014}).
\btitle{A new perspective on least squares under convex constraint}.
\bjournal{Ann. Statist.}
\bvolume{42}
\bpages{2340--2381}.
\bdoi{10.1214/14-AOS1254}
\end{barticle}
\endbibitem

\bibitem{chatterjee2016}
\begin{barticle}[author]
\bauthor{\bsnm{Chatterjee},~\bfnm{Sabyasachi}\binits{S.}}
(\byear{2016}).
\btitle{An improved global risk bound in concave regression}.
\bjournal{Electron. J. Statist.}
\bvolume{10}
\bpages{1608--1629}.
\bdoi{10.1214/16-EJS1151}
\end{barticle}
\endbibitem

\bibitem{chen2017randomized}
\begin{barticle}[author]
\bauthor{\bsnm{Chen},~\bfnm{Xiaohui}\binits{X.}} \AND
  \bauthor{\bsnm{Kato},~\bfnm{Kengo}\binits{K.}}
(\byear{2019}).
\btitle{Randomized incomplete $ U $-statistics in high dimensions}.
\bjournal{The Annals of Statistics}
\bvolume{47}
\bpages{3127-3156}.
\end{barticle}
\endbibitem

\bibitem{chen2017jackknife}
\begin{barticle}[author]
\bauthor{\bsnm{Chen},~\bfnm{Xiaohui}\binits{X.}} \AND
  \bauthor{\bsnm{Kato},~\bfnm{Kengo}\binits{K.}}
(\byear{2020}).
\btitle{Jackknife multiplier bootstrap: finite sample approximations to the $ U
  $-process supremum with applications}.
\bjournal{Probability Theory and Related Fields}
\bvolume{176}
\bpages{1097-1163}.
\end{barticle}
\endbibitem

\bibitem{chen2016}
\begin{barticle}[author]
\bauthor{\bsnm{Chen},~\bfnm{Yining}\binits{Y.}} \AND
  \bauthor{\bsnm{Wellner},~\bfnm{Jon~A.}\binits{J.~A.}}
(\byear{2016}).
\btitle{On convex least squares estimation when the truth is linear}.
\bjournal{Electron. J. Statist.}
\bvolume{10}
\bpages{171--209}.
\bdoi{10.1214/15-EJS1098}
\end{barticle}
\endbibitem

\bibitem{chernozhukov2015comparison}
\begin{barticle}[author]
\bauthor{\bsnm{Chernozhukov},~\bfnm{Victor}\binits{V.}},
  \bauthor{\bsnm{Chetverikov},~\bfnm{Denis}\binits{D.}} \AND
  \bauthor{\bsnm{Kato},~\bfnm{Kengo}\binits{K.}}
(\byear{2015}).
\btitle{Comparison and anti-concentration bounds for maxima of Gaussian random
  vectors}.
\bjournal{Probability Theory and Related Fields}
\bvolume{162}
\bpages{47--70}.
\end{barticle}
\endbibitem

\bibitem{chernozhukov2017}
\begin{barticle}[author]
\bauthor{\bsnm{Chernozhukov},~\bfnm{Victor}\binits{V.}},
  \bauthor{\bsnm{Chetverikov},~\bfnm{Denis}\binits{D.}} \AND
  \bauthor{\bsnm{Kato},~\bfnm{Kengo}\binits{K.}}
(\byear{2017}).
\btitle{Central limit theorems and bootstrap in high dimensions}.
\bjournal{Ann. Probab.}
\bvolume{45}
\bpages{2309--2352}.
\bdoi{10.1214/16-AOP1113}
\end{barticle}
\endbibitem

\bibitem{chernozhukov2014gaussian}
\begin{barticle}[author]
\bauthor{\bsnm{Chernozhukov},~\bfnm{Victor}\binits{V.}},
  \bauthor{\bsnm{Chetverikov},~\bfnm{Denis}\binits{D.}},
  \bauthor{\bsnm{Kato},~\bfnm{Kengo}\binits{K.}} \betal{et~al.}
(\byear{2014}).
\btitle{Gaussian approximation of suprema of empirical processes}.
\bjournal{The Annals of Statistics}
\bvolume{42}
\bpages{1564--1597}.
\end{barticle}
\endbibitem

\bibitem{chernozhukov2013intersection}
\begin{barticle}[author]
\bauthor{\bsnm{Chernozhukov},~\bfnm{Victor}\binits{V.}},
  \bauthor{\bsnm{Lee},~\bfnm{Sokbae}\binits{S.}} \AND
  \bauthor{\bsnm{Rosen},~\bfnm{Adam~M}\binits{A.~M.}}
(\byear{2013}).
\btitle{Intersection bounds: Estimation and inference}.
\bjournal{Econometrica}
\bvolume{81}
\bpages{667--737}.
\end{barticle}
\endbibitem

\bibitem{chetverikov2018econometrics}
\begin{barticle}[author]
\bauthor{\bsnm{Chetverikov},~\bfnm{Denis}\binits{D.}},
  \bauthor{\bsnm{Santos},~\bfnm{Andres}\binits{A.}} \AND
  \bauthor{\bsnm{Shaikh},~\bfnm{Azeem~M}\binits{A.~M.}}
(\byear{2018}).
\btitle{The econometrics of shape restrictions}.
\bjournal{Annual Review of Economics}
\bvolume{10}
\bpages{31--63}.
\end{barticle}
\endbibitem

\bibitem{de2012decoupling}
\begin{bbook}[author]
\bauthor{\bparticle{De~la} \bsnm{Pena},~\bfnm{Victor}\binits{V.}} \AND
  \bauthor{\bsnm{Gin{\'e}},~\bfnm{Evarist}\binits{E.}}
(\byear{2012}).
\btitle{Decoupling: from dependence to independence}.
\bpublisher{Springer Science \& Business Media}.
\end{bbook}
\endbibitem

\bibitem{diack1998nonparametric}
\begin{barticle}[author]
\bauthor{\bsnm{Diack},~\bfnm{Cheikh~AT}\binits{C.~A.}} \AND
  \bauthor{\bsnm{Thomas-Agnan},~\bfnm{Christine}\binits{C.}}
(\byear{1998}).
\btitle{A nonparametric test of the non-convexity of regression}.
\bjournal{Journal of Nonparametric Statistics}
\bvolume{9}
\bpages{335--362}.
\end{barticle}
\endbibitem

\bibitem{dudley1999uniform}
\begin{bbook}[author]
\bauthor{\bsnm{Dudley},~\bfnm{Richard~M}\binits{R.~M.}}
(\byear{1999}).
\btitle{Uniform central limit theorems}
\bvolume{63}.
\bpublisher{Cambridge university press}.
\end{bbook}
\endbibitem

\bibitem{dumbgen2001multiscale}
\begin{barticle}[author]
\bauthor{\bsnm{D{\"u}mbgen},~\bfnm{Lutz}\binits{L.}} \AND
  \bauthor{\bsnm{Spokoiny},~\bfnm{Vladimir~G}\binits{V.~G.}}
(\byear{2001}).
\btitle{Multiscale testing of qualitative hypotheses}.
\bjournal{Annals of Statistics}
\bpages{124--152}.
\end{barticle}
\endbibitem

\bibitem{einmahl2005uniform}
\begin{barticle}[author]
\bauthor{\bsnm{Einmahl},~\bfnm{Uwe}\binits{U.}},
  \bauthor{\bsnm{Mason},~\bfnm{David~M}\binits{D.~M.}} \betal{et~al.}
(\byear{2005}).
\btitle{Uniform in bandwidth consistency of kernel-type function estimators}.
\bjournal{The Annals of Statistics}
\bvolume{33}
\bpages{1380--1403}.
\end{barticle}
\endbibitem

\bibitem{FAMA19933}
\begin{barticle}[author]
\bauthor{\bsnm{Fama},~\bfnm{Eugene~F.}\binits{E.~F.}} \AND
  \bauthor{\bsnm{French},~\bfnm{Kenneth~R.}\binits{K.~R.}}
(\byear{1993}).
\btitle{Common risk factors in the returns on stocks and bonds}.
\bjournal{Journal of Financial Economics}
\bvolume{33}
\bpages{3 - 56}.
\bdoi{https://doi.org/10.1016/0304-405X(93)90023-5}
\end{barticle}
\endbibitem

\bibitem{fang2020projection}
\begin{barticle}[author]
\bauthor{\bsnm{Fang},~\bfnm{Zheng}\binits{Z.}} \AND
  \bauthor{\bsnm{Seo},~\bfnm{Juwon}\binits{J.}}
(\byear{2021}).
\btitle{A Projection Framework for Testing Shape Restrictions That Form Convex
  Cones}.
\bjournal{Econometrica, forthcoming (arXiv:1910.07689)}.
\end{barticle}
\endbibitem

\bibitem{frees1994}
\begin{barticle}[author]
\bauthor{\bsnm{Frees},~\bfnm{Edward~W.}\binits{E.~W.}}
(\byear{1994}).
\btitle{Estimating densities of functions of observations}.
\bjournal{Journal of American Statistical Association}
\bvolume{89}
\bpages{517-525}.
\end{barticle}
\endbibitem

\bibitem{ghosal2000}
\begin{barticle}[author]
\bauthor{\bsnm{Ghosal},~\bfnm{Subhashis}\binits{S.}},
  \bauthor{\bsnm{Sen},~\bfnm{Arusharka}\binits{A.}} \AND
  \bauthor{\bparticle{van~der} \bsnm{Vaart},~\bfnm{Aad~W.}\binits{A.~W.}}
(\byear{2000}).
\btitle{Testing monotonicity of regression}.
\bjournal{Ann. Statist.}
\bvolume{28}
\bpages{1054--1082}.
\bdoi{10.1214/aos/1015956707}
\end{barticle}
\endbibitem

\bibitem{gine2007local}
\begin{barticle}[author]
\bauthor{\bsnm{Gin{\'e}},~\bfnm{Evarist}\binits{E.}},
  \bauthor{\bsnm{Mason},~\bfnm{David~M}\binits{D.~M.}} \betal{et~al.}
(\byear{2007}).
\btitle{On local U-statistic processes and the estimation of densities of
  functions of several sample variables}.
\bjournal{The Annals of Statistics}
\bvolume{35}
\bpages{1105--1145}.
\end{barticle}
\endbibitem

\bibitem{gine2016mathematical}
\begin{bbook}[author]
\bauthor{\bsnm{Gin{\'e}},~\bfnm{Evarist}\binits{E.}} \AND
  \bauthor{\bsnm{Nickl},~\bfnm{Richard}\binits{R.}}
(\byear{2016}).
\btitle{Mathematical foundations of infinite-dimensional statistical models}
\bvolume{40}.
\bpublisher{Cambridge University Press}.
\end{bbook}
\endbibitem

\bibitem{groeneboom2001}
\begin{barticle}[author]
\bauthor{\bsnm{Groeneboom},~\bfnm{Piet}\binits{P.}},
  \bauthor{\bsnm{Jongbloed},~\bfnm{Geurt}\binits{G.}} \AND
  \bauthor{\bsnm{Wellner},~\bfnm{Jon~A.}\binits{J.~A.}}
(\byear{2001}).
\btitle{Estimation of a Convex Function: Characterizations and Asymptotic
  Theory}.
\bjournal{Ann. Statist.}
\bvolume{29}
\bpages{1653--1698}.
\bdoi{10.1214/aos/1015345958}
\end{barticle}
\endbibitem

\bibitem{guntuboyina2015}
\begin{barticle}[author]
\bauthor{\bsnm{Guntuboyina},~\bfnm{A.}\binits{A.}} \AND
  \bauthor{\bsnm{Sen},~\bfnm{B.}\binits{B.}}
(\byear{2015}).
\btitle{Global risk bounds and adaptation in univariate convex regression}.
\bjournal{Probab. Theory Related Fields}
\bvolume{163}
\bpages{379-411}.
\end{barticle}
\endbibitem

\bibitem{guntuboyina2015global}
\begin{barticle}[author]
\bauthor{\bsnm{Guntuboyina},~\bfnm{Adityanand}\binits{A.}} \AND
  \bauthor{\bsnm{Sen},~\bfnm{Bodhisattva}\binits{B.}}
(\byear{2015}).
\btitle{Global risk bounds and adaptation in univariate convex regression}.
\bjournal{Probability Theory and Related Fields}
\bvolume{163}
\bpages{379--411}.
\end{barticle}
\endbibitem

\bibitem{guntuboyina2018nonparametric}
\begin{barticle}[author]
\bauthor{\bsnm{Guntuboyina},~\bfnm{Adityanand}\binits{A.}} \AND
  \bauthor{\bsnm{Sen},~\bfnm{Bodhisattva}\binits{B.}}
(\byear{2018}).
\btitle{Nonparametric shape-restricted regression}.
\bjournal{Statistical Science}
\bvolume{33}
\bpages{568--594}.
\end{barticle}
\endbibitem

\bibitem{gupta1984efficient}
\begin{binproceedings}[author]
\bauthor{\bsnm{Gupta},~\bfnm{P}\binits{P.}} \AND
  \bauthor{\bsnm{Bhattacharjee},~\bfnm{GP}\binits{G.}}
(\byear{1984}).
\btitle{An efficient algorithm for random sampling without replacement}.
In \bbooktitle{International Conference on Foundations of Software Technology
  and Theoretical Computer Science}
\bpages{435--442}.
\bpublisher{Springer}.
\end{binproceedings}
\endbibitem

\bibitem{hall1991convergence}
\begin{barticle}[author]
\bauthor{\bsnm{Hall},~\bfnm{Peter}\binits{P.}}
(\byear{1991}).
\btitle{On convergence rates of suprema}.
\bjournal{Probability Theory and Related Fields}
\bvolume{89}
\bpages{447--455}.
\end{barticle}
\endbibitem

\bibitem{han2016}
\begin{barticle}[author]
\bauthor{\bsnm{Han},~\bfnm{Qiyang}\binits{Q.}} \AND
  \bauthor{\bsnm{Wellner},~\bfnm{Jon~A}\binits{J.~A.}}
(\byear{2016}).
\btitle{Multivariate convex regression: global risk bounds and adaptation}.
\bjournal{arXiv preprint arXiv:1601.06844}.
\end{barticle}
\endbibitem

\bibitem{hannah2013multivariate}
\begin{barticle}[author]
\bauthor{\bsnm{Hannah},~\bfnm{Lauren~A}\binits{L.~A.}} \AND
  \bauthor{\bsnm{Dunson},~\bfnm{David~B}\binits{D.~B.}}
(\byear{2013}).
\btitle{Multivariate convex regression with adaptive partitioning}.
\bjournal{The Journal of Machine Learning Research}
\bvolume{14}
\bpages{3261--3294}.
\end{barticle}
\endbibitem

\bibitem{hanson1976}
\begin{barticle}[author]
\bauthor{\bsnm{Hanson},~\bfnm{D.~L.}\binits{D.~L.}} \AND
  \bauthor{\bsnm{Pledger},~\bfnm{Gordon}\binits{G.}}
(\byear{1976}).
\btitle{Consistency in Concave Regression}.
\bjournal{Ann. Statist.}
\bvolume{4}
\bpages{1038--1050}.
\bdoi{10.1214/aos/1176343640}
\end{barticle}
\endbibitem

\bibitem{hildreth1954point}
\begin{barticle}[author]
\bauthor{\bsnm{Hildreth},~\bfnm{Clifford}\binits{C.}}
(\byear{1954}).
\btitle{Point estimates of ordinates of concave functions}.
\bjournal{Journal of the American Statistical Association}
\bvolume{49}
\bpages{598--619}.
\end{barticle}
\endbibitem

\bibitem{hoeffding1948class}
\begin{barticle}[author]
\bauthor{\bsnm{Hoeffding},~\bfnm{Wassily}\binits{W.}}
(\byear{1948}).
\btitle{A Class of Statistics with Asymptotically Normal Distribution}.
\bjournal{The Annals of Mathematical Statistics}
\bvolume{19}
\bpages{293--325}.
\end{barticle}
\endbibitem

\bibitem{janson1984_PTRF}
\begin{barticle}[author]
\bauthor{\bsnm{Janson},~\bfnm{Svante}\binits{S.}}
(\byear{1984}).
\btitle{The asymptotic distributions of incomplete {$U$}-statistics}.
\bjournal{Z, Wahrscheinlichkeitstheorie verw. Gebiete}
\bvolume{66}
\bpages{495-505}.
\end{barticle}
\endbibitem

\bibitem{juditsky2002}
\begin{barticle}[author]
\bauthor{\bsnm{Juditsky},~\bfnm{Anatoli}\binits{A.}} \AND
  \bauthor{\bsnm{Nemirovski},~\bfnm{Arkadi}\binits{A.}}
(\byear{2002}).
\btitle{On nonparametric tests of positivity/monotonicity/convexity}.
\bjournal{Ann. Statist.}
\bvolume{30}
\bpages{498--527}.
\bdoi{10.1214/aos/1021379863}
\end{barticle}
\endbibitem

\bibitem{komarova2019testing}
\begin{barticle}[author]
\bauthor{\bsnm{Komarova},~\bfnm{Tatiana}\binits{T.}} \AND
  \bauthor{\bsnm{Hidalgo},~\bfnm{Javier}\binits{J.}}
(\byear{2019}).
\btitle{Testing nonparametric shape restrictions}.
\bjournal{arXiv preprint arXiv:1909.01675}.
\end{barticle}
\endbibitem

\bibitem{kuosmanen2008representation}
\begin{barticle}[author]
\bauthor{\bsnm{Kuosmanen},~\bfnm{Timo}\binits{T.}}
(\byear{2008}).
\btitle{Representation theorem for convex nonparametric least squares}.
\bjournal{The Econometrics Journal}
\bvolume{11}
\bpages{308--325}.
\end{barticle}
\endbibitem

\bibitem{kur2019optimality}
\begin{barticle}[author]
\bauthor{\bsnm{Kur},~\bfnm{Gil}\binits{G.}},
  \bauthor{\bsnm{Dagan},~\bfnm{Yuval}\binits{Y.}} \AND
  \bauthor{\bsnm{Rakhlin},~\bfnm{Alexander}\binits{A.}}
(\byear{2019}).
\btitle{Optimality of Maximum Likelihood for Log-Concave Density Estimation and
  Bounded Convex Regression}.
\bjournal{arXiv preprint arXiv:1903.05315}.
\end{barticle}
\endbibitem

\bibitem{ledoux2013probability}
\begin{bbook}[author]
\bauthor{\bsnm{Ledoux},~\bfnm{Michel}\binits{M.}} \AND
  \bauthor{\bsnm{Talagrand},~\bfnm{Michel}\binits{M.}}
(\byear{2013}).
\btitle{Probability in Banach Spaces: isoperimetry and processes}.
\bpublisher{Springer Science \& Business Media}.
\end{bbook}
\endbibitem

\bibitem{lee2009testing}
\begin{barticle}[author]
\bauthor{\bsnm{Lee},~\bfnm{Sokbae}\binits{S.}},
  \bauthor{\bsnm{Linton},~\bfnm{Oliver}\binits{O.}} \AND
  \bauthor{\bsnm{Whang},~\bfnm{Yoon-Jae}\binits{Y.-J.}}
(\byear{2009}).
\btitle{Testing for stochastic monotonicity}.
\bjournal{Econometrica}
\bvolume{77}
\bpages{585--602}.
\end{barticle}
\endbibitem

\bibitem{lehmann2006testing}
\begin{bbook}[author]
\bauthor{\bsnm{Lehmann},~\bfnm{Erich~L}\binits{E.~L.}} \AND
  \bauthor{\bsnm{Romano},~\bfnm{Joseph~P}\binits{J.~P.}}
(\byear{2006}).
\btitle{Testing statistical hypotheses}.
\bpublisher{Springer Science \& Business Media}.
\end{bbook}
\endbibitem

\bibitem{lepski1999estimation}
\begin{barticle}[author]
\bauthor{\bsnm{Lepski},~\bfnm{Oleg}\binits{O.}},
  \bauthor{\bsnm{Nemirovski},~\bfnm{Arkady}\binits{A.}} \AND
  \bauthor{\bsnm{Spokoiny},~\bfnm{Vladimir}\binits{V.}}
(\byear{1999}).
\btitle{On estimation of the L r norm of a regression function}.
\bjournal{Probability theory and related fields}
\bvolume{113}
\bpages{221--253}.
\end{barticle}
\endbibitem

\bibitem{10.1214/20-EJS1714}
\begin{barticle}[author]
\bauthor{\bsnm{Lim},~\bfnm{Eunji}\binits{E.}}
(\byear{2020}).
\btitle{{The limiting behavior of isotonic and convex regression estimators
  when the model is misspecified}}.
\bjournal{Electronic Journal of Statistics}
\bvolume{14}
\bpages{2053 -- 2097}.
\bdoi{10.1214/20-EJS1714}
\end{barticle}
\endbibitem

\bibitem{lim2012consistency}
\begin{barticle}[author]
\bauthor{\bsnm{Lim},~\bfnm{Eunji}\binits{E.}} \AND
  \bauthor{\bsnm{Glynn},~\bfnm{Peter~W}\binits{P.~W.}}
(\byear{2012}).
\btitle{Consistency of multidimensional convex regression}.
\bjournal{Operations Research}
\bvolume{60}
\bpages{196--208}.
\end{barticle}
\endbibitem

\bibitem{mammen1991}
\begin{barticle}[author]
\bauthor{\bsnm{Mammen},~\bfnm{Enno}\binits{E.}}
(\byear{1991}).
\btitle{Nonparametric Regression Under Qualitative Smoothness Assumptions}.
\bjournal{Ann. Statist.}
\bvolume{19}
\bpages{741--759}.
\bdoi{10.1214/aos/1176348118}
\end{barticle}
\endbibitem

\bibitem{matzkin1991}
\begin{barticle}[author]
\bauthor{\bsnm{Matzkin},~\bfnm{R.}\binits{R.}}
(\byear{1991}).
\btitle{Semiparametric estimation of monotone and concave utility functions for
  polychotomous choice models}.
\bjournal{Econometrica}
\bvolume{59}
\bpages{1315-1327}.
\end{barticle}
\endbibitem

\bibitem{Mazumder2019}
\begin{barticle}[author]
\bauthor{\bsnm{Mazumder},~\bfnm{Rahul}\binits{R.}},
  \bauthor{\bsnm{Choudhury},~\bfnm{Arkopal}\binits{A.}},
  \bauthor{\bsnm{Iyengar},~\bfnm{Garud}\binits{G.}} \AND
  \bauthor{\bsnm{Sen},~\bfnm{Bodhisattva}\binits{B.}}
(\byear{2019}).
\btitle{A Computational Framework for Multivariate Convex Regression and Its
  Variants}.
\bjournal{Journal of the American Statistical Association}
\bvolume{114}
\bpages{318-331}.
\end{barticle}
\endbibitem

\bibitem{MurphyWelch1990}
\begin{barticle}[author]
\bauthor{\bsnm{Murphy},~\bfnm{Kevin~M}\binits{K.~M.}} \AND
  \bauthor{\bsnm{Welch},~\bfnm{Finis}\binits{F.}}
(\byear{1990}).
\btitle{{Empirical Age-Earnings Profiles}}.
\bjournal{Journal of Labor Economics}
\bvolume{8}
\bpages{202-229}.
\end{barticle}
\endbibitem

\bibitem{nolan1987u}
\begin{barticle}[author]
\bauthor{\bsnm{Nolan},~\bfnm{Deborah}\binits{D.}} \AND
  \bauthor{\bsnm{Pollard},~\bfnm{David}\binits{D.}}
(\byear{1987}).
\btitle{U-processes: rates of convergence}.
\bjournal{The Annals of Statistics}
\bpages{780--799}.
\end{barticle}
\endbibitem

\bibitem{nolan1988functional}
\begin{barticle}[author]
\bauthor{\bsnm{Nolan},~\bfnm{Deborah}\binits{D.}} \AND
  \bauthor{\bsnm{Pollard},~\bfnm{David}\binits{D.}}
(\byear{1988}).
\btitle{Functional limit theorems for $ U $-processes}.
\bjournal{The Annals of Probability}
\bvolume{16}
\bpages{1291--1298}.
\end{barticle}
\endbibitem

\bibitem{seijo2011nonparametric}
\begin{barticle}[author]
\bauthor{\bsnm{Seijo},~\bfnm{Emilio}\binits{E.}},
  \bauthor{\bsnm{Sen},~\bfnm{Bodhisattva}\binits{B.}} \betal{et~al.}
(\byear{2011}).
\btitle{Nonparametric least squares estimation of a multivariate convex
  regression function}.
\bjournal{The Annals of Statistics}
\bvolume{39}
\bpages{1633--1657}.
\end{barticle}
\endbibitem

\bibitem{sherman1994maximal}
\begin{barticle}[author]
\bauthor{\bsnm{Sherman},~\bfnm{Robert~P}\binits{R.~P.}}
(\byear{1994}).
\btitle{Maximal inequalities for degenerate U-processes with applications to
  optimization estimators}.
\bjournal{The Annals of Statistics}
\bpages{439--459}.
\end{barticle}
\endbibitem

\bibitem{song2019approximating}
\begin{barticle}[author]
\bauthor{\bsnm{Song},~\bfnm{Yanglei}\binits{Y.}},
  \bauthor{\bsnm{Chen},~\bfnm{Xiaohui}\binits{X.}} \AND
  \bauthor{\bsnm{Kato},~\bfnm{Kengo}\binits{K.}}
(\byear{2019}).
\btitle{Approximating high-dimensional infinite-order $ U $-statistics:
  statistical and computational guarantees}.
\bjournal{Electronic Journal of Statistics}
\bvolume{13}
\bpages{4794-4848}.
\end{barticle}
\endbibitem

\bibitem{van2011local}
\begin{barticle}[author]
\bauthor{\bsnm{Van Der~Vaart},~\bfnm{Aad}\binits{A.}} \AND
  \bauthor{\bsnm{Wellner},~\bfnm{Jon~A}\binits{J.~A.}}
(\byear{2011}).
\btitle{A local maximal inequality under uniform entropy}.
\bjournal{Electronic Journal of Statistics}
\bvolume{5}
\bpages{192}.
\end{barticle}
\endbibitem

\bibitem{van1996weak}
\begin{bbook}[author]
\bauthor{\bsnm{Van Der~Vaart},~\bfnm{Aad~W}\binits{A.~W.}} \AND
  \bauthor{\bsnm{Wellner},~\bfnm{Jon~A}\binits{J.~A.}}
(\byear{1996}).
\btitle{Weak Convergence and Empirical Processes With Applications to
  Statistics}.
\bpublisher{Springer-Verlag New York}.
\end{bbook}
\endbibitem

\bibitem{wang2011testing}
\begin{barticle}[author]
\bauthor{\bsnm{Wang},~\bfnm{Jianqiang~C}\binits{J.~C.}} \AND
  \bauthor{\bsnm{Meyer},~\bfnm{Mary~C}\binits{M.~C.}}
(\byear{2011}).
\btitle{Testing the monotonicity or convexity of a function using regression
  splines}.
\bjournal{Canadian Journal of Statistics}
\bvolume{39}
\bpages{89--107}.
\end{barticle}
\endbibitem

\bibitem{zhang2001bayesian}
\begin{barticle}[author]
\bauthor{\bsnm{Zhang},~\bfnm{Dixin}\binits{D.}}
(\byear{2001}).
\btitle{Bayesian bootstraps for U-processes, hypothesis tests and convergence
  of Dirichlet U-processes}.
\bjournal{Statistica Sinica}
\bpages{463--478}.
\end{barticle}
\endbibitem

\end{thebibliography}

\begin{appendix}

\section{Maximal inequalities}

\subsection{Local maximal inequalities for multiple incomplete U-processes}\label{app:local_max_ineq_incom}
In this section, we establish a local maximal inequality in Theorem \ref{thm:lmax_sampling_proc} for multiple incomplete $U$-processes. 
Thus, let $\cF$ be a collection of symmetric, measurable functions $f: (S^r,\cS^r) \to (\bR, \cB(\bR))$ with   a measurable envelope function $F:S^r \to [0,\infty)$ such that $0 < P^r F^2 < \infty$. 
For $\tau > 0$, define the uniform entropy integral
\begin{equation}
\label{def:uniform_entropy_integral_r}
\begin{split}
\bar{J}(\tau) &:= \bar{J}(\tau, \cF, F) := \int_{0}^{\tau} \sqrt{
1 + \sup_{Q} \log N(\cF, \|\cdot\|_{Q,2}, \epsilon \|F\|_{Q,2}) }\;\; d\epsilon, 
\end{split}
\end{equation}
where  
the $\sup_{Q}$ is taken over  all finitely supported {\it probability measures} on $\cS^r$. An important observation is that it is equivalent to take the supremum over all finitely support \textit{measures} $Q$ with $Q(\cS^r) < \infty$. 

Let $M \geq 1$ be an integer, and 
$\{Z_{\iota}^{(m)}: \iota \in I_{n,r}, m \in [M]\}$  be a collection of i.i.d. Bernoulli random variables with success probability $p_n = N/|I_{n,r}|$ for some integer $0 < N \leq |I_{n,r}|$, which are independent of the data $X_1^n$.
For $m \in [M]$, let
\begin{align}\label{def:m_incomp_U}
\bD_n^{(m)}(f) := \frac{1}{\sqrt{N}} \sum_{\iota \in I_{n,r}} 
(Z_{\iota}^{(m)} - p_n) f(X_{\iota}), \;\; \text{ for } f \in \cF.
\end{align}


\begin{theorem}
\label{thm:lmax_sampling_proc}
Let $\sigma_r > 0$ be such that
$\sup_{f \in \cF} \|f\|_{P^r,2} \leq \sigma_r \leq 
\|F\|_{P^r,2}
$.
Then for some absolute constant $C > 0$,
$C^{-1} \Exp\left[\max_{m \in [M]}\|{\bD}_{n}^{(m)}(f)\|_{\cF}\right]$ is upper bounded by
\begin{align*}
\sqrt{M \log(2M) }\bar{J}(\delta_r)\|F\|_{P^r,2} 
\; + \; \frac{ \log(2M)  \|\widetilde{\cT}\|_{\Pro,2} }{\sqrt{N}} \frac{\bar{J}^2(\delta_r)}{\delta_r^2} \; + \;
\sqrt{\Delta\log(2M) } \frac{  \bar{J}(\delta_r)}{\delta_r},
\end{align*}
where we define
\[\delta_r := \frac{\sigma_r}{\sqrt{M} \|F\|_{P^{r},2}},\quad
\Delta := 
\Exp\left[  \sup_{f \in \cF}\left|I_{n,r}\right|^{-1}\sum_{\iota \in I_{n,r}} f^2(X_{\iota})\right], \quad \widetilde{\mathcal{T}} := \max_{\iota \in I_{n,r}, m \in [M]} Z_{\iota}^{(m)} F(X_{\iota}).
\]
Further,   $\|\widetilde{\mathcal{T}}\|_{\Pro,2} \leq M^{1/q} N^{1/q} \|F\|_{P^r,q}$, for any $q \in [2,\infty]$, with the understanding that $1/q = 0$ if $q = \infty$. In addition, if $\cF$ is VC type class with characteristics $(A, \nu)$,
$\bar{J}(\tau) \leq C \tau \sqrt{\nu \log(A/\tau)}$ for $\tau \in (0,1]$. 
\end{theorem}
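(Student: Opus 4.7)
The plan is to lift the family $\{\bD_n^{(m)}\}_{m \in [M]}$ to a single stochastic process indexed by $(m,f)\in [M]\times \cF$ and apply a Bernstein-type generic chaining argument, then remove the conditioning on the data. Conditional on $X_1^n$, the summands in $\bD_n^{(m)}(f)$ are independent centered random variables with $|(Z_\iota^{(m)} - p_n)f(X_\iota)|\leq F(X_\iota)$ almost surely, and across different $m$ the Bernoullis $Z_\iota^{(m)}$ are independent. Hence the increments carry both a sub-Gaussian scale (governed by the conditional variance, which reduces to an empirical $L^2$ seminorm $\|f-f'\|_{\widehat{P},2}$ on the $r$-tuples when $m=m'$, and is additive across $m\neq m'$) and a sub-exponential scale (controlled by $F(X_\iota)/\sqrt N$), placing us squarely in the Bernstein chaining regime. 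The combined covering numbers factorize as $\log N([M]\times\cF,d,\varepsilon)\leq \log M + \log N(\cF,\|\cdot\|_{\widehat{P},2},\varepsilon)$.

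The first step is the conditional chaining. I would apply a Bernstein-type generic chaining inequality (in the spirit of Gin\'e--Nickl Section~3.5 or van der Vaart--Wellner (2.2.8), with the Talagrand-type mixed $L^2/L^\infty$ adjustment) to the conditional process on $[M]\times\cF$. The $\log M$ contribution to the covering numbers, combined with the usual Bernstein trade-off, produces the $\sqrt{M\log M}$ prefactor on the $L^2$-chaining piece (after the product-index normalization the effective envelope scales like $\sqrt M\|F\|_{\widehat{P},2}$) and the $\log M$ prefactor on the envelope piece. Localization at $\sigma_r$ is baked in through $\delta_r=\sigma_r/(\sqrt M\|F\|_{P^r,2})$, which serves as the upper limit of the entropy integral $\bar J(\delta_r)$ and determines both the main $L^2$ contribution and the remainder-after-chaining correction $\bar J(\delta_r)/\delta_r$.

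The second step integrates out $X_1^n$. The empirical envelope $\|F\|_{\widehat P,2}$ is replaced by $\|F\|_{P^r,2}$ via Jensen (yielding the first term); the sub-exponential piece contributes the envelope term $\|\widetilde{\cT}\|_{\Pro,2}/\sqrt N$ (the second term); and the residual ``empirical diameter'' of the subclass constrained by $\sigma_r$, which cannot be controlled in expectation by $\sigma_r$ alone, is absorbed into $\sqrt{\Delta\log(2M)}\,\bar J(\delta_r)/\delta_r$ using the uniform second-moment bound $\Delta=\Exp\sup_f|I_{n,r}|^{-1}\sum_\iota f^2(X_\iota)$ (the third term). For the stated bound on $\|\widetilde\cT\|_{\Pro,2}$, I would use the crude estimate $\max_{\iota,m}Z_\iota^{(m)}F(X_\iota)\leq(\sum_{\iota,m}Z_\iota^{(m)}F^q(X_\iota))^{1/q}$ and Jensen; since $\Exp\sum_{\iota,m}Z_\iota^{(m)}F^q(X_\iota)=MN\|F\|_{P^r,q}^q$, this gives $\|\widetilde\cT\|_{\Pro,2}\leq M^{1/q}N^{1/q}\|F\|_{P^r,q}$. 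The VC-type bound $\bar J(\tau)\leq C\tau\sqrt{\nu\log(A/\tau)}$ for $\tau\in(0,1]$ is an immediate calculation by direct integration of $\sqrt{1+\nu\log(A/\varepsilon)}$.

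The main obstacle I anticipate is the book-keeping: tracking the precise normalization $\delta_r=\sigma_r/(\sqrt M\|F\|_{P^r,2})$ through each level of the Bernstein chaining so that the three prefactors $\sqrt{M\log M}$, $\log M$, and $\sqrt{\log M}$ (the last appearing inside $\sqrt{\Delta\log(2M)}$) emerge with the correct exponents on $M$, and justifying that the \emph{empirical} entropy can be dominated by the supremum-over-measures entropy $\bar J$ (so that the integral is a data-free quantity). A secondary difficulty is keeping the sub-exponential step ``random-envelope'' rather than $L^\infty$: obtaining the clean $\|\widetilde\cT\|_{\Pro,2}$ factor (rather than the much larger $\|F\|_\infty$) requires invoking a version of Bernstein chaining that works with the random envelope $\max_{\iota,m}Z_\iota^{(m)}F(X_\iota)$, which is precisely where independence of $Z_\iota^{(m)}$ across both $\iota$ and $m$ gets exploited.
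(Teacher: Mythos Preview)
Your plan has a real gap, and it sits exactly where you flag your ``secondary difficulty.'' If you apply Bernstein's inequality to $\bD_n^{(m)}(f)$ conditional on $X_1^n$, the sub-exponential scale of each summand $(Z_\iota^{(m)}-p_n)f(X_\iota)$ is governed by its essential supremum over the randomness of $Z$, namely $(1-p_n)|f(X_\iota)|\le F(X_\iota)$. The resulting envelope in the $\gamma_1$ part of the chain is therefore $\max_{\iota\in I_{n,r}}F(X_\iota)$, not $\widetilde{\cT}=\max_{\iota,m}Z_\iota^{(m)}F(X_\iota)$; the $Z$-factors cannot appear because you have not yet conditioned on $Z$. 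There is no ``random-envelope Bernstein'' available here: once you condition on $\cD_n=(X_1^n,Z)$, $\bD_n^{(m)}$ is deterministic, and if you condition only on $X_1^n$, the Bernstein envelope is the full $\max_\iota F(X_\iota)$. The paper remarks explicitly (immediately after the theorem) that this route ``leads to sub-optimal rate.''

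The paper's argument is structurally different. It first symmetrizes in $Z$ (conditional on $X_1^n$), obtaining a Rademacher process $\widehat{\bD}_n^{(m)}(f)=N^{-1/2}\sum_\iota \epsilon_\iota^{(m)}Z_\iota^{(m)}f(X_\iota)$. Conditional on $\cD_n$ this is \emph{purely sub-Gaussian} (no Bernstein mixture), but with increments measured in the sampled empirical metric $\|\cdot\|_{\widehat Q_m,2}$, so the diameter is the random $\widehat V_n=\max_m\sup_f\|f\|_{\widehat Q_m,2}$. The quantity $\widetilde{\cT}$ does not enter as a chaining envelope at all; it appears when bounding $\Exp[\widehat V_n^2]$. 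The key device is a contraction principle for the \emph{maximum over $m$} (Lemma~\ref{lemma:max_contraction} in the paper), which converts $\max_m\sup_f|\sum_\iota \epsilon_\iota^{(m)}Z_\iota^{(m)}f^2(X_\iota)|$ into $\widetilde{\cT}\cdot\max_m\sup_f|\sum_\iota \epsilon_\iota^{(m)}Z_\iota^{(m)}f(X_\iota)|$; this is where the $Z$-weighted envelope legitimately enters, and it requires the $M$-fold extension of Ledoux--Talagrand contraction proved there. One then solves the resulting implicit inequality $(z^*)^2\lesssim \Delta/(M\|F\|_{P^r,2}^2)+(\text{const})\,\bar J(z^*)$ via the self-improvement lemma of van der Vaart (2011), which is what produces the $\bar J^2(\delta_r)/\delta_r^2$ factor in the middle term and the $\sqrt{\Delta}$ in the third. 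Your proposal contains no analogue of either the contraction step or the self-improvement, and without them the route does not reach the stated bound.
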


\begin{remark}
For a VC-type class,  the upper bound depends  on $M$ only via $\log^2(2M)$ if we use $\|F\|_{P^r,\infty}$ to bound $\|\widetilde{\mathcal{T}}\|_{\Pro,2}$. 
To further bound $\Delta$, which is the expectation of the supremum of a \textit{complete} $U$-process, we use the multi-level local maximal inequalities developed in \cite{chen2017jackknife}, which are summarized in Appendix \ref{subsec:multi-level_local} for convenience.
\end{remark}


\begin{remark}
There are two difficulties in the proof. First,  $p_n := N/|I_{n,r}|$ is very small (in most cases, asymptotically vanishing), which prevents us from directly applying sub-Gaussian inequality to $\bD_n^{(m)}(\cdot)$. One possible approach is to use Bernstein's inequality, which however involves the infinity norm, $\max_{\iota \in I_{n,r}} F(X_{\iota})$ (in comparison to $\widetilde{\mathcal{T}}$ in the above Lemma), and leads to sub-optimal rate. Second, we hope to achieve logarithmic dependence on $M$, so that the number of partitions will only impose a mild requirement on the sample size $n$ and computational budget $N$. 
\end{remark}

The idea of the current proof is in similar spirit as the proof for \cite[Theorem 5.2]{chernozhukov2014gaussian} and \cite[Theorem 5.1]{chen2017jackknife}. That is, we try to bound second moment by a concave function of first moment (see Step 3 in the proof of Theorem \ref{thm:lmax_sampling_proc} below), which, in \cite[Theorem 5.2]{chernozhukov2014gaussian} and \cite[Theorem 5.1]{chen2017jackknife}, was achieved by the contraction principle (\cite[Lemma A.5]{chernozhukov2014gaussian}) and Hoffman-J{\o}rgensen inequality(\cite[Theorem A.1]{chernozhukov2014gaussian}). However, in the presence of sampling ($\{Z_{\iota}^{(m)}\}$), these tools cannot be applied easily, and we need the following generalization of the contraction principle.

\begin{lemma}[Contraction principle]\label{lemma:max_contraction}
Let $M, L \geq 1$ be integers, $\Theta \subset \bR^{L}$ and $\{\epsilon_\ell^{(m)}: \ell \in [L],\; m \in [M]\}$ be a collection of independent Rademacher variables. Then
\begin{align*}
\Exp\left[\sup_{m \in [M], \theta \in \Theta} \left|\sum_{\ell \in [L]} \epsilon_\ell^{(m)} \theta_\ell^2 \right| \right]
\; \leq \; 4 \mathcal{T} 
\Exp\left[\sup_{m \in [M], \theta \in \Theta} \left|\sum_{\ell \in [L]} \epsilon_\ell^{(m)} \theta_\ell \right| \right],
\end{align*} 
where $\mathcal{T} := \sup_{\theta \in \Theta, \ell \in [L]} |\theta_{\ell}|$.
\end{lemma}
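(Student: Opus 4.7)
The plan is to reduce this inequality to the classical Ledoux--Talagrand contraction principle by re-indexing the doubly-indexed Rademacher array as a single Rademacher sequence. For each pair $(\theta, m) \in \Theta \times [M]$, I would introduce two vectors $a_{(\theta, m)}, b_{(\theta, m)} \in \bR^{LM}$ with entries indexed by $(\ell, m') \in [L] \times [M]$:
\[
a_{(\theta, m), (\ell, m')} := \theta_\ell \, \mathbbm{1}\{m' = m\}, \qquad b_{(\theta, m), (\ell, m')} := \theta_\ell^2 \, \mathbbm{1}\{m' = m\}.
\]
Setting $\tilde\epsilon_{(\ell, m')} := \epsilon_\ell^{(m')}$ gives an i.i.d.\ Rademacher sequence indexed by $[L] \times [M]$, and the two sums appearing in the claim can be rewritten as $\sum_{(\ell,m')} \tilde\epsilon_{(\ell,m')} a_{(\theta,m),(\ell,m')}$ and $\sum_{(\ell,m')} \tilde\epsilon_{(\ell,m')} b_{(\theta,m),(\ell,m')}$, respectively. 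Because the indicator kills the coordinates with $m' \neq m$, we have the key coordinatewise identity $b_{(\theta,m),(\ell,m')} = \phi\bigl(a_{(\theta,m),(\ell,m')}\bigr)$, where $\phi(x) := x^2$ satisfies $\phi(0) = 0$.

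Since $|a_{(\theta,m),(\ell,m')}| \leq \mathcal{T}$ for every admissible index, the map $\phi$ is $(2\mathcal{T})$-Lipschitz on the effective range of these coordinates and vanishes at $0$. Then the Ledoux--Talagrand contraction principle for Rademacher averages, combined with its standard absolute-value variant (which contributes an extra factor $2$), yields
\[
\Exp \sup_{(\theta,m)} \left|\sum_{(\ell,m')} \tilde\epsilon_{(\ell,m')} b_{(\theta,m),(\ell,m')}\right| \;\leq\; 2\cdot(2\mathcal{T})\cdot \Exp \sup_{(\theta,m)} \left|\sum_{(\ell,m')} \tilde\epsilon_{(\ell,m')} a_{(\theta,m),(\ell,m')}\right|,
\]
which is precisely the claimed inequality with constant $4\mathcal{T}$.

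The only delicate point, and the part that needs to be written out carefully, is the legitimacy of the re-indexing: one must check that each Rademacher variable $\tilde\epsilon_{(\ell,m')}$ multiplies exactly one coordinate of the enlarged vector, and that those enlarged coordinates are genuine coordinatewise $\phi$-images. Both facts are immediate from the indicator structure, so I do not expect any real obstacle beyond invoking the standard contraction result. No additional probabilistic input is required.
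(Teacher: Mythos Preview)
Your proposal is correct and is essentially the paper's own argument: the paper also embeds each pair $(\theta,m)$ as a matrix $\alpha\in\bR^{L\times M}$ with entries $\alpha_{\ell,j}=\theta_\ell\mathbbm{1}\{j=m\}$, rewrites both suprema as single Rademacher sums over the enlarged index set, and then invokes the classical Ledoux--Talagrand contraction principle (their Theorem~4.12). Your re-indexing via vectors in $\bR^{LM}$ is the same construction in different notation, and your accounting for the constant $4\mathcal{T}$ (the $2\mathcal{T}$-Lipschitz bound for $x\mapsto x^2$ on $[-\mathcal{T},\mathcal{T}]$ together with the factor $2$ from the absolute-value version) matches.
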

\begin{proof}
Define $\mathcal{A} := \cup_{m \in [M]} \mathcal{A}_m$, where
\[
\mathcal{A}_m := \bigcup_{\theta \in \Theta}\left\{ \alpha \in \bR^{L\times M}: \alpha_{\ell, j} = \theta_\ell \mathbbm{1}\{j = m\}\;\;\; \text{ for }  \ell \in [L],\; j \in [M]
\right\}.
\]
It is clear that $\Exp\left[\sup_{m \in [M], \theta \in \Theta} \left|\sum_{\ell \in [L]} \epsilon_\ell^{(m)} \theta_\ell \right| \right]
= \Exp\left[\sup_{\alpha \in \mathcal{A}} \left|\sum_{\ell \in [L], j \in [M]} \epsilon_\ell^{(j)} \alpha_{\ell, j} \right| \right]$ and \\
$\Exp\left[\sup_{m \in [M], \theta \in \Theta} \left|\sum_{\ell \in [L]} \epsilon_\ell^{(m)} \theta_\ell^2 \right| \right]
= \Exp\left[\sup_{\alpha \in \mathcal{A}} \left|\sum_{\ell \in [L], j \in [M]} \epsilon_\ell^{(j)} (\alpha_{\ell, j})^2 \right| \right]$.
Then the proof is complete by the usual contraction principle \cite[Theorem 4.12]{ledoux2013probability}.
\end{proof}

\begin{proof}[Proof of Theorem \ref{thm:lmax_sampling_proc}]
First,  define a collection of random measures (not necessarily probability measures) on $(S^r, \cS^r)$ that play important roles:
\begin{align}\label{def:two_random_measures}
\widehat{Q}_m := {N^{-1}}  \sum_{\iota \in I_{n,r}} Z_{\iota}^{(m)} \delta_{X_\iota}  \;\text{ for } m \in [M], \quad
\widehat{Q} := \sum_{m=1}^{M} \widehat{Q}_m,
\end{align}
where $\delta_{(x_1,\ldots,x_r)}$ is the Dirac measure such that
$
\delta_{(x_1,\ldots,x_r)}(A) = \ind\{(x_1,\ldots,x_r) \in A\}$
for any $A \in \cS^r$. Further, define
\begin{align}\label{def:V_hat_tilde}
\begin{split}
&\widehat{V}_n := \max_{m \in [M]}\sup_{f \in \cF} \|f\|_{\widehat{Q}_m,2} = \max_{m \in [M]}
\sup_{f \in \cF} \sqrt{N^{-1} \sum_{\iota \in I_{n,r}, m \in [M]} Z_{\iota}^{(m)} f^2(X_{\iota})},\\
&z^* := \sqrt{M^{-1}\Exp[\widehat{V}_n^2]/\|F\|_{P^r,2}^2} .
\end{split}
\end{align}
Finally, let $\{\epsilon_{\iota}^{(m)}: \iota \in I_{n,r}, m \in [M]\}$ be a collection of independent Rademacher random variables that are independent of 
$\cD_n := \{Z_{\iota}^{(m)}: \iota \in I_{n,r}, m \in [M]\} \cup X_1^n$. Define
for $f \in \cF$,
\begin{align*}
\widehat{\bD}_{n}^{(m)}(f) &:= N^{-1/2} \sum_{\iota \in I_{n,r}} \epsilon_{\iota}^{(m)} Z_{\iota}^{(m)} f(X_{\iota}) \text{ for } m \in [M], \quad \widehat{\bD}_{n}(f) := \max_{m \in [M]} |\widehat{\bD}_n^{(m)}(f)|, \\
\widetilde{\bD}_{n}(f) &:= \max_{m \in [M]} \left| N^{-1/2}  \sum_{\iota \in I_{n,r}} \epsilon_{\iota}^{(m)} Z_{\iota}^{(m)} f^2(X_{\iota}) \right|.
\end{align*}


\noindent\underline{{Step 1. bound $\Exp\left[\max_{m \in [M]}\|{\bD}_{n}^{(m)}\|_{\cF}\right]
$}}. 
Since $\{Z^{(m)}_{\iota}: \iota \in I_{n,r},\; m \in [M]\}$ is independent of $X_1^n$, by first conditioning on $X_1^n$ and then applying symmetrization inequality, 
\begin{align*}
\Exp\left[\max_{m \in [M]}  \left\|\bD_{n}^{(m)}(f)\right\|_{\cF}\right]
&= \Exp\left[ \Exp\left[
\max_{m \in [M]}\left\|\bD_{n}^{(m)}(f)\right\|_{\cF}\, |\, X_1^n
\right]\right]
 \lesssim \Exp\left[ \Exp\left[ \max_{m \in [M]}
\left\|\widehat{\bD}_{n}^{(m)}(f)\right\|_{\cF}\, |\, X_1^n
\right]\right] \\
\; &= \;
\Exp\left[ \max_{m \in [M]}
\left\|\widehat{\bD}_{n}^{(m)}(f)\right\|_{\cF}
\right] = \Exp\left[ \|\widehat{\bD}_n(f)\|_{\cF} \right].
\end{align*}

Then we apply the standard entropy integral bound conditioning on $\cD_n$. Specifically, 
for $m \in [M]$ and $f_1,f_2 \in \cF$, by Hoeffding's inequality \cite[Lemma 2.2.7]{van1996weak},
\begin{align*}
\|\widehat{\bD}_{n}^{(m)}(f_1) - \widehat{\bD}_{n}^{(m)}(f_2)\|_{\psi_2 \vert \cD_n} \; \lesssim \;
\sqrt{\frac{1}{N}\sum_{\iota \in I_{n,r}} (Z_{\iota}^{(m)})^2 (f_1(X_{\iota}) - f_2(X_{\iota}))^2} \; = \;
\|f_1 - f_2\|_{\widehat{Q}_m,2},
\end{align*}
where note that $Z^2 = Z$ for any  Bernoulli random variable $Z$. 
Now we use $\widehat{V}_n$ as the diameter for $\cF$ under $\|\cdot\|_{\widehat{Q}_m,2}$, and  by the entropy integral bound \cite[Corollary 2.2.5]{van1996weak}, 
\begin{align*}
\begin{split}
\left\| \|\widehat{\bD}_{n}^{(m)}(f)\|_{\cF}\right\|_{\psi_2  \vert \,
\cD_n } \; &\lesssim \;
\int_{0}^{\widehat{V}_n} \sqrt{1 + \log N(\cF, \|\cdot\|_{\widehat{Q}_m,2}, \epsilon)} \,d\epsilon  \\
& \;\leq\; \int_{0}^{\widehat{V}_n} \sqrt{1 + \log N(\cF, \|\cdot\|_{\widehat{Q},2}, \epsilon)} \,d\epsilon 
\;\leq\; \|F\|_{\widehat{Q},2} \bar{J}(\widehat{V}_n/\|F\|_{\widehat{Q},2}),
\end{split}
\end{align*}
where in the second inequality we used the fact that $\widehat{Q}$ dominates $\widehat{Q}_m$, and
in the third we used change-of variable technique and the definition of $\bar{J}(\cdot)$ (recall again $\widehat{Q}$ may not be a probability measure). Then by maximal inequality \cite[Lemma 2.2.2]{van1996weak}, 
\begin{align}\label{aux_inequ1}
\Exp\left[ \max_{m \in [M]} \|\widehat{\bD}_{n}^{(m)}(f)\|_{\cF} \, \vert \, \cD_n  \right]     \; &\lesssim \;
\sqrt{\log(2M)} \|F\|_{\widehat{Q},2} \bar{J}(\widehat{V}_n/\|F\|_{\widehat{Q},2}).
\end{align}
Since $(x,y) \mapsto \bar{J}(\sqrt{x/y})\sqrt{y}$ is jointly concave (see \cite[Lemma A.2]{chernozhukov2014gaussian}), by Jensen's inequality,
\begin{align}\label{aux_inequ2}
\begin{split}
\Exp\left[\max_{m \in [M]} \|\widehat{\bD}_{n}^{(m)}(f)\|_{\cF} \right] \; &\lesssim \; \sqrt{\log(2M)} 
\sqrt{\Exp\left[ \|F\|_{\widehat{Q},2}^2\right]} 
\bar{J}(\sqrt{\Exp[\widehat{V}_n^2]/{\Exp\left[ \|F\|_{\widehat{Q},2}^2\right]}}) \\
&= \sqrt{M \log(2M)} \|F\|_{P^r,2} \bar{J}(z^*),
\end{split}
\end{align}
where 
for the equality, we use the fact that
$$
\Exp\left[ \|F\|_{\widehat{Q},2}^2\right] = 
\Exp\left[ {N^{-1}} \sum_{m \in [M]}\sum_{\iota \in I_{n,r}} Z_{\iota}^{(m)}F^2(X_{\iota})\right] = M \|F\|_{P^r,2}^2.
$$

\noindent\underline{{Step 2. bound $\Exp[\widehat{V}_n^2]$}}. 
Observe that
\begin{align}
\begin{split}
\Exp[\widehat{V}_n^2] 
&\leq \frac{1}{{N}}\Exp\left[ \max_{m \in [M]}\sup_{f \in \cF} \sum_{\iota \in I_{n,r}} (Z_{\iota}^{(m)} - p_n) f^2(X_{\iota}) \right] +  \Exp\left[
\sup_{f \in \cF}\left|I_{n,r}\right|^{-1}\sum_{\iota \in I_{n,r}} f^2(X_{\iota})
\right]\\
\;\leq\; & N^{-1/2}\Exp\left[ \left\| \widetilde{\bD}_n(f)\right\|_{\cF}\right] 
+  \Delta,
\end{split}
\label{bound_exp_Vn_2}
\end{align}
where 
in the last inequality we again first condition on $X_1^n$ and then apply symmetrization inequality. For the first term, conditional on $\cD_n$, by Lemma \ref{lemma:max_contraction} (contraction principle) and \eqref{aux_inequ1},
\begin{align*}
\Exp\left[ \left. \left\| \widetilde{\bD}_n(f) \right\|_{\cF} \;\right\vert \cD_n \right]
\; \lesssim \;& \widetilde{\mathcal{T}}\; \Exp\left[ \|\widehat{\bD}_n(f)\|_{\cF}\;\vert \cD_n 
\right]
\; \lesssim \;
\sqrt{\log(2M)} \widetilde{\mathcal{T}} \|F\|_{\widehat{Q},2} \bar{J}(\widehat{V}_n/\|F\|_{\widehat{Q},2}).
\end{align*}
Denote $\mvxi := {\widetilde{\mathcal{T}}} \|F\|_{\widehat{Q},2}$. Since $(x,y) \mapsto \bar{J}(x/y){y}$ is jointly concave (see \cite[Lemma A.3]{chernozhukov2014gaussian}), by Jensen's inequality and Cauchy-Schwarz inequality,
\begin{align*}
\Exp\left[ \left\| \widetilde{\bD}_n(f) \right\|_{\cF} \right] 
&\; \lesssim \;
\sqrt{\log(2M)} \Exp\left[ \mvxi  \bar{J} \left(\frac{\widehat{V}_n}{ \|F\|_{\widehat{Q},2}} \right)\right] 
\; \lesssim \;
\sqrt{\log(2M)}  \Exp\left[ \mvxi  \right] \bar{J}\left(\frac{\Exp\left[ {\widetilde{\mathcal{T}}} \widehat{V}_n \right]}{\Exp\left[ \mvxi  \right]} \right) \\
&\; \lesssim \;
\sqrt{\log(2M)}  \Exp\left[ \mvxi \right] \bar{J} \left(\frac{\|\widetilde{\mathcal{T}}\|_{\Pro,2} \sqrt{\Exp[\widehat{V}_n^2]} }{\Exp\left[ \mvxi \right]} \right).
\end{align*}
Since $\bar{J}(\cdot)$ is non-decreasing in $[0,\infty)$ and for $c \geq 1$, $\bar{J}(c\tau) \leq c\bar{J}(\tau)$ (see \cite[Lemma A.2]{chernozhukov2014gaussian}) and by definition of $z^*$, we have
\begin{align}
\begin{split}
 \Exp\left[\|\widetilde{\bD}_{n}(f)\|_{\cF} \right] \; \lesssim \;&
\sqrt{\log(2M)}  \Exp\left[ \mvxi \right]
\bar{J}\left(\left(\Exp\left[ \mvxi \right]^{-1}\|\widetilde{\mathcal{T}}\|_{\Pro,2} \sqrt{M} \|F\|_{P^r,2} \vee 1\right) z^*  \right)  \\
\; \leq \;&
\sqrt{\log(2M)} \left(\sqrt{M}\|\widetilde{\mathcal{T}}\|_{\Pro,2} \|F\|_{P^r,2}
+ \Exp\left[ \mvxi \right]
\right) \bar{J}(z^*) \\
\leq \; &
2 \sqrt{M\log(2M)}\|\widetilde{\mathcal{T}}\|_{\Pro,2} \|F\|_{P^r,2} \bar{J}(z^*), 
\end{split} \label{aux_inequ3}
\end{align}
where in the last inequality we applied Cauchy-Schwarz inequality.\\

\noindent\underline{{Step 3. bound $(z^*)^2$ by $\bar{J}(z^*)$}}. 
Applying \eqref{aux_inequ3} to the first term in \eqref{bound_exp_Vn_2}, 
 we have
\begin{align*}
(z^*)^2  = M^{-1}\Exp\left[\widehat{V}_n^2\right]/\|F\|_{P^r,2}^2
\;\leq\; 
& \frac{\Delta}{M \|F\|_{P^r,2}^2} 
+ \frac{\sqrt{\log(2M)} \|\widetilde{\mathcal{T}}\|_{\Pro,2} }{\|F\|_{P^r,2}\sqrt{M N}}\bar{J}(z^*).
\end{align*}
Now define 
$$\Delta' := \max\left\{
{\delta}_r,\; \sqrt{M^{-1}\Delta}/\|F\|_{P^r,2}
\right\} \; \geq \;
{\delta}_r.
 $$
Applying \cite[Lemma 2.1]{van2011local}, we have
\begin{align*}
\bar{J}(z^*) 
\;\lesssim\; \bar{J}(\Delta')  + 
\frac{\sqrt{\log(2M)} \|\widetilde{\mathcal{T}}\|_{\Pro,2} \bar{J}^2(\Delta')}{\|F\|_{P^r,2} \sqrt{M N}(\Delta')^2}.
\end{align*}
Now we apply the above result to \eqref{aux_inequ2}:
\begin{align*}
\Exp\left[\max_{m \in [M]}  \left\|\bD_{n}^{(m)}(f)\right\|_{\cF}\right]
\; \lesssim \;
\sqrt{M \log(2M)} \left(
\bar{J}(\Delta')\|F\|_{P^r,2}  + 
\frac{\sqrt{\log(2M)} \|\widetilde{\mathcal{T}}\|_{\Pro,2} \bar{J}^2(\Delta')}{ \sqrt{M N}(\Delta')^2}
\right).
\end{align*}
Since $J(\tau)/\tau$ is non-increasing(see \cite[Lemma 5.2]{chernozhukov2014gaussian}), we have
\begin{align*}
\bar{J}(\Delta') \leq \Delta' \frac{\bar{J}(\delta_r)}{\delta_r}
= \max\left\{
\bar{J}(\delta_r),\;
\frac{\sqrt{\Delta} \bar{J}(\delta_r)}{\sqrt{M}\|F\|_{P^r,2} \delta_r}
\right\}, \quad
\frac{\bar{J}^2(\Delta')}{(\Delta')^2} \leq 
\frac{\bar{J}^2(\delta_r)}{\delta_r^2},
\end{align*}
which completes the proof of the first inequality. 
\end{proof}

\begin{remark}
In Appendix \ref{sec:non_local_maximal}, we also establish a non-local maximal inequality for the supremum of multiple $U$-processes, which is simpler, and will be used to bound the second moment of the supremum.
\end{remark}

The following Corollary is needed in establishing the validity of bootstrap, which has essentially been established in the proof of Theorem \ref{thm:lmax_sampling_proc}.

\begin{corollary}\label{cor:aux_lemmas}
Assume the conditions and notations in Theorem \ref{thm:lmax_sampling_proc}. Recall the definitions of $\widehat{Q}$ in \eqref{def:two_random_measures} and $\widehat{V}_n$ in \eqref{def:V_hat_tilde}. Then for some absolute constant $C > 0$,
\begin{align*}
&C^{-1} \sqrt{\log(2M)}\Exp\left[\|F\|_{\widehat{Q},2} \bar{J}(\widehat{V}_n/\|F\|_{\widehat{Q},2} ) \right] 
\leq  \;  \; \\
& \sqrt{M \log(2M) }\bar{J}(\delta_r)\|F\|_{P^r,2} + \frac{ \log(2M)  \|\widetilde{\cT}\|_{\Pro,2} }{\sqrt{N}} \frac{\bar{J}^2(\delta_r)}{\delta_r^2} \; + \;
\sqrt{\Delta\log(2M) } \frac{  \bar{J}(\delta_r)}{\delta_r},
\\
& \Exp\left[ \widehat{V}_n^2\right] \lesssim 
\Delta + \sigma_r^2 + \frac{\log(2M)\|\widetilde{\mathcal{T}}\|_{\Pro,2}^2 \bar{J}^2(\delta_r)}{N\delta_r^2},
\end{align*}
where recall that $\Delta := 
\Exp\left[ \sup_{f \in \cF} \left|I_{n,r}\right|^{-1}\sum_{\iota \in I_{n,r}} f^2(X_{\iota})\right]$.
\end{corollary}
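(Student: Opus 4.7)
Both inequalities essentially fall out of intermediate computations already carried out in the proof of Theorem~\ref{thm:lmax_sampling_proc}; the plan is to reassemble them cleanly. Let me abbreviate $T := \Exp\left[\|F\|_{\widehat Q,2}\bar J(\widehat V_n/\|F\|_{\widehat Q,2})\right]$ and $\Delta' := \max\{\delta_r,\;\sqrt{\Delta/M}/\|F\|_{P^r,2}\}$, and recall from Step~3 of that proof the key bound
\[
\bar J(z^*) \;\lesssim\; \bar J(\Delta') + \frac{\sqrt{\log(2M)}\,\|\widetilde{\cT}\|_{\Pro,2}\,\bar J^2(\Delta')}{\|F\|_{P^r,2}\sqrt{MN}(\Delta')^2},
\]
together with the elementary inequalities $\bar J(\Delta') \leq (\Delta'/\delta_r)\bar J(\delta_r)$ and $\bar J^2(\Delta')/(\Delta')^2 \leq \bar J^2(\delta_r)/\delta_r^2$ coming from the concavity and sublinearity of $\bar J$.

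For the first inequality, apply Jensen's inequality to $T$ using joint concavity of $(x,y)\mapsto \bar J(\sqrt{x/y})\sqrt{y}$ (exactly as in the derivation of \eqref{aux_inequ2}) to obtain $T \lesssim \sqrt{M}\|F\|_{P^r,2}\,\bar J(z^*)$. Multiplying by $\sqrt{\log(2M)}$ and substituting the displayed bound on $\bar J(z^*)$ produces two terms. The first, $\sqrt{M\log(2M)}\|F\|_{P^r,2}\bar J(\Delta')$, becomes either $\sqrt{M\log(2M)}\|F\|_{P^r,2}\bar J(\delta_r)$ or $\sqrt{\Delta\log(2M)}\bar J(\delta_r)/\delta_r$ according to whether $\Delta' = \delta_r$ or $\Delta' = \sqrt{\Delta/M}/\|F\|_{P^r,2}$, after applying $\bar J(\Delta')\leq (\Delta'/\delta_r)\bar J(\delta_r)$. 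The second, $\log(2M)\|\widetilde{\cT}\|_{\Pro,2}\bar J^2(\Delta')/(\sqrt N (\Delta')^2)$, is absorbed into $\log(2M)\|\widetilde{\cT}\|_{\Pro,2}\bar J^2(\delta_r)/(\sqrt N \delta_r^2)$. Using $\max\{a,b\}\leq a+b$ to merge the two cases of $\Delta'$ yields the stated three-term bound.

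For the second inequality, start from the decomposition \eqref{bound_exp_Vn_2}, namely $\Exp[\widehat V_n^2] \leq \Delta + N^{-1/2}\Exp[\|\widetilde\bD_n(f)\|_\cF]$, and use \eqref{aux_inequ3} to obtain
\[
\Exp[\widehat V_n^2] \;\lesssim\; \Delta + \frac{\sqrt{M\log(2M)}\,\|\widetilde{\cT}\|_{\Pro,2}\,\|F\|_{P^r,2}\,\bar J(z^*)}{\sqrt N}.
\]
Substituting the bound on $\bar J(z^*)$ produces the cross term $A_1 := \sqrt{M\log(2M)}\|\widetilde{\cT}\|_{\Pro,2}\|F\|_{P^r,2}\bar J(\Delta')/\sqrt N$ and a residual term of the form $\log(2M)\|\widetilde{\cT}\|_{\Pro,2}^2\bar J^2(\Delta')/(N(\Delta')^2)$, which is bounded by $\log(2M)\|\widetilde{\cT}\|_{\Pro,2}^2\bar J^2(\delta_r)/(N\delta_r^2)$ just as before. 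For $A_1$, applying $\bar J(\Delta')\leq (\Delta'/\delta_r)\bar J(\delta_r)$ and treating the two cases for $\Delta'$ in parallel reduces $\sqrt M\|F\|_{P^r,2}\Delta'$ to $\max\{\sigma_r,\sqrt\Delta\}$, so that $A_1 \lesssim \max\{\sigma_r,\sqrt\Delta\}\cdot\sqrt{\log(2M)}\|\widetilde{\cT}\|_{\Pro,2}\bar J(\delta_r)/(\sqrt N\delta_r)$; the AM-GM inequality $ab \leq (a^2+b^2)/2$ then splits this into a piece at most $\sigma_r^2 + \Delta$ and a piece matching $\log(2M)\|\widetilde{\cT}\|_{\Pro,2}^2\bar J^2(\delta_r)/(N\delta_r^2)$. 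Combining yields the claim. The only real work is carrying the two cases of $\Delta'$ through the AM-GM split; no tool beyond those already used in the proof of Theorem~\ref{thm:lmax_sampling_proc} is needed.
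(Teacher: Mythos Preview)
Your proposal is correct and follows essentially the same route as the paper. In both cases the first inequality is recognized as the intermediate bound $\sqrt{\log(2M)}\,\Exp[\|F\|_{\widehat Q,2}\bar J(\widehat V_n/\|F\|_{\widehat Q,2})] \lesssim \sqrt{M\log(2M)}\|F\|_{P^r,2}\bar J(z^*)$ from Steps~1--3 of Theorem~\ref{thm:lmax_sampling_proc}, and the second inequality is obtained by substituting this same bound on $\bar J(z^*)$ into \eqref{bound_exp_Vn_2}--\eqref{aux_inequ3} and then splitting the resulting cross terms via $2ab\leq a^2+b^2$ (what the paper calls Cauchy--Schwarz and you call AM--GM).
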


\begin{proof}
In the Step 1 of proving Theorem \ref{thm:lmax_sampling_proc}, we bound $\Exp\left[\max_{m \in [M]}\|{\bD}_{n}^{(m)}(f)\|_{\cF}\right]$ through an upper bound on $\sqrt{\log(2M)}\Exp\left[\|F\|_{\widehat{Q},2} \bar{J}(\widehat{V}_n/\|F\|_{\widehat{Q},2} ) \right]$ (see \eqref{aux_inequ1}). Thus the first inequality has been established.

For the second inequality, in the Step 3 of proving Theorem \ref{thm:lmax_sampling_proc}, we showed that
\begin{align*}
&\Exp\left[ \widehat{V}_n^{2} \right] \lesssim
\Delta + \frac{ \|\widetilde{\mathcal{T}}\|_{\Pro,2}}{\sqrt{N}} \sqrt{M\log(2M)}\|F\|_{P^r,2} \bar{J}(z^*), \\
&\sqrt{M\log(2M)} \|F\|_{P^r,2} \bar{J}(z^*) \lesssim  \\
&\sqrt{M \log(2M) }\bar{J}(\delta_r)\|F\|_{P^r,2} + \frac{ \log(2M)  \|\widetilde{T}\|_{\Pro,2} }{\sqrt{N}} \frac{\bar{J}^2(\delta_r)}{\delta_r^2} \; + \;
\sqrt{\Delta\log(2M) } \frac{  \bar{J}(\delta_r)}{\delta_r}.
\end{align*}
As a result,
\begin{align*}
\Exp\left[ \widehat{V}_n^{2} \right] \lesssim &
\Delta + \frac{\log(2M)\|\widetilde{\mathcal{T}}\|_{\Pro,2}^2 \bar{J}^2(\delta_r)}{N\delta_r^2}
+ \frac{\sqrt{M \log(2M) } \|\widetilde{\mathcal{T}}\|_{\Pro,2} \|F\|_{P^r,2} \bar{J}(\delta_r)}{\sqrt{N}} \\
&+ \frac{\sqrt{\log(2M)\Delta} \|\widetilde{\mathcal{T}}\|_{\Pro,2}  \bar{J}(\delta_r)}{\sqrt{N} \delta_r}.
\end{align*}
Then the results follow 
by Cauchy-Schwarz  inequality and due to  the definition that $\sigma_r := \sqrt{M}\delta_r \|F\|_{P^r,2}$.
\end{proof}

\subsection{Non-local maximal inequalities for multiple incomplete U-processes}
\label{sec:non_local_maximal}
In this subsection, we establish an upper bound for the second moment of the supremum over multiple incomplete $U$-processes, as defined in Section \ref{app:local_max_ineq_incom}, which however is non-local. Recall the notations in Section \ref{app:local_max_ineq_incom}.


\begin{lemma}\label{lemma:incom_F}
Denote $z := \Exp\left[\max_{m \in [M]} N^{-1} \sum_{\iota \in I_{n,r}} Z_{\iota}^{(m)} F^2(X_{\iota}) \right]$, and recall the definition of $\widetilde{\mathcal{T}}$ in Theorem \ref{thm:lmax_sampling_proc}. Then 
for some absolute constant $C > 0$,
\begin{align*}
z \leq C \left( \|F\|_{P^r,2}^2 + \frac{\log(2M) \|\widetilde{\mathcal{T}}\|_{\Pro,2}^2}{N} \right).
\end{align*}
\end{lemma}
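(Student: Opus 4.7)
The idea is to decompose the quantity inside the maximum into its deterministic ``mean'' part (with respect to the sampling weights) and a centered stochastic part, then close the loop via a self-bounding argument that recovers $z$ on both sides of an inequality.

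First I would write, for each fixed $m \in [M]$,
\begin{align*}
N^{-1} \sum_{\iota \in I_{n,r}} Z_{\iota}^{(m)} F^2(X_{\iota})
\;\leq\; N^{-1} \sum_{\iota \in I_{n,r}} (Z_{\iota}^{(m)}-p_n) F^2(X_{\iota})
\;+\; |I_{n,r}|^{-1} \sum_{\iota \in I_{n,r}} F^2(X_{\iota}),
\end{align*}
using $p_n = N/|I_{n,r}|$. Taking expectation and then the maximum, the second term contributes exactly $\|F\|_{P^r,2}^2$, which is the first term of the target bound. The remaining task is to bound the expected maximum over $m$ of the absolute value of the centered sum by $C\bigl(\|F\|_{P^r,2}^2 + \log(2M)\|\widetilde{\cT}\|_{\Pro,2}^2/N\bigr)$.

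For that centered piece, I would condition on the data $X_1^n$ and apply the standard symmetrization inequality for independent centered Bernoulli weights to replace $(Z_{\iota}^{(m)}-p_n)$ by $\epsilon_\iota^{(m)} Z_\iota^{(m)}$ (up to a constant), where $\{\epsilon_\iota^{(m)}\}$ are independent Rademacher variables independent of $\cD_n := X_1^n \cup \{Z_\iota^{(m)}\}$. Conditioning further on $\cD_n$ and invoking Hoeffding's inequality (\cite[Lemma 2.2.7]{van1996weak}) gives
\begin{align*}
\Bigl\|\sum_{\iota \in I_{n,r}} \epsilon_\iota^{(m)} Z_\iota^{(m)} F^2(X_\iota)\Bigr\|_{\psi_2 \mid \cD_n}
\;\lesssim\; \sqrt{\sum_{\iota \in I_{n,r}} Z_\iota^{(m)} F^4(X_\iota)},
\end{align*}
and then the maximal inequality \cite[Lemma 2.2.2]{van1996weak} over $m \in [M]$ produces the extra $\sqrt{\log(2M)}$ factor. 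The key estimate is
\begin{align*}
\sum_{\iota} Z_\iota^{(m)} F^4(X_\iota)
\;\leq\; \Bigl(\max_{\iota,m} Z_\iota^{(m)} F^2(X_\iota) \Bigr) \sum_\iota Z_\iota^{(m)} F^2(X_\iota)
\;\leq\; \widetilde{\cT}^{\,2} \sum_\iota Z_\iota^{(m)} F^2(X_\iota),
\end{align*}
which pulls out the envelope $\widetilde{\cT}$ and leaves us with $\sqrt{\max_m \sum_\iota Z_\iota^{(m)} F^2(X_\iota)}$.

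Combining the previous two displays and taking unconditional expectation, followed by Cauchy--Schwarz to split $\widetilde{\cT}$ from the remaining random quantity, yields
\begin{align*}
\Exp\Bigl[\max_m N^{-1}\Bigl|\sum_\iota (Z_\iota^{(m)}-p_n) F^2(X_\iota)\Bigr|\Bigr]
\;\lesssim\; N^{-1}\sqrt{\log(2M)}\,\|\widetilde{\cT}\|_{\Pro,2}\,\sqrt{Nz}
\;=\; \sqrt{\tfrac{\log(2M)\|\widetilde{\cT}\|_{\Pro,2}^2}{N}\cdot z},
\end{align*}
so altogether $z \lesssim \|F\|_{P^r,2}^2 + \sqrt{(\log(2M)\|\widetilde{\cT}\|_{\Pro,2}^2/N)\cdot z}$. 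The final step is the elementary self-bounding resolution: $a \leq b + \sqrt{c\,a}$ with $a,b,c\geq 0$ implies $a \leq 2b + c$, which delivers the desired bound with some absolute constant $C$. The only mildly delicate point is keeping the conditioning hierarchy correct when applying symmetrization (where only $Z_\iota^{(m)}$ are randomized) versus Hoeffding (where only $\epsilon_\iota^{(m)}$ are randomized); otherwise all moves are routine given the tools from Theorem \ref{thm:lmax_sampling_proc}.
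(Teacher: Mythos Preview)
Your proof is correct and follows essentially the same skeleton as the paper's: decompose into the mean part $\|F\|_{P^r,2}^2$ plus a centered Bernoulli sum, symmetrize, control the Rademacher maximum over $m$ via Hoeffding and the $\psi_2$ maximal inequality, and close with the self-bounding step $z \lesssim \|F\|_{P^r,2}^2 + \sqrt{\log(2M)\|\widetilde{\cT}\|_{\Pro,2}^2 z/N}$. The one substantive difference is how the square is peeled off: the paper first applies its contraction principle (Lemma~\ref{lemma:max_contraction}) to pass from $\sum \epsilon_\iota^{(m)} Z_\iota^{(m)} F^2(X_\iota)$ to $\widetilde{\cT}\cdot \sum \epsilon_\iota^{(m)} Z_\iota^{(m)} F(X_\iota)$, then uses Hoeffding on the linear sum and the second-moment maximal inequality; you instead apply Hoeffding directly to the quadratic sum and afterwards use the pointwise bound $\sum Z_\iota^{(m)} F^4(X_\iota)\le \widetilde{\cT}^{2}\sum Z_\iota^{(m)} F^2(X_\iota)$. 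Your route is marginally more elementary in that it sidesteps Lemma~\ref{lemma:max_contraction} entirely, though the paper needs that lemma anyway for Theorem~\ref{thm:lmax_sampling_proc}; both land on the identical self-bounding inequality.
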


\begin{proof}
Observe that
$z \leq  \Exp\left[\max_{m \in [M]} N^{-1} \sum_{\iota \in I_{n,r}} (Z_{\iota}^{(m)} - p_n) F^2(X_{\iota}) \right] + \|F\|_{P^r,2}^2$. 
Let $\{\epsilon_{\iota}^{(m)}: \iota \in I_{n,r}, m \in [M]\}$ be a collection of independent Rademacher random variables that are independent of 
$\cD_n  := \{Z_{\iota}^{(m)}: \iota \in I_{n,r}, m \in [M]\} \cup X_1^n$. 
By the symmetrization inequality (conditional on $X_1^n$), the contraction principle (Lemma \ref{lemma:max_contraction}, conditional on $\cD_n$), and Cauchy-Schwarz inequality,
\begin{align*}
z \;&\lesssim\; \Exp\left[\max_{m \in [M]} \left| N^{-1} \sum_{\iota \in I_{n,r}} \epsilon_{\iota}^{(m)}  Z_{\iota}^{(m)}  F^2(X_{\iota}) \right| \right] + \|F\|_{P^r,2}^2 \\
\;&\lesssim\;
\Exp\left[\widetilde{\mathcal{T}} \max_{m \in [M]} \left| N^{-1} \sum_{\iota \in I_{n,r}} \epsilon_{\iota}^{(m)}  Z_{\iota}^{(m)}  F(X_{\iota}) \right| \right] + \|F\|_{P^r,2}^2\\
\;&\lesssim\;
\|\widetilde{\mathcal{T}}\|_{\Pro,2} \left( \Exp\left[ \max_{m \in [M]} \left| N^{-1} \sum_{\iota \in I_{n,r}} \epsilon_{\iota}^{(m)}  Z_{\iota}^{(m)}  F(X_{\iota}) \right|^2 \right] \right)^{1/2} + \|F\|_{P^r,2}^2.
\end{align*}
By  Hoeffding's inequality \cite[Lemma 2.2.7]{van1996weak}, for $m \in [M]$,
\[
\left\|  N^{-1} \sum_{\iota \in I_{n,r}} \epsilon_{\iota}^{(m)}  Z_{\iota}^{(m)}  F(X_{\iota}) \right\|_{\psi_2 \vert \cD_n}
\lesssim \sqrt{N^{-2} \sum_{\iota \in I_{n,r}} Z_{\iota}^{(m)}  F^2(X_{\iota})}.
\]
Thus by maximal inequality \cite[Lemma 2.2.2]{van1996weak}, 
\begin{align*}
\Exp\left[ \max_{m \in [M]} \left| N^{-1} \sum_{\iota \in I_{n,r}} \epsilon_{\iota}^{(m)}  Z_{\iota}^{(m)}  F(X_{\iota}) \right|^2 \right] 
\; \lesssim \; & 
\Exp\left[ \log(2M) \max_{m \in [M]} N^{-2} \sum_{\iota \in I_{n,r}} Z_{\iota}^{(m)}  F^2(X_{\iota}) \right] \\
\; \leq \; & \log(2M) z/N.
\end{align*}
Thus we have 
$z \lesssim \frac{\|\widetilde{\mathcal{T}}\|_{\Pro,2} \sqrt{\log(2M)} }{\sqrt{N}} \sqrt{z} + \|F\|_{P^r,2}^2$.
which implies the conclusion.
\end{proof}

Recall the definition of ${\bD}_{n}^{(m)}$ in \eqref{def:m_incomp_U}.
\begin{lemma}\label{lemma:sampling_higher_order}
Recall the definition of $\widetilde{\mathcal{T}}$ in Theorem \ref{thm:lmax_sampling_proc}. Then for some absolute constant $C > 0$, 
\begin{align*}
&\Exp\left[ \max_{m \in [M]} \|{\bD}_{n}^{(m)}(f)\|_{\cF}^2 \right]
\;\leq\;   C \bar{J}^2(1) \left( \log(2M) \|F\|_{P^r,2}^2 + \frac{\log^2(2M) \|\widetilde{\mathcal{T}}\|_{\Pro,2}^2}{N}
\right).
\end{align*}
\end{lemma}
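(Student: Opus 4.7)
The plan is to mimic the argument in Theorem~\ref{thm:lmax_sampling_proc} (Steps 1 and 2) but targeting the second moment rather than the first, and stopping short of the local refinement since here we only need the non-local envelope-level diameter. Let $\{\epsilon_{\iota}^{(m)}\}$ be independent Rademachers, independent of $\cD_n := X_1^n \cup \{Z_{\iota}^{(m)}\}$, and set
\[
\widehat{\bD}_n^{(m)}(f) := N^{-1/2}\sum_{\iota \in I_{n,r}} \epsilon_{\iota}^{(m)} Z_{\iota}^{(m)} f(X_{\iota}).
\]
By first conditioning on $X_1^n$ and applying the symmetrization inequality to each index $m$ (using independence of $\{Z_\iota^{(m)} : \iota\}$ across $m$ and viewing the problem as indexed over the product $\cF \times [M]$), I would obtain
\[
\Exp\bigl[\max_{m \in [M]}\|\bD_n^{(m)}(f)\|_{\cF}^2\bigr] \;\lesssim\; \Exp\bigl[\max_{m \in [M]}\|\widehat{\bD}_n^{(m)}(f)\|_{\cF}^2\bigr].
\]

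Next, fix $m$ and work conditional on $\cD_n$. By Hoeffding's inequality, for any $f_1,f_2 \in \cF$,
\[
\bigl\|\widehat{\bD}_n^{(m)}(f_1)-\widehat{\bD}_n^{(m)}(f_2)\bigr\|_{\psi_2 \mid \cD_n} \;\lesssim\; \|f_1-f_2\|_{\widehat{Q}_m,2},
\]
so the standard entropy integral bound (with diameter $\|F\|_{\widehat{Q}_m,2}$, and using a change of variable together with the fact that $\bar{J}$ is defined via a supremum over all finitely supported measures) yields
\[
\bigl\|\,\|\widehat{\bD}_n^{(m)}(f)\|_{\cF}\,\bigr\|_{\psi_2\mid \cD_n} \;\lesssim\; \|F\|_{\widehat{Q}_m,2}\,\bar{J}(1).
\]
Applying the standard $\psi_2$ maximal inequality over the $M$ indices (conditional on $\cD_n$, again since $\{Z_\iota^{(m)}: \iota\}$ are independent across $m$, so the processes $\widehat{\bD}_n^{(m)}(\cdot)$ have independent conditional Orlicz norms) gives
\[
\Exp\bigl[\max_{m \in [M]}\|\widehat{\bD}_n^{(m)}(f)\|_{\cF}^2 \,\big|\, \cD_n\bigr] \;\lesssim\; \log(2M)\,\bar{J}^2(1)\,\max_{m \in [M]}\|F\|_{\widehat{Q}_m,2}^2.
\]

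Finally, taking expectation over $\cD_n$ and invoking Lemma~\ref{lemma:incom_F}, which provides exactly
\[
\Exp\Bigl[\max_{m \in [M]} N^{-1}\sum_{\iota \in I_{n,r}} Z_{\iota}^{(m)} F^2(X_{\iota})\Bigr] \;\lesssim\; \|F\|_{P^r,2}^2 + \frac{\log(2M)\,\|\widetilde{\mathcal{T}}\|_{\Pro,2}^2}{N},
\]
I would combine the two displays to conclude the claimed bound. I expect the main delicate step to be the justification of the symmetrization while keeping the $\max_{m}$ outside the expectation and preserving independence across $m$, so that in the subsequent step the Orlicz maximal inequality yields only a $\log(2M)$ factor rather than $M\log(2M)$. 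Everything else is routine application of the entropy integral together with Lemma~\ref{lemma:incom_F}.
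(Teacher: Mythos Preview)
Your proposal is correct and follows essentially the same route as the paper: symmetrize conditional on $X_1^n$, apply the entropy-integral bound conditional on $\cD_n$ with diameter $\|F\|_{\widehat{Q}_m,2}$, take the $\psi_2$ maximal inequality over $m$ to pick up $\log(2M)$, and finish with Lemma~\ref{lemma:incom_F}. Your worry about the symmetrization step is unnecessary: the standard symmetrization inequality for nondecreasing convex $\Phi$ applies to the norm $\max_{m}\sup_{f\in\cF}|\cdot|$ directly (it is just a supremum over the product index set $[M]\times\cF$), and goes straight from $Z_\iota^{(m)}-p_n$ to $\epsilon_\iota^{(m)}Z_\iota^{(m)}$ without any extra factor of $M$.
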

\begin{proof}
Recall the definitions of $\widehat{\bD}_{n}^{(m)}$, $\widehat{Q}_m$, and $\cD_n$ in the proof for Theorem \ref{thm:lmax_sampling_proc}.
Define for $m \in [M]$,
$
\widehat{V}_n^{(m)} := \sup_{f \in \cF} \|f\|_{\widehat{Q}_m,2}
$.
By the same argument leading to  \eqref{aux_inequ1} as in the proof for Theorem \ref{thm:lmax_sampling_proc}, we have  
\begin{align*}
&\Exp\left[ \max_{m \in [M]} \|{\bD}_{n}^{(m)}(f)\|_{\cF}^2 \, \vert \,
X_{1}^n \right]
\lesssim
\Exp\left[\max_{m \in [M]}\|\widehat{\bD}_{n}^{(m)}(f)\|_{\cF}^2\, \vert \,X_{1}^n\right] \\
&\Exp\left[\max_{m \in [M]}\|\widehat{\bD}_{n}^{(m)}(f)\|_{\cF}^2\, \vert \,
\cD_n \right]
\; \lesssim \; \log(2M)
\max_{m \in [M]}\left(
\int_{0}^{\widehat{V}_n^{(m)}} \sqrt{1 + \log N(\cF, \|\cdot\|_{\widehat{Q}_m,2}, \epsilon)} \,d\epsilon  \right)^2 \\
&\;\leq\; \log(2M) \max_{m \in [M]} \|F\|_{\widehat{Q}_m,2}^2 \bar{J}^2(1),
\end{align*}
where we used change-of variable, and the fact that $\widehat{V}_n^{(m)} \leq \|F\|_{\widehat{Q}_m,2}$. 
Then the proof is complete  by taking expectation on both sides, and
 applying the Lemma \ref{lemma:incom_F}.
\end{proof}

\subsection{A discretization lemma} \label{proof:discretization_lemma}
Theorem \ref{thm:lmax_sampling_proc} will usually be applied 
after discretization. 
The following lemma is useful for discretizating a VC-type class, as defined in \eqref{def:VC_type}, so that the difference class has the desired property.

\begin{lemma}\label{discretization_lemma}
Assume $(\cF,F)$ is a VC-type class  with characteristics $(A,\nu)$  and $P^r F^4 < \infty$. For any $\epsilon \in (0,1)$, there exists a finite collection $\{f_j : 1 \leq j \leq d\} \subset \cF$ such that
the following two conditions hold: (i). $d \leq (4A/\epsilon)^{\nu}$; (ii). for any $f \in \cF$, there exists $1 \leq j^* \leq d$ such that 
\begin{align*}
\max\left\{\|f - f_{j^*} \|_{P^r,2}, \;\; \|f - f_{j^*} \|_{P^r,4}^2 \right\} \;\leq \; \epsilon \|1 + F^2\|_{P^r,2}.
\end{align*} 
Further, define 
\[
\cF_{\epsilon} := \left\{ f - f': \max\left\{\|f - f' \|_{P^r,2}, \;\; \|f - f' \|_{P^r,4}^2 \right\} \;\leq \; \epsilon \|1 + F^2\|_{P^r,2}
\right\}.
\]
Then $(\cF_{\epsilon}, 2F)$ is a VC-type class with characteristics $(A,2\nu)$.
\end{lemma}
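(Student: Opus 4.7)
The key idea is to exhibit a single probability measure $Q$ on $(S^r, \cS^r)$ under which controlling the $\|\cdot\|_{Q,2}$--distance simultaneously controls both $\|\cdot\|_{P^r,2}$ and $\|\cdot\|_{P^r,4}$ differences, and then apply the VC-type covering bound to that $Q$. Concretely, I would define the tilted measure
\begin{equation*}
dQ \;:=\; c^{-1}(1+F^2)\,dP^r, \qquad c \;:=\; P^r(1+F^2) \;=\; 1+P^r F^2.
\end{equation*}
A direct computation gives, for any $f,f'\in\cF$,
\begin{equation*}
\|f-f'\|_{P^r,2}^2 + P^r\!\big((f-f')^2 F^2\big) \;=\; c\,\|f-f'\|_{Q,2}^2,
\end{equation*}
and since $|f-f'|\le 2F$ implies $(f-f')^4\le 4(f-f')^2 F^2$, we obtain the two bounds $\|f-f'\|_{P^r,2}^2\le c\|f-f'\|_{Q,2}^2$ and $\|f-f'\|_{P^r,4}^4\le 4c\|f-f'\|_{Q,2}^2$. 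Moreover $c\,\|F\|_{Q,2}^2 = P^r(F^2+F^4)\le P^r(1+F^2)^2 = \|1+F^2\|_{P^r,2}^2$, so $c\|F\|_{Q,2}^2\le \|1+F^2\|_{P^r,2}^2$.

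Next I would invoke the VC-type hypothesis. Although the definition quantifies only over finitely supported probability measures, this extends to arbitrary probability measures with $\|F\|_{Q,2}<\infty$ by a standard approximation argument (e.g.\ the discussion after Theorem 2.6.4 in van der Vaart--Wellner). Applied to our $Q$ at scale $\delta$, this yields a $\delta\|F\|_{Q,2}$--cover of $\cF$ in $\|\cdot\|_{Q,2}$ of cardinality at most $(A/\delta)^\nu$; replacing each center by a member of $\cF$ in that ball (when non--empty) gives a subset $\{f_j\}_{j=1}^d\subset\cF$ with $d\le(A/\delta)^\nu$ such that for every $f\in\cF$ there is some $j^*$ with $\|f-f_{j^*}\|_{Q,2}\le 2\delta\|F\|_{Q,2}$. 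Combining with the two inequalities above,
\begin{equation*}
\|f-f_{j^*}\|_{P^r,2} \;\le\; 2\delta\,\|1+F^2\|_{P^r,2}, \qquad
\|f-f_{j^*}\|_{P^r,4}^2 \;\le\; 4\delta\,\|1+F^2\|_{P^r,2}.
\end{equation*}
Choosing $\delta=\epsilon/4$ gives both quantities $\le\epsilon\|1+F^2\|_{P^r,2}$ and $d\le(4A/\epsilon)^\nu$, proving (i)--(ii).

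For the final assertion, observe that any $h\in\cF_\epsilon$ has the form $h=f-f'$ with $f,f'\in\cF$, hence $|h|\le 2F$, so $2F$ is a valid envelope. For any finitely discrete probability measure $Q$ on $S^r$, a $\delta\|F\|_{Q,2}$--cover of $\cF$ of size $\le(A/\delta)^\nu$ yields, by taking pairwise differences, a $2\delta\|F\|_{Q,2}$--cover of $\cF-\cF$ of size $\le(A/\delta)^{2\nu}$. Rewriting $2\delta\|F\|_{Q,2}=\delta\|2F\|_{Q,2}$ gives $N(\cF-\cF,\|\cdot\|_{Q,2},\delta\|2F\|_{Q,2})\le (A/\delta)^{2\nu}$, and since $\cF_\epsilon\subset\cF-\cF$ this covering bound descends to $\cF_\epsilon$ (centers of the cover need not lie in the subset). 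Thus $(\cF_\epsilon,2F)$ is VC-type with characteristics $(A,2\nu)$.

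I do not anticipate a genuine obstacle: the main ingredient is the construction of the right tilted measure $Q$, and the only mildly delicate point is the extension of the VC-type covering bound from finitely discrete to arbitrary probability measures, which is standard.
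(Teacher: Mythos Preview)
Your proof is correct and follows essentially the same route as the paper: both tilt $P^r$ by the weight $1+F^2$, use the VC-type bound (extended from finitely discrete to general measures) to produce an $L^2(Q)$--cover, and then observe that control in $L^2(Q)$ yields simultaneous control of $\|\cdot\|_{P^r,2}$ and $\|\cdot\|_{P^r,4}^2$; the second claim is handled identically via pairwise differences. The only cosmetic difference is that the paper works with the un-normalized measure $Q^*(A)=\int_A(1+F^2)\,dP^r$ and invokes \cite[Problem~2.5.1]{van1996weak} for the extension, whereas you normalize to a probability measure first.
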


\begin{proof}
We start with the first claim. Define a measure $Q^*$ (not probability) on $(S^r, \cS^r)$ as follows:
\[
Q^*(A) := \int_A (1 + F^2) dP^r, \text{ for any } A \in \cS^r.
\]
Since $P^r F^4 < \infty$, $Q^*$ is a finite measure and $Q^* F^2 < \infty$. By definition \ref{def:VC_type} of VC-type class and \cite[Problem 2.5.1, Page 133]{van1996weak},
\begin{align*}
N(\cF, \|\cdot\|_{Q^*,2}, 2^{-1}\epsilon\|F\|_{Q^*,2})
\leq \sup_{Q} N(\cF, \|\cdot\|_{Q,2}, 4^{-1}\epsilon\|F\|_{Q,2})
\leq (4A/\epsilon)^{\nu},
\end{align*}
where the $\sup_{Q}$ is taken over  all finitely supported probability measures on $\cS^r$. Thus there exists an integer $d \leq (4A/\epsilon)^{\nu}$ and a subset $\{f_j: 1\leq j \leq d\} \subset \cF$ such that for any $f \in \cF$, there exists $1 \leq j^* \leq d$ such that
\begin{align*}
\|f - f_{j^*}\|_{Q^*,2}^2 \leq  2^{-2}\epsilon^2\|F\|_{Q^*,2}^2 
= 4^{-1}\epsilon^2 \int (F^2 + F^4) dP^r \leq 
4^{-1}\epsilon^2\|1 + F^2 \|_{P^r,2}^2.
\end{align*}
On the other hand,
\begin{align*}
&\|f - f_{j^*}\|_{Q^*,2}^2 = \int (f-f_{j^*})^2 (1 + F^2) dP^r 
\geq \|f - f_{j^*}\|_{P^r,2}^2, \\
&\|f - f_{j^*}\|_{Q^*,2}^2 = \int (f-f_{j^*})^2 (1 + F^2) dP^r 
\geq 4^{-1} \int (f-f_{j^*})^4 dP^r 
=  4^{-1} \|f-f_{j^*}\|_{P^r,4}^4,
\end{align*}
which completes the proof of the first claim.
\vspace{0.2cm}

For the second claim, for any $\tau > 0$ and finite probability measure $Q$, there exists $\{f_j': 1\leq j \leq d'\} \subset \cF$ such that $d' = N(\cF, \|\cdot\|_{Q,2}, \tau \|F\|_{Q,2}) \leq (A/\tau)^\nu$, and for any $f \in \cF$, there exists $1 \leq j' \leq d'$ such that
\begin{align*}
\|f - f_j'\|_{Q,2} \leq \tau \|F\|_{Q,2}.
\end{align*}
By triangle inequality, $\{f_j' - f_k': 1 \leq j,k \leq d' \}$ is a $\tau\|2F\|_{Q,2}$ cover for $\cF- \cF:= \{f-f': f,f' \in \cF\}$. As a result,
\begin{align*}
N(\cF_{\epsilon}, \|\cdot\|_{Q,2}, \tau\|2F\|_{Q,2}) \leq 
N(\cF - \cF, \|\cdot\|_{Q,2}, \tau\|2F\|_{Q,2}) \leq 
(A/\tau)^{2\nu},
\end{align*}
which completes the proof.
\end{proof}

\subsection{Multi-level local maximal inequality for complete U-processes} 
\label{subsec:multi-level_local}
 In this subsection, let $\cF$ be a collection of symmetric, measurable functions $f: (S^r,\cS^r) \to (\bR, \cB(\bR))$ with   a measurable envelope functrion $F:S^r \to [0,\infty)$ such that $0 < P^r F^2 < \infty$. For $f \in \cF$, denote by
 $U_n^{(r)}(f) := \left|I_{n,r}\right|^{-1}\sum_{\iota \in I_{n,r}} f(X_{\iota}) $ its associated $U$-process.

For each $1 \leq \ell \leq r$, the Hoeffding projection (with respect to P) is defined by
\[
( \pi_\ell f) (x_1,\ldots,x_\ell) :=
(\delta_{x_1} - P)\cdots (\delta_{x_{\ell}}  - P) P^{r-\ell} f.
\]
Then the Hoeffding decomposition \cite{hoeffding1948class} of  ${U}^{(r)}_n$ is given by
\begin{align}
\label{Hoeffding decomp}
U_n^{(r)}(f) - P^r f = \sum_{i=1}^{\ell}  \binom{r}{\ell} U_n^{(\ell)}(\pi_\ell f),\;\; \text{ for } f \in \cF.
\end{align}
For $1 \leq \ell \leq r$, let $F_\ell$ be an envelope function for
$P^{r-\ell} \cF := \{P^{r-\ell}f : f \in \cF\}$, 
i.e., $|P^{r-\ell} f (x)| \leq F_{\ell}(x)$ for any $f \in \cF$ and $x \in S^{\ell}$.
Further for $1 \leq \ell \leq r$, let $\sigma_\ell$ be such that
$\sup_{f \in \cF} \|P^{r-\ell} f\|_{P^\ell,2} \leq \sigma_\ell \leq 
\|F_{\ell}\|_{P^\ell,2}
$ and define 
\[
\mathcal{T}_\ell := \max_{1 \leq i \leq \lfloor n/\ell \rfloor} F_{\ell} (X_{(i-1)\ell+1}^{i\ell}).
\]
For $\tau > 0$ and $1 \leq \ell \leq r$, define the uniform entropy integral
\begin{equation}
\label{def:uniform_entropy_integral}
\begin{split}
J_{\ell}(\tau) &:= J_{\ell}(\tau, P^{r-\ell}\cF, F_{\ell}) := \int_{0}^{\tau} \left[
1 + \sup_{Q} \log N(P^{r-\ell} \cF, \|\cdot\|_{Q,2}, \epsilon \|F_{\ell}\|_{Q,2}) \right]^{\ell/2}\;\; d\epsilon, 
\end{split}
\end{equation}
where  
the $\sup_{Q}$ is taken over  all finitely supported probability measures on $\cS^\ell$. 

The following Theorem is due to \cite[Theorem 5.1 and Corollary 5.6]{chen2017jackknife}, and included here due to its repeated use in this paper. Together with \eqref{Hoeffding decomp}, it provides multi-scale local inequalities for the complete $U$-process. 

\begin{theorem}\label{thm:multi-level}
For $1 \leq \ell \leq r$, let $\delta_\ell := \sigma_{\ell}/\|F_{\ell}\|_{P^{\ell},2}$. Then 
\begin{align*}
n^{\ell/2} \Exp\left[
\|U_n^{(r)}(\pi_{\ell} f)\|_{\cF}
\right] \;&\lesssim\;  \min\left\{J_{\ell}(1) \|F_{\ell}\|_{P^{\ell},2}, \;\;J_{\ell}(\delta_{\ell})\|F_{\ell}\|_{P^{\ell},2} 
+ \frac{J^2_{\ell}(\delta_{\ell}) \|\mathcal{T}_\ell\|_{\Pro,2}}{\delta_{\ell}^2 \sqrt{n}}\right\},
\end{align*}
where $\|\cdot\|_{\cF} = \sup_{f\in \cF} |\cdot|$, and $\lesssim$ means up to a multiplicative constant only depending on $r$. 
Further, if $\cF$ has a finite cardinality $d < \infty$, then
\begin{align*}
n^{\ell/2} \Exp\left[
\|U_n^{(r)}(\pi_{\ell} f)\|_{\cF}
\right] \;&\lesssim\;  \min\left\{ \|F_{\ell}\|_{P^{\ell},2} \log^{\ell/2}(d), \;\;\sigma_{\ell} \log^{\ell/2}(d)
+ n^{-1/2} \|\mathcal{T}_{\ell}\|_{\Pro,2} \log^{\ell/2+1/2}(d) \right\}.
\end{align*}
If $(\cF,F)$ is VC type class with characteristics $(A,\nu)$ with $A \geq e \vee (e^{2(r-1)}/16)$ and $\nu \geq 1$, and
$F_{\ell} = P^{r-\ell} F$ for $1 \leq \ell \leq r$,
 then there exists a constant $C$, depending only on $r$, such that for any $\tau \in (0,1]$,
\begin{align*}
J_{\ell}(\tau) &\leq C \tau \left( \nu \log(A/\tau) \right)^{\ell/2} \; \text{ for } 1 \leq \ell \leq r.
\end{align*}
Finally, for any $q \in [2,\infty]$,  $\|\mathcal{T}_\ell\|_{\Pro,2} \leq n^{1/q} \|F_{\ell}\|_{P^\ell,q}$, where $1/q = 0$ if $q = \infty$. 
\end{theorem}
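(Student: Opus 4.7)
The plan is to reduce the problem to a chaining argument for a suitably symmetrized and decoupled process, since $\pi_\ell f$ is a completely degenerate kernel of order $\ell$. The statement is attributed to \cite{chen2017jackknife}, so I would largely follow the strategy there, emphasizing the four distinct claims (the entropy-integral bound, the finite-class version, the VC-type estimate on $J_\ell$, and the tail bound on $\mathcal{T}_\ell$).

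\textbf{Step 1: Decoupling and symmetrization.} First, I would invoke the standard decoupling inequalities for $U$-statistics with degenerate kernels (de la Pe\~na--Gin\'e, cf.\ \cite{de2012decoupling}) to replace
\[
U_n^{(r)}(\pi_\ell f) = \binom{n}{r}^{-1} \sum_{\iota \in I_{n,r}} \pi_\ell f(X_{\iota_1}, \dots, X_{\iota_\ell})
\]
(up to lower-order terms and the dependence of $\pi_\ell f$ on only $\ell$ of its arguments) with a decoupled analogue involving $\ell$ independent copies $X_1^{n,(1)}, \dots, X_1^{n,(\ell)}$ of $X_1^n$, losing only a constant factor that depends on $r$ and $\ell$. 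Then, by the symmetrization inequality applied $\ell$ times, I can introduce Rademacher variables $\epsilon_1^{(j)}, \dots, \epsilon_n^{(j)}$, $1 \leq j \leq \ell$, so that the supremum is controlled by the expected supremum of a Rademacher chaos of order $\ell$ indexed by $\cF$.

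\textbf{Step 2: Chaining for the Rademacher chaos.} Conditionally on the data, the decoupled, symmetrized chaos is a sub-Gaussian process of order $\ell$ with natural semimetric controlled by an empirical $L^2(\cdot)$ pseudo-norm; by the chaining tail bound for Rademacher chaoses (e.g.\ the version that yields an $\ell$-th power of the entropy integrand), I obtain
\[
\Exp\Big[ \| n^{\ell/2} U_n^{(r)}(\pi_\ell f) \|_{\cF} \,\Big|\, \text{data} \Big]
\;\lesssim\; \int_0^{\widehat{V}_\ell} \bigl(1 + \log N(P^{r-\ell}\cF, \|\cdot\|_{\widehat{Q}_\ell, 2}, \epsilon)\bigr)^{\ell/2} d\epsilon,
\]
where $\widehat{V}_\ell$ is the (random) $L^2$-diameter and $\widehat{Q}_\ell$ an appropriate empirical measure on $S^\ell$. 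Passing to the uniform entropy bound $J_\ell$ and using Jensen's inequality along with the change-of-variable $z \mapsto \widehat{V}_\ell/\|F_\ell\|_{\widehat{Q}_\ell,2}$ and the concavity lemma $(x,y) \mapsto J_\ell(\sqrt{x/y})\sqrt{y}$ (Lemma A.2 of \cite{chernozhukov2014gaussian}), I get the non-local bound $J_\ell(1)\|F_\ell\|_{P^\ell,2}$ and, via the device used in the proof of Theorem~\ref{thm:lmax_sampling_proc}, a local bound in terms of $J_\ell(\delta_\ell)$ plus a remainder containing $\|\mathcal{T}_\ell\|_{\Pro,2}/\sqrt{n}$, the latter arising because $\widehat{V}_\ell^2$ is controlled by $\sigma_\ell^2$ plus a correction involving the maximum of the envelope. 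The main obstacle here is the bookkeeping of the order-$\ell$ contraction/concentration—i.e., ensuring that iteration over the $\ell$ coordinates produces the $J_\ell$ (not $J_1$) integrand and that the localization in terms of $\delta_\ell$ survives the concavity step.

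\textbf{Step 3: Finite class case and VC estimate on $J_\ell$.} For finite $\cF$ of cardinality $d$, I would replace the entropy integral step by a direct maximal inequality for a sub-Gaussian chaos of order $\ell$ indexed by $d$ elements, which produces $\log^{\ell/2}(d)$ as the leading factor and $\log^{(\ell+1)/2}(d)$ as the correction factor from bounding the diameter $\widehat{V}_\ell$ (analogous to the move from Dudley's entropy bound to a finite-class bound). For the VC estimate on $J_\ell(\tau)$, I would use the assumption $\sup_Q N(P^{r-\ell}\cF, \|\cdot\|_{Q,2}, \epsilon \|F_\ell\|_{Q,2}) \leq (A/\epsilon)^\nu$ (inherited from the hypothesis that $(\cF,F)$ is VC with characteristics $(A,\nu)$, since projections preserve VC-type structure up to constants; see \cite[Section 3.6]{gine2016mathematical}), and then bound
\[
J_\ell(\tau) \leq \int_0^\tau \bigl(1 + \nu \log(A/\epsilon)\bigr)^{\ell/2} d\epsilon \;\leq\; C_r \,\tau\, (\nu \log(A/\tau))^{\ell/2},
\]
valid for $\tau \leq 1$ under the stated lower bound on $A$; the monotonicity $\epsilon \mapsto \log(A/\epsilon)$ on $(0,1]$ and a direct calculation give the required constant depending only on $r$.

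\textbf{Step 4: Tail bound on $\mathcal{T}_\ell$.} Since $\mathcal{T}_\ell$ is a maximum of $\lfloor n/\ell \rfloor \leq n$ i.i.d.\ copies of $F_\ell(X_1^\ell)$, by the elementary inequality $\Exp[\max_{i \leq n} |\xi_i|^2] \leq n^{2/q} \Exp[|\xi_1|^q]^{2/q}$ for $q \geq 2$ (and trivially $\|F_\ell\|_{P^\ell,\infty}$ for $q=\infty$), we get $\|\mathcal{T}_\ell\|_{\Pro,2} \leq n^{1/q} \|F_\ell\|_{P^\ell,q}$, which is the last assertion.
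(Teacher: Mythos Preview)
Your proposal is correct and follows essentially the same decoupling--symmetrization--chaining strategy that underlies the cited results. Note that the paper does not actually reprove this theorem: it invokes \cite[Theorem 5.1, Corollary 5.3, Corollary 5.6]{chen2017jackknife} directly for the first and third claims, observes that the finite-class bound follows from the same argument using $J_\ell(\tau)\lesssim\tau\log^{\ell/2}(d)$, and calls the last claim obvious---so your sketch is in fact more detailed than the paper's own treatment, but aligned with the approach of the source it cites.
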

\begin{proof}
The first  inequality is established in \cite[Theorem 5.1 and Corollary 5.6]{chen2017jackknife}. The second inequality slightly improves the dependence of the second term inside $\min\{\cdot,\cdot\}$ on $\log(d)$ for a finite class. Its proof is essentially the same, 
except that we use the fact that $J_{\ell}(\tau) \lesssim \tau\log^{\ell/2}(d)$ for any $\tau > 0$, and thus is omitted. 
The third claim is established in \cite[Corollary 5.3]{chen2017jackknife},
while the last claim is obvious
\end{proof}

\section{Stratified, incomplete high-dimensional U-statistics}\label{sec:GAR_incomp_U_stat}

In this subsection, we establish Gaussian approximation and bootstrap results for stratified, incomplete high-dimensional $U$-statistics, which is a key step in establishing the distribution approximation for the supremum of the $U$-process. 
 Thus let $\{h_1,\ldots, h_d\}$ be a collection of $d$ elements in $\cH$, and define $\underline{h}:= (h_1,\ldots, h_d): S^r \to \bR^d$.
Consider the following complete and stratified, incomplete $d$-dimensional $U$-statistics:
\begin{align}
\label{eqn:d-dimension U stat}
\begin{split}
&\underline{U}_{n,j} := U_n(h_j) = |I_{n,r}|^{-1} \sum_{\iota \in I_{n,r}} {h}_j(X_{i_1},\ldots, X_{i_r}) :={|I_{n,r}|}^{-1} \sum_{\iota \in I_{n,r}} {h}_j(X_{\iota}),\\
&\underline{U}'_{n,N,j} :=U_{n,N}'(h_j) = (\widehat{N}^{\sigma(h_j)})^{-1} \sum_{\iota \in I_{n,r}} Z_{\iota}^{\sigma(h_j)} {h}_j(X_{\iota}),
\end{split}
\end{align}
where we recall that $\widehat{N}^{(m)} := \sum_{\iota \in I_{n,r}} Z_{\iota}^{(m)}$ for $m \in [M]$, and $\sigma(h_j) = m$ if and only if $h_j \in \cH_m$. Further, define $\underline{\theta} := \Exp[\underline{U}_n]$, and  $d$-by-$d$ matrices $\Gamma_A$, $\Gamma_B$, and $\Gamma_{*}$ such that for $1 \leq i,j \leq d$,
\begin{align}\label{def:gamma_AB_mat}
\Gamma_{A,ij} := \gamma_A(h_i, h_j),\quad
\Gamma_{B,ij} := \gamma_B(h_i, h_j), \quad
\Gamma_{*,ij} := \gamma_{*}(h_i, h_j),
\end{align}
where we recall that the covariance functions $\gamma_A$, $\gamma_B$, and $\gamma_{*}$ are defined as follows: for $h,h' \in \cH$,
\begin{align}\label{def:cov_G}
\begin{split}
&\gamma_{*}(h,h') := r^2 \gamma_A(h,h') + \alpha_n \gamma_B(h,h'),\quad 
\text{ where }\;\; \alpha_n := n/N,\\
&\gamma_A(h,h') :=  \Cov\left(P^{r-1}h(X_1),\; P^{r-1}h'(X_1)\right),  \\
&\gamma_B(h,h') := \Cov\left(h(X_1^r),\; h'(X_1^r)\right) \mathbbm{1}\{\sigma(h) = \sigma(h')\}.
\end{split}
\end{align}
If $h = h'$, we write ${\gamma}_{*}(h)$ for ${\gamma}_{*}(h,h)$; the same convention applies to ${\gamma}_{A}$ and ${\gamma}_{B}$.

Denote by $\cR := \{\prod_{j=1}^d [a_j,b_j]: -\infty \leq a_j \leq b_j \leq \infty\}$  the collection of hyperrectangles in $\bR^d$. 
We start with the Gaussian approximation results.
\begin{theorem}\label{thm:GAR_complete_hd_U}
Assume \eqref{MT0}-\eqref{MT2} hold. Then there exists a constant $C$, depending only on
$r, q, \ubar{\sigma}$, such that
\begin{align*}
&C^{-1} \sup_{R \in \cR}\left\vert
\Pro\left(\sqrt{n}(\underline{U}_{n}-\underline{\theta}) \in R \right) - \Pro(r Y_A \in R)
\right\vert \\
\; \leq \; &
\left( \frac{D_n^2 \log^7(dn)}{n}\right)^{1/6} + 
\left(\frac{D_n^2 \log^3(dn)}{n^{1-2/q}}  \right)^{1/3} +
\left(\frac{D_n^{3-2/q} \log^{2}(d)}{n^{1-1/q}}\right)^{1/2},
\end{align*}
where  $Y_A \sim N(0, \Gamma_{A} )$   and $1/q = 0$ if $q = \infty$.
\end{theorem}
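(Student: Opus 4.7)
The plan is to reduce the Gaussian approximation for the complete $U$-statistic to a Gaussian approximation for a sum of i.i.d.\ random vectors (the Hájek projection), and then control the higher-order Hoeffding remainders in max-norm via the moment conditions in \eqref{MT2}.

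\textbf{Step 1: Hoeffding decomposition.} By the Hoeffding decomposition \eqref{Hoeffding decomp}, applied componentwise to $\underline{h}$,
\begin{equation*}
\sqrt{n}(\underline{U}_n - \underline{\theta}) \;=\; r\, \underline{L}_n \;+\; \underline{R}_n, \qquad \underline{L}_n := \frac{1}{\sqrt{n}} \sum_{k=1}^n \left( P^{r-1}\underline{h}(X_k) - P^r\underline{h} \right),
\end{equation*}
where $\underline{R}_n = \sum_{\ell=2}^{r} \binom{r}{\ell} \sqrt{n}\, U_n^{(\ell)}(\pi_\ell \underline{h})$ collects the completely degenerate projections of order $\ell \geq 2$. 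Note $\Cov(\underline{L}_n) = \Gamma_A$, so the target normal $r Y_A$ matches the linear part.

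\textbf{Step 2: CCK bound on the linear part.} Since $\underline{L}_n$ is a normalized sum of i.i.d.\ centered random vectors in $\R^d$, I will invoke a high-dimensional Gaussian approximation over hyperrectangles (Chernozhukov--Chetverikov--Kato). Under \eqref{MT0} and \eqref{MT1}, each coordinate has variance at least $r^2\ubar{\sigma}^2$, bounded third moments in terms of $D_n$, and envelope $P^{r-1}H$ with $\|P^{r-1}H\|_{P,q}\leq D_n$. The resulting Kolmogorov bound takes the form
\begin{equation*}
\sup_{R\in\cR}\bigl|\Pro(r\underline{L}_n \in R) - \Pro(rY_A \in R)\bigr| \;\lesssim\; \left(\tfrac{D_n^2 \log^7(dn)}{n}\right)^{1/6} + \left(\tfrac{D_n^2 \log^3(dn)}{n^{1-2/q}}\right)^{1/3},
\end{equation*}
which yields the first two terms in the theorem.

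\textbf{Step 3: Controlling the degenerate remainder $\underline{R}_n$ in max-norm.} For each $\ell\in\{2,\ldots,r\}$, I apply the second inequality in Theorem \ref{thm:multi-level} to the finite class $\cF_\ell := \{\pi_\ell h_j : j\in[d]\}$ with envelope derived from $P^{r-\ell}H$ (using the triangle inequality to bound $\pi_\ell h$). The condition \eqref{MT2} supplies $\sigma_\ell$ and $\|F_\ell\|_{P^\ell, q}$, so that
\begin{equation*}
\Exp\bigl[n^{\ell/2}\|U_n^{(\ell)}(\pi_\ell \underline{h})\|_\infty\bigr] \;\lesssim\; \sigma_\ell \log^{\ell/2}(d) + n^{-1/2+\ell/q} \|F_\ell\|_{P^\ell,q}\log^{(\ell+1)/2}(d).
\end{equation*}
The dominant contribution is $\ell=2$; carrying the bounds from \eqref{MT2} through and dividing by $\sqrt{n}$ (note $U_n^{(\ell)}$ is $O(n^{-\ell/2})$ while we scale by $\sqrt n$, losing $n^{-(\ell-1)/2}$), the worst case is the $\ell=2$ contribution, which is of order $D_n^{3-2/q}\log^2(d)/n^{1/2-1/q}$ in expectation, yielding $\|\underline{R}_n\|_\infty = O_\Pro(D_n^{3-2/q}\log^2(d)/n^{1/2-1/q})$. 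Higher $\ell$ are cheaper because $n^{-(\ell-1)/2}$ wins.

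\textbf{Step 4: Combining via anti-concentration.} Using Nazarov's anti-concentration inequality for Gaussian measures on hyperrectangles together with the variance lower bound $\ubar{\sigma}^2$ from \eqref{MT0}, an event $\{\|\underline{R}_n\|_\infty \leq t\}$ inflates the Kolmogorov distance over $\cR$ by $O(t\sqrt{\log d})$. Choosing $t$ of order $\|\underline{R}_n\|_\infty$'s high-probability bound and balancing, I obtain the third term $(D_n^{3-2/q}\log^2(d)/n^{1-1/q})^{1/2}$. Adding the three contributions gives the claimed bound. The main obstacle is the degenerate order-$2$ remainder: unlike order $\ell\geq 3$, it is not automatically $o_\Pro(n^{-1/2})$, so one must use \eqref{MT2} carefully (in particular $\|P^{r-2} H\|_{P^2,q}\leq D_n^{3-2/q}$) to extract the correct exponent; the rest reduces to standard CCK machinery.
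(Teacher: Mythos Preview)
Your approach is essentially the same as the paper's: Hoeffding decomposition, the Chernozhukov--Chetverikov--Kato bound for the H\'ajek projection (Step~1 of the paper), the finite-class case of Theorem~\ref{thm:multi-level} for the higher-order remainder (packaged as Lemma~\ref{lemma:first_order_without_hajek}), and then Markov plus Nazarov anti-concentration with an optimized threshold (Step~2 of the paper). One correction in your Step~3: the maximal-inequality tail term from Theorem~\ref{thm:multi-level} is $n^{-1/2+1/q}\|F_\ell\|_{P^\ell,q}\log^{(\ell+1)/2}(d)$, not $n^{-1/2+\ell/q}$, so for $\ell=2$ the remainder is of order $n^{-1/2}D_n\log(d)+n^{-1+1/q}D_n^{3-2/q}\log^{3/2}(d)$, and the first of these two pieces (which you drop) must be carried through the balancing and then absorbed into $(D_n^2\log^7(dn)/n)^{1/6}$ via the harmless convention that this quantity is $\leq 1$.
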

\begin{proof}
See Section \ref{sec:proof_complete_Ustat}.
\end{proof}

\begin{theorem}\label{thm:GAR_hd_U}
Assume \eqref{MT0}-\eqref{MT4} and \ref{cond:MB} hold.
Then there exists a constant $C$, depending only on
$r, q, \ubar{\sigma}, c_0, C_0$, such that
\begin{align*}
&\sup_{R \in \cR}\left\vert
\Pro\left(\sqrt{n}(\underline{U}_{n,N}'-\underline{\theta}) \in R \right) - \Pro(Y \in R)
\right\vert  \; \leq \; C \varpi_n^{(1)} + C \varpi_n^{(2)}, \;\; \text{ with }\\
& \varpi_n^{(1)}:=
\left( \frac{D_n^2 \log^7(dn)}{n}\right)^{1/8} + 
\left(\frac{D_n^2 \log^3(dn)}{n^{1-2/q}}  \right)^{1/4} +
\left(\frac{D_n^{3-2/q} \log^{2}(d)}{n^{1-1/q}}\right)^{1/2},\\
& \varpi_n^{(2)}:=
\left( \frac{B_n^2 \log^7(dn)}N \right)^{1/8} +  \left( \frac{n^{4r/q} \log^5(dn) B_n^{2-8/q} D_n^{8/q}}{N}
\right)^{1/4}, 
\end{align*}
where  $Y \sim N(0,r^2 \Gamma_{A} + \alpha_{n} \Gamma_B)$,  $\alpha_n := n/N$, and $1/q = 0$ if $q = \infty$.
\end{theorem}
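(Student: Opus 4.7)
The plan is to decompose the normalized incomplete $U$-statistic into a complete $U$-statistic term, an independent sampling-noise term, and a small remainder from the random denominator $\widehat{N}^{(m)}$. Starting from $U'_{n,N}(h_j)-\theta_j = (\widehat{N}^{(m)})^{-1}\sum_\iota Z_\iota^{(m)}(h_j(X_\iota)-\theta_j)$ and splitting $(\widehat{N}^{(m)})^{-1} = N^{-1} + (N-\widehat{N}^{(m)})/(N\widehat{N}^{(m)})$, I would use the identity $p_n|I_{n,r}|=N$ to obtain, for each $j\in[d]$ with $m=\sigma(h_j)$,
\begin{align*}
\tfrac{\widehat{N}^{(m)}}{N}\sqrt{n}\bigl(U'_{n,N}(h_j)-\theta_j\bigr) = \sqrt{n}\bigl(U_n(h_j)-\theta_j\bigr) + \sqrt{\alpha_n}\,T_{n,j}, \quad T_{n,j}:=\tfrac{1}{\sqrt{N}}\sum_\iota(Z_\iota^{(m)}-p_n)(h_j(X_\iota)-\theta_j).
\end{align*}

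I would then treat the three pieces separately. Theorem \ref{thm:GAR_complete_hd_U} applied directly to the complete part produces the rate $\varpi_n^{(1)}$. The multiplicative factor $\widehat{N}^{(m)}/N$ is absorbed by combining Bernstein's inequality for the Binomial $\widehat{N}^{(m)}$, a union bound over $m\in[M]$ via \ref{cond:MB}, and a crude Chebyshev-type tightness bound on $\sqrt{n}(U'_{n,N}(h_j)-\theta_j)$ from \eqref{MT4}; the resulting contribution is $O_{\Pro}(\sqrt{\log(Mn)/N})$, which sits comfortably inside $\varpi_n^{(2)}$. For the sampling vector $\underline{T}_n:=(T_{n,1},\ldots,T_{n,d})^\top$, I would observe that, conditionally on $X_1^n$, it is a sum of $|I_{n,r}|$ independent centered Bernoulli-weighted vectors, and apply a Chernozhukov--Chetverikov--Kato-type high-dimensional CLT over hyperrectangles. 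The Bernoulli moments $\Exp|Z_\iota^{(m)}-p_n|^k\lesssim p_n$ for $k\geq 2$, combined with the $P^r$ moment bounds in \eqref{MT3}, produce precisely the two summands of $\varpi_n^{(2)}$: the bounded-envelope term $(B_n^2\log^7(dn)/N)^{1/8}$ and the $L^q$-truncation term $(n^{4r/q}\log^5(dn)B_n^{2-8/q}D_n^{8/q}/N)^{1/4}$.

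To match the target $Y\sim N(0,r^2\Gamma_A+\alpha_n\Gamma_B)$, I would exploit the fact that $\{Z_\iota^{(m)}\}$ is independent of $X_1^n$ and independent across $m$, so the conditional covariance $\widehat{\Gamma}_B$ of $\underline{T}_n$ given $X_1^n$ is block-diagonal in $\sigma(\cdot)$ with $(i,j)$-entry $(1-p_n)U_n((h_i-\theta_i)(h_j-\theta_j))$ whenever $\sigma(h_i)=\sigma(h_j)$. Entrywise concentration of this $U$-statistic around $\gamma_B(h_i,h_j)$ follows from \eqref{MT2}, \eqref{MT4}, \eqref{MT5} together with a union bound over pairs; the Hoeffding decomposition and the multi-level local maximal inequalities of Appendix \ref{subsec:multi-level_local} control the fluctuation. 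Nazarov anti-concentration / Gaussian comparison for hyperrectangles then lets me swap $\widehat{\Gamma}_B$ for $\Gamma_B$ in the conditional Gaussian at a cost still within $\varpi_n^{(2)}$. Finally, enlarging the probability space to introduce $Y_B\sim N(0,\Gamma_B)$ independent of $(X_1^n,Y_A)$, the independence of the sampling plans from the data upgrades the two coordinatewise approximations into the joint statement $(\sqrt{n}(\underline{U}_n-\underline{\theta}),\sqrt{\alpha_n}\underline{T}_n)\approx(rY_A,\sqrt{\alpha_n}Y_B)$ in hyperrectangle distance, and summing the coordinates produces $Y$.

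The hard part will be the conditional CLT for $\underline{T}_n$. Because $Z_\iota^{(m)}-p_n$ has $L^\infty$ norm of order one while its $L^2$ norm is only $\sqrt{p_n}$ with $p_n\ll 1$, a naive sub-Gaussian argument loses polynomial factors of $p_n^{-1}$; one has to exploit the asymmetric Bernoulli scaling $\Exp|Z-p_n|^k\lesssim p_n$ (not $p_n^{k/2}$) and match it against the $P^r$-moments of $h$ controlled by the \emph{kernel envelope} $B_n$, which is typically much larger than the projection envelope $D_n$—this is exactly why $B_n$ rather than $D_n$ dominates $\varpi_n^{(2)}$. A related delicate point is that $\widehat{\Gamma}_B$ is only close to $\Gamma_B$ in $\|\cdot\|_{\max}$ with a rate governed by \eqref{MT4}--\eqref{MT5}, so the Gaussian-comparison step must be performed with $d$-dimensional Nazarov bounds while keeping the $\log(dn)$ powers consistent with those appearing in $\varpi_n^{(2)}$, rather than mixing them with the $\varpi_n^{(1)}$ powers inherited from the complete part.
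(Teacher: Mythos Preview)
Your proposal is essentially the paper's proof: the same decomposition $\frac{\widehat N^{(m)}}{N}\sqrt n(U'_{n,N,j}-\theta_j)=\sqrt n(U_{n,j}-\theta_j)+\sqrt{\alpha_n}\,T_{n,j}$, Theorem~\ref{thm:GAR_complete_hd_U} for the complete piece, a conditional high-dimensional CLT plus Gaussian comparison for $T_n$ (this is exactly Lemma~\ref{lemma:effect_of_sampling}), and Bernstein for $\widehat N^{(m)}/N$ (Lemma~\ref{lemma:Bernstein}); the joint statement is obtained precisely through the iterated conditioning you describe.

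Two small remarks. First, you invoke \eqref{MT5} for the concentration of $\widehat\Gamma_B$, but the theorem only assumes \eqref{MT0}--\eqref{MT4}; in fact Lemma~\ref{lemma:higher_order} delivers the needed concentration from \eqref{MT2} and \eqref{MT4} alone. Second, your ``crude Chebyshev-type tightness bound'' on $\sqrt n(U'_{n,N,j}-\theta_j)$ is coordinatewise, and a union bound over $j\in[d]$ would cost a polynomial factor in $d$; the paper avoids this by first proving the Gaussian approximation for $\Phi_n:=(\widehat N^{(m)}/N)\sqrt n(U'_{n,N}-\theta)$, then transferring a $\sqrt{\log d}$ tail bound from the Gaussian limit back to $\Phi_n$, and only then multiplying by $|N/\widehat N^{(m)}-1|$.
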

\begin{proof}
See Section \ref{proof:GAR_hd_U}.
\end{proof}

Recall the definitions of  $\bU_{n,A}^{\#}(\cdot)$,  $\bU_{n,B}^{\#}(\cdot)$, $\bU_{n,*}^{\#}(\cdot)$, and $\cD_{n}'$ in Section \ref{sec:bootstrap_uproc}. 
Note that conditional on $\cD'_n$, $\bU^{\#}_{n,*}$, $\bU^{\#}_{n,A}$, and $\bU^{\#}_{n,B}$ are centered Gaussian processes with the following covariance functions :  
\begin{align}\label{def:est_Covs}
\begin{split}
\widehat{\gamma}_{*}(h,h') &:= r^2 \widehat{\gamma}_A(h,h') + \alpha_n \widehat{\gamma}_B(h,h'),\\
\widehat{\gamma}_A(h,h') &:= {n}^{-1}
\sum_{k = 1}^{n}  \left(\bG^{(k)}(h) - \overline{\bG}(h) \right)\left(\bG^{(k)}(h') - \overline{\bG}(h') \right),\\
\widehat{\gamma}_B(h,h') &:= \frac{\mathbbm{1}\{\sigma(h) = \sigma(h')\}}{\widehat{N}^{(\sigma(h))}} 
\sum_{\iota \in I_{n,r}} Z_{\iota}^{(\sigma(h))} \left(
h(X_{\iota}) - U'_{n,N}(h)
\right)\left(
h'(X_{\iota}) - U'_{n,N}(h')
\right).
\end{split}
\end{align}
If $h = h'$, we write $\widehat{\gamma}_{*}(h)$ for $\widehat{\gamma}_{*}(h,h)$; the same convention applies to $\widehat{\gamma}_{A}$ and $\widehat{\gamma}_{B}$.

Define three $d$-dimensional random vectors as follows: for $1 \leq j \leq d$,
\begin{align*}
\underline{U}_{n,A,j}^{\#} := \bU_{n,A}^{\#}(h_j), \quad
\underline{U}_{n,B,j}^{\#} := \bU_{n,B}^{\#}(h_j), \quad
\underline{U}_{n,*,j}^{\#} := \bU_{n,*}^{\#}(h_j).
\end{align*}
The next Theorem establishes the validity of multiplier bootstrap for high-dimensional incomplete $U$-statistics.

\begin{theorem}\label{thrm:appr_U_full}
Assume the conditions \eqref{MT0}-\eqref{MT5}, and \ref{cond:MB} hold. There exists a constant $C$, depending only on $\ubar{\sigma}, r, q, c_0, C_0$, such that with probability at least $1 -C \log^{1/4}(2M)\cX_n$,
\begin{equation*}
\sup_{R \in \cR}
\left| 
\Pro_{\vert \cD'_{n}} \left( \underline{U}^{\#}_{n,*} \in R \right)
- \Pro(r Y_A + \alpha_n^{1/2} Y_B \in R)
\right| \; \leq \; C \log^{1/4}(2M) \cX_n,
\end{equation*}
where  $Y_A \sim N(0,\Gamma_A)$,
 $Y_B \sim N(0,\Gamma_B)$ and $Y_A,Y_B$ are independent, and $\cX_n$ is defined as follows:
 \begin{align*}
& \left(\frac{M^{2/q}B_n^{2-4/q} D_n^{4/q} \log^{3}(d)}{(N \wedge N_2)^{1-2/q}} \right)^{1/4} + \left( \frac{B_n^2 \log^{5}(dn)}{N\wedge N_2} \right)^{1/8} + \\
& \left( \frac{D_n^2 \log^{5}(d)}{n} \right)^{1/8} +
\left( \frac{D_n^2 \log^{3}(d)}{n^{1-2/q}} \right)^{1/4}+ \left( \frac{D_n^{8-8/q} \log^{11}(d)}{n^{3-4/q}} \right)^{1/14} +
\left( \frac{D_n^{3-2/q} \log^{3}(d)}{n^{1-1/q}} \right)^{2/7}.
\end{align*}
\end{theorem}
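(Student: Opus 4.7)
The plan is to reduce the bootstrap validity to a Gaussian comparison problem on hyperrectangles. Conditional on $\cD'_n$, the random vector $\underline{U}^{\#}_{n,*}$ is a centered Gaussian in $\bR^d$ with covariance matrix $r^2 \widehat{\Gamma}_A + \alpha_n \widehat{\Gamma}_B$, where $\widehat{\Gamma}_A$ and $\widehat{\Gamma}_B$ are built from $\widehat{\gamma}_A$ and $\widehat{\gamma}_B$ in \eqref{def:est_Covs}. The target vector $r Y_A + \alpha_n^{1/2} Y_B$ is centered Gaussian with covariance $r^2 \Gamma_A + \alpha_n \Gamma_B$. A standard Gaussian comparison inequality over $\cR$ (of Chernozhukov--Chetverikov--Kato type, controlling Kolmogorov distance on hyperrectangles by $\log^{2/3}(d)$ times the cube-root of the max-norm of the covariance difference, together with the non-degeneracy provided by \eqref{MT0}) reduces the claim to proving, on an event of probability $1-C\log^{1/4}(2M)\cX_n$, the two entrywise bounds
\begin{align*}
\|\widehat{\Gamma}_A-\Gamma_A\|_{\max} &\lesssim \log^{1/4}(2M)\,\cX_n^{3}\log^{-2}(d), \\
\alpha_n\|\widehat{\Gamma}_B-\Gamma_B\|_{\max} &\lesssim \log^{1/4}(2M)\,\cX_n^{3}\log^{-2}(d),
\end{align*}
(with the right powers ultimately matching the terms listed in $\cX_n$).

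Next I would handle the two covariance estimators separately. For $\widehat{\Gamma}_B$, write, for $h,h'$ with $\sigma(h)=\sigma(h')=m$,
\[
\widehat{\gamma}_B(h,h')-\gamma_B(h,h') = \bigl(\widehat{N}^{(m)}\bigr)^{-1}\!\!\sum_{\iota\in I_{n,r}} Z_{\iota}^{(m)}\bigl[h(X_\iota)h'(X_\iota)-P^r(hh')\bigr] -U'_{n,N}(h)U'_{n,N}(h')+(P^rh)(P^rh'),
\]
and control the first summand by a Bernstein-type argument applied to the incomplete $U$-process with kernel $hh'-P^r(hh')$, taking a maximum over the $d^2$ pairs via the local/non-local maximal inequalities for multiple incomplete $U$-processes from Theorem \ref{thm:lmax_sampling_proc} and Lemma \ref{lemma:sampling_higher_order}; the products $U'_{n,N}(h)U'_{n,N}(h')$ are controlled by Theorem \ref{thm:GAR_hd_U} (or a direct maximal-inequality-based tail bound). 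The moment hypotheses \eqref{MT2}--\eqref{MT5} and the stratification budget $N$ combine to yield the $N$-dependent terms in $\cX_n$, while the factor $M^{2/q}$ enters through $\|\widetilde{\cT}\|_{\Pro,2}\le M^{1/q}N^{1/q}\|H\|_{P^r,q}$.

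The hard part, and the place where the proof actually differs from the complete-$U$ bootstrap of \cite{chen2017jackknife}, is the control of $\widehat{\Gamma}_A-\Gamma_A$, because each $\bG^{(k)}(h)$ is itself a stratified incomplete $(r-1)$-fold $U$-statistic (budget $N_2$) used to estimate $P^{r-1}h(X_k)$. I would decompose
\[
\bG^{(k)}(h) - P^r h \;=\; \bigl(P^{r-1}h(X_k)-P^r h\bigr) \;+\; \bigl(\bG^{(k)}(h)-P^{r-1}h(X_k)\bigr),
\]
so that $\widehat{\gamma}_A(h,h')$ splits into the ``oracle'' empirical covariance of the H\'ajek projections (which approximates $\gamma_A(h,h')$ by a standard empirical-process argument, yielding the $D_n^2/n$ and $D_n^{3-2/q}/n^{1-1/q}$ rates) plus cross and pure-sampling error terms. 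The sampling-error terms are $n^{-1}\sum_k$ of products of conditionally centered incomplete-$U$ deviations that depend on $X_k$; I would bound them by first conditioning on $X_1^n$ and applying the multiple-incomplete-$U$-process maximal inequalities of Appendix \ref{app:local_max_ineq_incom} with $M$ replaced by $nM$ (one sampling plan per $(k,m)$), which explains the extra $\log^{1/4}(2M)$ factor in front of $\cX_n$ and the appearance of $N_2$ through $N\wedge N_2$. The main technical obstacle is to get the \emph{correct} dependence on $B_n$ versus $D_n$: the projections $P^{r-1}h$ have size $D_n$ while $h$ itself can be as large as $B_n$, and one must carefully separate levels in the Hoeffding-type expansion of $\bG^{(k)}(h)-P^{r-1}h(X_k)$ so that the $B_n$-dependent fluctuations get divided by $\sqrt{N_2}$, producing the terms $(B_n^2/(N\wedge N_2))^{1/8}$ and $(M^{2/q}B_n^{2-4/q}D_n^{4/q}/(N\wedge N_2)^{1-2/q})^{1/4}$.

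Finally, combining the two covariance-estimation bounds through the Gaussian comparison inequality and applying a union bound gives the claimed rate $C\log^{1/4}(2M)\cX_n$. I expect essentially no work beyond careful bookkeeping for the final assembly; the bulk of the argument is the dual maximal-inequality calculation for $\widehat{\Gamma}_A-\Gamma_A$ sketched above.
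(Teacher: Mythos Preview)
Your high-level plan—bound $\widehat{\Gamma}_A-\Gamma_A$ and $\widehat{\Gamma}_B-\Gamma_B$ separately, then feed the result into a Gaussian comparison over hyperrectangles—is correct, and your sketches for the two covariance estimators are essentially the paper's Lemmas \ref{lemma:Delta_B_bound} and \ref{lemma:Delta_A_bound}. The gap is in the combination step.

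You propose a single Gaussian comparison for $r^2\widehat{\Gamma}_A+\alpha_n\widehat{\Gamma}_B$ versus $r^2\Gamma_A+\alpha_n\Gamma_B$, invoking only \eqref{MT0} for the non-degeneracy lower bound $r^2\ubar{\sigma}^2$. With that normalization you would need $\alpha_n\|\widehat{\Gamma}_B-\Gamma_B\|_{\max}\lesssim\log^{-2}(d)\,\cX_n^{3}$. But the best available bound (your own, or the paper's Lemma \ref{lemma:Delta_B_bound}) is $\|\widehat{\Gamma}_B-\Gamma_B\|_{\max}\lesssim B_n^2D_n^{-2}\log^{-2}(d)\,\chi_{n,B}^{3}$, so you are off by the factor $\alpha_n B_n^2 D_n^{-2}$, which is \emph{not} assumed bounded (e.g.\ in the concavity application with $\kappa<1$ it diverges). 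The same issue, in milder form, affects the A-part whenever $\sup_h\gamma_A(h)$ grows with $D_n$.

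The paper avoids this scaling mismatch by exploiting a structural fact you did not use: conditional on $\cD_n'$, the vectors $\underline{U}_{n,A}^{\#}$ and $\underline{U}_{n,B}^{\#}$ are \emph{independent} (they are built from disjoint multiplier families $\{\xi_k\}$ and $\{\xi_\iota^{(m)}\}$). This allows two separate Gaussian comparisons, each at its natural scale: compare $\underline{U}_{n,B}^{\#}$ with $Y_B$ after normalizing by $\Lambda_B$ and using the lower bound $\gamma_B(h)\ge c_0B_n^2D_n^{-2}$ from \eqref{MT4}; compare $\underline{U}_{n,A}^{\#}$ with $Y_A$ after normalizing by $\Lambda_A$ and using \eqref{MT0}. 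The two bounds are then stitched together by iterated conditioning, writing $\Pro_{|\cD_n'}(\underline{U}_{n,*}^{\#}\in R)=\Pro_{|\cD_n'}(\underline{U}_{n,B}^{\#}\in\alpha_n^{-1/2}(R-r\underline{U}_{n,A}^{\#}))$ and replacing first $\underline{U}_{n,B}^{\#}$ by $Y_B$, then $\underline{U}_{n,A}^{\#}$ by $Y_A$. Your direct route can be repaired, but only if you normalize by $\Lambda_*$ and explicitly use $\gamma_*(h)\ge\alpha_n\gamma_B(h)$ together with \eqref{MT4}; invoking \eqref{MT0} alone is not enough.
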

\begin{proof}
See Subsection \ref{proof:appr_U_full}.
\end{proof}

In the following proofs in this section, we assume without loss of generality that
\begin{equation}
\label{wlog_theta_zero}
\underline{\theta} = 0,
\end{equation}
since otherwise we can always center $\underline{h}$ first.

\subsection{Supporting calculations}
\label{sec:supp_lemmas_u_stat}

\begin{lemma}\label{lemma:Bernstein}
Let $L \geq N \geq 3$ be positive integers and $p_n := N/L$. Let $M \geq 1$ be an integer and $\log(M) \leq C_0 \log(N)$ for some absolute constant $C_0$. Let
$\{Z_{\ell}^{(m)} : \ell \in [L], m \in [M]\}$ be a collection of Bernoulli random variables  with success probability $p_n$ and $\widehat{N}^{(m)} := \sum_{\ell \in [L]} Z_{\ell}^{(m)}$ for $m \in [M]$. Then  there exists a constant $C$, depending only on $C_0$, such that
\[
\Pro\left( \bigcup_{m \in [M]}\left\{\left|N/\widehat{N}^{(m)} - 1\right| > C\sqrt{\frac{\log(N)}{N}} \right\}\right) \leq C N^{-1}.
\]
\end{lemma}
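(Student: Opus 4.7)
The plan is a standard Bernstein inequality combined with a union bound over $m \in [M]$, followed by a simple algebraic manipulation to pass from concentration of $\widehat{N}^{(m)}$ around its mean $N$ to concentration of the ratio $N/\widehat{N}^{(m)}$ around $1$.

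First, fix $m \in [M]$ and observe that $\widehat{N}^{(m)}-N = \sum_{\ell \in [L]}(Z_\ell^{(m)} - p_n)$ is a sum of $L$ independent, centered Bernoulli increments, each bounded by $1$ in absolute value, with total variance $Lp_n(1-p_n) \leq N$. Bernstein's inequality therefore gives, for every $t>0$,
\[
\Pro\bigl(|\widehat{N}^{(m)}-N| > t\bigr) \;\leq\; 2\exp\!\left(-\frac{t^2/2}{N + t/3}\right).
\]
The natural choice is $t = c\sqrt{N\log N}$ with $c$ a constant to be selected large enough, depending on $C_0$. Since $N \geq 4$, we may take $c$ so that $c\sqrt{N\log N} \leq N/2$ (for the few small values of $N$ where this is not automatic, the conclusion of the lemma is essentially vacuous after enlarging $C$, so we may and do assume $N$ is large enough for the standard regime), hence $N + t/3 \leq 2N$ and the right-hand side is bounded by $2\exp(-c^2 \log(N)/4) = 2N^{-c^2/4}$.

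Next, the union bound over $m \in [M]$ together with the hypothesis $\log M \leq C_0 \log N$ (so $M \leq N^{C_0}$) yields
\[
\Pro\!\left(\bigcup_{m \in [M]}\bigl\{|\widehat{N}^{(m)}-N| > c\sqrt{N\log N}\bigr\}\right) \;\leq\; 2N^{C_0 - c^2/4}.
\]
Choosing $c$ so that $c^2/4 \geq C_0 + 1$ delivers the desired probability bound $\leq C N^{-1}$.

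Finally, on the complementary good event, $\widehat{N}^{(m)} \geq N - c\sqrt{N \log N} \geq N/2$ simultaneously for all $m$, and therefore
\[
\left|\frac{N}{\widehat{N}^{(m)}} - 1\right|
\;=\; \frac{|N - \widehat{N}^{(m)}|}{\widehat{N}^{(m)}}
\;\leq\; \frac{c\sqrt{N\log N}}{N/2}
\;=\; 2c\sqrt{\frac{\log N}{N}},
\]
which is the required conclusion with $C := 2c$. The only mild obstacle is the usual bookkeeping: one has to check that $c$ can be chosen so as to simultaneously satisfy (i) $c\sqrt{N\log N} \leq N/2$ for all $N\geq 4$ in the regime where the bound is non-trivial and (ii) $c^2/4 - C_0 \geq 1$; both are achieved by taking $c$ sufficiently large depending only on $C_0$, which preserves the stated dependence of the constant $C$.
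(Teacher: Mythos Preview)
Your proposal is correct and follows essentially the same route as the paper: Bernstein's inequality for each $\widehat{N}^{(m)}$, a union bound over $m\in[M]$ using $\log M \le C_0\log N$, and then the elementary step converting concentration of $\widehat{N}^{(m)}/N$ (or $\widehat{N}^{(m)}-N$) into concentration of $N/\widehat{N}^{(m)}$ on the good event, with the small-$N$ regime absorbed into the constant. The only cosmetic difference is that the paper writes the last step via the inequality $|z^{-1}-1|\le 2|z-1|$ for $|z-1|\le 1/2$, whereas you use $|N/\widehat{N}^{(m)}-1|=|N-\widehat{N}^{(m)}|/\widehat{N}^{(m)}$ directly.
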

\begin{proof}
By Bernstein's inequality \cite[Lemma 2.2.9]{van1996weak} with $x = \sqrt{ C N \log(N)}$ for $C > 0$ and union bound,
\begin{align*}
&\Pro\left( \bigcup_{m \in [M]} \left\{ \left| \widehat{N}^{(m)}/N - 1\right| > \sqrt{\frac{C\log(N)}{N}}  \right\} \right) \leq 2 M \exp\left(
\frac{-C\log(N)/2}{1+3^{-1}\sqrt{C\log(N)/N})} \right)\\
 \leq & 2\exp\left(
\frac{-C\log(N)/2}{1+3^{-1}\sqrt{C\log(N)/N})}  + C_0 \log(N)\right).
\end{align*}
For $N \geq 3$, $\log(N)/N \leq 0.4$. Thus if we let $C_1$ to be large enough such that
$\frac{C_1}{2(1+ 3^{-1}\sqrt{0.4 C_1})} - C_0 \geq 1$, 
we have 
\[
\Pro\left( \bigcup_{m \in [M]} \left\{ \left| \widehat{N}^{(m)}/N - 1\right| > \sqrt{\frac{C_1\log(N)}{N}}  \right\} \right) \leq 2 N^{-1}.
\]
Since $x \mapsto \log(x)/x$ is a deceasing function on $[e,\infty)$, 
there exists some $C_2 > 0$, such that if $N \geq C_2$,
$\sqrt{C_1 \log(N)/N} \leq 0.5$. 
Further, since $|z^{-1} - 1| \leq 2|z-1|$ for $|z - 1|\leq 0.5$, we have
for $N \geq C_2$, 
\[
\Pro\left( \bigcup_{m \in [M]}\left\{\left|N/\widehat{N}^{(m)} - 1\right| > 2\sqrt{C_1\frac{\log(N)}{N}} \right\}\right) \leq 2 N^{-1}.
\]
Then the conclusion holds with $C = \max\{C_2, 2\sqrt{C_1}\}$.

\end{proof}

\begin{lemma}\label{lemma:Bernstein_second_moment}
Let $L \geq N \geq 3, M \geq 1$ be positive integers.
Let $p_n := N/L$, and 
$\{Z_{\ell}^{(m)} : \ell \in [L], m \in [M]\}$ be a collection of Bernoulli random variables  with success probability $p_n$. Then  there exists an absolute constant $C$,
\[
\Exp \left[ \max_{m \in [M]}\left|\sum_{\ell \in [L]}\left(
Z_{\ell}^{(m)} - p_n
\right)
\right|^2 \right] \leq C  \left(N \log(2M) + \log^2(2M)\right).
\]
\end{lemma}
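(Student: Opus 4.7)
The plan is to apply Bernstein's inequality to each fixed-$m$ sum, combine via union bound over the $M$ columns, and then integrate the resulting tail bound to control the second moment of the maximum.

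First, for each fixed $m \in [M]$, set $S_m := \sum_{\ell \in [L]} (Z_\ell^{(m)} - p_n)$. The summands are independent, centered, bounded by $1$ in absolute value, and have variance $p_n(1-p_n) \leq p_n$, so $\Var(S_m) \leq L p_n = N$. Bernstein's inequality \cite[Lemma 2.2.9]{van1996weak} yields
\begin{equation*}
\Pro(|S_m| > t) \;\leq\; 2 \exp\!\left( - \frac{t^2}{2N + (2/3)t} \right), \qquad t \geq 0.
\end{equation*}
A union bound over $m \in [M]$ gives $\Pro(\max_{m \in [M]} |S_m| > t) \leq 2M \exp(-t^2/(2N + (2/3)t))$.

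Next, I would choose a threshold $t_0$ so that the right-hand side equals $1$, namely $t_0$ solves $t_0^2/(2N + (2/3)t_0) = \log(2M)$. Solving the quadratic shows
\begin{equation*}
t_0 \;\leq\; \tfrac{1}{3}\log(2M) + \sqrt{\tfrac{1}{9}\log^2(2M) + 2 N \log(2M)} \;\lesssim\; \sqrt{N \log(2M)} + \log(2M),
\end{equation*}
so in particular $t_0^2 \lesssim N \log(2M) + \log^2(2M)$, which is already the target order of magnitude.

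Finally, I would write
\begin{equation*}
\Exp\!\left[\max_{m \in [M]} S_m^2\right] \;=\; \int_0^\infty 2t\, \Pro\!\left(\max_{m} |S_m| > t\right) dt \;\leq\; t_0^2 + \int_{t_0}^{\infty} 2t \cdot 2M \exp\!\left(- \frac{t^2}{2N+(2/3)t}\right) dt,
\end{equation*}
and split the tail integral into the sub-Gaussian regime $t \leq 3N$ (where the exponent dominates as $t^2/(4N)$) and the sub-exponential regime $t > 3N$ (where it dominates as $(3/4)t$); substituting $u = t - t_0$ and using the definition of $t_0$ gives a bound of order $\sqrt{N \log(2M)} + 1 \lesssim t_0$ on each piece, so the remainder is $O(N + \log^2(2M))$. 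Combining these estimates yields the stated bound $C(N\log(2M) + \log^2(2M))$. The main care needed is just tracking the two regimes cleanly; there is no real obstacle here—this is essentially the standard sub-exponential maximal inequality applied to centered Binomial sums.
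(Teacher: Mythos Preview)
Your approach is correct and takes a genuinely different route from the paper. The paper's proof invokes two off-the-shelf tools: first the Hoffmann--J{\o}rgensen inequality \cite[Proposition A.1.6]{van1996weak} to reduce the second moment of the maximum to the square of its first moment (plus the trivially bounded term $\Exp[\max_{\ell,m}|Z_\ell^{(m)}-p_n|^2]\le 1$), and then \cite[Lemma 8]{chernozhukov2015comparison} for the first-moment bound $\Exp[\max_m |S_m|] \lesssim \sqrt{N\log(2M)} + \log(2M)$. Your direct Bernstein-plus-tail-integration argument is more elementary and self-contained; it essentially re-derives the first-moment maximal inequality and extends it to the second moment in a single computation, at the cost of doing the bookkeeping by hand.

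One caveat on the tail integral. Replacing the Bernstein exponent $\phi(t)=t^2/(2N+(2/3)t)$ by the crude lower bound $t^2/(4N)$ on $[t_0,3N]$, as you describe, does \emph{not} work directly: at $t=t_0$ this lower bound can be off by a factor of $2$ in the exponent, which after exponentiation costs a factor of $\sqrt{M}$ in $\int_{t_0}^{3N}4Mt\,e^{-t^2/(4N)}\,dt$. A clean fix is to note that $\phi$ is convex (indeed $\phi''(t)=8N^2/(2N+(2/3)t)^3>0$) with $\phi'(t_0)\ge \phi(t_0)/t_0=\log(2M)/t_0$, so $2M e^{-\phi(t_0+u)}\le e^{-u\log(2M)/t_0}$ for $u\ge 0$; integrating $2(t_0+u)$ against this gives a tail contribution $\lesssim t_0^2/\log(2M)\lesssim N+\log(2M)$, which is what you need. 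Alternatively, use $e^{-\phi(t)}\le e^{-t^2/(4N)}+e^{-(3/4)t}$ and apply the layer-cake formula to each piece with its \emph{own} threshold ($2\sqrt{N\log(2M)}$ and $(4/3)\log(2M)$, respectively) rather than the single $t_0$.
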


\begin{proof}
By Hoffman-J{\o}rgensen inequality \cite[Proposition A.1.6]{van1996weak}  and
\cite[Lemma 8]{chernozhukov2015comparison},
\begin{align*}
\Exp\left[\max_{m \in [M]}
\left\vert  \sum_{\ell \in [L]}
(Z^{(m)}_{\ell} - p_n) \right\vert^2 \right]
&\lesssim \left(\Exp\left[\max_{m \in [M]}
\left\vert  \sum_{\ell \in [L]}
(Z^{(m)}_{\ell} - p_n) \right\vert \right] \right)^2 + 1 \\
&\lesssim N \log(2M) + \log^2(2M).
\end{align*}
\end{proof}

\begin{lemma} \label{lemma:first_order_without_hajek}
Denote $S_{n,j} := r n^{-1} \sum_{i=1}^{n} P^{r-1} h_j(X_i)$ for $1 \leq j \leq d$, and recall \eqref{wlog_theta_zero}. Assume that  \eqref{MT1} and \eqref{MT2} hold, and that 
\begin{equation}\label{assumption_for_geometric_series}
n^{-1} D_n^2 \log(d) \leq 1/6.
\end{equation}
Then there exists a constant $C$, depending only on $q, r$, such that
\begin{align*}
&C^{-1}\Exp\left[ 
\max_{1 \leq j \leq d} \left \vert \underline{U}_{n,j}  \right\vert \right] 
\leq    n^{-1/2}  D_n \log^{1/2}(d) + 
n^{-1+1/q} D_n \log(d)
+n^{-3/2+1/q}  D_n^{3-2/q} \log^{3/2}(d). \\
&C^{-1}\Exp\left[  \sqrt{n}
\max_{1 \leq j \leq d} \left \vert \underline{U}_{n,j} - S_{n,j} \right\vert \right] 
\leq 
n^{-1/2}  D_n \log(d) + n^{-1+1/q}  D_n^{3-2/q} \log^{3/2}(d).
\end{align*}
\end{lemma}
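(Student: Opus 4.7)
The plan is to reduce to the Hoeffding decomposition and combine three ingredients: a Nemirovski/Fuk--Nagaev type inequality for the H\'ajek projection, the multi-level local maximal inequality of Theorem~\ref{thm:multi-level} (finite class version) for each degenerate component, and a geometric-series argument enabled by the assumption \eqref{assumption_for_geometric_series}. Since $\underline{\theta}=0$ (see \eqref{wlog_theta_zero}), $P^{r}h_j=0$ and the Hoeffding decomposition \eqref{Hoeffding decomp} yields $\underline{U}_{n,j} = S_{n,j} + \sum_{\ell=2}^{r}\binom{r}{\ell} U_n^{(\ell)}(\pi_\ell h_j)$, so the first claim will follow from the second by adding an estimate for $\max_j|S_{n,j}|$.

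For $\max_j |S_{n,j}|$, $S_{n,j}=rn^{-1}\sum_i P^{r-1}h_j(X_i)$ is a sum of i.i.d.\ centered scalars. I would apply the finite-class version of Theorem~\ref{thm:multi-level} at $\ell=1$ (or equivalently a Fuk--Nagaev/Nemirovski inequality), using the moment controls from \eqref{MT1}: by Jensen, $\|P^{r-1}h\|_{P,2}^{2}\le (\Exp|P^{r-1}h|^4)^{1/2}\le D_n$, i.e.\ $\|P^{r-1}h\|_{P,2}\le D_n^{1/2}\le D_n$, while $\|\max_j |P^{r-1}h_j|\|_{P,q}\le \|P^{r-1}H\|_{P,q}\le D_n$. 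This yields $\Exp[\max_j|S_{n,j}|]\lesssim n^{-1/2}D_n\log^{1/2}(d)+n^{-1+1/q}D_n\log(d)$, matching the first two terms of the first bound.

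For the higher-order part, I would apply Theorem~\ref{thm:multi-level} separately to the finite, completely degenerate classes $\{\pi_\ell h_j\}_{j\le d}$ for $\ell=2,\dots,r$. The delicate case is $\ell=2$, where I would use the ``$\sigma_\ell$'' branch. Writing $\pi_2 h=P^{r-2}h-P^{r-1}h(x_1)-P^{r-1}h(x_2)+P^{r}h$ and applying the triangle inequality gives $\sigma_2\le \|P^{r-2}h\|_{P^2,2}+2\|P^{r-1}h\|_{P,2}$. Crucially, invoking \eqref{MT2} at $\ell=2$, $s=1$ (which is allowed since $\ell+s=3>2$) produces $\|P^{r-2}|h|\|_{P^2,2}\le D_n$, hence $\sigma_2\lesssim D_n$. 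For the envelope, $\|F_2\|_{P^2,q}\lesssim \|P^{r-2}H\|_{P^2,q}+\|P^{r-1}H\|_{P,q}\lesssim D_n^{3-2/q}$ by \eqref{MT1}--\eqref{MT2}, and $\|\mathcal{T}_2\|_{\Pro,2}\le n^{1/q}\|F_2\|_{P^2,q}$. Multi-level, finite class at $\ell=2$ then gives
\[
\sqrt{n}\,\Exp\bigl[\max_j |U_n^{(r)}(\pi_2 h_j)|\bigr]\;\lesssim\; n^{-1/2}D_n\log(d)+n^{-1+1/q}D_n^{3-2/q}\log^{3/2}(d),
\]
which matches the second bound exactly. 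For $\ell\ge 3$, I would use the ``envelope'' branch: \eqref{MT2} gives $\|P^{r-\ell}H\|_{P^\ell,2}\le D_n^\ell$ (for $s=1$), so $\|F_\ell\|_{P^\ell,2}\lesssim D_n^\ell$, whence $\sqrt{n}\,\Exp[\max_j|U_n^{(r)}(\pi_\ell h_j)|]\lesssim n^{-(\ell-1)/2}D_n^\ell\log^{\ell/2}(d) = D_n\log^{1/2}(d)\cdot (n^{-1}D_n^2\log d)^{(\ell-1)/2}$. Under \eqref{assumption_for_geometric_series}, these form a geometric series summing to a constant multiple of the $\ell=3$ term $n^{-1}D_n^3\log^{3/2}(d)$, which is in turn dominated by $n^{-1+1/q}D_n^{3-2/q}\log^{3/2}(d)$ since $D_n^{2/q}\le n^{1/q}$ (as $D_n^2\le n$ from \eqref{assumption_for_geometric_series}).

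Finally, the first claim follows from combining the $\max_j|S_{n,j}|$ bound with the second claim divided by $\sqrt{n}$; the residual term $n^{-1}D_n\log(d)$ is absorbed into $n^{-1/2}D_n\log^{1/2}(d)$ because \eqref{assumption_for_geometric_series} forces $\log(d)\le n$. The main obstacle is the correct identification of the $L^2$-bound $\|P^{r-2}|h|\|_{P^2,2}\le D_n$ inside \eqref{MT2} (using $\ell=2$, $s=1$); the looser bound $\|P^{r-2}h\|_{P^2,2}\le D_n^{2}$ coming from Jensen applied to \eqref{MT5} would only yield $\sigma_2\lesssim D_n^{2}$ and therefore $n^{-1/2}D_n^{2}\log(d)$ in place of the desired $n^{-1/2}D_n\log(d)$. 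Once this is observed, the rest of the argument is bookkeeping with the multi-level inequality and a geometric sum.
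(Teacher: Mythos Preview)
Your proposal is correct and follows the same approach as the paper: Hoeffding decomposition, the finite-class version of Theorem~\ref{thm:multi-level} with $\sigma_2\lesssim D_n$ from \eqref{MT2} at $\ell=2,\,s=1$ and $\|P^{r-2}H\|_{P^2,q}\le D_n^{3-2/q}$, envelope bounds $\|P^{r-\ell}H\|_{P^\ell,2}\le D_n^{\ell}$ for $\ell\ge 3$, and the geometric-series reduction via \eqref{assumption_for_geometric_series}. The only cosmetic difference is that the paper applies Theorem~\ref{thm:multi-level} directly to $\cF=\{h_j\}$ (so that $\sigma_\ell$ bounds $\|P^{r-\ell}h_j\|_{P^\ell,2}$, not $\|\pi_\ell h_j\|_{P^\ell,2}$), which makes your triangle-inequality decomposition of $\pi_2 h$ unnecessary; the same $D_n$ bound on $\sigma_2$ follows immediately.
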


\begin{proof}
We only prove the second inequality, as the proof for the first is almost identical.
We apply Theorem \ref{thm:multi-level} and Hoeffding decomposition \eqref{Hoeffding decomp}
to the finite collection $\{h_j: 1 \leq j \leq d\} \subset \cH$ with envelopes $\{P^{r-\ell} H: 2 \leq \ell \leq r\}$:
\begin{align*}
\Exp\left[ 
\max_{1 \leq j \leq d} \left \vert \underline{U}_{n,j} - S_{n,j} \right\vert \right] \;
\lesssim \;&  
 n^{-1} \left( 
 \sup_{1 \leq j \leq d} \|P^{r-2} h_j\|_{P^{2}, 2} \log(d)
+  n^{-1/2 + 1/q} \|P^{r-2} H\|_{P^{2},q} \log^{3/2}(d)
\right) \\
&+ \sum_{\ell = 3}^{r}  n^{-\ell/2}
 \|P^{r-\ell} H \|_{P^\ell,2} \log^{\ell/2}(d).
\end{align*}
Then due to condition \eqref{MT2}, we have
\begin{align*}
&\Exp\left[ 
\max_{1 \leq j \leq d} \left \vert \underline{U}_{n,j} - S_{n,j} \right\vert \right] \;\\ 
\lesssim \;&  
n^{-1} D_n  \log(d)
+ n^{-3/2+1/q}  D_n^{3-2/q} \log^{3/2}(d)
+ \sum_{\ell = 3}^{r}  \left(n^{-1/2} D_n \log^{1/2}(d)\right)^{\ell}.
\end{align*}
Due to \eqref{assumption_for_geometric_series},  we have
\begin{align*}
\Exp\left[  \sqrt{n}
\max_{1 \leq j \leq d} \left \vert \underline{U}_{n,j} - S_{n,j} \right\vert \right] 
\lesssim &  
n^{-1/2}  D_n \log(d)  
+ n^{-1+1/q}  D_n^{3-2/q} \log^{3/2}(d)
+
n^{-1} D_n^3 \log^{3/2}(d), \\
\lesssim \;&  
n^{-1/2}  D_n \log(d) + n^{-1+1/q}  D_n^{3-2/q} \log^{3/2}(d).
\end{align*}
\end{proof}

\begin{lemma}\label{lemma:higher_order}
Assume that \eqref{assumption_for_geometric_series}, \eqref{MT2}, and \eqref{MT4} holds. Denote 
\begin{equation}
\label{varphi_n}
\varphi_n := n^{-1/2} D_n \log^{1/2}(d) 
+ n^{-1+1/q} D_n^2 \log(d) + n^{-3/2+1/q} D_n^{4-2/q} \log^{3/2}(d).
\end{equation}
Then there exists a constant $C$, depending only on $q, r, c_0$, such that for $s = 2,3,4$,
\begin{align*}
&\Exp\left[\max_{1 \leq j,k \leq d}\left\vert |I_{n,r}|^{-1} \sum_{\iota \in I_{n,r}} \widetilde{h}_j(X_{\iota}) \widetilde{h}_k(X_{\iota}) - P^r (\widetilde{h}_j \widetilde{h}_k ) \right\vert \right] \; \leq \; C \varphi_n, \\
&\Exp\left[\max_{1 \leq j \leq d}|I_{n,r}|^{-1} \sum_{\iota \in I_{n,r}}\vert \widetilde{h}_j(X_{\iota}) \vert^s \right] \; \leq \;
C B_n^{s-2}\left(1  +  \varphi_n \right),
\end{align*}
where $\widetilde{h}_j(\cdot) = h_j(\cdot)/\|h_j\|_{P^r,2}$ for $1 \leq j \leq d$.
\end{lemma}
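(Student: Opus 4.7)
The plan is to apply the multi-scale Hoeffding decomposition together with the local maximal inequality of Theorem~\ref{thm:multi-level} to two auxiliary finite function classes: $\mathcal{F}^{(2)} := \{\widetilde{h}_j \widetilde{h}_k - P^r(\widetilde{h}_j\widetilde{h}_k) : 1 \le j,k \le d\}$ for the first inequality, and $\mathcal{F}^{(s)} := \{|\widetilde{h}_j|^s - P^r|\widetilde{h}_j|^s : 1 \le j \le d\}$ for the second. Both are symmetric, $\mathcal{F}^{(2)}$ has cardinality $\le d^2$, and both have envelope controlled by $2\widetilde{H}^s$ where $\widetilde{H} := H/\inf_{h\in \cH}\|h\|_{P^r,2}$. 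Since \eqref{MT4} gives $\inf_h \|h\|_{P^r,2}^2 \ge c_0 B_n^2 D_n^{-2}$, normalization translates \eqref{MT2} uniformly into bounds for the $\widetilde{h}_j$'s and $\widetilde{H}$.

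The key moment bookkeeping for $\mathcal{F}^{(2)}$ proceeds as follows. Using the pointwise Cauchy–Schwarz inequality $|P^{r-\ell}(\widetilde{h}_j\widetilde{h}_k)| \le (P^{r-\ell}\widetilde{h}_j^2 + P^{r-\ell}\widetilde{h}_k^2)/2$, together with the individual-function bound in \eqref{MT2} applied to $|h|^2$, yields $\sup_{j,k}\|P^{r-\ell}(\widetilde{h}_j\widetilde{h}_k)\|_{P^\ell,2} \lesssim D_n^{\ell}$ for $\ell = 0,1,2$. The envelope bounds from \eqref{MT2} applied to $H^2$, after dividing by $\inf_h\|h\|_{P^r,2}^2$, yield $\|P^{r-\ell}\widetilde{H}^2\|_{P^\ell,2} \lesssim D_n^{\ell+1}$ for $\ell \ge 2$ and $\|P^{r-\ell}\widetilde{H}^2\|_{P^\ell,q} \lesssim D_n^{\ell(2-2/q)+2/q}$ for $\ell = 1,2$ (note the special case $\ell=1, s=2$ gives exponent $0$, producing $D_n^2$ after normalization, which is precisely what makes the second summand of $\varphi_n$ come out to $D_n^2$ rather than $D_n^{4-2/q}$). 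Now apply Theorem~\ref{thm:multi-level} to each Hoeffding component of degree $\ell$, using the finite-class bound $J_\ell(\tau) \lesssim \tau\log^{\ell/2}(d^2) \lesssim \tau\log^{\ell/2}(d)$ and the bound $\|\mathcal{T}_\ell\|_{\Pro,2} \le n^{1/q}\|F_\ell\|_{P^\ell,q}$. The level-$1$ contribution is $n^{-1/2}D_n\log^{1/2}(d) + n^{-1+1/q}D_n^2\log(d)$; the level-$2$ contribution is $n^{-1}D_n^2\log(d) + n^{-3/2+1/q}D_n^{4-2/q}\log^{3/2}(d)$; and for $\ell \ge 3$ the contribution takes the form $D_n \cdot y^\ell$ with $y := n^{-1/2}D_n\log^{1/2}(d)$, where \eqref{assumption_for_geometric_series} gives $y^2 \le 1/6$. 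Summing the geometric tail produces $\lesssim D_n y^3 = n^{-3/2}D_n^4\log^{3/2}(d)$, and since $D_n \le \sqrt{n/6}$ (again by \eqref{assumption_for_geometric_series}) so that $D_n^{2/q} \le n^{1/q}$, this is dominated by $n^{-3/2+1/q}D_n^{4-2/q}\log^{3/2}(d)$. All terms together give the first inequality.

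For the second inequality, triangle inequality splits the quantity into the expectation $\max_j P^r|\widetilde{h}_j|^s$ and the deviation $\Exp[\max_j |U_n(|\widetilde{h}_j|^s) - P^r|\widetilde{h}_j|^s|]$. The expectation part: \eqref{MT2} with $\ell = 0$ gives $P^r|h|^s \le B_n^{2s-2}D_n^{-s}$, and dividing by $\|h\|_{P^r,2}^s \ge (c_0 B_n^2 D_n^{-2})^{s/2}$ yields $\max_j P^r|\widetilde{h}_j|^s \lesssim B_n^{s-2}$, which gives the leading $B_n^{s-2}$ factor. The deviation part is handled by the same three-step strategy as above, applied to $\{|\widetilde{h}_j|^s\}$: the individual and envelope moment bounds for $|\widetilde{h}_j|^s$ work out to be exactly those used for $\mathcal{F}^{(2)}$ scaled by the uniform prefactor $B_n^{s-2}$ (because normalizing \eqref{MT2} by $\|h\|_{P^r,2}^s$ produces $B_n^{2s-2}/B_n^s = B_n^{s-2}$), so the maximal inequality yields a bound of $B_n^{s-2}\varphi_n$. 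Adding the two pieces gives $B_n^{s-2}(1 + \varphi_n)$.

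The main obstacle is purely bookkeeping, namely propagating \eqref{MT2} through the normalization by $\|h\|_{P^r,2}$ and the Cauchy–Schwarz step while tracking the individual-vs-envelope distinction carefully — it is this distinction, rather than any probabilistic subtlety, that produces the clean $D_n^2$ (not $D_n^{4-2/q}$) in the middle term of $\varphi_n$. The remaining nuisance is controlling the higher-order ($\ell \ge 3$) tail of the Hoeffding decomposition, which requires combining the geometric decay from \eqref{assumption_for_geometric_series} with the explicit inequality $D_n \le \sqrt{n}$ to fold the residual terms into the last summand of $\varphi_n$; no new estimates beyond those already in Theorem~\ref{thm:multi-level} are needed.
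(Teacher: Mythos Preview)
Your proposal is correct and follows essentially the same approach as the paper: Hoeffding decomposition combined with the finite-class case of Theorem~\ref{thm:multi-level}, using the individual-function bounds from \eqref{MT2} at levels $\ell=1,2$, the envelope bounds (for $\widetilde{H}^s$) at levels $\ell\ge 3$, and \eqref{assumption_for_geometric_series} to sum the geometric tail. The paper only writes out the second claim and says the first follows similarly; your additional AM--GM step $|\widetilde{h}_j\widetilde{h}_k|\le(\widetilde{h}_j^2+\widetilde{h}_k^2)/2$ is exactly what is needed to reduce the product case to the square case.
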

\begin{proof}
We only prove the second claim, as the first can be established by the same argument. 
We apply the Theorem \ref{thm:multi-level} and Hoeffding decomposition \eqref{Hoeffding decomp}
to the finite collection $\{|\widetilde{h}_j|^s: 1 \leq j \leq d\}$ with envelopes $\{P^{r-\ell}\widetilde{H}^s: 1 \leq \ell \leq r\}$, where $\widetilde{H}(\cdot):= H(\cdot)/\inf_{1 \leq j \leq d} \|h_j\|_{P^r,2}$:
\begin{align*}
&\Exp\left[\max_{1 \leq j \leq d}|I_{n,r}|^{-1} \sum_{\iota \in I_{n,r}}\vert \widetilde{h}_j(X_{\iota}) \vert^s \right] \; \lesssim\; 
\sup_{1 \leq j \leq d} P^{r}|\widetilde{h}_j|^s +
\\
 & \sum_{\ell = 1}^{2}  n^{-\ell/2}
\left(
\sup_{1 \leq j \leq d} \|P^{r-\ell} |\widetilde{h}_j|^s\|_{P^{\ell},2}
\log^{\ell/2}(d)
+ n^{-1/2+1/q} \|P^{r-\ell} \widetilde{H}^s \|_{P^\ell,q} \log^{\ell/2+1/2}(d)
\right) +
 \\
& \sum_{\ell = 3}^{r}  n^{-\ell/2}
 \|P^{r-\ell} \widetilde{H}^s \|_{P^\ell,2} \log^{\ell/2}(d).
\end{align*}
Due to \eqref{MT2}, \eqref{MT4}, and \eqref{assumption_for_geometric_series}, we have
\begin{align*}
&\Exp\left[\max_{1 \leq j \leq d}|I_{n,r}|^{-1} \sum_{\iota \in I_{n,r}}\vert \widetilde{h}_j(X_{\iota}) \vert^s \right] \; \lesssim\; 
B_n^{s-2} + \sum_{\ell = 3}^{r}  n^{-\ell/2}
B_n^{s-2} D_n^{\ell+1} \log^{\ell/2}(d) 
\\
&+n^{-1/2} B_n^{s-2} D_{n} \log^{1/2}(d) + n^{-1+1/q} B_n^{s-2} D_n^{2} \log(d)\\
&+n^{-1} B_n^{s-2} D_{n}^{2} \log(d) + n^{-3/2+1/q} B_n^{s-2} D_n^{4-2/q} \log^{3/2}(d) \\
\; \lesssim\; & 
B_n^{s-2} + B_n^{s-2} n^{-1/2} D_{n} \log^{1/2}(d) + B_n^{s-2} n^{-1+1/q} D_n^{2} \log(d) \\
& + B_n^{s-2} n^{-3/2+1/q} D_n^{4-2/q} \log^{3/2}(d),
\end{align*}
which completes the proof.
\end{proof}

\subsection{Proof of Theorem \ref{thm:GAR_complete_hd_U}}
\label{sec:proof_complete_Ustat}
%

\begin{proof}[Proof of Theorem \ref{thm:GAR_complete_hd_U}]
Without loss of generality, we may assume the following hold: 
\begin{equation}
\label{convention:GAR_comp}
\frac{D_n^2 \log(d)}{n} \leq 1/6,
\quad \frac{D_n^2 \log^7(dn)}{n} \leq 1.
\end{equation}
since otherwise we can increase the constant $C$.

\noindent \underline{\textit{Step 1. }}
Denote $S_{n,j} := r n^{-1} \sum_{i=1}^{n} P^{r-1} h_j(X_i)$.
Due to condition \eqref{MT0}, \eqref{MT1} and \cite[Proposition 2.1]{chernozhukov2017}, we have
\begin{align*}
\sup_{R \in \cR}\left\vert
\Pro\left(\sqrt{n}(S_{n}- \underline{\theta}) \in R \right) - \Pro(rY_A \in R)
\right\vert \; \lesssim \; 
\left( \frac{D_n^2 \log^7(dn)}{n}\right)^{1/6} + 
\left(\frac{D_n^2 \log^3(dn)}{n^{1-2/q}}  \right)^{1/3}.
\end{align*}
\vspace{0.2cm}

\noindent \underline{\textit{Step 2. }} Fix any rectangle $R = [a,b] \in \cR$, where $a,b \in \bR^d$ and $a \leq b$. For any $ t >0$,
\begin{align*}
&\Pro\left(\sqrt{n} (\underline{U}_n - \underline{\theta}) \in R \right) \\
\leq \;& \Pro\left( \sqrt{n}\sup_{1 \leq j \leq d}\left\vert \underline{U}_{n,j} - S_{n,j} \right\vert \geq t
 \right) 
  + \Pro\left(-\sqrt{n} (S_{n} - \underline{\theta}) \leq  -a+t \cap
\sqrt{n} (S_{n} - \underline{\theta}) \leq b+t 
 \right) \\ 
 \leq_{(1)} \;& t^{-1} \left(n^{-1/2}  D_n \log(d) + n^{-1+1/q}  D_n^{3-2/q} \log^{3/2}(d)\right) + \Pro(-rY_A \leq -a +t \cap rY_A \leq b+t) \\
& + 
\left( \frac{D_n^2 \log^7(dn)}{n}\right)^{1/6} + 
\left(\frac{D_n^2 \log^3(dn)}{n^{1-2/q}}  \right)^{1/3},\\
 \leq_{(2)} \;& t^{-1} \left(n^{-1/2}  D_n \log(d) + n^{-1+1/q}  D_n^{3-2/q} \log^{3/2}(d)\right) + Ct\sqrt{\log(d)} + \Pro(rY_A \in R) \\
& + 
C\left( \frac{D_n^2 \log^7(dn)}{n}\right)^{1/6} + 
C\left(\frac{D_n^2 \log^3(dn)}{n^{1-2/q}}  \right)^{1/3},
\end{align*}
where we used Lemma \ref{lemma:first_order_without_hajek}, the Markov inequality  and results from Step 1 in (1),and the anti-concentration inequality \cite[Lemma A.1]{chernozhukov2017} in (2).

Now let $t = n^{-1/4} D_n^{1/2} \log^{1/4}(d) +
n^{-1/2+1/(2q)} D_n^{3/2-1/q} \log^{1/2}(d)$, and we have
\begin{align*}
&\Pro\left(\sqrt{n} (\underline{U}_n - \underline{\theta}) \in R \right) - \Pro\left(rY_A \in R \right) \\
\lesssim \;& 
\left(\frac{D_n^{2} \log^{3}(d)}{n}\right)^{1/4}
+
\left(\frac{D_n^{3-2/q} \log^{2}(d)}{n^{1-1/q}}\right)^{1/2}
 +  
\left( \frac{D_n^2 \log^7(dn)}{n}\right)^{1/6} + 
\left(\frac{D_n^2 \log^3(dn)}{n^{1-2/q}}  \right)^{1/3}\\
\lesssim \;& 
\left(\frac{D_n^{3-2/q} \log^{2}(d)}{n^{1-1/q}}\right)^{1/2}
 +  
\left( \frac{D_n^2 \log^7(dn)}{n}\right)^{1/6} + 
\left(\frac{D_n^2 \log^3(dn)}{n^{1-2/q}}  \right)^{1/3} := 
{\epsilon}_n,
\end{align*}
where we used \eqref{convention:GAR_comp} in the last inequality. By a similar argument, we can show that 
\[ \Pro\left(rY_A \in R \right) - 
\Pro\left(\sqrt{n} (\underline{U}_n - \underline{\theta}) \in R \right) \lesssim {\epsilon}_n,\]
which completes the proof.
\end{proof}

\subsection{Bounding the effect due to sampling}
Let $m \in [M]^d$ such that $m_j = \sigma(h_j)$ for $1 \leq j \leq d$.
For $1 \leq j \leq d$, let $\widetilde{h}_j(\cdot) = h_j(\cdot)/\|h_j\|_{P^r,2}$, and define
\begin{align} \label{zeta_n}
\zeta_{n,j} :=  \frac{1}{\sqrt{|I_{n,r}|}}\sum_{\iota \in I_{n,r}} \mathcal{T}_{\iota,j}, \quad
\text{ where }
\mathcal{T}_{\iota,j} := \frac{Z_{\iota}^{(m_j)} - p_n}{\sqrt{p_n(1-p_n)}} \widetilde{h}_j(X_{\iota}).
\end{align}

%
%
%

\begin{lemma}\label{lemma:effect_of_sampling}
Assume \eqref{MT2}, \eqref{MT3}, and \eqref{MT4} hold. Define
\begin{align*}
\eta_n &:= \left( \frac{B_n^2 \log^7(dn)}N \right)^{1/8}, \quad \omega_n := \left( \frac{n^{4r/q} \log^5(dn) B_n^{2-8/q} D_n^{8/q}}{N}
\right)^{1/4},
\end{align*}
and recall the definition of $\varphi_n$ in Lemma \ref{lemma:higher_order}. 
There exists a constant $C$, depending only on
$r, q, c_0$, such that
with probability at least $1 - C \varphi_n^{1/4} \log^{1/2}(d) - C\eta_n$,
\begin{align*}
\rho^{\cR}_{\vert X_1^n}( \zeta_n,  \Lambda_B^{-1/2} {Y}_{B})
&:= \sup_{R \in \cR}
\left\vert
\Pro_{\vert X_1^n}( \zeta_n \in R)
-
\Pro\left( \Lambda_B^{-1/2} {Y} \in R \right)
\right\vert \\
& \leq C \eta_n + C\omega_n + C \varphi_n^{1/4} \log^{1/2}(d),
\end{align*}
where $Y_B \; \sim \; N(0, \Gamma_B)$ and $\Lambda_B$ is a $d*d$ diagonal matrix such that $\Lambda_{B,jj} := P^r h_j^2$.
\end{lemma}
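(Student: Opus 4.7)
The plan is to condition on $X_1^n$. Given $X_1^n$, the random vector $\zeta_n = (\zeta_{n,1},\ldots,\zeta_{n,d})^\top$ becomes the normalized sum $|I_{n,r}|^{-1/2}\sum_{\iota \in I_{n,r}} \mathcal{T}_\iota$, where the vectors $\mathcal{T}_\iota := (\mathcal{T}_{\iota,1},\ldots,\mathcal{T}_{\iota,d})^\top$ are mutually independent across $\iota$ (the only remaining randomness comes from the Bernoulli sampling indicators $\{Z_\iota^{(m_j)}:j\in[d]\}$, which are independent across $\iota$, with in-vector dependence only through the stratification map $\sigma$). The conditional covariance of $\zeta_n$ is the complete $U$-statistic
\[
\widehat\Sigma_{jk} \;:=\; |I_{n,r}|^{-1} \sum_{\iota \in I_{n,r}} \widetilde h_j(X_\iota)\, \widetilde h_k(X_\iota)\, \mathbbm{1}\{\sigma(h_j)=\sigma(h_k)\},
\]
whose expectation (using $\underline\theta=0$, hence $P^r\widetilde h_j=0$) equals $\Sigma := \Lambda_B^{-1/2}\Gamma_B\Lambda_B^{-1/2}$, the covariance of $\Lambda_B^{-1/2} Y_B$, with $\Sigma_{jj}=1$.

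\textbf{Step 1: conditional high-dimensional CLT.} Conditional on $X_1^n$, we would apply a hyperrectangle CLT for independent sums of CCK type (e.g.\ \cite[Proposition 2.1]{chernozhukov2017}) to produce a Gaussian approximation of $\zeta_n$ by $N(0,\widehat\Sigma)$. The rate is driven by the third and fourth conditional moments of $\mathcal T_{\iota,j}$ and a truncated maximum term. Using $\mathrm{Var}(Z_\iota^{(m_j)})=p_n(1-p_n)$ one computes $E[|\mathcal T_{\iota,j}|^k \mid X_1^n] \lesssim p_n^{1-k/2}|\widetilde h_j(X_\iota)|^k$, so the $k$-th normalized moment ($k=3,4$) equals $N^{1-k/2}\cdot|I_{n,r}|^{-1}\sum_\iota |\widetilde h_j(X_\iota)|^k$, which Lemma \ref{lemma:higher_order} bounds by $B_n^{k-2}(1+\varphi_n)$ with high probability. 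For the maximum, $|\mathcal T_{\iota,j}|\leq H(X_\iota)/(\inf_j\|h_j\|_{P^r,2}\sqrt{p_n})$; combined with the lower bound $\|h_j\|_{P^r,2}^2\geq c_0 B_n^2/D_n^2$ from \eqref{MT4} and the $L^q$ envelope in \eqref{MT3}, this produces the $\omega_n$ term, while the moment contributions aggregate into the $\eta_n$ term.

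\textbf{Step 2: Gaussian comparison via covariance concentration.} Next we would invoke a hyperrectangle Gaussian–Gaussian comparison (e.g.\ \cite[Lemma 3.1]{chernozhukov2014gaussian}) to obtain
\[
\sup_{R\in\cR}\bigl|\Pro(N(0,\widehat\Sigma)\in R) - \Pro(N(0,\Sigma)\in R)\bigr| \;\lesssim\; \|\widehat\Sigma-\Sigma\|_\infty^{1/2}\,\log^{1/2}(d),
\]
which is justified by $\Sigma_{jj}=1$. Lemma \ref{lemma:higher_order} gives $E\|\widehat\Sigma-\Sigma\|_\infty \lesssim \varphi_n$; applying Markov's inequality at the threshold $\sqrt{\varphi_n}$ places $\{\|\widehat\Sigma-\Sigma\|_\infty \leq \sqrt{\varphi_n}\}$ inside an event of probability $\geq 1-O(\sqrt{\varphi_n})$, on which the comparison bound becomes $O(\varphi_n^{1/4}\log^{1/2}(d))$. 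Combining with Step 1 via the triangle inequality (both in the Kolmogorov distance and in the probability accounting) produces the stated conclusion.

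\textbf{Main obstacle.} The delicate part will be handling the two sources of randomness—the data $X_1^n$ and the Bernoulli sampling plan—simultaneously and tracking the resulting compound exponents. The conditional CLT constants are themselves data-dependent and must be controlled on a high-probability event via the complete $U$-process bounds of Lemma \ref{lemma:higher_order}, while the small sampling probability $p_n\asymp N/|I_{n,r}|$ forces $L^q$-envelope (rather than $L^\infty$) bounds on the maximum, yielding the $\omega_n$ term with its explicit $q$-dependence. A careful bookkeeping of how Markov's inequality converts expectation-level concentration of $\widehat\Sigma-\Sigma$ and of the conditional moments into high-probability statements is what produces the somewhat unusual $\varphi_n^{1/4}\log^{1/2}(d)$ factor.
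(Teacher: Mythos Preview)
Your proposal is correct and follows essentially the same two-step route as the paper: condition on $X_1^n$, apply a high-dimensional CLT over hyperrectangles to compare $\zeta_n$ with the conditional Gaussian $N(0,\widehat\Sigma)$, then use a Gaussian--Gaussian comparison to pass from $\widehat\Sigma$ to $\Sigma=\Lambda_B^{-1/2}\Gamma_B\Lambda_B^{-1/2}$; the moment/envelope control via Lemma~\ref{lemma:higher_order} and \eqref{MT3}--\eqref{MT4} that you outline is exactly what the paper does. Two small corrections: the conditional CLT needed is the Lindeberg-type Theorem~2.1 of \cite{chernozhukov2017} (not Proposition~2.1, which assumes i.i.d.\ summands with uniform sub-exponential tails---here the $\mathcal T_\iota$ are neither), and the hyperrectangle Gaussian comparison gives $\Delta^{1/3}\log^{2/3}(d)$ rather than $\Delta^{1/2}\log^{1/2}(d)$, so the paper thresholds at $\varphi_n^{3/4}\log^{-1/2}(d)$ (not $\sqrt{\varphi_n}$) to land on the same $\varphi_n^{1/4}\log^{1/2}(d)$.
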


\begin{proof} 
The constants in this proof may depend on 
$r, q, c_0$. 
Consider conditionally independent (conditioned on $X_1^n$) $\bR^d$-valued random vectors $\{\widehat{Y}_{\iota}: \iota \in I_{n,r}\}$ such that
$$
\widehat{Y}_{\iota} \vert X_1^n
\;\sim \; N(0, \widehat{\Sigma}_{\iota}), \;\;
\text{ where }
\widehat{\Sigma}_{\iota, ij} =  \widetilde{h}_i(X_{\iota})\widetilde{h}_j(X_{\iota}) \mathbbm{1}\{m_i = m_j\}.
$$
Let $\widehat{Y} := |I_{n,r}|^{-1/2} \sum_{\iota \in I_{n,r}} \widehat{Y}_{\iota}$. 
Further, define
\begin{align*}
\rho^{\cR}_{\vert X_1^n}(\zeta_n, \widehat{Y})
&:= \sup_{R \in \cR}
\left\vert
\Pro_{\vert X_1^n}\left(\zeta_n \in R \right)
- \Pro_{\vert X_1^n}(\widehat{Y}\in R)
\right\vert, \\
\rho^{\cR}_{\vert X_1^n}(\widehat{Y},  \Lambda_B^{-1/2}Y_B)
&:= \sup_{R \in \cR}
\left\vert
\Pro_{\vert X_1^n}\left(\widehat{Y} \in R \right)
- \Pro( \Lambda_B^{-1/2}{Y}_B\in R)
\right\vert.
\end{align*}
By triangle inequality, it then suffices to show that each of the following events happens with probability at least $1-C \varphi_n^{1/4} \log^{1/2}(d) - C\eta_n$,
\begin{align}\label{aux_to_show}
\rho^{\cR}_{\vert X_1^n}(\zeta_n, \widehat{Y}) \leq C\eta_n +  C\omega_n,\quad
\rho^{\cR}_{\vert X_1^n}(\widehat{Y}, \Lambda_B^{-1/2}Y_B) \leq C\varphi_n^{1/4} \log^{1/2}(d),
\end{align}
on which we now focus. Without loss of generality, we assume
\begin{equation}
\label{aux_wlog}
\varphi_n^{1/4} \log^{1/2}(d) \leq 1/6, \quad \eta_n \leq c_1, \quad \omega_n \leq c_1 ,
\end{equation}
for some sufficiently small constant $c_1 \in (0,1/2)$ that is to be determined, since otherwise we could always increase $C$.

\vspace{0.3cm}

\noindent{\underline{\textit{Step 0}}}. 
Due to \eqref{aux_wlog}, Lemma \ref{lemma:higher_order} and Markov inequality, we have
\begin{equation}
\label{equ:Delta_B2}
\Pro\left(
\left \|
\Cov_{\vert X_1^n}(\widehat{Y})  - \Cov(\Lambda_B^{-1/2} Y_B)\right\|_{\infty} \geq 1/2
\right) \lesssim \varphi_n \leq \varphi_n^{1/4} \log^{1/2}(d).
\end{equation}
where for $1 \leq i,j \leq d$,
\[\Cov_{\vert X_1^n}(\widehat{Y})_{ij} :=  |I_{n,r}|^{-1} \sum_{\iota \in I_{n,r}}  \widetilde{h}_i(X_{\iota})
\widetilde{h}_j(X_{\iota}) \mathbbm{1}\{m_i = m_j\}.\]
By definition, for any $\iota \in I_{n,r}$ and $1 \leq j \leq d$, $\Exp_{\vert X_1^n}\left[ \widehat{Y}_{\iota,j}^2\right] 
= \widetilde{h}_j^2(X_{\iota})$, which implies  that $\|\widehat{Y}_{\iota,j}\|_{\psi_2\vert X_1^n} \lesssim |\widetilde{h}_j(X_{\iota})|$. Thus by 
  maximal inequality \cite[Lemma 2.2.2]{van1996weak}, there exists an absolute constant $C_0 \geq 1$ such that
\begin{align}
\label{aux_subG_max}
\|\max_{1 \leq j \leq d} \widehat{Y}_{\iota,j}\|_{\psi_1 \vert X_1^n} \leq
\|\max_{1 \leq j \leq d} \widehat{Y}_{\iota,j}\|_{\psi_2 \vert X_1^n} \leq C_0 \max_{1 \leq j \leq d}|\widetilde{h}_j(X_{\iota})|  \log^{1/2}(d).
\end{align}


\vspace{0.3cm}
\noindent{\underline{\textit{Step 1}}}.  The goal is to show that 
the first event in \eqref{aux_to_show}, $\rho^{\cR}_{\vert X_1^n}(\zeta_n, \widehat{Y}) \leq C\eta_n + C \omega_n$ , holds with probability at least $1-C\varphi_n^{1/4} \log^{1/2}(d) - C\eta_n$.

\vspace{0.2cm}
\underline{\textit{Step 1.1.}} Define 
\begin{align}\label{def:Lhat}
\widehat{L}_n :=
\max_{1 \leq j \leq d} |I_{n,r}|^{-1}
\sum_{\iota \in I_{n,r}} \Exp_{\vert X_1^n}\left[
\left\vert
\mathcal{T}_{\iota,j} \right\vert^3 \right].
\end{align}
Further,  $\widehat{M}_n(\phi) := \widehat{M}_{n,X}(\phi) + \widehat{M}_{n,Y}(\phi)$, where
\begin{equation}
\label{def:M_XY}
\begin{split}
&\widehat{M}_{n,X}(\phi) := |I_{n,r}|^{-1}\sum_{\iota \in I_{n,r}}
\Exp_{\vert X_1^n}\left[\max_{1 \leq j \leq d} \left\vert
\mathcal{T}_{\iota,j} \right\vert^3 ; \max_{1 \leq j \leq d}\left\vert
\mathcal{T}_{\iota,j}   \right\vert > 
\frac{\sqrt{|I_{n,r}|}}{4\phi \log d}
\right],\\
&\widehat{M}_{n,Y}(\phi) := |I_{n,r}|^{-1}\sum_{\iota \in I_{n,r}}
\Exp_{\vert X_1^n}\left[\max_{1 \leq j \leq d} |\widehat{Y}_{\iota,j}|^3; \max_{1 \leq j \leq d} |\widehat{Y}_{\iota,j}| > 
\frac{\sqrt{|I_{n,r}|}}{4\phi \log d}
\right].
\end{split}
\end{equation}
By Theorem 2.1 in \cite{chernozhukov2017},  there exist absolute constants $K_1$ and $K_2$  such that for
any real numbers $\overline{L}_n$ and $\overline{M}_n$, we have
\begin{align*}
\rho^{\cR}_{\vert X_1^n}(\zeta_n, \widehat{Y})
\leq K_1 \left( \left(
\frac{\overline{L}_n^2 \log^7(d)}{|I_{n,r}|}
\right)^{1/6} + \frac{\overline{M}_n}{\overline{L_n}} \right) \quad \text{ with } \quad
\phi_n := K_2 \left( \frac{\overline{L}_n^2 \log^4(d)}{|I_{n,r}|}\right)^{-1/6},
\end{align*}
on the event 
\begin{equation} \label{def:E_n}
\mathcal{E}_n := \left\{\widehat{L}_n \leq \overline{L}_n \right\} \bigcap \left\{\widehat{M}_n(\phi_n) \leq \overline{M}_n \right\} \bigcap \left\{\min_{1 \leq j \leq d} \Cov_{\vert X_1^n}(\widehat{Y})_{jj} \geq 2^{-1} \right\} \bigcap \left\{\phi_n \geq 1 \right\}.
\end{equation}

In Step 0, we have shown $\Pro\left(\min_{1 \leq j \leq d} \Cov_{\vert X_1^n}(\widehat{Y})_{jj} \geq 2^{-1}\right) \geq 1- C \varphi_n^{1/4} \log^{1/2}(d)$, since $  \Cov(\Lambda_B^{-1/2} Y_B)_{jj} = 1$ for $1 \leq j \leq d$.
In Step 1.2-1.4, we select proper $\overline{L}_n$ and $\overline{M}_n$ such that the first two events happen with probability at least $1 - C \eta_n$, and $\phi_n \geq 1$ with small enough $c_1$. In Step 1.5, we plug in these values.

\vspace{0.3cm}
\underline{\textit{Step 1.2: Select $\overline{L}_n$.}}\hspace{0.1cm}  
Since $p_n \leq 1/2$, $\Exp|Z_{\iota}^{(\ell)} - p_n|^3 \leq C p_n$ for $\ell \in [M]$, and thus
$$\widehat{L}_n \leq C p_n^{-1/2} \mathcal{T}_1, \;\text{ where } \mathcal{T}_1 := \max_{1 \leq j \leq d}\frac{1}{|I_{n,r}|}\sum_{\iota \in I_{n,r}} \left\vert
\widetilde{h}_j(X_{\iota})\right\vert^3
.$$
Due to \eqref{aux_wlog}, Lemma \ref{lemma:higher_order} and Markov inequality, we have
\begin{align*}
\Pro\left(\mathcal{T}_1 \leq  C B_n\, (\eta_n)^{-1}\right)) \geq 1- \eta_n.
\end{align*}
Thus there exists a constant $C_1$, depending on $q,r, c_0$, such that the following two conditions hold: (i).
\begin{align}\label{C_1_sel}
4^{-1}C_0^{-1}  K_2^{-1} C_1^{1/3} \geq 3r, 
\end{align}
where $C_0$ appears in \eqref{aux_subG_max}, and (ii). if  we let
\begin{equation}
\label{Ln_sel}
\overline{L}_n := 
C_1 p_n^{-1/2} B_n\, (\eta_n)^{-1}
\left(1 + \frac{n^{3r/q} \log^{11/2}(dn) B_n^{2-6/q}D_n^{6/q} \eta_n^{1-3/q}}{N}
\right),
\end{equation}
then $\Pro\left(L_n \leq \overline{L}_n \right) \geq 1 - \eta_n$.

With this selection of $\overline{L}_n$, and due to the definition of $\eta_n$ and \eqref{aux_wlog}, we have
\begin{align*}
\phi_n^{-1} &\leq 2 K_2^{-1} C_1^{1/3} \left(
\left( \frac{B_n^2 \eta_n^{-2}\log^4(dn)}{N}\right)^{1/6}
+
\left(\frac{n^{6r/q} \log^{15}(dn) B_n^{6-12/q} D_n^{12/q} \eta_n^{-6/q}}{N^3 }\right)^{1/6} \right) \\
& = 2 K_2^{-1} C_1^{1/3} \left(
\left( \frac{B_n^2 \log^3(dn)}{N}\right)^{1/8}
+
\left( \frac{n^{2r/q} \log^{5-7/(4q)}(dn) B_n^{2-9/(2q)} D_n^{4/q} }{N^{1-1/(4q)} }\right)^{1/2} \right) \\
& \leq 2 K_2^{-1} C_1^{1/3} \left(
\left( \frac{B_n^2 \log^3(dn)}{N}\right)^{1/8}
+
\left( \frac{n^{2r/q}   \log^{3/2}(dn) B_n^{1-4/q} D_n^{4/q} }{N^{1/2} }\right)^{1/2} \right) .
\end{align*}
Thus,  if we select  $c_1$ in \eqref{aux_wlog} to be small enough, 
i.e.,
\begin{align*}
4 c_1   K_2^{-1} C_1^{1/3} \leq 1,
\end{align*}
we have $\phi_n^{-1} \leq 1$.\\

\vspace{0.2cm}

\underline{\textit{Step 1.3: bounding $\widehat{M}_{n,X}(\phi_n)$.}}\hspace{0.1cm}  
Since $p_n \leq 1/2$, by its definition in \eqref{def:M_XY}, we have
$\widehat{M}_{n,X}(\phi_n) = 0$ on the event 
\begin{align*}
\left\{ \Upsilon_n :=
 \max_{\iota \in I_{n,r}} \max_{1 \leq j \leq d}
\left\vert \widetilde{h}_j(X_{\iota}) \right\vert
\; \leq \; \frac{\sqrt{N}}{4\phi_n \log(d)} \right\}.
\end{align*}

Observe that by the definition of $\bar{L}_n$ in \eqref{Ln_sel},
\begin{align*}
\phi_n^{-1} &\geq K_2^{-1} C_1^{1/3} \left(\frac{p_n^{-1} n^{6r/q} \log^{11}(dn)\log^4(d) B_n^{6-12/q} D_n^{12/q} \eta_n^{-6/q}}{N^2 |I_{n,r}|}\right)^{1/6}\\
&\geq
K_2^{-1} C_1^{1/3} \left(\frac{ n^{2r/q} \log^3(dn) \log^2(d) B_n^{2-4/q} D_n^{4/q} \eta_n^{-2/q}}{N}\right)^{1/2},
\end{align*}
which  implies that
\begin{align}\label{aux_cal_incomp_Ustat}
\frac{\sqrt{N}}{4\phi_n \log(d)}  \geq 
4^{-1} K_2^{-1} C_1^{1/3} n^{r/q} B_n^{1-2/q} D_n^{2/q} \eta_n^{-1/q} \log^{3/2}(dn)
\geq n^{r/q} B_n^{1-2/q} D_n^{2/q} \eta_n^{-1/q},
\end{align}
where in the last inequality, we used \eqref{C_1_sel}.
Due to \eqref{MT3} and \eqref{MT4}, and Markov inequality (the case for $q = \infty$ is obvious),
\begin{align}\label{Upsilon_bound}
\Pro\left(\Upsilon_n \geq  n^{r/q} B_n^{1-2/q} D_n^{2/q} \eta_n^{-1/q}\right) \leq \eta_n.
\end{align}
Thus  $\Pro\left( \widehat{M}_{n,X}(\phi_n) = 0\right) \geq 1 - \eta_n$.

\vspace{0.2cm}

\underline{\textit{Step 1.4: bounding $\widehat{M}_{n,Y}(\phi_n)$  and selecting $\overline{M}_n$.}} 
Due to the calculation in Step 0, 
\begin{align*}
\Pro_{\vert X_1^n}\left(
\max_{1 \leq j \leq d} \widehat{Y}_{\iota,j} \geq t
\right) \leq 2 \exp\left( \frac{t}{C_0\max_{1 \leq j \leq d}|\widetilde{h}_j(X_{\iota})| \log^{1/2}(d)}
\right),
\end{align*}
where $C_0$ is the absolute constant  in \eqref{aux_subG_max}.  
In Step 1.3 and 1.2, we have shown that 
\begin{align*}
&\Pro(\cE'_n)\geq 1-\eta_n,\;\; \text{ where }  \cE'_n := \left\{
\Upsilon_n \leq  n^{r/q} B_n^{1-2/q} D_n^{2/q} \eta_n^{-1/q}
\leq \frac{\sqrt{N}}{4\phi_n \log(d)}\right\}, \\
& \phi_n^{-1} \leq 1,\quad
\phi_n^{-1} \geq K_2^{-1} C_1^{1/3}  \left(\frac{ n^{2r/q} \log^3(dn)\log^2(d) B_n^{2-4/q} D_n^{4/q} \eta_n^{-2/q}}{N}\right)^{1/2}.
\end{align*}
Thus on the event $\cE'_n$, we have
\[
\Upsilon_n \phi_n \leq K_2 C_1^{-1/3}  \log^{-3/2}(dn) \log^{-1}(d)N^{1/2}.
\]
By \cite[Lemma C.1]{chernozhukov2017},
on the event $\cE'_n$,  for each $\iota \in I_{n,r}$,
\begin{align*}
&\Exp_{\vert X_1^n}\left[\max_{1 \leq j \leq d} |\widehat{Y}_{\iota,j}|^3; \max_{1 \leq j \leq d} |\widehat{Y}_{\iota,j}| > 
\frac{\sqrt{|I_{n,r}|}}{4\phi_n \log d}
\right] \\
\leq \;\;&
12 C_0^3 \left(\frac{\sqrt{|I_{n,r}|}}{4\phi_n \log d} +  \Upsilon_n \log^{1/2}(d) \right)^3 \exp\left(-
\frac{\sqrt{|I_{n,r}|}}{ 4C_0 \Upsilon_n \phi_n \log^{3/2} d} 
\right) \\
\leq \;\;& 12 C_0^3 n^{3r/2} \exp\left(-
\frac{{|I_{n,r}|^{1/2}}}{ 4 C_0  K_2 C_1^{-1/3}  \log^{-1}(dn) N^{1/2}} 
\right) \\
\leq \;\;& 12 C_0^3 n^{3r/2} \exp\left(- 4^{-1}C_0^{-1}  K_2^{-1} C_1^{1/3}\log(dn)
\right).
\end{align*}
Due to \eqref{C_1_sel}, we have that on the event $\cE'_n$,  for each $\iota \in I_{n,r}$,
\begin{align*}
\Exp_{\vert X_1^n}\left[\max_{1 \leq j \leq d} |\widehat{Y}_{\iota,j}|^3; \max_{1 \leq j \leq d} |\widehat{Y}_{\iota,j}| > 
\frac{\sqrt{|I_{n,r}|}}{4\phi_n \log d}
\right] 
\leq \;\;& 12 C_0^3 n^{-3r/2}.
\end{align*}
Thus if we select
\begin{align}
\label{Mn_sel}
\overline{M}_n :=  12C_0^3 n^{-3r/2},
\end{align}
then $\Pro(\widehat{M}_{n,Y}(\phi_n) \leq \overline{M}_n ) \geq  1 - C \eta_n$.
\vspace{0.2cm}

\underline{\textit{Step 1.5: plug in $\overline{L}_n$ and $\overline{M}_n$.}}\hspace{0.1cm}  
Recall the definition $\overline{L}_n$ and $\overline{M}_n$ in \eqref{Ln_sel} and \eqref{Mn_sel}. With these selections, we have shown  that $\Pro(\mathcal{E}_n) \geq 1 - C \varphi_n^{1/4} \log^{1/2}(d) - C\eta_n$, where
$\mathcal{E}_n$ is defined in \eqref{def:E_n}.
 Further, on the event $\mathcal{E}_n$, due to \eqref{aux_wlog}, we have
\begin{align*}
&\rho^{\cR}_{\vert X_1^n}( \zeta_n, \widehat{Y})
\lesssim \left(
\frac{\overline{L}_n^2 \log^7(dn)}{|I_{n,r}|}
\right)^{1/6} +
\frac{\overline{M}_n}{\overline{L_n}} \\
\lesssim &
\left(\frac{B_n^2 \log^7(dn)}{N}
\right)^{1/8} +
\left( \frac{n^{2r/q} 
\log^{6-7/(4q)}(dn) B_n^{2-9/(2q)} D_n^{4/q}}{N^{1-1/(4q)}}
\right)^{1/2} 
+ n^{-3r/2} p_n^{1/2} B_n^{-1} \eta_n,
\\
\lesssim &
\left(\frac{B_n^2 \log^7(dn)}{N}
\right)^{1/8} +
\left( \frac{n^{4r/q} \log^5(dn)B_n^{2-8/q} D_n^{8/q}}{N}
\right)^{1/4} = \eta_n + \omega_n,
\end{align*}
which completes the proof of Step 1.

\vspace{0.4cm}
\underline{\textit{Step 2.}}\hspace{0.1cm}  
The goal is to show that 
the second event in \eqref{aux_to_show}, $\rho^{\cR}_{\vert X_1^n}(\widehat{Y}, \Lambda_B^{-1/2}Y_B) \leq C   \varphi_n^{1/4} \log^{1/2}(d)$, holds with probability at least $1 - C \varphi_n^{1/4} \log^{1/2}(d)$.

By the Gaussian comparison inequality \citep[Lemma C.5] {chen2017randomized},
$$
\rho^{\cR}_{\vert X_1^n}(\widehat{Y},  \Lambda_B^{-1/2}Y_B) \lesssim \overline{\Delta}^{1/3} \log^{2/3}(d),
$$
on the event that $\{\|\Cov_{\vert X_1^n}(\widehat{Y})  - \Cov(\Lambda_B^{-1/2} Y_B) \|_{\infty} \leq \overline{\Delta}\}$.
By Lemma \ref{lemma:higher_order}, due to \eqref{aux_wlog}, and by Markov inequality,
\begin{align*}
\Pro\left(
\left \|\Cov_{\vert X_1^n}(\widehat{Y})  - \Cov(\Lambda_B^{-1/2} Y_B)\right\|_{\infty} \leq C \varphi_n^{3/4} \log^{-1/2}(d)
\right) \geq 1 - C \varphi_n^{1/4} \log^{1/2}(d).
\end{align*}
Thus if we set $\overline{\Delta} :=C \varphi_n^{3/4} \log^{-1/2}(d)$, then with probability at least $1- C \varphi_n^{1/4} \log^{1/2}(d)$,
\begin{align*}
\rho^{\cR}_{\vert X_1^n}(\widehat{Y}, \Lambda_B^{-1/2} Y_B) \lesssim 
 C \varphi_n^{1/4} \log^{1/2}(d).
\end{align*}
\end{proof}

\subsection{Proof of Theorem \ref{thm:GAR_hd_U}}
\label{proof:GAR_hd_U}

For $1 \leq j \leq d$, let $\widetilde{h}_j(\cdot) := h_j(\cdot)/\| h_j\|_{P^r,2}$, and let $m \in [M]^d$ such that $m_j = \sigma(h_j)$. Recall
 the definition of $\zeta_n$ in \eqref{zeta_n}. Let $Y_A$ and $Y_B$ be two \textit{independent} $d$-dimensional Gaussian vectors such that
\[
Y_A \; \sim \; N(0, \Gamma_A), \quad
Y_B \; \sim \; N(0, \Gamma_B).
\]
Define 
\[
Y:= r Y_A + \sqrt{\alpha_n} Y_B, \qquad
\widetilde{Y} :=  \Lambda_{*}^{-1/2}(r Y_A + \sqrt{\alpha_n} Y_B),
\] 
where $ \Lambda_{*}$ is a $d\times d$ \textit{diagonal} matrix such that
$\Lambda_{*,jj} = \Exp[Y_j^2]$. For two $d$-dimensional random vectors $Z$ and $Z'$, define $\rho(Z,Z') := \sup_{R \in \cR}|\bP(Z \in R) - \bP(Z' \in R)|$.

\begin{proof}
Without loss of generality, we assume \eqref{wlog_theta_zero}, and
\begin{equation}
\label{wlog_c_incomplete}
\begin{split}
& \varpi_n^{(1)}:=
\left( \frac{D_n^2 \log^7(dn)}{n}\right)^{1/8} + 
\left(\frac{D_n^2 \log^3(dn)}{n^{1-2/q}}  \right)^{1/4} +
\left(\frac{D_n^{3-2/q} \log^{2}(d)}{n^{1-1/q}}\right)^{1/2} \leq 1/2, \\
& \varpi_n^{(2)}:=
\left( \frac{B_n^2 \log^7(dn)}N \right)^{1/8} +  \left( \frac{n^{4r/q} \log^5(dn) B_n^{2-8/q} D_n^{8/q}}{N}
\right)^{1/4} \leq 1/2.
\end{split}
\end{equation}

We recall the definitions of $\varphi_n$, $\eta_n$, and $\omega_n$ in Lemma \ref{lemma:effect_of_sampling}. Due to \eqref{wlog_c_incomplete}, 
\begin{align} \label{aux_order_cal}
\varphi_n^{1/4} \log^{1/2}(d) \lesssim \varpi_n^{(1)},  \qquad
\eta_n + \omega_n = \varpi_n^{(2)}.
\end{align}


Observe that for $1 \leq j \leq d$,
\begin{align*}
\underline{U}_{n,N,j}' &= \frac{N}{\widehat{N}^{(m_j)}}\left( 
\frac{1}{N}\sum_{\iota \in I_{n,r}} (Z_{\iota}^{(m_j)} - p_n) \Lambda_{B,jj}^{1/2}\widetilde{h}_j(X_{\iota})
+
\frac{1}{|I_{n,r}|}\sum_{\iota \in I_{n,r}} h_j(X_{\iota})
\right) \\
&= \frac{N}{\widehat{N}^{(m_j)}}\left( \sqrt{\frac{1-p_n}{N}} \left(\Lambda_B^{1/2}\zeta_n \right)_j + \underline{U}_{n,j}\right)
:= \frac{N}{\widehat{N}^{(m_j)}} \Phi_{n,j}.
\end{align*}
where  $ \Lambda_{B}$ is a $d\times d$ \textit{diagonal} matrix such that
$\Lambda_{B,jj} = P^r h_j^2$.

\vspace{0.2cm}
\underline{Step 1:} the goal is to show that
$$
\rho\left(\sqrt{n} \Phi_n, \;\; rY_A +\alpha_n^{1/2}Y_B \right)
\lesssim \varpi_n^{(1)} + \varpi_n^{(2)}.
$$
For any rectangle $R \in \cR$, observe that
\begin{align*}
&\Pro(\sqrt{n}\left(\underline{U}_n + \sqrt{1-p_n} N^{-1/2} \Lambda_B^{1/2} \zeta_n \right) \in R)\\
= \;\; &\Exp\left[
\Pro_{\vert X_1^n}\left(
 \zeta_n \in \left( \frac{1}{\sqrt{\alpha_n (1-p_n)}} \Lambda_B^{-1/2} R-
\sqrt{\frac{N}{1-p_n}}\Lambda_B^{-1/2} \underline{U}_n
\right)
\right)
\right].
\end{align*}
By Lemma \ref{lemma:effect_of_sampling} and \eqref{aux_order_cal}, we have
\begin{align*}
&\Pro(\sqrt{n}\left( \underline{U}_n + \sqrt{1-p_n} N^{-1/2}\Lambda_B^{1/2}\zeta_n \right) \in R)\\
\leq  \;\; &\Exp\left[
\Pro_{\vert X_1^n}\left(
\Lambda_B^{-1/2} Y_B \in \left( \frac{1}{\sqrt{\alpha_n (1-p_n)}} \Lambda_B^{-1/2} R-
\sqrt{\frac{N}{1-p_n}}\Lambda_B^{-1/2} \underline{U}_n
\right)
\right)
\right] + C \varpi_n^{(1)} +  C \varpi_n^{(2)}\\
= \;\; &
\Pro\left(\sqrt{n} \underline{U}_n \in 
\left[   R- \sqrt{\alpha_n(1-p_n)}Y_B
\right] \right)+ C \varpi_n^{(1)} +  C \varpi_n^{(2)},
\end{align*}
where we recall that $Y_B$ is independent of all other random variables.
Further, by Theorem \ref{thm:GAR_complete_hd_U},
\begin{align*}
&\Pro(\sqrt{n}\left( \underline{U}_n + \sqrt{1-p_n} N^{-1/2}\Lambda_B^{1/2}\zeta_n \right) \in R)\\
\leq \;\; &
\Exp\left[
\Pro_{\vert Y_B}\left(\sqrt{n} \underline{U}_n \in 
\left[  R- \sqrt{\alpha_n(1-p_n)} Y_B
\right] \right)
\right]+  C \varpi_n^{(1)} +  C \varpi_n^{(2)}, \\
\leq \;\; &
\Exp\left[
\Pro_{\vert Y_B}\left(r Y_A \in 
\left[   R- \sqrt{\alpha_n(1-p_n)} Y_B
\right] \right)
\right]+ C \varpi_n^{(1)} +  C \varpi_n^{(2)}, \\
= \;\; &
\Pro\left( \Lambda_{*}^{-1/2}(r Y_A + \sqrt{\alpha_n(1-p_n)} Y_B) \in \Lambda_{*}^{-1/2} R
\right) + C \varpi_n^{(1)} +  C \varpi_n^{(2)},
\end{align*}
By definition, 
$\Exp[\widetilde{Y}_j^2] = 1$ for each $1 \leq j \leq d$. Then by the Gaussian comparison inequality \citep[Lemma C.5] {chen2017randomized}, and due to \eqref{MT4} and $\alpha_n p_n = n/|I_{n,r}| \lesssim n^{-(r-1)}$, 
\begin{align*}
&\sup_{R' \in \cR} \left| 
\Pro\left(\Lambda_{*}^{-1/2}(r Y_A + \sqrt{\alpha_n(1-p_n)} Y_B) \in R' \right) -
\Pro\left(\widetilde{Y} \in R' \right) 
\right|  \\
& \lesssim \left(\frac{\alpha_n p_n D_n^{2(r-1)}}{r^2 \ubar{\sigma}^2} \right)^{1/3} \log^{2/3}(d)
\lesssim \left( \frac{D_n^2 \log^{2/(r-1)}(d)}{n} \right)^{(r-1)/3} \lesssim \varpi_n^{(1)}.
\end{align*}
As a result,
\begin{align*}
\Pro(\sqrt{n} \Phi_n \in R) &\; \leq \; 
\Pro\left( r Y_A + \sqrt{\alpha_n} Y_B \in R
\right) + C \varpi_n^{(1)} + C\varpi_n^{(2)}.
\end{align*}
Similarly, we can show $\Pro(\sqrt{n}\Phi_n \in R)
\;\geq \; \Pro\left( r Y_A + \sqrt{\alpha_n} Y_B \in R
\right) - C \varpi_n^{(1)} - C\varpi_n^{(2)}$. Thus the proof of Step 1 is complete.

\vspace{0.3cm}
\underline{Step 2:} we show that with probability at least $1 -  C \varpi_n^{(1)} - C \varpi_n^{(2)}$,
 $$\max_{1 \leq j \leq d} \left| \left(\frac{N}{\widehat{N}^{(m_j)}}-1 \right)\sqrt{N} \Phi_{n,j} \right| \leq C \nu_n, \;\;\; \text{ where }
 \nu_n := \left(\frac{D_n^2 \log^{3}(dn)}{n} \right)^{1/2}
+ \left( \frac{B_n^{2} \log^{3}(dn)}{N}\right)^{1/2}.
 $$
Due to \eqref{MT1} and \eqref{MT4}, $\Exp[Y_j^2] = r^2 \gamma_A(h_j) + \alpha_n \gamma_B(h_j) \lesssim D_n^2 + \alpha_n B_n^2$. 
Since $Y$ is a multivariate Gaussian, 
$\max_{1\leq j \leq d} \|Y_j\|_{\psi_2} \leq \sqrt{D_n^2 + \alpha_n B_n^2}$. Then by the maximal inequality \cite[Lemma 2.2.2]{van1996weak}, $\|\max_{1\leq j \leq d} |Y_j|\|_{\psi_2} \leq C \sqrt{(D_n^2+\alpha_n B_n^2) \log(d)}$,
which further implies that 
$$\Pro\left(\max_{1\leq j \leq d} |Y_j| \geq C \sqrt{(D_n^2+\alpha_n B_n^2)  \log(d)\log(n)} \right) \leq 2n^{-1}.$$
Since $n^{-1} \lesssim \varpi_n^{(1)}$,  and from the result in Step 1, we have
$$\Pro\left(\|\sqrt{n} \Phi_n\|_{\infty} \geq C \sqrt{(D_n^2 + \alpha_n B_n^2)  \log(d)\log(n)} \right) \leq C  \varpi_n^{(1)} + C  \varpi_n^{(2)}.$$
Finally, due to Lemma \ref{lemma:Bernstein} and \ref{cond:MB}, we have with probability at least $1 - C \varpi_n^{(1)} - C \varpi_n^{(2)}$,
\begin{align*}
 \max_{1 \leq j \leq d} \left|\left(\frac{N}{\widehat{N}^{(m_j)}}-1 \right)\sqrt{N} \Phi_{n,j} \right| &\leq  C
\sqrt{(D_n^2 + \alpha_n B_n^2) \log(d)\log^2(n) N^{-1} \alpha_n^{-1}} \\
&\leq C \left(\frac{D_n^2 \log^{3}(dn)}{n} \right)^{1/2}
+ C \left( \frac{B_n^{2} \log^{3}(dn)}{N}\right)^{1/2}.
 \end{align*} 
\vspace{0.3cm}

\underline{Step 3: final step.}  Recall that  $\sqrt{N} U_{n,N,j}' = \sqrt{N} \Phi_{n,j} + (N/\widehat{N}^{(m_j)}-1)\sqrt{N} \Phi_{n,j}$
for $1 \leq j \leq d$, 
and $\nu_n$ is defined in Step 2. For any rectangle $R = [a,b]$ with $a \leq b$, by Step 2,
\begin{align*}
&\Pro\left( \sqrt{N} U_{n,N}' \in R\right)\\
\leq &\Pro\left( \sqrt{N} U_{n,N}' \in R \;\bigcap\; \left\{ \max_{1 \leq j \leq d} \left| (N/\widehat{N}^{(m_j)}-1)\sqrt{N} \Phi_{n,j} \right| \leq C \nu_n \right\}\right)
+ C \varpi_n^{(1)} + C \varpi_n^{(2)} \\
\leq &\Pro\left( -\sqrt{N} \Phi_n \leq - a + C\nu_n \;\cap\; \sqrt{N} \Phi_n \leq b + C\nu_n\right)
+ C \varpi_n^{(1)} + C \varpi_n^{(2)}.
\end{align*}
Then by the result in Step 1, we have
\begin{align*}
\Pro\left( \sqrt{N} U_{n,N}' \in R\right)
&\leq \Pro\left( -\alpha_n^{-1/2} Y \leq - a + C\nu_n \;\cap\; \alpha_n^{-1/2}Y  \leq b + C\nu_n\right)
+ C \varpi_n^{(1)} + C \varpi_n^{(2)}.
\end{align*}
Observe that $\Exp[ (\alpha_n^{-1/2} {Y}_j)^2] \geq \Exp[\gamma_B(h_j)]
\geq  \ubar{\sigma}^2$ for $1 \leq j \leq d$, and thus by anti-concentration inequality \cite[Lemma A.1]{chernozhukov2017},
\begin{align*}
&\Pro\left( \sqrt{N} U_{n,N}' \in R\right)
\leq \Pro\left( -\alpha_n^{-1/2} {Y} \leq - {a}  \;\cap\; \alpha_n^{-1/2}Y  \leq {b} \right) + 
C\nu_n \log^{1/2}(d) 
+ C \varpi_n^{(1)} + C \varpi_n^{(2)}
\\
&=  \Pro\left( \alpha_n^{-1/2} Y \in R\right) + 
\sqrt{\frac{\log^{4}(dn) D_n^2}{n}} +
\sqrt{\frac{\log^{4}(dn) B_n^2}{N}} +
C \varpi_n^{(1)} + C \varpi_n^{(2)}\\
&\leq   \Pro\left( \alpha_n^{-1/2} Y \in R\right)  + C \varpi_n^{(1)} + C \varpi_n^{(2)},
\end{align*}
where the last inequality is due to \eqref{wlog_c_incomplete}. 
Similarly, we can show
$$\Pro\left( \sqrt{N} U_{n,N}' \in R\right) \geq  \Pro\left( \alpha_n^{-1/2} Y \in R\right)  - C \varpi_n^{(1)} - C \varpi_n^{(2)},
$$ 
and thus
$
\rho(\sqrt{N} U_{n,N}', \;\;\alpha_n^{-1/2} Y) \lesssim \varpi_n^{(1)} +  \varpi_n^{(2)},
$
which completes the proof.
\end{proof}

\subsection{Proof of Theorem \ref{thrm:appr_U_full}}
We will deal with the bootstrap for $\Gamma_A$ and $\Gamma_B$ separately.

\subsubsection{Bootstrap for $\Gamma_B$}
Recall the definition of $\widehat{\gamma}_B$ in \eqref{def:est_Covs}.

\begin{lemma} \label{lemma:Delta_B_bound}
Assume that \eqref{assumption_for_geometric_series}, \eqref{MT1}- \eqref{MT4}, and \ref{cond:MB} holds. Define
\begin{align*}
\widehat{\Delta}_B &:= \max_{1 \leq j,k \leq d} \left| \widehat{\gamma}_B(h_j,h_k) - \gamma_B(h_j, h_k) \right|, \\
\chi_{n, B} := &\log^{1/4}(2M) \left( \left(\frac{M^{2/q} B_n^{2-4/q} D_n^{4/q} \log^{3}(d)}{N^{1-2/q}} \right)^{1/4} + \left( \frac{B_n^2 \log^{5}(dn)}{N} \right)^{1/8} \right.\\
& \left. + \left(\frac{D_n^2 \log^5(d)}{n} \right)^{1/8}
+ \left(\frac{D_n^2 \log^3(d)}{n^{1-1/q}} \right)^{1/4}
+ \left(\frac{D_n^{8-4/q} \log^7(d)}{n^{3-2/q}} \right)^{1/8} \right).
\end{align*}
Then there exists a constant $C$, that only depends on $q,r,c_0, C_0$, such that
\[
\Pro\left( \widehat{\Delta}_B \leq C B_n^2 D_n^{-2} \log^{-2}(d) \chi_{n,B}^3 \right) \geq 1 - C \chi_{n,B}.
\]
\end{lemma}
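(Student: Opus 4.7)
Since both $\widehat\gamma_B(h_j,h_k)$ and $\gamma_B(h_j,h_k)$ vanish unless $\sigma(h_j)=\sigma(h_k)$, I would restrict attention to same-stratum pairs, writing $m=\sigma(h_j)=\sigma(h_k)$ for such pairs. Introduce the stratified empirical measure $\widehat Q^{(m)}(f):=(\widehat N^{(m)})^{-1}\sum_{\iota\in I_{n,r}} Z_\iota^{(m)} f(X_\iota)$, so that
\[
\widehat\gamma_B(h_j,h_k)-\gamma_B(h_j,h_k) \;=\; T_1(h_j,h_k)\;-\;T_2(h_j,h_k),
\]
where $T_1:=\widehat Q^{(m)}(h_j h_k)-P^r(h_j h_k)$ and $T_2:=U'_{n,N}(h_j)U'_{n,N}(h_k)-(P^r h_j)(P^r h_k)$. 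The two pieces are controlled separately and the desired bound is then obtained from a union bound over the (at most $d^2$) same-stratum pairs.

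For $T_1$, I would first replace $\widehat N^{(m)}$ by $N$ using Lemma~\ref{lemma:Bernstein}, which gives $\max_m|N/\widehat N^{(m)}-1|\lesssim\sqrt{\log(N)/N}$ on an event of probability $\geq 1-CN^{-1}$. The resulting correction multiplies $|I_{n,r}|^{-1}\sum_\iota Z_\iota^{(m)}|h_j h_k|(X_\iota)$, whose size is controlled uniformly over $(j,k)$ by Lemma~\ref{lemma:higher_order} applied to the product class with envelope $H^2$. The leading term then splits as
\[
N^{-1}\sum_{\iota}Z_\iota^{(m)}(h_j h_k)(X_\iota)-P^r(h_j h_k) \;=\; N^{-1}\sum_\iota(Z_\iota^{(m)}-p_n)(h_j h_k)(X_\iota) \;+\; \bigl[U_n(h_j h_k)-P^r(h_j h_k)\bigr].
\]
The complete $U$-statistic deviation on the right is handled uniformly over $(j,k)$ by Lemma~\ref{lemma:higher_order}, producing the $n$-dependent terms in $\chi_{n,B}$ (through moments of $H^2$, which by \eqref{MT2}--\eqref{MT3} are controlled by $B_n, D_n$). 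For the pure sampling fluctuation on the left, I would invoke the non-local maximal inequality Lemma~\ref{lemma:sampling_higher_order} applied to the stratified collection $\{h_j h_k:\sigma(h_j)=\sigma(h_k)=m\}$ indexed by $m\in[M]$ with envelope $H^2$; the bound $\|\widetilde\cT\|_{\Pro,2}\leq M^{1/q}N^{1/q}\|H^2\|_{P^r,q}$ together with \eqref{MT3} gives the $(M^{2/q}B_n^{2-4/q}D_n^{4/q}/N^{1-2/q})^{1/4}$ and $(B_n^2/N)^{1/8}$ contributions to $\chi_{n,B}$ after Markov's inequality.

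For $T_2$, write $\Delta_j:=U'_{n,N}(h_j)-P^r h_j$, so that $T_2=(P^r h_j)\Delta_k+(P^r h_k)\Delta_j+\Delta_j\Delta_k$. The deterministic factor satisfies $|P^r h_j|=|P(P^{r-1}h_j)|\leq\|P^{r-1}H\|_{P,q}\leq D_n$ by \eqref{MT1}, and for $\max_j|\Delta_j|$ I would invoke Theorem~\ref{thm:GAR_hd_U} to approximate the law of $\sqrt n\,\Delta$ by a centered Gaussian with covariance $r^2\Gamma_A+\alpha_n\Gamma_B$ whose diagonal is $\lesssim D_n^2+\alpha_n B_n^2$, then apply a standard Gaussian tail and union bound to get $\max_j|\Delta_j|\lesssim\sqrt{D_n^2\log(d)/n+B_n^2\log(d)/N}$ with high probability. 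Combining gives $\max_{j,k}|T_2|\lesssim D_n\max_j|\Delta_j|+\max_j|\Delta_j|^2$, which feeds into $\chi_{n,B}$. A final union over the two events and optimizing the Markov exponents to match the probability $\chi_{n,B}$ yields the stated $\widehat\Delta_B\leq CB_n^2 D_n^{-2}\log^{-2}(d)\chi_{n,B}^3$ (the $B_n^2 D_n^{-2}$ prefactor being the natural scale of $\gamma_B(h_j)$ from \eqref{MT4}, and the exponent~$3$ reflecting that this estimate is designed for a subsequent Gaussian comparison argument of the form $\Delta^{1/3}\log^{2/3}(d)$).

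The main technical obstacle is the sampling-fluctuation term in $T_1$: uniformity across $M$ strata and $d^2$ pairs, combined with the extreme sparsity $p_n\to 0$ of the Bernoulli weights, rules out a direct sub-Gaussian approach and forces the use of Lemma~\ref{lemma:sampling_higher_order}. Tracking the envelope $H^2$ through its maximum $\widetilde\cT$ is the mechanism by which the $M^{2/q}$ dependence appears, and all other rates in $\chi_{n,B}$ either come from the complete $U$-statistic piece (via Lemma~\ref{lemma:higher_order}) or from the Gaussian-approximation control of $\max_j|\Delta_j|$.
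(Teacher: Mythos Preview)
Your decomposition matches the paper's: split $\widehat{\gamma}_B-\gamma_B$ into a product term $T_1$ and a centering term $T_2$, replace $\widehat{N}^{(m)}$ by $N$ via Lemma~\ref{lemma:Bernstein}, and in $T_1$ separate the complete $U$-statistic deviation (Lemma~\ref{lemma:higher_order}) from the pure sampling fluctuation. For $T_2$ the paper is more elementary than you: rather than invoking the Gaussian approximation Theorem~\ref{thm:GAR_hd_U}, it further decomposes $N^{-1}\sum_\iota Z_\iota^{(m)}h_j(X_\iota)$ into a sampling part $\widehat{\Delta}_{B,5}$ and a complete part $\widehat{\Delta}_{B,6}$ and bounds their first moments directly (Lemma~\ref{lemma: Delta_B1_B5_bound} and Lemma~\ref{lemma:first_order_without_hajek}). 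Your route via Theorem~\ref{thm:GAR_hd_U} is heavier but not incorrect.

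There is a genuine gap in the sampling-fluctuation step of $T_1$. You invoke the \emph{non-local} maximal inequality Lemma~\ref{lemma:sampling_higher_order}, whose leading term scales with the envelope norm $\|H^2\|_{P^r,2}$. The paper instead uses the \emph{local} inequality Theorem~\ref{thm:lmax_sampling_proc} (packaged as Lemma~\ref{lemma: Delta_B1_B5_bound}), whose leading term scales with $\sigma_r=\sup_{j,k}\|h_jh_k\|_{P^r,2}$. Under \eqref{MT2}--\eqref{MT4} these are different: $\sup_j\|h_j^2\|_{P^r,2}=(P^r h_j^4)^{1/2}\leq B_n^3 D_n^{-2}$, whereas $\|H^2\|_{P^r,2}\leq B_n^2 D_n^{r-1}$, and in the regime $B_n\asymp D_n^r$ that drives the application their ratio is $D_n$. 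Tracking this through Markov's inequality, your bound would replace the term $(B_n^2\log^5(dn)/N)^{1/8}$ in $\chi_{n,B}$ by $(B_n^2 D_n^2\log^5(dn)/N)^{1/8}$, which does not prove the lemma as stated. The fix is precisely the local inequality: for a finite class $\bar{J}(\delta_r)\lesssim\delta_r\log^{1/2}(d)$, so the first term of Theorem~\ref{thm:lmax_sampling_proc} collapses to $\sqrt{\log(2M)}\,\sigma_r\log^{1/2}(d)$, recovering the sharp dependence on $\sup_j\|h_j^2\|_{P^r,2}$ rather than $\|H^2\|_{P^r,2}$. (A minor side point: the quantity you need for $\widetilde{\cT}$ is $\|H^2\|_{P^r,q/2}=\|H\|_{P^r,q}^2$, not $\|H^2\|_{P^r,q}$.)
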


\begin{proof}
Without loss of generality, we assume $\chi_{n,B} \leq 1/2$, 
since we can always let $C \geq 2$. 
For $1 \leq j,k \leq d$, and $\sigma(h_j) = \sigma(h_k) = m \in [M]$,
\begin{align*}
&\left|
\widehat{\gamma}_B(h_j,h_k) - \gamma_B(h_j, h_k)
\right| \\
= & \left| 
\frac{1}{\widehat{N}^{(m)}} 
\sum_{\iota \in I_{n,r}} Z_{\iota}^{(m)} \left(
h_j(X_{\iota}) - U'_{n,N}(h_j)
\right)\left(
h_k(X_{\iota}) - U'_{n,N}(h_k)
\right) - P^r(h_j h_k)
\right| \\
\leq & \left(\max_{\ell \in [M]} N/\widehat{N}^{(\ell)} \right) \left( 
\widehat{\Delta}_{B,1} + 
\widehat{\Delta}_{B,2}\right)+\widehat{\Delta}_{B,3} + \left(\max_{\ell \in [M]} N/\widehat{N}^{(\ell)} \right)^2 \widehat{\Delta}_{B,4}^2,  \\
\leq & \left(\max_{\ell \in [M]} N/\widehat{N}^{(\ell)} \right) \left( 
\widehat{\Delta}_{B,1} + 
\widehat{\Delta}_{B,2}\right)
+\widehat{\Delta}_{B,3} + 2\left(\max_{\ell \in [M]} N/\widehat{N}^{(\ell)} \right)^2 \left( \widehat{\Delta}_{B,5}^2 + \widehat{\Delta}_{B,6}^2 \right).
\end{align*}
where we define
\begin{align} \label{def:hat_Delta_B}
\begin{split}
\widehat{\Delta}_{B,1} &:= \max_{1 \leq j,k \leq d}\max_{\ell \in [M]} \left|{N}^{-1} \sum_{\iota \in I_{n,r}} (Z_{\iota}^{(\ell)}-p_n){h}_j(X_{\iota}) 
{h}_k(X_{\iota})  \right|, \\
\widehat{\Delta}_{B,2} &:= \max_{1 \leq j,k \leq d} \left\vert|I_{n,r}|^{-1} \sum_{\iota \in I_{n,r}} {h}_j(X_{\iota}) {h}_k(X_{\iota}) - P^r ({h}_j {h}_k )\right\vert,\\
\widehat{\Delta}_{B,3} &:= \left(\max_{\ell \in [M]} \left|N/\widehat{N}^{(\ell)} -1\right| \right) \max_{1 \leq j \leq d}\left|P^r h_j^2\right|, \;\;
\widehat{\Delta}_{B,4} := \max_{1 \leq j \leq d} \max_{\ell \in [M]} \left|N^{-1}\sum_{\iota \in I_{n,r} } Z_{\iota}^{(\ell)}{h}_j(X_{\iota}) \right|\\
\widehat{\Delta}_{B,5} &:= \max_{1 \leq j \leq d} \max_{\ell \in [M]}
\left|N^{-1}\sum_{\iota \in I_{n,r} } (Z_{\iota}^{(\ell)}-p_n) {h}_j(X_{\iota}) \right|,\\
\widehat{\Delta}_{B,6} &:= 
\max_{1 \leq j \leq d} \left ||I_{n,r}|^{-1}\sum_{\iota \in I_{n,r} }  {h}_j(X_{\iota}) \right |. 
\end{split}
\end{align}

%
%
%

Then by Markov inequality, \eqref{MT4}, and due to Lemma \ref{lemma: Delta_B1_B5_bound} (ahead), Lemma \ref{lemma:higher_order}, and Lemma \ref{lemma:Bernstein}, for $i = 1,2,3$,
\begin{align*}
\Pro\left( \widehat{\Delta}_{B,i} \geq C B_n^2 D_n^{-2} \log^{-2}(d) \chi_{B,n}^3\right) \leq C \chi_{B,n}.
\end{align*}
Further, by Lemma \ref{lemma: Delta_B1_B5_bound} (ahead), and Lemma \ref{lemma:first_order_without_hajek}, since $B_n \geq D_n$, for $i = 5,6$,
\begin{align*}
\Pro\left( \widehat{\Delta}_{B,i}^2 \geq C B_n^2 D_n^{-2} \log^{-2}(d) \chi_{B,n}^3\right) \leq C \chi_{B,n}^{5/2} \leq C \chi_{B,n}.
\end{align*}
Then the proof is complete due to \ref{cond:MB} and Lemma \ref{lemma:Bernstein}.
\end{proof}
\begin{lemma} \label{lemma: Delta_B1_B5_bound}
Recall the definition of $\widehat{\Delta}_{B,1}$ and $\widehat{\Delta}_{B,5}$ in \eqref{def:hat_Delta_B}. Assume that \eqref{assumption_for_geometric_series}, \eqref{MT2}, \eqref{MT3}, and \eqref{MT4} hold. Then there exists a constant $C$, depending only on $q, r, c_0$, such that 
\begin{align*}
&C^{-1} \log^{-1}(2M)\Exp\left[ \widehat{\Delta}_{B,1} \right] 
 \; \leq \; \\
 & B_n^2 D_n^{-2} \left(
N^{-1+2/q} M^{2/q} B_n^{2-4/q} D_n^{4/q} \log(d)  +
N^{-1/2} B_n \log^{1/2}(d) + 
N^{-1/2} B_n \varphi_n^{1/2} \log^{1/2}(d)
\right), \\ 
&C^{-1}\log^{-1}(2M)\Exp\left[ \widehat{\Delta}_{B,5} \right] 
 \; \leq \; \\
 &B_n D_n^{-1} \left(
N^{-1+1/q} M^{1/q} B_n^{1-2/q} D_n^{2/q}\log(d)  +
N^{-1/2}  \log^{1/2}(d) + 
N^{-1/2} \varphi_n^{1/2} \log^{1/2}(d)
\right), 
\end{align*}
where $\varphi_n$ is defined in \eqref{varphi_n}. 
\end{lemma}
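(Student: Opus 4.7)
The plan is to recognize that both $\widehat{\Delta}_{B,1}$ and $\widehat{\Delta}_{B,5}$ are, up to a factor of $N^{-1/2}$, exactly of the form treated by Theorem \ref{thm:lmax_sampling_proc}:
\[
\widehat{\Delta}_{B,1} \;=\; N^{-1/2} \max_{m \in [M]} \|\bD_n^{(m)}(f)\|_{\cF_1}, \qquad
\widehat{\Delta}_{B,5} \;=\; N^{-1/2} \max_{m \in [M]} \|\bD_n^{(m)}(f)\|_{\cF_5},
\]
where $\cF_1 := \{h_j h_k : 1 \leq j, k \leq d\}$ has envelope $H^2$ and cardinality $\leq d^2$, and $\cF_5 := \{h_j : 1 \leq j \leq d\}$ has envelope $H$ and cardinality $d$. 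For a finite class of cardinality $d'$, covering numbers are bounded by $d'$ uniformly, so $\bar{J}(\tau) \leq \tau\sqrt{1+\log d'}$; in particular $\bar{J}(\delta_r)/\delta_r \lesssim \sqrt{\log d}$ and $\bar{J}^2(\delta_r)/\delta_r^2 \lesssim \log d$ (since $\log|\cF_i| \lesssim \log d$). A key simplification is that choosing $\sigma_r := \sup_{f \in \cF}\|f\|_{P^r,2}$ makes the $\sqrt{M}$ factor in the first term of Theorem \ref{thm:lmax_sampling_proc} cancel exactly with the $\sqrt{M}$ in the denominator of $\delta_r = \sigma_r/(\sqrt{M}\|F\|_{P^r,2})$, so that the master bound becomes
\[
\Exp\left[ \max_{m \in [M]}\|\bD_n^{(m)}(f)\|_{\cF} \right] \;\lesssim\; \sqrt{\log(2M)\log d}\,\sigma_r + \frac{\log(2M)\log d}{\sqrt{N}}\|\widetilde{\cT}\|_{\Pro,2} + \sqrt{\Delta\log(2M)\log d}.
\]

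The remaining work is to compute, for each of the two classes, the three ingredients $\sigma_r$, $\|\widetilde{\cT}\|_{\Pro,2}$, and $\Delta$. For $\cF_5$: from \eqref{MT4} together with $P^r h_j = 0$ (by \eqref{wlog_theta_zero}), $\sigma_r \leq C B_n D_n^{-1}$; from \eqref{MT3}, $\|\widetilde{\cT}\|_{\Pro,2} \leq M^{1/q} N^{1/q} \|H\|_{P^r,q} \leq M^{1/q} N^{1/q} B_n^{2-2/q} D_n^{2/q-1}$; and Lemma \ref{lemma:higher_order} with $s=2$, applied after pulling out the uniform normalization $\sup_j\|h_j\|_{P^r,2}^2 \leq C B_n^2 D_n^{-2}$, yields $\Delta \leq C B_n^2 D_n^{-2}(1+\varphi_n)$. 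Substituting these three bounds into the master inequality, dividing by $\sqrt{N}$, and finally dividing by $\log(2M)$ gives the advertised bound on $\Exp[\widehat{\Delta}_{B,5}]$.

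For $\cF_1$: by Cauchy-Schwarz and \eqref{MT2} at $\ell=0,\,s=2$, $\sup_{j,k} P^r h_j^2 h_k^2 \leq \sup_j P^r h_j^4 = \sup_j \|h_j^2\|_{P^r,2}^2 \leq B_n^4 D_n^{-4}$, so $\sigma_r \leq B_n^2 D_n^{-2}$. For the envelope, Lyapunov's inequality (valid since $q \geq 4$) gives $\|H^2\|_{P^r,q/2} = \|H\|_{P^r,q}^2 \leq B_n^{4-4/q} D_n^{4/q-2}$, so $\|\widetilde{\cT}\|_{\Pro,2} \leq (MN)^{2/q} \|H^2\|_{P^r,q/2} \leq M^{2/q} N^{2/q} B_n^{4-4/q} D_n^{4/q-2}$. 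Finally, Cauchy-Schwarz within the sum gives $\sup_{j,k}|I_{n,r}|^{-1}\sum_\iota h_j^2 h_k^2 \leq \max_j |I_{n,r}|^{-1}\sum_\iota h_j^4$, and applying Lemma \ref{lemma:higher_order} with $s=4$ (after again normalizing by $\sup_j\|h_j\|_{P^r,2}^4 \leq B_n^4 D_n^{-4}$) gives $\Delta \leq C B_n^6 D_n^{-4}(1+\varphi_n)$. Substitution into the master inequality, division by $\sqrt{N}$, and division by $\log(2M)$ yield the claim for $\Exp[\widehat{\Delta}_{B,1}]$.

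The main obstacle is purely bookkeeping: converting the $L^q$ bound on $H$ in \eqref{MT3} into the $L^{q/2}$ bound on the envelope $H^2$ of $\cF_1$ via Lyapunov, and propagating the normalization $\tilde h_j = h_j/\|h_j\|_{P^r,2}$ correctly through the $\max_j$ in Lemma \ref{lemma:higher_order}. There is no essential conceptual difficulty since both function classes are finite and Theorem \ref{thm:lmax_sampling_proc} already delivers the right structural form; the remaining exponents of $B_n$, $D_n$, $N$, and $M$ then match by direct arithmetic.
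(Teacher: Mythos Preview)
Your approach is essentially identical to the paper's: apply Theorem~\ref{thm:lmax_sampling_proc} to the finite classes $\{h_jh_k\}$ (envelope $H^2$) and $\{h_j\}$ (envelope $H$), bound $\bar{J}(\tau)\lesssim\tau\sqrt{\log d}$, and plug in $\sigma_r$, $\|\widetilde{\cT}\|_{\Pro,2}$, and $\Delta$ using \eqref{MT2}--\eqref{MT4} and Lemma~\ref{lemma:higher_order}. The paper does exactly this.

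There is, however, one bookkeeping slip in your $\cF_1$ computation. You claim $\sup_j\|h_j^2\|_{P^r,2}^2=\sup_jP^rh_j^4\leq B_n^4D_n^{-4}$ by ``\eqref{MT2} at $\ell=0,s=2$''. But $\ell=0,s=2$ is excluded from \eqref{MT2} (it requires $\ell+s>2$), and in any case that entry would bound $P^rh_j^2$, not $P^rh_j^4$. The correct reference is \eqref{MT2} at $\ell=0,s=4$, which gives $P^rh_j^4\leq B_n^{6}D_n^{-4}$, hence $\sigma_r\leq B_n^{3}D_n^{-2}$ rather than $B_n^{2}D_n^{-2}$. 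With your too-small value of $\sigma_r$ the hypothesis $\sup_f\|f\|_{P^r,2}\leq\sigma_r$ of Theorem~\ref{thm:lmax_sampling_proc} may fail, and the resulting first term would be $N^{-1/2}B_n^{2}D_n^{-2}\log^{1/2}d$---strictly smaller than the $N^{-1/2}B_n^{3}D_n^{-2}\log^{1/2}d$ the lemma asserts. Once you correct $\sigma_r$ to $B_n^{3}D_n^{-2}$, all three terms match the statement and the paper's proof verbatim.
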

\begin{proof}
We first focus on $\widehat{\Delta}_{B,1}$, and 
 apply Theorem \ref{thm:lmax_sampling_proc} to the finite collection 
$\{h_j h_k: 1 \leq j,k \leq d\}$ with envelope $H^2$. Since this is a finite collection of functions with cardinality $d^2$, we have
\[
\bar{J}(\tau) \leq C\tau \log^{1/2}(d), \text{ for } \tau > 0.
\]
Thus by Theorem \ref{thm:lmax_sampling_proc}, and Lemma \ref{lemma:higher_order} with $s = 4$, 
\begin{align*}
\Exp\left[ \sqrt{N}\widehat{\Delta}_{B,1} \right]
\lesssim &\sup_{1 \leq j \leq d} \|h_j^2\|_{P^r,2} \log^{1/2}(d) \log^{1/2}(2M)+ 
N^{-1/2+2/q} M^{2/q}\|H^2\|_{P^r,q/2} \log(d)  \log(2M)\\
& + B_n^{3} D_n^{-2} (1+\varphi_n^{1/2}) \log^{1/2}(d)\log^{1/2}(2M).
\end{align*}
Then due to \eqref{MT2} and \eqref{MT3},
\begin{align*}
\Exp\left[ \sqrt{N}\widehat{\Delta}_{B,1} \right]
\lesssim  \log(2M) &\left( B_n^3 D_n^{-2} \log^{1/2}(d) + 
N^{-1/2+2/q} M^{2/q} B_n^{4-4/q} D_n^{4/q-2} \log(d) \right. \\
& \;\; \left. + B_n^3 D_n^{-2} (1+\varphi_n^{1/2}) \log^{1/2}(d) \right),
\end{align*}
which completes the proof of the first result.

Now for $\widehat{\Delta}_{B,5}$, we apply Theorem \ref{thm:lmax_sampling_proc} to the finite collection 
$\{h_j : 1 \leq j \leq d\}$ with envelope $H$. By  Theorem \ref{thm:lmax_sampling_proc} and Lemma \ref{lemma:higher_order} with $s = 2$, 
\begin{align*}
\Exp\left[ \sqrt{N}\widehat{\Delta}_{B,5} \right]
\lesssim \log(2M) &\left( \sup_{1 \leq j \leq d} \|h_j\|_{P^r,2} \log^{1/2}(d) + 
N^{-1/2+1/q} M^{1/q}\|H\|_{P^r,q} \log(d)  \right.\\
&\;\; \left. + B_n D_n^{-1} (1+\varphi_n^{1/2}) \log^{1/2}(d)\right).
\end{align*}
Then due to \eqref{MT3}, and \eqref{MT4},
\begin{align*}
\Exp\left[ \sqrt{N}\widehat{\Delta}_{B,5} \right]
\lesssim \log(2M) &\left( B_n D_n^{-1} \log^{1/2}(d) + 
N^{-1/2+1/q} M^{1/q} B_n^{2-2/q}D_n^{2/q-1} \log(d)  \right.\\
&\;\; \left. + B_n D_n^{-1} (1+\varphi_n^{1/2}) \log^{1/2}(d)\right),
\end{align*}
which completes the proof of the second result.
\end{proof}

\subsubsection{Bootstrap for $\Gamma_A$} Recall the definition of $\widehat{\gamma}_A$ in \eqref{def:est_Covs}.

\begin{lemma} \label{lemma:Delta_A_bound}
Assume that \eqref{assumption_for_geometric_series}, \eqref{MT0}- \eqref{MT3}, \eqref{MT5}, and \ref{cond:MB} holds. Define
\begin{align*}
\widehat{\Delta}_A &:= \max_{1 \leq i,j \leq d} \left| \widehat{\gamma}_A(h_i,h_j) - \gamma_A(h_i, h_j) \right|/\sqrt{\gamma_A(h_i)\gamma_A(h_j)}, \\
\chi_{n, A} &:= \left(\frac{\log(2M) B_n^{2} \log^{5}(d)}{N_2} \right)^{1/7} 
+ \left(\frac{\log(2M)M^{1/q}B_n^{2-2/q}D_n^{2/q-1} \log^{5/2}(d)}{N_2^{1-1/q}} \right)^{2/7}
+ \\
&\left( \frac{D_n^2 \log^{5}(d)}{n} \right)^{1/8} +
\left( \frac{D_n^2 \log^{3}(d)}{n^{1-2/q}} \right)^{1/4}
+ \left( \frac{D_n^{8-8/q} \log^{11}(d)}{n^{3-4/q}} \right)^{1/14} +
\left( \frac{D_n^{3-2/q} \log^{3}(d)}{n^{1-1/q}} \right)^{2/7}.
\end{align*}
Then there exists a constant $C$, that only depends on $\ubar{\sigma}^2, q,r,c_0, C_0$, such that
\[
\Pro\left( \widehat{\Delta}_A \leq C \log^{-2}(d) \chi_{n,A}^3 \right) \geq 1 - C \chi_{n,A}.
\]
\end{lemma}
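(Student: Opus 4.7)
The plan is to mimic the structure of the proof of Lemma~\ref{lemma:Delta_B_bound}, adapted to the jackknife-style estimator $\widehat{\gamma}_A$. The natural decomposition starts by introducing the \emph{complete jackknife}
\[
\widetilde{G}^{(k)}(h) := |I_{n-1,r-1}|^{-1}\sum_{\iota \in \kjack} h(X_{\iota^{(k)}}),\qquad \sigma(h)=m,
\]
and writing
\[
\bG^{(k)}(h) - P^r h \;=\; \underbrace{\bG^{(k)}(h) - \widetilde{G}^{(k)}(h)}_{\text{(S): sampling error}} \;+\; \underbrace{\widetilde{G}^{(k)}(h) - P^{r-1}h(X_k)}_{\text{(J): jackknife remainder}} \;+\; \underbrace{P^{r-1}h(X_k) - P^r h}_{\text{(P): first Hoeffding projection}}.
\]
Substituting this into the bilinear form defining $\widehat{\gamma}_A(h_i,h_j)$ produces a (P,P)$\times$(P,P) main term, plus six types of cross/quadratic terms involving (S) and (J), plus the drift from $\overline{\bG}$. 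The task reduces to bounding each group uniformly over $(i,j)\in[d]^2$.

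First, for the main term $n^{-1}\sum_k (P^{r-1}h_i - P^rh_i)(X_k)(P^{r-1}h_j - P^rh_j)(X_k) - \gamma_A(h_i,h_j)$, I would apply a Bernstein/maximal inequality in the spirit of \cite[Proposition 2.1]{chernozhukov2017} using the moment bounds \eqref{MT1} and \eqref{MT5} on $P^{r-1}h$ and $(P^{r-2}H)^{\odot 2}$; this produces the summands $(D_n^2\log^3(d)/n^{1-2/q})^{1/4}$ and $(D_n^{3-2/q}\log^3(d)/n^{1-1/q})^{2/7}$ in $\chi_{n,A}$, together with the drift in $|\overline{\bG}(h_j) - P^rh_j|$ controlled by Lemma~\ref{lemma:first_order_without_hajek}. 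Second, for the sampling error (S), I would apply Theorem~\ref{thm:lmax_sampling_proc} to the finite class $\{h_j:j\in[d]\}$ on the $(r-1)$-tuples $\kjack$, with $nM$ independent sampling plans indexed by $(k,m)$. The envelope bounds \eqref{MT2}–\eqref{MT3}, the moment identity $\Var(h)\asymp B_n^2 D_n^{-2}$ from \eqref{MT4}, and $\log(2nM)\lesssim\log(dn)$ (via \ref{cond:MB}) then yield, after a Markov step for the squared-error $n^{-1}\sum_k(\bG^{(k)} - \widetilde{G}^{(k)})^2$, the terms $(\log(2M)M^{1/q}B_n^{2-2/q}D_n^{2/q-1}\log^{5/2}(d)/N_2^{1-1/q})^{2/7}$ and $(\log(2M)B_n^2\log^5(d)/N_2)^{1/7}$. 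Third, for the jackknife remainder (J), I would condition on $X_k$ and recognize $\widetilde{G}^{(k)}(h)-P^{r-1}h(X_k)$ as a centred complete $U$-statistic of order $r-1$ on the $n-1$ samples $\{X_i\}_{i\neq k}$ with kernel $(x_1,\dots,x_{r-1})\mapsto h(X_k,x_1,\dots,x_{r-1})$; its Hoeffding decomposition combined with Theorem~\ref{thm:multi-level} (applied to the finite class $\{h_j\}_{j=1}^{d}$ with envelopes $P^{r-\ell}H$) gives, uniformly over $k\in[n]$ via an additional $\log(n)$ factor from a maximal inequality, the remaining summand involving $(D_n^{8-8/q}\log^{11}(d)/n^{3-4/q})^{1/14}$ and $(D_n^2\log^5(d)/n)^{1/8}$.

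Having controlled each group in expectation, I would apply Markov's inequality to convert each bound into an in-probability statement and combine via a union bound, arranging the exponents so that the cubic product structure $B_n^2 D_n^{-2}\log^{-2}(d)\chi_{n,A}^3$ on the right-hand side absorbs all the prefactors (using $\gamma_A(h)\geq\underbar{\sigma}^2$ from \eqref{MT0} to normalize the denominators). Finally, I would collect all the exceptional sets, verify that their total probability is $\lesssim\chi_{n,A}$, and tidy the constants, which only depend on $\underbar{\sigma},q,r,c_0,C_0$.

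The hard part will be the jackknife remainder (J): obtaining a uniform-in-$k$ control of order $r-1$ Hoeffding terms while at the same time maintaining uniformity over the finite class $\{h_j\}_{j=1}^d$ and dovetailing the resulting rates with the sampling term (S) so that the exponents match those recorded in $\chi_{n,A}$. In particular, the fractional exponents $1/14$ and $2/7$ in $\chi_{n,A}$ suggest balancing a truncation level between the $L^q$ and $L^2$ envelope bounds in \eqref{MT2}–\eqref{MT3} and the second-moment control of the squared remainder---analogous to, but more delicate than, the Step~3/Step~5 balancing inside Lemma~\ref{lemma:effect_of_sampling}.
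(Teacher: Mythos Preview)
Your three-way split $(S)+(J)+(P)$ is exactly the decomposition the paper uses, and your handling of $(S)$ via Theorem~\ref{thm:lmax_sampling_proc}/Lemma~\ref{lemma:sampling_higher_order} and of the main $(P,P)$ term via an empirical-process maximal inequality is correct in spirit. The paper, however, does not expand the full bilinear form into nine cross terms. Instead it invokes the Cauchy--Schwarz reduction from \cite[Theorem 4.2]{chen2017randomized} to obtain
\(
\widehat{\Delta}_A \lesssim \widehat{\Delta}_{A,1}^{1/2} + \widehat{\Delta}_{A,1} + \widehat{\Delta}_{A,2} + \widehat{\Delta}_{A,3}^2,
\)
where $\widehat{\Delta}_{A,1} = \max_j n^{-1}\sum_k (\bG^{(k)}(h_j)-P^{r-1}h_j(X_k))^2$ absorbs \emph{all} of the $(S)$ and $(J)$ contributions at once, and $\widehat{\Delta}_{A,2},\widehat{\Delta}_{A,3}$ are purely $(P)$-terms handled by Lemma~\ref{lemma:Delta_A2_A3_bound}. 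This collapses your six cross/quadratic groups into a single diagonal quantity and is what produces the unusual exponents $1/7,2/7,1/14$: they arise from balancing Markov against $\widehat{\Delta}_{A,1}^{1/2}$, not from any truncation in the style of Lemma~\ref{lemma:effect_of_sampling}. Two small mis-attributions in your sketch reflect this: the $B_n^2D_n^{-2}$ prefactor belongs to Lemma~\ref{lemma:Delta_B_bound}, not here (the normalization is just $\gamma_A\geq\ubar{\sigma}^2$), and the $(D_n^{3-2/q}\log^3(d)/n^{1-1/q})^{2/7}$ term comes from the jackknife piece, not from the $(P,P)$ main term.

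There is a genuine gap in your treatment of $(J)$. Your plan is to condition on $X_k$, view $\widetilde{G}^{(k)}(h_j)-P^{r-1}h_j(X_k)$ as an order-$(r-1)$ $U$-statistic, apply Theorem~\ref{thm:multi-level}, and then take a maximum over $k$. But the target quantity is $T_1 = \max_j n^{-1}\sum_k (J^{(k)}(h_j))^2$, and after passing $\max_j$ inside the sum and conditioning on $X_k$, the leading second-moment term forces you through the envelope: you get $n^{-1}\|P^{r-2}H\|_{P^2,2}^2\log(d)\lesssim n^{-1}D_n^4\log(d)$ rather than the paper's $n^{-1}\sup_j\|P^{r-2}h_j\|_{P^2,2}^2\lesssim n^{-1}D_n^2\log(d)$. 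That extra $D_n^2$ is not absorbed by any summand of $\chi_{n,A}$ under the standing assumption \eqref{assumption_for_geometric_series}. The paper instead bounds $\Exp[T_1]$ directly using the jackknife calculation from the proof of \cite[Theorem~3.1, eq.~(32)]{chen2017jackknife} (recorded here as Lemma~\ref{lemma:jacknife_complete_part}), which exploits the joint structure across $k$ and is precisely where the $\bigodot 2$ moment condition \eqref{MT5} enters --- not in the $(P,P)$ term as you suggest. Replace your conditional-on-$X_k$ step with this lemma and the rest of your outline goes through.
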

\begin{proof}
Without loss of generality, we assume $\chi_{n,A} \leq 1/2$, 
By the same argument as in the proof of \cite[Theorem 4.2] {chen2017randomized},
\begin{align*}
\widehat{\Delta}_A \lesssim \widehat{\Delta}_{A,1}^{1/2} + \widehat{\Delta}_{A,1} + \widehat{\Delta}_{A,2} + \widehat{\Delta}_{A,3}^2,
\end{align*}
where we define 
\begin{align}\label{def:hat_Delta_A}
\begin{split}
&\widehat{\Delta}_{A,1} :=   \max_{1 \leq j \leq d} \frac{1}{n}\sum_{k = 1}^{n} \left(
\bG^{(k)}(h_j) - P^{r-1}h_j(X_{k})
\right)^2,\\
&\widehat{\Delta}_{A,2} := \max_{1 \leq i,j \leq d} \left\vert
\frac{1}{\sqrt{\Gamma_{A,ii} \Gamma_{A,jj}} n} \sum_{k = 1}^{n} \left(P^{r-1}h_i(X_{k}) P^{r-1}h_j(X_{k})- \Gamma_{A,ij}\right)
\right\vert, \;\;\\
&\widehat{\Delta}_{A,3} := \max_{1 \leq j \leq d} \left\vert
\frac{1}{\sqrt{\Gamma_{A,jj}}  n} \sum_{k = 1}^{n} P^{r-1}h_j(X_{k})
\right\vert.
\end{split}
\end{align}

By Lemma \ref{lemma:Delta_A2_A3_bound} (ahead), 
\begin{align*}
\Pro\left( \widehat{\Delta}_{A,2} \geq C \log^{-2}(d) \chi_{n,A}^3 \right) \leq C \chi_{n,A}, \;\;
\Pro\left( \widehat{\Delta}_{A,3}^2 \geq C \log^{-2}(d) \chi_{n,A}^3 \right) \leq C \chi_{n,A}^{5/2} \leq C  \chi_{n,A}.
\end{align*}

Now we focus on $\widehat{\Delta}_{A,1}$. By Lemma \ref{lemma:Bernstein}, and due to \ref{cond:MB} and the union bound, since $N_2 \geq n$,
\begin{align*}
\Pro\left(\mathcal{E}'  \right) \geq 1 - C_1 N_2^{-1}, \text{ where } 
\mathcal{E}' := \bigcap_{k  \in [n], \ell \in [M]}\left\{ |N_2/\widehat{N}_2^{(k, \ell)} - 1| \leq C_1 \sqrt{\log(N_2)/N_2}\right\}.
\end{align*}
Since $N_2 \geq n \geq 4$, on the event $\mathcal{E}'$, $|N_2/\widehat{N}_2^{(k,\ell)} - 1| \leq C_1$ for each $k \in [n], \ell \in [M]$. Then by definition, on the event $\mathcal{E}'$, for fixed $k\in [n]$, we have
for $1 \leq j \leq d$,
\begin{align*}
&\left(
\bG^{(k)}(h_j) - P^{r-1}h_j(X_{k})
\right)^2
\lesssim \left( T_2^{(k)} \right)^2 + \left( T_3^{(k)} \right)^2 + \\
&\left( |I_{n-1,r-1}^{(k)}|^{-1} \sum_{\iota \in \kjack} \left( h_j(X_{\iota^{(k)}}) - P^{r-1}h_j(X_k) \right) \right)^2
\end{align*}
where we define
\begin{align}\label{def:T2_T3}
\begin{split} 
T_2^{(k)} &:= \max_{\ell \in [M]}\max_{1 \leq j \leq [d]} \left\vert N_2^{-1} \sum_{\iota \in \kjack}
(Z^{(k,\ell)}_{\iota} - q_n) h_j(X_{\iota^{(k)}}) \right\vert, \\
T_3^{(k)} &:=  \max_{1 \leq j \leq d} \left\vert P^{r-1}h_j(X_k) \right\vert  \max_{\ell \in [M]}\left\vert N_2^{-1} \sum_{\iota \in \kjack}
(Z^{(k, \ell)}_{\iota} - q_n) \right\vert.
\end{split}
\end{align} 

As a result, on the event $\mathcal{E}'$,
\begin{align*}
\widehat{\Delta}_{A,1} \lesssim T_1 +n^{-1} \sum_{k = 1}^{n} \left( \left( T_2^{(k)} \right)^2 + \left( T_3^{(k)} \right)^2 \right).
\end{align*}
where we define
\begin{align}\label{def:T1}
T_1 &:=  \max_{1 \leq j \leq d}  n^{-1} \sum_{k = 1}^{n} \left( |I_{n-1,r-1}^{(k)}|^{-1} \sum_{\iota \in \kjack} \left( h_j(X_{\iota^{(k)}}) - P^{r-1}h_j(X_k) \right) \right)^2.
\end{align}

Now by Markov inequality, Lemma \ref{lemma:jacknife_complete_part} and Lemma \ref{lemma:T2_T3_bound} (both ahead), 
\begin{align*}
&\Pro\left(T_1 \geq C \log^{-4}(d) \cX_{n,A}^6 \right) \leq C \cX_{n,A}, \\
&\Pro\left( n^{-1} \sum_{k = 1}^{n} \left( \left( T_2^{(k)} \right)^2 + \left( T_3^{(k)} \right)^2\right) \geq C \log^{-4}(d) \cX_{n,A}^6 \right) \leq C \cX_{n,A}.
\end{align*}
As a result, 
\begin{align*}
\Pro\left(\widehat{\Delta}_{A,1} \geq C \log^{-4}(d) \cX_{n,A}^6 \right) \leq C \cX_{n,A}.
\end{align*}
Since $\cX_{n,A} \leq 1/2$, $\log^{-4}(d) \cX_{n,A}^6 \leq \log^{-2}(d) \cX_{n,A}^3$. thus,
\begin{align*}
\Pro\left(\widehat{\Delta}_{A,1} \geq C \log^{-2}(d) \cX_{n,A}^3 \right) \leq C \cX_{n,A}, \quad
\Pro\left(\widehat{\Delta}_{A,1}^{1/2} \geq C \log^{-2}(d) \cX_{n,A}^3 \right) \leq C \cX_{n,A},
\end{align*}
which completes the proof.
\end{proof}

\begin{lemma} \label{lemma:Delta_A2_A3_bound}
Recall the definition of $\widehat{\Delta}_{A,2}$ and $\widehat{\Delta}_{A,3}$ in \eqref{def:hat_Delta_A}. Assume that 
\eqref{assumption_for_geometric_series},
\eqref{MT0}, and \eqref{MT1} hold. Then there exists a constant $C$, depending only on $q, \ubar{\sigma}^2$, such that 
\begin{align*}
&C^{-1}\Exp\left[ \widehat{\Delta}_{A,2} \right] 
 \; \leq \; n^{-1/2} D_n
\log^{1/2}(d) + 
n^{-1+2/q} D_n^2 \log(d), \\ 
&C^{-1}\Exp\left[ \widehat{\Delta}_{A,3} \right] 
 \; \leq \; n^{-1/2}
\log^{1/2}(d) +
n^{-1+1/q} D_n\log(d).
\end{align*}
\end{lemma}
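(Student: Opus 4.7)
The plan is to reduce both quantities to maxima over $d$ (respectively $d^2$) centered i.i.d.\ averages and then apply a standard Nemirovski/Fuk--Nagaev--type maximal inequality. The crucial point is that assumption \eqref{MT0} gives the lower bound $\Gamma_{A,jj} \geq \ubar{\sigma}^2$ (recall \eqref{wlog_theta_zero}, so $P^r h_j = 0$ and $\Gamma_{A,jj} = \Exp[(P^{r-1}h_j(X_1))^2]$), which legitimizes the normalization, while \eqref{MT1} controls the higher moments and, most importantly, the \emph{uniform envelope} $P^{r-1} H$ with $\|P^{r-1} H\|_{P,q}\leq D_n$. Having a single envelope (rather than a per-$j$ bound) is what allows the $d^{1/q}$ that a naive union bound would produce to be absorbed.

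For $\widehat{\Delta}_{A,3}$, I would set $\xi_{k,j} := P^{r-1}h_j(X_k)/\sqrt{\Gamma_{A,jj}}$. These are i.i.d.\ across $k$, mean zero, unit variance, and the pointwise bound $\max_{j\leq d}|\xi_{1,j}| \leq P^{r-1}H(X_1)/\ubar{\sigma}$ gives $\|\max_j |\xi_{1,j}|\|_{P,q} \lesssim D_n$. The standard maximal inequality for maxima of i.i.d.\ sums (see e.g.\ \cite[Lemma P.2]{chernozhukov2017} or analogous statements in \cite{chen2017randomized}) yields
\[
\Exp\Bigl[\max_{j\leq d}\bigl| n^{-1}\textstyle\sum_{k}\xi_{k,j}\bigr|\Bigr] \;\lesssim\; \sqrt{\tfrac{\log d}{n}} + \frac{\|\max_j |\xi_{1,j}|\|_{P,q}\log d}{n^{1-1/q}},
\]
which immediately gives the claimed bound.

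For $\widehat{\Delta}_{A,2}$, I would set $\eta_{k,ij} := (P^{r-1}h_i(X_k) P^{r-1}h_j(X_k) - \Gamma_{A,ij})/\sqrt{\Gamma_{A,ii}\Gamma_{A,jj}}$, again i.i.d.\ mean zero. By Cauchy--Schwarz and \eqref{MT1} applied with $k=2$ (so $\Exp|P^{r-1}h(X_1)|^{4}\leq D_n^{2}$), the variance satisfies $\Exp[\eta_{k,ij}^{2}] \lesssim D_n^{2}/\ubar{\sigma}^{4}$, i.e.\ standard deviation of order $D_n$. For the envelope, $\max_{i,j}|\eta_{1,ij}| \leq (P^{r-1}H(X_1))^{2}/\ubar{\sigma}^{2} + 1$, so $\|\max_{i,j}|\eta_{1,ij}|\|_{P,q/2}\lesssim \|P^{r-1}H\|_{P,q}^{2}\lesssim D_n^{2}$. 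Applying the same maximal inequality (now to $d^{2}$ sums with moment index $q/2$) produces
\[
\Exp[\widehat{\Delta}_{A,2}] \;\lesssim\; D_n\sqrt{\tfrac{\log d^{2}}{n}} + \frac{D_n^{2}\log d^{2}}{n^{\,1-2/q}},
\]
which is the stated bound up to an absolute constant.

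The only place where care is needed is the maximal inequality itself: a direct Rosenthal/union-bound argument would produce an unwanted $d^{1/q}$ factor, so I would invoke the truncation-plus-Bernstein version (truncate at a threshold of order $\tau$, use Bernstein on the bounded part and Markov's inequality of order $q$ on the tail, and optimize in $\tau$). This step is routine but must be stated explicitly; once it is cited, the remaining bookkeeping—plugging in $\Gamma_{A,jj}\geq \ubar{\sigma}^{2}$ and the envelope norms from \eqref{MT1}—is direct. The condition \eqref{assumption_for_geometric_series} is not needed for this lemma per se; it is only listed because it accompanies \eqref{MT1}--\eqref{MT2} throughout Appendix~\ref{sec:supp_lemmas_u_stat}.
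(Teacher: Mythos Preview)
Your proposal is correct and follows essentially the same route as the paper: both arguments reduce $\widehat{\Delta}_{A,2}$ and $\widehat{\Delta}_{A,3}$ to maxima of i.i.d.\ centered sums with envelopes $(P^{r-1}H)^2$ and $P^{r-1}H$, respectively, and then invoke the same maximal inequality (the paper cites \cite[Lemma~E.1]{chernozhukov2017}, which is precisely the truncation-plus-Bernstein bound you describe). Your observation that \eqref{assumption_for_geometric_series} is not actually used here is also accurate.
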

\begin{proof}
For the first result, we apply \cite[Lemma E.1]{chernozhukov2017} to the finite collection $\{P^{r-1}h_i  P^{r-1}h_j: 1 \leq i, j \leq d\}$ with envelope $(P^{r-1}H)^2$:
\begin{align*}
\Exp\left[ \widehat{\Delta}_{A,2} \right] 
\leq n^{-1/2}\max_{1 \leq j \leq d} \|(P^{r-1}h_j)^2\|_{P,2}
\log^{1/2}(d) +
n^{-1+2/q}\|(P^{r-1}H)^2\|_{P,q/2} \log(d).
\end{align*}
Due to \eqref{MT1}, 
\begin{align*}
\Exp\left[ \widehat{\Delta}_{A,2} \right] 
\leq n^{-1/2} D_n
\log^{1/2}(d) +
n^{-1+2/q} D_n^2 \log(d).
\end{align*}

For the second result, we apply \cite[Lemma E.1]{chernozhukov2017} to the finite collection $\{ P^{r-1}h_j: 1 \leq  j \leq d\}$ with envelope $P^{r-1}H$: due to \eqref{MT1},
\begin{align*}
\Exp\left[ \widehat{\Delta}_{A,3} \right] 
\leq n^{-1/2}
\log^{1/2}(d) +
n^{-1+1/q} D_n\log(d),
\end{align*}
which completes the proof.
\end{proof}

\begin{lemma} \label{lemma:jacknife_complete_part}
Recall the definition of $T_1$ in \eqref{def:T1}.
Assume that \eqref{assumption_for_geometric_series},   \eqref{MT2},  and \eqref{MT5} hold. Then
there exists a constant $C$, depending only on $q,r$, such that 
\begin{align*}
T_1 
\leq C\left( n^{-1} D_n^2 \log(d) + n^{-3/2+2/q} D_n^{4-4/q} \log^{3/2}(d)   + n^{-2+2/q} D_n^{6-4/q} \log^{2}(d) \right).
\end{align*}
\end{lemma}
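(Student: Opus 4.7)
The stated bound should be interpreted as an $\Exp[T_1]$ bound (the subsequent use of Lemma \ref{lemma:jacknife_complete_part} via Markov's inequality in the proof of Lemma \ref{lemma:Delta_A_bound} confirms this). My first move is to swap the max and the outer average: $T_1 \leq n^{-1}\sum_{k=1}^n\max_j V_{j,k}^2$, where $V_{j,k} := |\kjack|^{-1}\sum_{\iota \in \kjack}(h_j(X_{\iota^{(k)}}) - P^{r-1}h_j(X_k))$. By exchangeability, $\Exp[\max_j V_{j,k}^2]$ does not depend on $k$, so the task reduces to bounding $\Exp[\max_j V_{j,1}^2]$. The key structural observation is that, conditional on $X_1$, each $V_{j,1}$ is a centered complete $U$-statistic of order $r-1$ in the i.i.d.\ sample $X_2,\dots,X_n$ with kernel $g_{j,X_1}(y_1,\dots,y_{r-1}) := h_j(X_1,y_1,\dots,y_{r-1}) - P^{r-1}h_j(X_1)$, which satisfies $P^{r-1}g_{j,X_1} = 0$ by construction.

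The stated three terms have the algebraic form $(b_1+b_2)^2$ with $b_1 := n^{-1/2}D_n\log^{1/2}(d)$ and $b_2 := n^{-1+1/q}D_n^{3-2/q}\log(d)$, which strongly suggests squaring an $L^1$ maximal bound. The plan is therefore to apply the multi-level local maximal inequality (Theorem \ref{thm:multi-level}) conditionally on $X_1$ to the finite collection $\{g_{j,X_1}\}_{j=1}^d$ of cardinality $d$: the level-$1$ Hoeffding projection $\pi_1^{(1)}g_{j,X_1}(y) = P^{r-2}h_j(X_1,y) - P^{r-1}h_j(X_1)$ is controlled after integrating out $X_1$ using $\|P^{r-2}h_j\|_{P^2,2}\leq D_n$ and $\|P^{r-2}H\|_{P^2,q}\leq D_n^{3-2/q}$ from \eqref{MT2}, producing the $b_1$ and $b_2$ scales respectively. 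The passage from an $L^1$ bound on the maximum to an $L^2$ bound will be made through $\Exp[\max_j|V_{j,1}|^2] \leq (\Exp\max_j|V_{j,1}|)^2 + \Var(\max_j|V_{j,1}|)$, combined with a Hoffman--J\o rgensen type second-moment inequality (in the spirit of Lemma \ref{lemma:Bernstein_second_moment}) to control the variance at the same two scales.

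The main obstacle I anticipate is controlling the higher-level Hoeffding components ($\ell\geq 2$) of the conditional decomposition $V_{j,1} = \sum_{\ell=1}^{r-1}\binom{r-1}{\ell}W_{j,1}^{(\ell)}$ without importing an extraneous $B_n$ factor. A naive use of \eqref{MT4} would give $\|\pi_\ell^{(1)}g_{j,X_1}\|_{P^\ell,2}^2 \leq \|h_j\|_{P^r,2}^2 \lesssim B_n^2 D_n^{-2}$, which would pollute the final expression. Condition \eqref{MT5}, in particular $\|P^{r-2}h\|_{P^2,4}\leq D_n^2$, is exactly what is needed to express the $\ell=2$ projection in terms of $D_n$ alone. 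The $n^{-\ell/2}$ prefactor carried by each level-$\ell$ degenerate $U$-statistic component, combined with the summability hypothesis \eqref{assumption_for_geometric_series}, should ensure that higher-$\ell$ contributions are dominated by the level-$1$ contribution and are absorbed into the three terms of the stated bound via a telescoping geometric-series argument.
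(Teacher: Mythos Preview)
Your very first move --- swapping $\max_j$ and $n^{-1}\sum_k$ --- is the step that breaks the argument. After the swap you are reduced to bounding $\Exp[\max_j V_{j,1}^2]$, and you propose to do this by applying Theorem~\ref{thm:multi-level} conditionally on $X_1$. But the local scale that Theorem~\ref{thm:multi-level} asks for at level $1$ is then
\[
\sigma_1(X_1)^2 \;=\; \max_{1\le j\le d}\int \bigl(P^{r-2}h_j(X_1,y)-P^{r-1}h_j(X_1)\bigr)^2\,dP(y),
\]
a random quantity in $X_1$ with the maximum over $j$ \emph{inside}. The bound $\|P^{r-2}h_j\|_{P^2,2}\le D_n$ you cite controls $\Exp_{X_1}[\sigma_{1,j}^2(X_1)]$ for each fixed $j$, not $\Exp_{X_1}[\max_j\sigma_{1,j}^2(X_1)]$; the only handle on the latter under \eqref{MT2} is through the envelope, $\Exp_{X_1}[\sigma_1^2(X_1)]\le \|P^{r-2}H\|_{P^2,2}^2\le D_n^4$. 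Consequently your level-$1$ contribution is $n^{-1}D_n^4\log(d)$, not the claimed $b_1^2=n^{-1}D_n^2\log(d)$. No amount of Hoffman--J{\o}rgensen manipulation on the second moment recovers the lost factor $D_n^{-2}$, because the loss already occurred at the first-moment stage.

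The paper does \emph{not} swap. It keeps $\max_j$ outside and observes that, for each fixed $j$, the quantity $n^{-1}\sum_k V_{j,k}^2$ (after Hoeffding-decomposing the inner $U$-statistic and squaring) is itself a $U$-statistic in the full sample $X_1,\dots,X_n$, with leading mean of order $n^{-1}\|P^{r-2}h_j\|_{P^2,2}^2\le n^{-1}D_n^2$. The $\max_j$ of this new $U$-process is then controlled by the multi-level maximal inequality applied to the function classes $\{(P^{r-2}h_j)^2\}_j$ and $\{(P^{r-2}h_j)^{\bigodot 2}\}_j$; this is exactly why \eqref{MT5} carries conditions on $\|P^{r-2}h\|_{P^2,4}$ and on $\|(P^{r-2}H)^{\bigodot 2}\|_{P^2,q/2}$ (the $\bigodot$ operator arises from the off-diagonal terms when you expand $(V_{j,k}^{(1)})^2$). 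In short, the averaging over $k$ must be exploited \emph{before} the maximum over $j$ is taken; your route throws that structure away at the outset. The paper's argument is imported from \cite[Theorem~3.1, equation (32)]{chen2017jackknife}.
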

\begin{proof}
From the proof of \cite[Theorem 3.1, equation (32)]{chen2017jackknife}, we have
\begin{align*}
&\Exp\left[T_{1} \right] \; \lesssim \; \sum_{\ell=2}^{r-1} n^{-\ell} \|P^{r-\ell-1}H\|_{P^{\ell+1},2}^2 \log^{\ell}(d) + n^{-1}\max_{1 \leq j \leq d} \|P^{r-2}h_j\|_{P^2,2}^2  \\
&  + n^{-3/2} \max_{1 \leq j \leq d} \|(P^{r-2}h_j)^2\|_{P^2,2} \log^{1/2}(d) + n^{-2+2/q} \|(P^{r-2}H)^2\|_{P^2,q/2}\log(d) \\
& + n^{-2}\|(P^{r-2}H)^2\|_{P^2,2} \log(d)  \\
&+ n^{-1}  \max_{1 \leq j \leq d}\| (P^{r-2}h_j)^{\bigodot 2} \|_{P^2,2} \log(d)  + n^{-3/2+2/q} \|(P^{r-2} H)^{\bigodot 2}\|_{P^2,q/2} \log^{3/2}(d)  \\
&+ n^{-3/2}\max_{1 \leq j \leq d}\|(P^{r-2}h_j)^2\|_{P^2,2} \log^{3/2}(d)  + n^{-2+2/q}\|(P^{r-2}H)^2\|_{P^2,q/2} \log^{2}(d),
\end{align*}
and  for $1 \leq j \leq d$,
$\|(P^{r-2}h_j)^{\bigodot 2}\|_{P^2,2} \leq
\|P^{r-2}h_j\|_{P^2,2}^2$.

Then since $q \geq 4$, due to \eqref{assumption_for_geometric_series}, \eqref{MT5}, \eqref{MT2}
\begin{align*}
&\Exp\left[T_1 \right] \; \lesssim \; \sum_{\ell=2}^{r-1} n^{-\ell} D_n^{2(\ell+1)} \log^{\ell}(d)  \\
&+ n^{-1}  \max_{1 \leq j \leq d}\|P^{r-2}h_j\|_{P^2,2}^2 \log(d)  + n^{-3/2+2/q} \|(P^{r-2} H)^{\bigodot 2}\|_{P^2,q/2} \log^{3/2}(d)  \\
&+ n^{-3/2}\max_{1 \leq j \leq d}\|(P^{r-2}h_j)^2\|_{P^2,2} \log^{3/2}(d)  + n^{-2+2/q}\|(P^{r-2}H)^2\|_{P^2,q/2} \log^{2}(d),\\
\lesssim & n^{-2} D_n^{6} \log^2(d) +
n^{-1} D_n^2 \log(d) + n^{-3/2+2/q} D_n^{4-4/q} \log^{3/2}(d) + \\
\ & n^{-3/2} D_n^4 \log^{3/2}(d)  + n^{-2+2/q} D_n^{6-4/q} \log^{2}(d),
\end{align*}
which completes the proof due to \eqref{assumption_for_geometric_series}.
\end{proof}

\begin{lemma}\label{lemma:T2_T3_bound}
Fix $k \in [n]$, and recall the definitions of  $T_2^{(k)}$ and $T_3^{(k)}$ in \eqref{def:T2_T3}. Assume \eqref{MT3} holds. Then
there exists an absolute constant $C$  such that for each $k \in [n]$, 
\begin{align*}
&C^{-1} \Exp [ ( T_2^{(k)} )^2 ]  \; \leq \; 
\frac{\log(2M) B_n^2 \log(d)}{N_2}
+\frac{\log^2(2M)M^{2/q}B_n^{4-4/q}D_n^{4/q-2} \log(d)}{N_2^{2-2/q}},\\
&C^{-1} \Exp [ ( T_3^{(k)} )^2 ]  \; \leq \; 
\frac{\log(2M) B_n^2 }{N_2}
+\frac{\log^2(2M)B_n^{2} }{N_2^{2}}.
\end{align*} 
\end{lemma}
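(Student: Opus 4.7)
The plan is to view both $T_2^{(k)}$ and $T_3^{(k)}$ as (rescaled) suprema of stratified incomplete $U$-processes over $r-1$ tuples drawn from $[n]\setminus\{k\}$, and then invoke the non-local maximal inequality of Lemma~\ref{lemma:sampling_higher_order} (together with the simpler Bernstein-type bound of Lemma~\ref{lemma:Bernstein_second_moment} for the trivial class), after conditioning on $X_k$ and using independence of the Bernoulli multipliers $\{Z^{(k,\ell)}_{\iota}\}$ from $X_1^n$.

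For $T_2^{(k)}$, I would condition on $X_k$ and apply Lemma~\ref{lemma:sampling_higher_order} to the finite class $\{(x_1,\dots,x_{r-1})\mapsto h_j(X_k,x_1,\dots,x_{r-1}): 1\le j\le d\}$, which is symmetric in its $r-1$ arguments and has envelope $F_k(\cdot):=H(X_k,\cdot)$. Since this is a finite class of cardinality $d$, the uniform entropy integral satisfies $\bar J(1)\lesssim \log^{1/2}(d)$. The lemma then yields, conditionally on $X_k$,
\[
\Exp_{\vert X_k}\bigl[(T_2^{(k)})^2\bigr]\;\lesssim\;\frac{\log(d)\log(2M)}{N_2}\,\|F_k\|^2_{P^{r-1}\vert X_k,2}\;+\;\frac{\log(d)\log^2(2M)}{N_2^2}\,\|\widetilde{\mathcal T}_k\|^2_{\Pro\vert X_k,2},
\]
where $\widetilde{\mathcal T}_k:=\max_{\iota,\ell}Z^{(k,\ell)}_{\iota}F_k(X_{\iota})$. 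Taking outer expectation, the first piece becomes $\|H\|^2_{P^r,2}\le B_n^2$ by \eqref{MT3}. For the second piece, the same counting argument used to bound $\|\widetilde{\mathcal T}\|_{\Pro,2}$ in Theorem~\ref{thm:lmax_sampling_proc} gives $\Exp[\widetilde{\mathcal T}_k^q]\le MN_2\|H\|^q_{P^r,q}$, hence $\Exp[\widetilde{\mathcal T}_k^2]\le M^{2/q}N_2^{2/q}\|H\|^2_{P^r,q}\le M^{2/q}N_2^{2/q}B_n^{4-4/q}D_n^{4/q-2}$ again by \eqref{MT3}. Combining these yields exactly the claimed bound on $\Exp[(T_2^{(k)})^2]$.

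For $T_3^{(k)}$, I would exploit the product structure $T_3^{(k)}=A_k\,B_k$ with $A_k:=\max_{j}|P^{r-1}h_j(X_k)|$ and $B_k:=\max_{\ell}|N_2^{-1}\sum_{\iota\in\kjack}(Z^{(k,\ell)}_{\iota}-q_n)|$. Since $\{Z^{(k,\ell)}_{\iota}\}$ is independent of $\cD_n\supset\{X_k\}$, these two factors are independent, so $\Exp[(T_3^{(k)})^2]=\Exp[A_k^2]\,\Exp[B_k^2]$. The pointwise bound $A_k\le P^{r-1}H(X_k)$ together with \eqref{MT3} (which gives $\|P^{r-1}H\|_{P,2}\le D_n\le B_n$) yields $\Exp[A_k^2]\le B_n^2$. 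For $\Exp[B_k^2]$, I would apply Lemma~\ref{lemma:Bernstein_second_moment} with the $|I_{n-1,r-1}|$ Bernoulli variables $\{Z^{(k,\ell)}_{\iota}\}$ and $M$ strata, to get $\Exp[B_k^2]\lesssim N_2^{-1}\log(2M)+N_2^{-2}\log^2(2M)$. Multiplying these gives the stated inequality for $\Exp[(T_3^{(k)})^2]$.

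The only mild technical point I expect is verifying that applying Lemma~\ref{lemma:sampling_higher_order} conditionally on $X_k$ is legitimate, since the lemma is stated unconditionally. This is routine: the proof of the lemma only uses symmetrization in the Bernoulli variables and in auxiliary Rademacher variables, both of which remain valid after conditioning on $X_k$, because $\{Z^{(k,\ell)}_{\iota}\}$ is independent of $X_k$. Hence conditioning merely replaces $P$ by $P(\cdot\mid X_k)$ throughout the argument, and Fubini-style averaging in $X_k$ then recovers the stated unconditional $L^2$ and $L^q$ norms of $H$ on $S^r$. Once this is in place, the computation reduces to plugging the moment bounds from \eqref{MT3} (and \eqref{MT1} for the $T_3$ piece) into the two conclusions above.
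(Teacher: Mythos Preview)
Your proposal is correct and follows essentially the same route as the paper: apply Lemma~\ref{lemma:sampling_higher_order} conditionally on $X_k$ for $T_2^{(k)}$ and then average out, and for $T_3^{(k)}$ factor by independence and invoke Lemma~\ref{lemma:Bernstein_second_moment}. The only cosmetic differences are that the paper bounds $\|\widetilde{\mathcal T}_k\|_{\Pro\mid X_k,2}^2\le M^{2/q}N_2^{2/q}\|H(X_k,\cdot)\|_{P^{r-1},q}^2$ conditionally and then uses Jensen in $X_k$, whereas you take the unconditional $q$-th moment of $\widetilde{\mathcal T}_k$ directly; and the bound $\|P^{r-1}H\|_{P,2}\le B_n$ comes straight from \eqref{MT3} via Jensen (not through $D_n$), so you need not invoke \eqref{MT1}.
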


\begin{proof}
We apply Lemma \ref{lemma:sampling_higher_order} to $\{h_j: 1 \leq j \leq d\}$ conditional on $X_k$:
\begin{align*}
\Exp_{\vert X_k} [ ( T_2^{(k)} )^2 ] \; \lesssim \;
& N_2^{-1}\log(d)\left(
 \|H(X_k,\cdot)\|_{P^{r-1},2}^2 \log(2M)  +
 \frac{\log^2(2M) M^{2/q} N_2^{2/q}	\|H(X_k,\cdot)\|_{P^{r-1},q}^2}{N_2}
\right).
\end{align*}
Due to \eqref{MT3}, $q \geq 4$ and Jensen's inequality, we have
\begin{align*}
\Exp [ ( T_2^{(k)} )^2 ]  \; \leq \; C  N_2^{-1}  \log(d)
\left(\log(2M) B_n^2 + \frac{\log^2(2M)M^{2/q}B_n^{4-4/q}D_n^{4/q-2}}{N_2^{1-2/q}} \right).
\end{align*}

Now we focus on the second inequality. Due to  \eqref{MT3},
$\left\| \max_{1 \leq j \leq d} \left\vert P^{r-1}h_j(X_k) \right\vert \right\|_{\Pro,2}^2  \leq  B_n^2$. 
By Lemma \ref{lemma:Bernstein_second_moment},
\begin{align*}
\Exp\left[\max_{\ell \in [M]}
\left\vert  \sum_{\iota \in \kjack}
(Z^{(k, \ell)}_{\iota} - q_n) \right\vert^2 \right]
\lesssim N_2 \log(2M) + \log^2(2M).
\end{align*}

As a result, due to independence,
\begin{align*}
\Exp[(T_3^{(k)})^2] \lesssim B_n^2 N_2^{-1} \log(2M) + B_n^2 N_2^{-2}\log^2(2M).
\end{align*}
\end{proof}

\subsubsection{Proof of Theorem \ref{thrm:appr_U_full}}
\label{proof:appr_U_full}
The constants in the following proof may depend on $r, \ubar{\sigma},q, c_0, C_0$, and may vary from line to line.

\begin{proof}[Proof of Theorem \ref{thrm:appr_U_full}]
Without loss of generality, we can assume $Y_A$ and $Y_B$ are independent of all other random variables. 
Recall the definition of $\cX_{n,A}$ and $\cX_{n,B}$ in Lemma \ref{lemma:Delta_A_bound} and \ref{lemma:Delta_B_bound}. Define two $d\times d$ \textit{diagonal} matrices $\Lambda_A$ and $\Lambda_B$ such that
\[
\Lambda_{A,jj} = \gamma_A(h_j), \quad
\Lambda_{B,jj} = \gamma_B(h_j), \;\;\text{ for } 1 \leq j \leq d.
\]

\noindent\underline{Step 1.}  Due to Lemma \ref{lemma:Delta_B_bound} and \eqref{MT4},
\begin{align*}
\Pro\left( \mathcal{E}_1 \right) \geq 1 -  C \cX_{nB}, \;\;\text{ where }
\mathcal{E}_1 :=
\left\{  \max_{1 \leq j,k \leq d} \frac{\left| \widehat{\gamma}_B(h_j,h_k) - \gamma_B(h_j, h_k) \right|}{
\sqrt{\gamma_B(h_j)\gamma_B(h_k)}
} \leq C \log^{-2}(d)\cX_{n,B}^3
\right\}.
\end{align*}
Then by the Gaussian comparison inequality \citep[Lemma C.5]{chen2017randomized}, on the event $\mathcal{E}_1$,
\begin{align*}
&\sup_{R \in \cR} \left\vert \Pro_{\vert \cD_n'}(\underline{U}_{n,B}^{\#} \in R) - \Pro(Y_B \in R)
\right\vert \\
= & \sup_{R \in \cR} \left\vert \Pro_{\vert \cD_n'}(\Lambda_B^{-1/2}\underline{U}_{n,B}^{\#} \in R) - \Pro(\Lambda_B^{-1/2}Y_B \in R) 
\right\vert \\
\lesssim & \left(\log^{-2}(d)\cX_{n,B}^3\right)^{1/3} \log^{2/3}(d) =   \cX_{n,B}.
\end{align*}

\noindent\underline{Step 2.}  Due to Lemma \ref{lemma:Delta_A_bound},
\begin{align*}
\Pro\left( \mathcal{E}_2 \right) \geq 1 - C \cX_{n,A}, \;\;\text{ where }
\mathcal{E}_2 :=
\left\{  \widehat{\Delta}_A \leq C \log^{-2}(d)\cX_{n,A}^3
\right\},
\end{align*}
where $\widehat{\Delta}_A$ is defined in Lemma \ref{lemma:Delta_A_bound}. 
Then by the Gaussian comparison inequality \citep[Lemma C.5]{chen2017randomized}, on the event $\mathcal{E}_2$,
\begin{align*}
&\sup_{R \in \cR} \left\vert \Pro_{\vert \cD_n'}(\underline{U}_{n,A}^{\#} \in R) - \Pro(Y_A \in R)
\right\vert \\
= & \sup_{R \in \cR} \left\vert \Pro_{\vert \cD_n'}(\Lambda_A^{-1/2}\underline{U}_{n,A}^{\#} \in R) - \Pro(\Lambda_A^{-1/2}Y_A \in R) 
\right\vert \\
\lesssim & \left(\log^{-2}(d)\cX_{n,A}^3\right)^{1/3} \log^{2/3}(d) =   \cX_{n,A}.
\end{align*}

\noindent\underline{Step 3.} Now we focus on the event $\mathcal{E}_1 \cup \mathcal{E}_2$, which occurs with probability at least $1 - C\chi_{n,A} - C \chi_{n,B}$. Fix $R \in \cR$.

On the event $\mathcal{E}_1 \cup \mathcal{E}_2$, by the results in Step 1 and 2,
\begin{align*}
\Pro_{\vert \cD_n}\left(\underline{U}_{n,*}^{\#} \in R\right)
&= \Pro_{\vert \cD_n'}\left(\alpha_n^{1/2} \underline{U}_{n,B}^{\#} + r \underline{U}_{n,A}^{\#} \in R
\right) 
= \Pro_{\vert \cD_n'}\left( \underline{U}_{n,B}^{\#} \in \alpha_n^{-1/2}(R - r  \underline{U}_{n,A}^{\#}) \right) \\
&\leq \Pro_{\vert \cD_n'}\left( Y_B \in \alpha_n^{-1/2}(R - r  \underline{U}_{n,A}^{\#} ) 
\right) + C \cX_{n,B} \\
&= \Pro_{\vert \cD_n'}\left( \underline{U}_{n,A}^{\#} \in r^{-1}(R - \alpha_n^{1/2}Y_B  ) 
\right) +  C \cX_{n,B} \\
& \leq \Pro\left( Y_A \in r^{-1}(R - \alpha_n^{1/2}Y_B  ) 
\right) +  C \cX_{n,B} + C\cX_{n,A} \\
&= \Pro\left( r Y_A + \alpha_n^{1/2}Y_B \in R  ) 
\right) +  C \cX_{n,B} + C\cX_{n,A} .
\end{align*}
The reverse inequality is similar. Then the proof is complete by noticing that $\cX_{n,A} + \cX_{n,B} \leq \log^{1/4}(M)\cX_{n}$.
\end{proof}

\section{Proofs regarding stratified, incomplete U-processes}
First, define 
\begin{equation}
\label{def:H_all_epsilon}
\partial \cH_{\epsilon} := \{h - h':\; h,h' \in \cH, \;\;\max\{ \|h-h'\|_{P^r,2}, \|h-h'\|_{P^r,4}^2\} \leq \epsilon\|1 + H^2\|_{P^r,2}\}.
\end{equation}
By  Lemma \ref{discretization_lemma}, $\{\partial\cH_{\epsilon}, 2H\}$ is a VC-type class with characteristics $(A,2\nu)$. Then due to \cite[Corollary A.1]{chernozhukov2014gaussian},
$\{(\partial\cH_{\epsilon})^2, (2H)^2\}$ is also VC-type with characteristics $(\sqrt{2}A,4\nu)$.

%
Due to \eqref{MT2}, \eqref{MT4}, if \eqref{wlog_h_epsilon} holds, we have
\begin{align}\label{log_epsilon}
\log(N \|H\|_{P^r,2}) \leq C \log(n), \quad
\log(N\|H^2\|_{P^r,2}) \leq C \log(n), \quad
n^{-1} D_n^2 K_n \leq 1.
\end{align}
where the constant $C$ depends on $r$. 

\subsection{Supporting calculations}

\begin{lemma} \label{lemma:discr_first_order}
Assume \eqref{log_epsilon}, \ref{cond:VC}, \eqref{MT1}, \eqref{MT2} and \eqref{MT4} hold. Let $\epsilon^{-1} := N \|1 + H^2\|_{P^r,2}$. Then there exists a constant $C$, depending only on $q,r$, such that
\begin{align*}
C^{-1}\Exp\left[ \|\sqrt{n}\left(U_n(h) - P^r h \right)\|_{\partial \cH_{\epsilon}}\right] 
\;\leq\;
n^{-1/2+1/q} D_n K_n + n^{-1+1/q} D_n^{3-2/q} K_n^2.
\end{align*}
\end{lemma}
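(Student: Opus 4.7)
The plan is to decompose $U_n(h) - P^rh$ via the Hoeffding expansion \eqref{Hoeffding decomp} into completely degenerate components of levels $\ell = 1, \dots, r$, apply the multi-level local maximal inequality of Theorem \ref{thm:multi-level} on each level, and exploit the fact that $\partial\cH_\epsilon$ has $L^2(P^r)$-diameter of order $\epsilon\|1+H^2\|_{P^r,2} = 1/N$ to make levels one and two small. The class $\partial\cH_\epsilon$ is VC type with characteristics $(A, 2\nu)$ and envelope $2H$ by Lemma \ref{discretization_lemma}, so the VC bound $J_\ell(\tau) \lesssim \tau K_n^{\ell/2}$ from Theorem \ref{thm:multi-level} applies at every level, with $\log(1/\delta_\ell)$ absorbed into $K_n$ thanks to \eqref{log_epsilon}.

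For level $\ell = 1$, contractivity of projections gives $\sigma_1 := \sup_{f\in\partial\cH_\epsilon}\|P^{r-1}f\|_{P,2} \leq 1/N$, while $\|P^{r-1}H\|_{P,q} \leq D_n$ from \eqref{MT1} yields $\|\mathcal{T}_1\|_{\Pro,2} \lesssim n^{1/q} D_n$; plugging into the local bound of Theorem \ref{thm:multi-level} produces $K_n^{1/2}/N + n^{-1/2+1/q}D_n K_n$, the first term being absorbed into $n^{-1/2+1/q}D_n K_n$ because $N \geq n/r$ and $D_n \geq 1$. For level $\ell = 2$, the same argument with $\sigma_2 \leq 1/N$, $\|P^{r-2}H\|_{P^2,2} \leq D_n$, and $\|P^{r-2}H\|_{P^2,q} \leq D_n^{3-2/q}$ from \eqref{MT2} delivers $n^{-1/2}(K_n/N + n^{-1/2+1/q}D_n^{3-2/q}K_n^2)$, again with the second piece dominant.

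For levels $\ell \geq 3$ the local $L^2$ bound on $|h|^s$ is unavailable in \eqref{MT2}, so I will fall back on the non-local version of Theorem \ref{thm:multi-level}, using only the envelope bound $\|P^{r-\ell}H\|_{P^\ell,2} \leq D_n^\ell$ from \eqref{MT2}. This gives contributions of order $(D_n K_n^{1/2}/\sqrt{n})^{\ell-1} D_n K_n^{1/2}$; summing the geometric series and using $n^{-1}D_n^2 K_n \leq 1$ from \eqref{log_epsilon}, the total is $\lesssim D_n^3 K_n^{3/2}/n$, which is dominated by $n^{-1+1/q}D_n^{3-2/q}K_n^2$ (the ratio is at most $(D_n^2/n)^{1/q} K_n^{-1/2} \leq 1$). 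Adding the three bounds yields the target.

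The main obstacle is bookkeeping: making sure that the localization at levels one and two (via the small diameter $1/N$) is actually captured by the hypotheses in \eqref{MT1}, \eqref{MT2}, which mix local $\|P^{r-\ell}|h|^s\|$ bounds with global envelope bounds, and verifying that the logarithmic factor in $J_\ell(\delta_\ell) \asymp \delta_\ell (\nu\log(A/\delta_\ell))^{\ell/2}$ collapses to $K_n^{\ell/2}$ via \eqref{log_epsilon}. Once these verifications are done, the rest is a routine term-by-term comparison in the geometric series.
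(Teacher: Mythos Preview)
Your proposal is correct and follows essentially the same route as the paper: Hoeffding decomposition, then Theorem \ref{thm:multi-level} with the local bound (using $\sigma_\ell = 1/N$ and the $L^q$ envelope moment) at levels $\ell=1,2$ and the non-local envelope bound $\|P^{r-\ell}H\|_{P^\ell,2}\le D_n^\ell$ at levels $\ell\ge 3$, followed by summing the geometric series via $n^{-1}D_n^2K_n\le 1$. One harmless slip: from \eqref{MT2} with $\ell=2,s=1$ you have $\|P^{r-2}H\|_{P^2,2}\le D_n^2$, not $D_n$, but this quantity never enters the final bound since the first term $J_\ell(\delta_\ell)\|F_\ell\|_{P^\ell,2}$ collapses to $\sigma_\ell K_n^{\ell/2}=N^{-1}K_n^{\ell/2}$ independently of the envelope's $L^2$ size.
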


\begin{proof}
By definition, for $1 \leq \ell \leq r$, $\sup_{h \in \cH_{\partial \epsilon}} \|P^{r-\ell}h\|_{P^{\ell},2} 
\leq \sup_{h \in \partial \cH_{\epsilon}} \|h\|_{P^{r},2} 
\leq N^{-1}$. Thus let
\begin{align*}
\sigma_{\ell} := N^{-1}, \quad 
\delta_\ell := \sigma_{\ell}/\|2 P^{r-\ell} H \|_{P^r,2} \geq (2N \|H\|_{P^r,2})^{-1}.
\end{align*}
Now we apply Theorem \ref{thm:multi-level} to the class $\{\partial\cH_{\epsilon}, 2H\}$ with envelopes $\{2P^{r-\ell} H:1 \leq \ell \leq r\}$. Note that by Theorem \ref{thm:multi-level}, due to \ref{cond:VC} and \eqref{log_epsilon},
\[
J_{\ell}(\delta_\ell) \lesssim \delta_{\ell} \left(
\nu \log(A/\delta_{\ell})
\right)^{\ell/2} \lesssim
\delta_{\ell} K_n^{\ell/2}.
\]
Thus by Theorem \ref{thm:multi-level}, due to \eqref{MT1}, \eqref{MT2}, \eqref{log_epsilon}, and $N \geq n/r$,
\begin{align*}
&\Exp\left[ \|U_n(h) - P^r h \|_{\partial\cH_{\epsilon}}\right] 
\lesssim  \;n^{-1/2}  N^{-1} K_n^{1/2} + n^{-1+1/q} \|P^{r-1} H\|_{P,q} K_n  \\
&+ n^{-1} N^{-1} K_n^{1} + n^{-3/2+1/q} \|P^{r-2} H\|_{P^2,q}  K_n^{2}
+\sum_{\ell=3}^{r} n^{-\ell/2} \|P^{r-\ell} H\|_{P^{\ell},2}  K_n^{\ell/2} \\
\lesssim & \;
n^{-1+1/q} D_n K_n + n^{-3/2+1/q} D_n^{3-2/q} K_n^{2}
+ n^{-3/2} D_n^3 K_n^{3/2} \\
\lesssim & \;
n^{-1+1/q} D_n K_n + n^{-3/2+1/q} D_n^{3-2/q} K_n^{2},
\end{align*}
which completes the proof.
\end{proof}

\begin{lemma} \label{lemma:discr_second_order}
Assume \eqref{log_epsilon}, \ref{cond:VC}, \eqref{MT2}, and \eqref{MT4}  hold. Let $\epsilon^{-1} := N \|1 + H^2\|_{P^r,2}$. Then there exists a constant $C$, depending only on $q,r,c_0$, such that
\begin{align*}
C^{-1}\Exp\left[ \left\| |I_{n,r}|^{-1} \sum_{\iota \in I_{n,r}} h^2(X_{\iota}) \right\|_{\partial \cH_{\epsilon}}\right] 
\;\leq\; 
\left( \inf_{h \in \cH} \gamma_B(h) \right) \left(
n^{-1+1/q} D_n^{2} K_n +
n^{-3/2+1/q}  D_n^{4-2/q} K_n^2  \right).
\end{align*}
\end{lemma}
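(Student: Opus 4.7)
The strategy mirrors that of Lemma~\ref{lemma:discr_first_order}, but is applied to the squared class. Writing
$$ |I_{n,r}|^{-1}\sum_{\iota \in I_{n,r}} h^2(X_\iota) \;=\; \bigl(U_n^{(r)}(h^2) - P^r h^2\bigr) + P^r h^2,$$
the pointwise part satisfies $P^r h^2 = \|h\|_{P^r,2}^2 \le \epsilon^2 \|1+H^2\|_{P^r,2}^2 = 1/N^2$, which is much smaller than the stated bound since $N \ge n/r$. It therefore suffices to control the centered $U$-process indexed by $\{h^2 - P^r h^2 : h \in \partial\cH_\epsilon\}$, which I would decompose via the Hoeffding projections \eqref{Hoeffding decomp} and bound level-by-level using Theorem~\ref{thm:multi-level}.

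For the multi-level local maximal inequality, I would use the fact that $\{(\partial\cH_\epsilon)^2,(2H)^2\}$ is VC type with characteristics $(\sqrt{2}A,4\nu)$ (by \cite[Corollary A.1]{chernozhukov2014gaussian}), take envelopes $F_\ell := P^{r-\ell}(4H^2)$, and set the local radii
$$ \sigma_\ell \;:=\; \sup_{h\in\partial\cH_\epsilon}\|P^{r-\ell}h^2\|_{P^\ell,2} \;\le\; \sup_{h\in\partial\cH_\epsilon} \|h\|_{P^r,4}^2 \;\le\; 1/N,$$
where the first step is Jensen's inequality and the second is the definition of $\partial\cH_\epsilon$ in \eqref{def:H_all_epsilon}. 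With $\delta_\ell := \sigma_\ell/\|F_\ell\|_{P^\ell,2}$, \ref{cond:VC} and \eqref{log_epsilon} give $J_\ell(\delta_\ell) \lesssim \delta_\ell K_n^{\ell/2}$, hence $J_\ell(\delta_\ell)\|F_\ell\|_{P^\ell,2} \lesssim N^{-1}K_n^{\ell/2}$ and $J_\ell^2(\delta_\ell)/\delta_\ell^2 \lesssim K_n^\ell$.

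For $\ell \in \{1,2\}$, I would bound $\|\mathcal T_\ell\|_{\Pro,2}\le n^{1/q}\|F_\ell\|_{P^\ell,q}$ and invoke condition \eqref{MT2} for $s=2$, which yields $\|P^{r-1}H^2\|_{P,q}\lesssim B_n^2$ and $\|P^{r-2}H^2\|_{P^2,q}\lesssim B_n^2 D_n^{2-2/q}$. The resulting contributions are of order $n^{-1+1/q}B_n^2 K_n$ from $\ell=1$ and $n^{-3/2+1/q}B_n^2 D_n^{2-2/q}K_n^2$ from $\ell=2$; using \eqref{MT4} to write $B_n^2 \asymp (\inf_{h\in\cH}\gamma_B(h))\, D_n^{2}$ reproduces the two terms in the claimed bound. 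For $\ell\ge 3$ no $L^q$-type bound on $P^{r-\ell}H^2$ is assumed, so I would take $q=2$ in Theorem~\ref{thm:multi-level}, giving a contribution $\lesssim n^{-\ell/2}B_n^2 D_n^{\ell-1}K_n^\ell$ via \eqref{MT2}. The contribution of $\sigma_\ell K_n^{\ell/2}$ is $n^{-\ell/2}N^{-1}K_n^{\ell/2}$, which is negligible since $N\ge n/r$.

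The main technical point—and the step I expect to be the mildly annoying one—is showing that the $\ell\ge 3$ terms are dominated by the $\ell=1,2$ ones. The ratio of the level $\ell$ bound to the level 2 bound equals $n^{-\ell/2+3/2-1/q}D_n^{\ell-3+2/q}K_n^{\ell/2-2}$; substituting $D_n^2 \le n/K_n$ from \eqref{log_epsilon} collapses the $n$ exponent to zero and leaves a factor $K_n^{-1/2-1/q}\le 1$. Summing the geometric series across $\ell$ and combining with the negligible $P^r h^2$ term then yields the claimed inequality.
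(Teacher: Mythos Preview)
Your strategy is essentially the paper's: apply Theorem~\ref{thm:multi-level} to the VC-type class $\{(\partial\cH_\epsilon)^2,4H^2\}$ with $\sigma_\ell=N^{-1}$, use the $L^q$ bound on $P^{r-\ell}H^2$ from \eqref{MT2} at levels $\ell=1,2$, convert $B_n^2$ to $(\inf_h\gamma_B(h))D_n^2$ via \eqref{MT4}, and absorb $\ell\ge3$ using \eqref{log_epsilon}.

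There is one inconsistency to flag. For $\ell\ge3$ you say ``take $q=2$,'' which selects the second branch of the $\min$ in Theorem~\ref{thm:multi-level} and yields $n^{-\ell/2}B_n^2D_n^{\ell-1}K_n^{\ell}$; but your ratio-to-level-$2$ computation carries the exponent $K_n^{\ell/2-2}$, which corresponds instead to the \emph{non-local} branch $J_\ell(1)\|F_\ell\|_{P^\ell,2}\lesssim K_n^{\ell/2}\|P^{r-\ell}H^2\|_{P^\ell,2}$. The latter is what the paper actually uses for $\ell\ge3$, and it is the one that works: with the $K_n^{\ell}$ exponent the level-$3$/level-$2$ ratio becomes $n^{-1/q}D_n^{2/q}K_n\le K_n^{1-1/q}$ after substituting $D_n^2\le n/K_n$, which is not bounded. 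Once you replace ``take $q=2$'' by ``use the first branch $J_\ell(1)\|F_\ell\|_{P^\ell,2}$,'' your geometric-series argument goes through exactly as written.
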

\begin{proof}
By definition, for $h \in \partial\cH_{\epsilon}$,
\[
\|h^2\|_{P^{r},2} = \|h\|_{P^{r},4}^{2} \leq N^{-1}.
\]
As a result, for $ 1 \leq \ell \leq r$,
\begin{align*}
&\sup_{h \in \partial\cH_{\epsilon}} \|P^{r-\ell}h^2\|_{P^{\ell},2} 
\leq \sup_{h \in \partial\cH_{\epsilon}} \|h^2\|_{P^{r},2} 
\leq N^{-1} := \sigma_{\ell}, \quad \\
&\delta_{\ell} := \sigma_{\ell}/\|4P^{r-\ell} H^2\|_{P^{\ell},2} 
\geq \left(4N\|H^2\|_{P^r,2}\right)^{-1}.
\end{align*}
Now we apply Theorem \ref{thm:multi-level} to the class $\{(\partial\cH_{\epsilon})^2, 4H^2\}$ with envelopes $\{4P^{r-\ell} H^2 :1 \leq \ell \leq r\}$. Note that by Theorem \ref{thm:multi-level}, due to \ref{cond:VC} and \eqref{log_epsilon}, $J_{\ell}(\delta_\ell) \lesssim  \delta_{\ell} K_n^{\ell/2}$. Thus by Theorem \ref{thm:multi-level}, due to \eqref{MT2}, \eqref{MT4}, and \eqref{log_epsilon}, and $N \geq n/r$,
\begin{align*}
&\Exp\left[ \left\| |I_{n,r}|^{-1} \sum_{\iota \in I_{n,r}} h^2(X_{\iota}) \right\|_{\partial \cH_{\epsilon}}\right] 
\leq N^{-1} + n^{-1/2} N^{-1} K_n^{1/2} + n^{-1+1/q} \|P^{r-1} H^2 \|_{P,q} K_n \\
& + n^{-1} N^{-1} K_n + n^{-3/2+1/q} \|P^{r-2} H^2\|_{P^2,q} K_n^2 + \sum_{\ell=3}^{r} n^{-\ell/2} \|P^{r-\ell} H^2\|_{P^{\ell},2} K_n^{\ell/2} \\
\lesssim & \;
\left( \inf_{h \in \cH} \gamma_B(h) \right) \left(
n^{-1+1/q} D_n^{2} K_n +
n^{-3/2+1/q}  D_n^{4-2/q} K_n^2 
+ n^{-3/2}  D_n^{4} K_n^{3/2}  \right)\\
\lesssim & \;
\left( \inf_{h \in \cH} \gamma_B(h) \right) \left(
n^{-1+1/q} D_n^{2} K_n +
n^{-3/2+1/q}  D_n^{4-2/q} K_n^2  \right),
\end{align*}
which completes the proof.
\end{proof}

Recall the definition of $W_P$ and $\gamma_{*}(h)$ in Section \ref{sec:GaussApprox_Proce}.

\begin{lemma}\label{lemma:WP_size}
Assume the conditions \ref{cond:PM}, \ref{cond:VC}, \ref{cond:MB}, \eqref{MT0}, and \eqref{MT3} hold, and let $\widetilde{\cM} := \sup_{h \in \cH} W_P(h/\sqrt{\gamma_{*}(h)})$. 
Then there exist a constant $C$ only depending on $r,\ubar{\sigma}, C_0, C_1$ such that
\begin{align*}
\Pro\left( \widetilde{\cM} \geq C K_n^{1/2} \right) \leq n^{-1}.
\end{align*}
\end{lemma}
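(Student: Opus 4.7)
The plan is the standard two-step approach: bound the expectation $\Exp[\widetilde{\cM}]$ via Dudley's entropy integral, then upgrade to a tail bound via Gaussian concentration (Borell--TIS).

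First, observe that the normalized process $\widetilde{W}(h) := W_P(h)/\sqrt{\gamma_{*}(h)}$ is a centered Gaussian process indexed by $\cH$ with $\Var(\widetilde{W}(h)) = 1$ for every $h$. Its intrinsic pseudo-metric satisfies
\[
d^2(\tilde h_1,\tilde h_2) \;=\; 2 - \frac{2\,\gamma_*(h_1,h_2)}{\sqrt{\gamma_*(h_1)\gamma_*(h_2)}} \;\leq\; 2,
\]
so the diameter of $\widetilde{\cH}$ under $d$ is at most $\sqrt{2}$. The lower bound $\gamma_*(h) \geq r^2 \ubar{\sigma}^2$ (from \eqref{MT0}) and the upper bounds $\gamma_A(h) \leq \|P^{r-1}H\|_{P,2}^2 \leq D_n^2$ and $\gamma_B(h) \leq \|H\|_{P^r,2}^2 \leq B_n^2$ (from \eqref{MT1} and \eqref{MT3}) give uniform two-sided control of $\gamma_*(h)$.

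Next I would control $d$ by the $L^2(P^r)$ pseudo-metric on $\cH$, stratum by stratum. For $h_1,h_2$ with $\sigma(h_1)=\sigma(h_2)$ one has $\gamma_B(h_1-h_2) = \Var((h_1-h_2)(X_1^r)) \leq \|h_1-h_2\|_{P^r,2}^2$ and similarly $\gamma_A(h_1-h_2) \leq \|h_1-h_2\|_{P^r,2}^2$, so $\gamma_*(h_1-h_2) \leq (r^2+\alpha_n)\|h_1-h_2\|_{P^r,2}^2$. Together with the uniform bounds on $\gamma_*(h)$, a brief triangle-inequality computation (using $|a^{-1/2}-b^{-1/2}|\leq |a-b|/(2\min(a,b)^{3/2})$) yields $d(\tilde h_1,\tilde h_2) \leq C_1 \|h_1-h_2\|_{P^r,2}$ for some $C_1$ depending polynomially on $r$, $\ubar{\sigma}^{-1}$, $D_n$, $B_n$, and $\alpha_n$. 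Combining this with the VC-type cover $N(\cH_m,\|\cdot\|_{P^r,2},\tau) \leq (AB_n/\tau)^{\nu}$ and taking a union over the $M$ strata,
\[
\log N(\widetilde{\cH}, d, \epsilon) \;\leq\; \log M + \nu \log(AB_n C_1/\epsilon).
\]

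Dudley's entropy bound then gives
\[
\Exp[\widetilde{\cM}] \;\leq\; C\int_0^{\sqrt 2} \sqrt{\log N(\widetilde{\cH},d,\epsilon)}\,d\epsilon \;\leq\; C'\bigl(\sqrt{\log M + \nu\log(AB_nC_1)} + \sqrt{\nu}\bigr).
\]
Since $\log M \leq C_0\log n$ by \ref{cond:MB}, and since in every regime where the lemma is invoked the quantities $B_n,D_n,\alpha_n$ are polynomially bounded in $n$ (indeed the moment and budget conditions of the ambient theorems force $B_n,D_n \leq n^{O(1)}$, and $\alpha_n = n/N \leq r$), the quantity $\log(AB_n C_1)$ is dominated by $\log(A\vee n)$ and hence $\Exp[\widetilde{\cM}] \leq C''\sqrt{K_n}$. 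Finally, the Borell--TIS inequality applied to $\widetilde{W}$ with $\sup_h \Var(\widetilde{W}(h))=1$ yields
\[
\Pro(\widetilde{\cM} \geq \Exp[\widetilde{\cM}] + t) \leq \exp(-t^2/2),
\]
and setting $t = \sqrt{2\log n}$ together with $K_n \geq \log n$ delivers $\Pro(\widetilde{\cM} \geq C K_n^{1/2}) \leq n^{-1}$ for a suitable $C$. The main obstacle is the comparison step linking the intrinsic metric $d$ to $\|\cdot\|_{P^r,2}$: one must argue that despite the stratification and the ratio form $h/\sqrt{\gamma_*(h)}$, the polynomial dependencies on $B_n,D_n,\alpha_n$ enter only inside a logarithm and are therefore absorbed into $K_n$; this is where \ref{cond:MB} and the implicit polynomial growth of $B_n,D_n$ are essential.
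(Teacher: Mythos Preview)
Your approach is correct and matches the paper's: Dudley's entropy integral for $\Exp[\widetilde{\cM}]$ followed by Borell--TIS concentration for the tail. The paper streamlines your metric-comparison step by working directly with the normalized class $\widetilde{\cH}_m=\{h/\sqrt{\gamma_*(h)}:h\in\cH_m\}$, which has envelope $\ubar\sigma^{-1}H$ by \eqref{MT0} and satisfies $d_m(\tilde h_1,\tilde h_2)\leq\sqrt{r^2+r}\,\|\tilde h_1-\tilde h_2\|_{P^r,2}$ with a constant depending only on $r$; this sidesteps your triangle-inequality computation and the attendant polynomial $B_n,D_n$ factors, so that the only dependence on $B_n$ enters via $\|H\|_{P^r,2}$ inside the logarithm. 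The paper also applies Borell--TIS to each stratum separately (with deviation $\sqrt{2\log(Mn)}$) and then union-bounds over $M$, rather than your single global application, but the two are equivalent under \ref{cond:MB}.
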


\begin{proof}
For $m \in [M]$,
define 
\[
\widetilde{\cH}_m := \left\{h/\sqrt{\gamma_{*}(h)} : h \in \cH_m\right\}, \quad
d^2_m(h_1, h_2) := \Exp\left[ \left(W_P(h_1) - W_P(h_2) \right)^2 \right],
\]
for $h_1, h_2 \in \widetilde{\cH}_m$. 
Then for $m \in [M]$ and $h_1, h_2 \in \widetilde{\cH}_m$, by definition  (recall $W_P$ is prelinear \cite[Theorem 3.1.1]{dudley1999uniform}) and Jensen's inequality,
\begin{align*}
d^2_m(h_1, h_2)& = \Exp\left[ W_P^2(h_1 - h_2)\right]
\leq  r^2 \Exp\left[ \left( P^{r-1}(h_1 - h_2)(X_1) \right)^2\right]
+ \alpha_n \Exp\left[ \left((h_1-h_2)(X_1^r) \right)^2\right] \\ 
&\leq (r^2 + r) \|h_1 - h_2\|_{P^r,2}^2.
\end{align*}

Due to \ref{cond:VC} and \eqref{MT0},  $\left\{ \widetilde{\cH}_m, \ubar{\sigma}^{-1} H \right\}$ is a VC-type class
with characteristics $(A,\nu+1)$ for $m \in[M]$. As a result, for any $\tau > 0$ and $m \in [M]$,
\begin{align*}
&N(\widetilde{\cH}_m, d_m, \tau\sqrt{ r^2 + r }\|\ubar{\sigma}^{-1}H\|_{P^r,2})  \\
\leq & 
N(\widetilde{\cH}_m, \sqrt{ r^2 + r } \|\cdot\|_{P^r,2}, \tau\sqrt{ r^2 + r }\| \ubar{\sigma}^{-1}H\|_{P^r,2}) 
\leq (A/\tau)^{\nu+1}.
\end{align*}

Since $\Exp\left[W_P^2(h) \right] = 1$ for any $h \in \widetilde{\cH}_m$, by the entropy integral bound \cite[Corollary 2.2.5]{van1996weak} and due to \eqref{MT3}, for $m \in [M]$,
\begin{align*}
\Exp\left[ \|W_P\|_{\widetilde{\cH}_m} \right]
\lesssim \int_0^{2} \sqrt{1 + (\nu+1)\log\left(2Ar \|\ubar{\sigma}^{-1} H\|_{P^r,2}/\tau \right)}\;\; d\,\tau 
\lesssim  K_n^{1/2}.
\end{align*}

By  the Borell-Sudakov-Tsirelson concentration inequality \cite[Theorem 2.2.7]{gine2016mathematical}, for $m \in [M]$,
 \begin{align*}
 \Pro\left(
 \|W_P\|_{\widetilde{H}_m} \geq \Exp\left[ \|W_P\|_{\widetilde{H}_m}\right] + \sqrt{2\log(Mn)}
 \right) \leq M^{-1} n^{-1}.
 \end{align*}
 Then the proof is complete due to \ref{cond:MB} and the union bound.
\end{proof}

\subsection{Proof of the Gaussian approximation for the suprema}\label{proof:thm:GAR_sup_incomplete_Uproc}
Now we prove Theorem \ref{thm:GAR_sup_incomplete_Uproc}. For $\epsilon \in (0,1)$ and $m \in [M]$, denote
\begin{equation}
\label{def:H_m_epsilon}
\partial \cH_{m,\epsilon} := \{h - h':\; h,h' \in \cH_m, \;\;\max\{ \|h-h'\|_{P^r,2}, \|h-h'\|_{P^r,4}^2\} \leq \epsilon\|1 + H^2\|_{P^r,2}\}.
\end{equation}
By Lemma \ref{discretization_lemma}, due to condition \ref{cond:VC}, $\{\partial \cH_{m,\epsilon}, 2H\}$ is a VC-type class with characteristics $(A,2\nu)$. 
For each $m \in [M]$ and $h_1, h_2 \in \cH_m$ such that
$\max\{ \|h_1-h_2\|_{P^r,2}, \|h_1-h_2\|_{P^r,4}^2\} \leq \epsilon\|1 + H^2\|_{P^r,2}$, we define for $h := h_1 - h_2$,
\begin{align*}
\bar{U}_{n,N}'(h) &:= U_{n,N}'(h_1) - U_{n,N}'(h_2), \;\;
\bar{W}_P(h) := W_P(h_1) - W_P(h_2).
\end{align*}
$\bar{U}_{n,N}'(\cdot)$ is well defined (i.e. independent of the choice of $h_1, h_2$) since $U_{n,N}'(\cdot)$ is linear in its argument.
Further, by \cite[Theorem 3.1.1]{dudley1999uniform}, $W_P$ can be extended to the linear hull of $\cH_m$. With above discussions, we will simply write $U_{n,N}'$ and $W_P$ for their extensions $\bar{U}_{n,N}'$ and $\bar{W}_P$ to $\partial \cH_{m,\epsilon}$. Similar convention applies to $\bU_{n,N}'$.

Further, define
\begin{align}\label{min_variance}
\underline{\gamma}_{*} := \inf_{h \in \cH} \gamma_*(h,h),
\end{align}
where we recall that $\gamma_A(\cdot, \cdot)$, $\gamma_B(\cdot, \cdot)$, and $\gamma_{*}(\cdot, \cdot)$ are defined in \eqref{def:cov_G}, and $\alpha_n = n/N$.

In this section, $C$ denotes a constant that depends only on $r,q,\ubar{\sigma}, c_0, C_0$, and that may vary from line to line.  The notation $\lesssim$ means that the left hand side is bounded by the right hand side up to a constant that depends only on $r,q,\ubar{\sigma}, c_0, C_0$. Clearly, in proving Theorem \ref{thm:GAR_sup_incomplete_Uproc}, without loss of generality, we may assume 
\begin{align}
\label{wlog_h_epsilon}
\eta_n^{(1)} \leq 1/2, \quad
\eta_n^{(2)} \leq 1/2.
\end{align}

\begin{proof}[Proof of Theorem \ref{thm:GAR_sup_incomplete_Uproc}] Fix some $t \in \bR$. 
Let $\epsilon^{-1} := N \|1 + H^2\|_{P^r,2}$.
By Lemma \ref{discretization_lemma} and \ref{cond:MB},  there exists a finite collection $\{h_j : 1 \leq j \leq d\} \subset \cH$ such that
the following two conditions hold: (i).
$\log(d) \leq \log \left( M (4A/\epsilon)^{\nu} \right) \lesssim K_n$; 
(ii). for any $h \in \cH$, there exists $1 \leq j^* \leq d$ such that
$\sigma(h) = \sigma(h_{j^*})$, and
\begin{align*}
\max\left\{\|h - h_{j^*} \|_{P^r,2}, \;\; \|h - h_{j^*} \|_{P^r,4}^2 \right\} \;\leq \; \epsilon \|1 + H^2\|_{P^r,2}.
\end{align*} 
Define $
\bMn^{\epsilon} := \max_{1 \leq j \leq d} \bU_{n,N}'(h_j), \;\;
\widetilde{\bM}_n^{\epsilon} := \max_{1 \leq j \leq d} W_P(h_j)$.
Then by definition, 
\begin{align*}
\bMn^{\epsilon} \leq \bMn \leq  \bMn^{\epsilon} + \max_{m \in [M]}\|\bU_{n,N}'(h)\|_{\partial \cH_{m,\epsilon}}, \quad
\widetilde{\bM}_n^{\epsilon} \leq \widetilde{\bM}_n \leq \widetilde{\bM}_n^{\epsilon} + \max_{m \in [M]} \|W_P(h)\|_{\partial \cH_{m,\epsilon}}.
\end{align*}
Observe that $\Exp[\gamma_{*}^{-1/2}(h_j)W_P(h_j)] = 1$ for $1 \leq j \leq d$. Then
for any  $\Delta >0$, by Gaussian anti-concentration inequality \cite[Lemma A.1]{chernozhukov2017},
\begin{align*}
&\Pro\left(\widetilde{\bM}_n^{\epsilon}  \leq t -  \Delta  \underline{\gamma}_{*}^{1/2} \right)
= \Pro\left(\bigcap_{j=1}^{d} \left\{ W_P(h_j) \leq t - \Delta  \underline{\gamma}_{*}^{1/2}\right\} \right) \\
=& \Pro\left(\bigcap_{j=1}^{d} \left\{ \gamma_{*}^{-1/2}(h_j)W_P(h_j)  \leq \gamma_{*}^{-1/2}(h_j) t - \Delta  \gamma_{*}^{-1/2}(h_j )\underline{\gamma}_{*}^{1/2}\right\} \right) \\
\geq & \Pro\left(\bigcap_{j=1}^{d} \left\{ \gamma_{*}^{-1/2}(h_j)W_P(h_j)  \leq \gamma_{*}^{-1/2}(h_j) t - \Delta \right\} \right) \\
\geq & \Pro\left(\bigcap_{j=1}^{d} \left\{ \gamma_{*}^{-1/2}(h_j)W_P(h_j)  \leq \gamma_{*}^{-1/2}(h_j) t \right\} \right) - C \Delta \log^{1/2}(d)
\geq  \Pro\left(\widetilde{\bM}_n^{\epsilon}  \leq t \right) - C \Delta K_n^{1/2}.
\end{align*}
Thus for any $\Delta > 0$, we have
\begin{align*}
&\Pro(\widetilde{\bM}_n \leq t) \; \leq \; \Pro(\widetilde{\bM}_n^{\epsilon}  \leq t)   \;\leq\;
\Pro(\widetilde{\bM}_n^{\epsilon}  \leq t -\Delta \underline{\gamma}_{*}^{1/2}) + C \Delta K_n^{1/2} \\
&\;\leq_{(1)} \;
\Pro(\bMn^{\epsilon} \leq t -\Delta \underline{\gamma}_{*}^{1/2}) +
C \Delta K_n^{1/2}+ C \eta_{n}^{(1)} + C \eta_n^{(2)}\\
&\; \leq \;
\Pro({\bMn} \leq t) + \Pro\left(\max_{m \in [M]}\|\bU_{n,N}'(h)\|_{\partial \cH_{m,\epsilon}} \geq \Delta \underline{\gamma}_{*}^{1/2}\right) + C \Delta K_n^{1/2}+ C \eta_{n}^{(1)} + C \eta_n^{(2)}\\
&\; \leq_{(2)} \;
\Pro({\bMn} \leq t) 
+ C \Delta K_n^{1/2} 
+ C \eta_n^{(1)} + C \eta_n^{(2)}\\
&\quad + \frac{1}{\Delta} \left( \frac{ M^{1/q} B_n^{1-2/q} D_n^{2/q} K_n^2}{ N^{1/2-1/q}} 
 +\frac{D_n K_n^{3/2}}{ n^{1/2-1/q}}
+ \frac{D_n^{2-1/q} K_n^{2}}{ n^{3/4-1/(2q)}}  +
+ \frac{D_n^{3-2/q} K_n^2}{ n^{1-1/q}} \right)+ \frac{1}{N},
\end{align*}
where $(1)$ is due to Theorem \ref{thm:GAR_hd_U}, and  $(2)$ is due to Lemma \ref{lemma:size_H_discretize} (ahead). Now let $\Delta$ to be the following
\[
\left( \frac{M^{1/q}  B_n^{1-2/q} D_n^{2/q} K_n^{3/2}}{ N^{1/2-1/q}} \right)^{1/2}
+\left(\frac{D_n K_n}{ n^{1/2-1/q}}\right)^{1/2}
+ \left( \frac{D_n^{2-1/q} K_n^{3/2}}{ n^{3/4-1/(2q)}} \right)^{1/2} 
+ \left(\frac{D_n^{3-2/q} K_n^{3/2}}{ n^{1-1/q}}\right)^{1/2}.
\]
Then due to \eqref{wlog_h_epsilon}, we have
\begin{align*}
\Pro(\widetilde{\bM}_n  \leq t)  \;\leq \;
\Pro(\bMn \leq t) + C \eta_{n}^{(1)} + C \eta_n^{(2)}.
\end{align*} 
By a similar argument (using the bound on $\Pro(\max_{m \in [M]} \|W_P(h)\|_{\partial \cH_{m,\epsilon}} \geq \Delta)$ instead of the one on 
$\Pro(\max_{m \in [M]}\|\bU_{n,N}'(h)\|_{\partial \cH_{m,\epsilon}} \geq \Delta \underline{\gamma}_{*}^{1/2} )$ in Lemma \ref{lemma:size_H_discretize}), we have
\begin{align*}
\Pro(\bMn \leq t) \; \leq \; \Pro(\widetilde{\bM}_n  \leq t)  +C \eta_{n}^{(1)} + C \eta_n^{(2)},
\end{align*}
which completes the proof.
\end{proof}

\subsection{A lemma for establishing validity of Gaussian approximation}
Recall the definition of $\partial \cH_{m,\epsilon}$ in \eqref{def:H_m_epsilon}.  
The next Lemma controls the size of $\bU_{n,N}'$ and $W_P$ over $\partial \cH_{m,\epsilon}$.

In this section, the notation $\lesssim$ means that the left hand side is bounded by the right hand side up to a multiplicative constant that only depends on $r, C_0, c_0, \ubar{\sigma},q$. It is clear  that
for $m \in [M]$, $\partial \cH_{m,\epsilon} \subset \partial \cH_{\epsilon}$,
where $ \cH_{\epsilon}$ is defined  in \eqref{def:H_all_epsilon}.

\begin{lemma}\label{lemma:size_H_discretize}
Assume \eqref{wlog_h_epsilon},  \ref{cond:PM}, \ref{cond:VC}, \ref{cond:MB}, \eqref{MT0}-\eqref{MT4} hold. Let $\epsilon^{-1} := N \|1 + H^2\|_{P^r,2}$. 
Recall the definition of $\underline{\gamma}_{*}$ in \eqref{min_variance}. 
For any $\Delta > 0$,
\begin{align*}
&\Pro\left( \max_{m \in [M]}\|\bU_{n,N}'(h)\|_{\partial \cH_{m,\epsilon}} \;\geq\; \Delta  \underline{\gamma}_{*}^{1/2} \right)  \; \lesssim \;\\
& \frac{1}{\Delta} \left( \frac{ M^{1/q} B_n^{1-2/q} D_n^{2/q} K_n^2}{ N^{1/2-1/q}} 
 +\frac{D_n K_n^{3/2}}{ n^{1/2-1/q}}
+ \frac{D_n^{2-1/q} K_n^{2}}{ n^{3/4-1/(2q)}}  +
+ \frac{D_n^{3-2/q} K_n^2}{ n^{1-1/q}} \right)+ \frac{1}{N}, \\
&\Pro\left(\max_{m \in [M]} \|W_P(h)\|_{\partial \cH_{m,\epsilon}} \;\geq\; \Delta \right) 
\leq \Delta^{-1} N^{-1} K_n.
\end{align*}
\end{lemma}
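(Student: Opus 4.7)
The plan is to treat the two claims separately but by parallel strategies. For both, the key structural input is Lemma \ref{discretization_lemma}: the difference class $(\partial\cH_{m,\epsilon}, 2H)$ is VC-type with characteristics $(A, 2\nu)$, and by construction every $h\in\partial\cH_{m,\epsilon}$ satisfies $\max\{\|h\|_{P^r,2},\|h\|_{P^r,4}^2\}\leq \epsilon\|1+H^2\|_{P^r,2}=N^{-1}$. Under \ref{cond:MB} we have $\log M\lesssim K_n$, so a union bound over the $M$ strata costs only an extra $K_n$ factor.

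For the Gaussian part I would use the covariance identity $\gamma_{*}(h)=r^2\gamma_A(h)+\alpha_n\gamma_B(h)\leq (r^2+\alpha_n)\|h\|_{P^r,2}^2$, which gives an intrinsic pseudo-metric $d(h,h')\leq \sqrt{r^2+\alpha_n}\,\|h-h'\|_{P^r,2}$ on $\partial\cH_{m,\epsilon}$ with diameter at most $\sqrt{r^2+\alpha_n}/N$. Dudley's entropy bound with the VC-type covering number then yields $\Exp\sup_{\partial\cH_{m,\epsilon}}|W_P|\lesssim \sqrt{r^2+\alpha_n}\,N^{-1}K_n^{1/2}$; taking the maximum over $m\in[M]$ using the Borell--TIS inequality and absorbing $\log M$ into $K_n$ produces $\Exp\max_m\|W_P\|_{\partial\cH_{m,\epsilon}}\lesssim N^{-1}K_n$ (the $\alpha_n$ factor is harmless because $\alpha_n\leq 1$ by the running assumption $p_n\leq 1/2$ combined with $N\geq n/r$; if $\alpha_n>1$ one absorbs it inside $K_n$). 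Markov's inequality then gives the stated $\Delta^{-1}N^{-1}K_n$ bound.

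For the incomplete $U$-process part I would exploit the identity $(\widehat{N}^{(m)}/N)U'_{n,N}(h)=U_n(h)+N^{-1}\sum_{\iota}(Z^{(m)}_\iota-p_n)h(X_\iota)$, so that
\[
\bU'_{n,N}(h) \;=\; \tfrac{N}{\widehat{N}^{(m)}}\bigl\{\sqrt{n}(U_n(h)-P^rh)+\alpha_n^{1/2}\bD_n^{(m)}(h)\bigr\}+\sqrt{n}\bigl(\tfrac{N}{\widehat{N}^{(m)}}-1\bigr)P^rh - \sqrt{n}\,\Exp[U'_{n,N}(h)-P^rh],
\]
with $\bD_n^{(m)}$ the centred incomplete $U$-statistic of \eqref{def:m_incomp_U}. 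Lemma \ref{lemma:Bernstein} replaces $N/\widehat{N}^{(m)}$ by $1$ up to a negligible multiplicative error with probability $1-O(N^{-1})$, and the residual mean terms are at most of order $\sqrt{n}\,(N^{-1}+\sqrt{\log N/N}\cdot N^{-1})$ on $\partial\cH_{m,\epsilon}$, which is absorbed into the bound. The complete-$U$-process piece $\sqrt{n}\|U_n(h)-P^rh\|_{\partial\cH_\epsilon}$ is handled directly by Lemma \ref{lemma:discr_first_order}; for the sampling-noise piece $\max_m\|\bD_n^{(m)}\|_{\partial\cH_{m,\epsilon}}$ I would apply Theorem \ref{thm:lmax_sampling_proc} with envelope $2H$, radius $\sigma_r=N^{-1}$, and $\Delta$ controlled by Lemma \ref{lemma:discr_second_order}. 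Collecting the bounds and multiplying by $\alpha_n^{1/2}=\sqrt{n/N}$ produces the four-term sum inside the parentheses, and Markov together with the lower bound $\underline{\gamma}_*^{1/2}\geq r\ubar{\sigma}$ from \eqref{MT0} completes the argument.

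The main obstacle I anticipate is keeping the accounting tight enough to hit the stated rates for the sampling-noise piece: in Theorem \ref{thm:lmax_sampling_proc} the term $\|\widetilde{\cT}\|_{\Pro,2}\leq M^{1/q}N^{1/q}\|2H\|_{P^r,q}$ interacts multiplicatively with $\bar J^2(\delta_r)/\delta_r^2\lesssim K_n$ and with a $\log M$ factor, so one must carefully convert $\delta_r^{-1}=2N\|H\|_{P^r,2}$ and the $L^q$ envelope moment from \eqref{MT3} into the $M^{1/q}B_n^{1-2/q}D_n^{2/q}$ dependence displayed in the lemma, rather than the cruder $B_n$-type bound that a non-local inequality would give. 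A secondary bookkeeping issue is making sure the higher-order Hoeffding terms coming from Lemma \ref{lemma:discr_first_order}, of combined size $n^{-1/2+1/q}D_n K_n+n^{-1+1/q}D_n^{3-2/q}K_n^2$, dominate the crude $(\sqrt n/N)$ contribution of the mean residuals, which reduces to the growth assumption \eqref{log_epsilon} on $\log N$.
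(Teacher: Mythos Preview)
Your overall decomposition and use of Lemmas~\ref{lemma:discr_first_order}, \ref{lemma:discr_second_order} and Theorem~\ref{thm:lmax_sampling_proc} match the paper's approach, and the second claim goes through as you describe (minor correction: $p_n\leq 1/2$ does not give $\alpha_n\leq 1$; rather $N\geq n/r$ gives $\alpha_n\leq r$, which is enough).

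There is, however, a genuine gap in your treatment of the first claim. You propose to finish by dividing through using only the lower bound $\underline{\gamma}_*^{1/2}\geq r\ubar{\sigma}$ from \eqref{MT0}. This is not sharp enough for the sampling-noise piece. Applying Theorem~\ref{thm:lmax_sampling_proc} with envelope $2H$ and \eqref{MT3} yields, for the $\|\widetilde{\cT}\|_{\Pro,2}$ term, a contribution of order $M^{1/q}N^{-1/2+1/q}B_n^{2-2/q}D_n^{2/q-1}K_n^2$, and the $\sqrt{\Delta}$ term picks up the prefactor $(\inf_{h\in\cH}\gamma_B(h))^{1/2}\asymp B_nD_n^{-1}$ from Lemma~\ref{lemma:discr_second_order}. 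After multiplying by $\alpha_n^{1/2}$ and dividing only by $r\ubar{\sigma}$, you are left with bounds that are too large by a factor $B_n/D_n$, which diverges in the target application.

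The missing ingredient, which the paper uses explicitly, is that $\underline{\gamma}_*$ admits a \emph{second} lower bound: by definition $\gamma_*(h)=r^2\gamma_A(h)+\alpha_n\gamma_B(h)$, so together with the lower half of \eqref{MT4},
\[
\underline{\gamma}_*\;\geq\;\max\bigl\{r^2\ubar{\sigma}^2,\ \alpha_n\inf_{h\in\cH}\gamma_B(h)\bigr\}\;\geq\;\max\bigl\{r^2\ubar{\sigma}^2,\ c_0\,\alpha_n B_n^2D_n^{-2}\bigr\}.
\]
For the sampling-noise contributions one divides by $(\alpha_n\inf_h\gamma_B(h))^{1/2}\gtrsim \alpha_n^{1/2}B_nD_n^{-1}$, which simultaneously cancels the $\alpha_n^{1/2}$ and converts $B_n^{2-2/q}D_n^{2/q-1}$ into the stated $B_n^{1-2/q}D_n^{2/q}$; for the complete $U$-process piece from Lemma~\ref{lemma:discr_first_order} one uses the first bound $r\ubar{\sigma}$. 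You correctly flag the exponent $B_n^{1-2/q}D_n^{2/q}$ as the obstacle, but the resolution is not a finer use of the local maximal inequality---it is selecting the appropriate lower bound on $\underline{\gamma}_*$ for each piece.
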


\begin{proof}
\underline{We start with the first claim.} Observe that for $m \in [M]$ and $h \in \partial \cH_{m,\epsilon}$,
\begin{align*}
&\left({\widehat{N}^{(m)}}/{N} \right) \sqrt{n}( U_{n,N}'(h) - P^{r} h)
=\;\;\alpha_n^{1/2} \frac{1}{\sqrt{N}}\sum_{\iota \in I_{n,r}} (Z_{\iota}^{(m)} -p_n) h(X_{\iota}) \\
&\;\;- 
\alpha_n^{1/2}  P^rh \frac{1}{\sqrt{N}}\sum_{\iota \in I_{n,r}} (Z_{\iota}^{(m)} -p_n) 
+ \sqrt{n}\frac{1}{|I_{n,r}|} \sum_{\iota \in I_{n,r}} \left(h(X_{\iota}) - P^r h \right).
\end{align*}
As a result, 
$\max_{m \in [M]}\left( \widehat{N}^{(m)}/{N} \|\bU_{n,N}'(h)\|_{\partial \cH_{m,\epsilon}} \right) \leq \alpha_n^{1/2} I + 
\alpha_n^{1/2} II
+ III$, where
\begin{align*}
I &:= \max_{m \in [M]} \sup_{h \in \partial \cH_{\epsilon}}\left|
\frac{1}{\sqrt{N}}\sum_{\iota \in I_{n,r}} (Z_{\iota}^{(m)} -p_n) h(X_{\iota})
\right|, \\
II &:=  \sup_{h \in \partial \cH_{\epsilon}} \left| P^rh \right|
\max_{m \in [M]} \left|\frac{1}{\sqrt{N}}\sum_{\iota \in I_{n,r}} (Z_{\iota}^{(m)} -p_n)  \right|, \\
III &:= \sqrt{n}\sup_{h \in \partial \cH_{\epsilon}} \left|
\frac{1}{|I_{n,r}|} \sum_{\iota \in I_{n,r}} \left(h(X_{\iota}) - P^r h \right)
\right|.
\end{align*}

For the first term, we apply Theorem \ref{thm:lmax_sampling_proc} with $\{\partial \cH_{\epsilon},2H\}$. 
By definition of $\partial  \cH_{\epsilon}$,
\[
\sup_{h \in \partial  \cH_{\epsilon}} \|h\|_{P^r,2} \leq N^{-1} := \sigma_r,\quad
\delta_r := \sigma_r/(M^{1/2}\|2H\|_{P^r,2}) \geq (2N M^{1/2}\|H\|_{P^r,2})^{-1}.
\]
By Theorem \ref{thm:lmax_sampling_proc} and due to \ref{cond:MB} and \eqref{log_epsilon}, $\bar{J}(\delta_r) \lesssim \delta_r K_n^{1/2}$ and $\log(2M) \lesssim K_n$. Then due to Theorem \ref{thm:lmax_sampling_proc} and Lemma \ref{lemma:discr_second_order},
\begin{align*}
\Exp\left[I \right]\;
\lesssim \;   &N^{-1}K_n 
+ M^{1/q} N^{-1/2+1/q} \|H\|_{P^r,q} K_n^2 \\
&+ \left(\inf_{h \in \cH} \gamma_B(h)\right)^{1/2} \left(n^{-1/2+1/(2q)} D_n K_n^{3/2} +
n^{-3/4+1/(2q)} D_n^{2-1/q} K_n^{2}
\right).
\end{align*}
Due to \eqref{MT3} and \eqref{MT4}, and since $N \geq n/r$,
\begin{align*}
\alpha_n^{1/2} \Exp\left[ I \right] 
\lesssim \; 
\left(\alpha_n \inf_{h \in \cH} \gamma_B(h)\right)^{1/2} \left(
 \frac{ M^{1/q} B_n^{1-2/q} D_n^{2/q} K_n^2}{N^{1/2-1/q}} + \frac{D_n K_n^{3/2}}{n^{1/2-1/(2q)}}
+ \frac{D_n^{2-1/q} K_n^{2}}{n^{3/4-1/(2q)}} \right).
\end{align*}

For the second term, by Lemma \ref{lemma:Bernstein_second_moment},
\begin{align*}
\Exp\left[ II \right]  \lesssim  N^{-1} 
\left( K_n^{1/2} + N^{-1/2}K_n \right) =
 N^{-1} K_n^{1/2} +N^{-3/2}K_n.
\end{align*}

Finally, due to Lemma \ref{lemma:discr_first_order}, we have
\begin{align*}
&\Exp\left[
\max_{m \in [M]}\left( \widehat{N}^{(m)}/{N} \|\bU_{n,N}'(h)\|_{\partial \cH_{m,\epsilon}} \right)
\right]  \lesssim \frac{D_n K_n}{n^{1/2-1/q}}
+ \frac{D_n^{3-2/q} K_n^2}{n^{1-1/q}} + \\
&\qquad \left(\alpha_n \inf_{h \in \cH} \gamma_B(h)\right)^{1/2} \left(
 \frac{ M^{1/q} B_n^{1-2/q} D_n^{2/q} K_n^2}{N^{1/2-1/q}} + \frac{D_n K_n^{3/2}}{n^{1/2-1/(2q)}}
+ \frac{D_n^{2-1/q} K_n^{2}}{n^{3/4-1/(2q)}} \right).
\end{align*}
Recall  $\underline{\gamma}_{*} := \inf_{h \in \cH} \gamma_*(h)$ (in \eqref{min_variance}). Clearly, by definition and due to \eqref{MT0},
\[
\underline{\gamma}_{*} \geq \max\left\{
r^2 \underline{\sigma}^2,  \alpha_n \inf_{h \in \cH} \gamma_B(h)
\right\}.
\]
Then  by Lemma \ref{lemma:Bernstein} and by the Markov inequality, 
\begin{align*}
&\Pro\left(\max_{m \in [M]}\|\bU_{n,N}'(h)\|_{\partial \cH_{m,\epsilon}} \;\geq\; \Delta \underline{\gamma}_{*}^{1/2}\right) \\
\leq  &
\Pro\left( \max_{m \in [M]}\|\bU_{n,N}'(h)\|_{\partial \cH_{m,\epsilon}} \;\geq\; \Delta\underline{\gamma}_{*}^{1/2}, \;\; \max_{m \in [M]} N/\widehat{N}^{(m)} \leq C \right) 
 + C N^{-1}\\
\leq \;&\Pro\left(\max_{m \in [M]}\left( \left( \widehat{N}^{(m)}/{N} \right) \|\bU_{n,N}'(h)\|_{\partial \cH_{m,\epsilon}} \right) \;\geq\; \Delta \underline{\gamma}_{*}^{1/2}/C \right) 
 + C N^{-1}, \\
 \lesssim \;&
 \frac{ M^{1/q} B_n^{1-2/q} D_n^{2/q} K_n^2}{\Delta N^{1/2-1/q}} + \frac{D_n K_n^{3/2}}{\Delta n^{1/2-1/(2q)}}
+ \frac{D_n^{2-1/q} K_n^{2}}{\Delta n^{3/4-1/(2q)}}  +
\frac{D_n K_n}{\Delta n^{1/2-1/q}}
+ \frac{D_n^{3-2/q} K_n^2}{\Delta n^{1-1/q}} + \frac{1}{N}, \\
 \lesssim \;&
 \frac{ M^{1/q} B_n^{1-2/q} D_n^{2/q} K_n^2}{\Delta N^{1/2-1/q}} 
 +\frac{D_n K_n^{3/2}}{\Delta n^{1/2-1/q}}
+ \frac{D_n^{2-1/q} K_n^{2}}{\Delta n^{3/4-1/(2q)}}  +
+ \frac{D_n^{3-2/q} K_n^2}{\Delta n^{1-1/q}} + \frac{1}{N}, 
\end{align*}
which completes the proof of the first claim.
\vspace{0.2cm}

\underline{Now we focus on the second claim.} Observe that for $m \in [M]$ and 
$h_1, h_2 \in \partial \cH_{m,\epsilon}$, by Jensen's inequality,
\begin{align*}
\Exp\left[\left(W_P(h_1) - W_P(h_2) \right)^2\right] 
&= \Exp \left[\left(W_P(h_1 - h_2) \right)^2\right] \\
&=r^2 \Var(P^{r-1}(h_1 - h_2)(X_1)) 
+ \alpha_n \Var((h_1 - h_2)(X_1^r)) \\
&\lesssim \|h_1 - h_2\|_{P^r,2}^2.
\end{align*}
Since $\{\partial \cH_{m,\epsilon}, 2H\}$ is VC type with characteristics $(A,2\nu)$, by entropy integral bound \cite[Theorem 2.3.7]{van1996weak}, if we denote  $D := \sup_{h \in \partial \cH_{m,\epsilon}} \|h\|_{P^r,2} \leq N^{-1}$,   we have for $m \in [M]$,
\begin{align*}
&\left\| \|W_P(h)\|_{ \partial \cH_{m,\epsilon}} \right\|_{\psi_2} \lesssim 
\int_0^{D} \sqrt{1 + \log N(\partial \cH_{m,\epsilon}, \|\cdot\|_{P^r,2}, \tau)} d\tau \\
&\qquad \lesssim \|2H\|_{P^r,2} \int_0^{D/\|2H\|_{P^r,2}} \sqrt{1 + \log N(\partial \cH_{m,\epsilon}, \|\cdot\|_{P^r,2}, \tau \|2H\|_{P^r,2})} d\tau,
\end{align*}
By Lemma \ref{thm:multi-level} and \eqref{log_epsilon}, for $m \in [M]$,
\begin{align*}
&\left\| \|W_P(h)\|_{ \partial \cH_{m,\epsilon}} \right\|_{\psi_2} \lesssim 
N^{-1} K_n^{1/2}.
\end{align*}
Then the  proof is complete by maximal inequality \cite[Lemma 2.2.2]{van1996weak},  and Markov inequality.
%
%
%
\end{proof}

\subsection{Proof of the validity of bootstrap for stratified, incomplete U-processes}\label{proof:thm:uproc_bootstrap}
In this section, $C$ denotes a constant that depends only on $r,q, \ubar{\sigma},c_0, C_0$, and that may vary from line to line. 
Recall the definition of $\partial \cH_{m,\epsilon}$ for $m \in [M]$ in \eqref{def:H_m_epsilon}. 
The next Lemma establishes a high probability  bound on the size of $W_P$ and $\bU_{n,*}^{\#}$ over $\partial \cH_{m,\epsilon}$ (recall the discussions in Section \ref{proof:thm:GAR_sup_incomplete_Uproc} on extending the domain from $\cH_{m}$ to $\partial \cH_{m,\epsilon}$; similar convention applies to 
$\bU_{n,*}^{\#}$).

\begin{lemma} \label{lemma:boostrap_size_He}
Assume the conditions \ref{cond:PM}, \ref{cond:VC}, \ref{cond:MB}, and \eqref{MT0}-\eqref{MT5}, and for some $C_1 > 0$,
\[
\log(B_n) + \log(D_n) \leq C_1 \log(n), \quad
n^{-1} D_n^2 K_n \leq C_1, \quad N_2^{-1/2} B_n \leq C_1.
\]
Let $\epsilon^{-1} := N \|1 + H^2\|_{P^r,2}$.  Then there exists a constant $C$, depending only on $r, C_0$, such that
$$\Pro\left( \max_{m \in [M]} \|W_P(h)\|_{\partial \cH_{m,\epsilon}}  \; \geq \; C N^{-1} K_n^{1/2} \right) \leq n^{-1}.$$
Further, there exists a constant $C'$, depending only on $r,q,c_0, \ubar{\sigma}, C_0, C_1$, such that for any $\Delta >0$, with probability at least $1 - C' (N \wedge N_2)^{-1} - C' \Delta - C' \Delta^2$,
\begin{align*}
\Pro_{\vert \cD_n'} &\left( 
\max_{m \in [M]}\|\bU_{n,*}^{\#}\|_{\partial \cH_{m,\epsilon}}
\geq C' \underline{\gamma}_{*}^{1/2} N^{-1/2} B_n K_n^{1/2} + 
C' \underline{\gamma}_{*}^{1/2}  \Delta^{-1}\chi_{n,*}^{\#}  
\right) \leq n^{-1}, \;\text{ where }\\
\chi_{n,*}^{\#} := &(N \wedge N_2)^{-1/2+1/q} M^{1/q} B_n^{1-2/q} D_n^{2/q} K_n^2+ 
n^{-1/2+1/q} D_n  K_n^{3/2} + \\
& n^{-3/4+1/q} D_n^{2-2/q} K_n^{2} + 
n^{-1+1/q} D_n^{3-2/q} K_n^2.
\end{align*}
\end{lemma}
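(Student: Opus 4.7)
The plan is to treat the two claims separately, handling each via Gaussian entropy integrals combined with high-probability control over the relevant (conditional) covariance structure, and then to take a union bound over the $M$ strata by leveraging the fact that $\log M \lesssim K_n$ by condition \ref{cond:MB}.

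For the first claim, I will argue exactly as in the second part of Lemma \ref{lemma:size_H_discretize}, but now pushed to a high-probability statement. Fix $m \in [M]$. Since $W_P$ is prelinear and Gaussian, the intrinsic semi-metric on $\partial\cH_{m,\epsilon}$ is controlled by $\lesssim \|h_1 - h_2\|_{P^r,2}$, and since $\{\partial\cH_{\epsilon}, 2H\}$ is VC-type with characteristics $(A, 2\nu)$, Dudley's entropy bound together with the elementary inequality $\sup_{h \in \partial\cH_{m,\epsilon}} \|h\|_{P^r,2} \leq N^{-1}$ and \eqref{log_epsilon} gives $\|\|W_P\|_{\partial\cH_{m,\epsilon}}\|_{\psi_2} \lesssim N^{-1} K_n^{1/2}$. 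By Borell--Sudakov--Tsirelson (or simply the sub-Gaussian tail) applied to each stratum and a union bound over $m \in [M]$ (using $\log M \lesssim K_n$ so that the deviation term $N^{-1}\sqrt{\log(Mn)}$ is absorbed into $N^{-1}K_n^{1/2}$), we obtain the desired first claim with probability at least $1 - n^{-1}$.

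For the second claim, decompose $\bU_{n,*}^{\#} = r\bU_{n,A}^{\#} + \alpha_n^{1/2} \bU_{n,B}^{\#}$ and bound each piece. For $\bU_{n,B}^{\#}$, conditional on $\cD_n'$ it is a centered Gaussian process and, by Hoeffding on Gaussian sums, its intrinsic semi-metric is $\|\bU_{n,B}^{\#}(h_1) - \bU_{n,B}^{\#}(h_2)\|_{\psi_2 | \cD_n'} \lesssim \|h_1 - h_2\|_{\widehat{Q}_m,2}$, with $\widehat{Q}_m$ as in \eqref{def:two_random_measures}. Since $\widehat{Q}_m$ is dominated by $\widehat{Q}$, the standard entropy integral argument (as in Step 1 of the proof of Theorem \ref{thm:lmax_sampling_proc}) combined with $\log M \lesssim K_n$ yields
\[
\Exp\!\left[\max_{m}\|\bU_{n,B}^{\#}\|_{\partial\cH_{m,\epsilon}} \,\big|\, \cD_n'\right] \;\lesssim\; K_n^{1/2}\, \|2H\|_{\widehat{Q},2}\, \bar J(\widehat V_n / \|2H\|_{\widehat Q,2}),
\]
where $\bar J$ is the uniform entropy integral of the VC-type class $\{\partial\cH_\epsilon, 2H\}$. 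Taking unconditional expectation and invoking Corollary \ref{cor:aux_lemmas} applied to $\{\partial\cH_\epsilon, 2H\}$ (with the diameter $\sigma_r = N^{-1}$ from the definition of $\partial\cH_\epsilon$, and the bound on $\Delta$ coming from Lemma \ref{lemma:discr_second_order}) delivers a bound in expectation that, after multiplication by $\alpha_n^{1/2}$ and Markov's inequality, fits into the $\underline{\gamma}_{*}^{1/2}\Delta^{-1}\chi_{n,*}^{\#}$ term together with the $\underline{\gamma}_*^{1/2} N^{-1/2} B_n K_n^{1/2}$ term (using $\underline\gamma_* \geq \alpha_n \inf_h \gamma_B(h)$ as in Lemma \ref{lemma:size_H_discretize}).

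For $\bU_{n,A}^{\#}$, again conditionally on $\cD_n'$ it is Gaussian with conditional semi-metric bounded by the empirical $L^2$ norm on $\{\bG^{(k)}(h) - \bar\bG(h)\}$. I will split $\bG^{(k)}(h)$ into its complete-jackknife part $|I_{n-1,r-1}^{(k)}|^{-1}\sum_{\iota} h(X_{\iota^{(k)}})$ and a sampling-error part driven by the $Z^{(k,m)}_\iota$'s, and bound the empirical covariance of each using (i) the multi-level local maximal inequality Theorem \ref{thm:multi-level} applied to $\{\partial\cH_\epsilon, 2H\}$ with envelopes $2P^{r-\ell}H$, and (ii) the Bernstein-type control on the conditional sampling error with the $\chi_{n,*}^{\#}$-scale coming from Lemma \ref{lemma:T2_T3_bound} (with $N_2$ replacing $N$). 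Combining these with Dudley's entropy integral conditional on $\cD_n'$, taking a max over $m \in [M]$ via $\log M \lesssim K_n$, and applying Markov's inequality with a high-probability event of the form $\{\max_{k,m} N_2/\widehat{N}_2^{(k,m)} \leq C\}$ from Lemma \ref{lemma:Bernstein}, produces the contribution of the $n$- and $N_2$-terms in $\chi_{n,*}^{\#}$. The final step is to combine the two bounds using the $\Delta$-Markov trick and a union bound; the exceptional probability $C'(N \wedge N_2)^{-1} + C'\Delta + C'\Delta^2$ in the statement accounts, respectively, for the Bernstein-type control of $\widehat N^{(m)}$ and $\widehat N_2^{(k,m)}$, Markov on the $\bU_{n,A}^{\#}$ plus $\bU_{n,B}^{\#}$ expectations, and the deviation of the conditional variance terms (which is where the $\Delta^2$ factor naturally arises).

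The main obstacle I anticipate is in controlling $\bU_{n,A}^{\#}$ on the \emph{difference class} $\partial\cH_{m,\epsilon}$ uniformly in $m$: the two-layer sampling mechanism in $\bG^{(k)}(h)$ forces us to simultaneously control (a) an incomplete $U$-process-like object indexed by a VC-type difference class, and (b) a conditionally Gaussian process whose intrinsic metric depends on a random empirical measure. Carefully combining the local maximal inequalities for complete $U$-processes (Theorem \ref{thm:multi-level}) with those for incomplete ones (Theorem \ref{thm:lmax_sampling_proc}) and tracking the constants so that the final expression matches the rate $\chi_{n,*}^{\#}$ stated in the lemma will be the delicate bookkeeping step.
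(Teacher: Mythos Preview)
Your plan is essentially the paper's own approach: Borell--Sudakov--Tsirelson on each stratum plus a union bound for the first claim; for the second claim, decompose $\bU_{n,*}^{\#}$ into its $A$ and $B$ parts, bound conditional mean and conditional variance of each with high probability over $\cD_n'$, and then apply BST conditionally to reach $\Pro_{|\cD_n'}(\cdot)\le n^{-1}$. The supporting lemmas you invoke (Corollary~\ref{cor:aux_lemmas}, Theorem~\ref{thm:multi-level}, Lemmas~\ref{lemma:discr_second_order}, \ref{lemma:Bernstein}, \ref{lemma:T2_T3_bound}) are precisely the ones the paper uses, so the bookkeeping you anticipate is exactly what is carried out there.

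One attribution to correct: the term $\underline{\gamma}_*^{1/2} N^{-1/2}B_n K_n^{1/2}$ that appears \emph{without} a $\Delta^{-1}$ factor does \emph{not} come from the $\bU_{n,B}^{\#}$ piece---all contributions from $\bU_{n,B}^{\#}$ pass through Markov and therefore pick up $\Delta^{-1}$. It comes instead from the $\bU_{n,A}^{\#}$ piece, via the paper's Lemma~\ref{lemma:U_nA_bootstrp_size_expect}: there one bounds the conditional intrinsic metric crudely by Cauchy--Schwarz, $d_m(h_1,h_2)\lesssim \|h_1-h_2\|_{\overline Q,2}$ with $\overline Q := n^{-1}\sum_k\sum_{\iota\in I_{n-1,r-1}^{(k)}}\delta_{X_{\iota^{(k)}}}$, and runs Dudley's integral up to the diameter $N^{-1/2}\|H\|_{P^r,2}\vee (\Sigma_{n,A}^{(m)})^{1/2}$. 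The first part of that max is a deterministic lower floor (on the high-probability event $\{\|H\|_{\overline Q,2}\le n^{r-1}\|H\|_{P^r,2}\}$), which is why it survives without the $\Delta^{-1}$; the second part is then controlled via Markov on $\Sigma_{n,A}^{(m)}$ with budget $\Delta^2$. If you try to obtain the $N^{-1/2}B_n K_n^{1/2}$ term from $\bU_{n,B}^{\#}$ via Corollary~\ref{cor:aux_lemmas} you will not find it, and if you route \emph{everything} for $\bU_{n,A}^{\#}$ through Markov you will get an extra $\Delta^{-1}$ that does not match the stated form.
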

\begin{proof}
See Section \ref{proof:lemma:boostrap_size_He}.
\end{proof}

\begin{proof}[Proof of Theorem \ref{thm:uproc_bootstrap}]
Without loss of generality, we may assume $\varrho_n \leq 1$.

Let $\epsilon^{-1} := N \|1 + H^2\|_{P^r,2}$.
By Lemma \ref{discretization_lemma} and \ref{cond:MB},  there exists a finite collection $\{h_j : 1 \leq j \leq d\} \subset \cH$ such that
the following two conditions hold: (i).
$\log(d) \leq \log \left( M (4A/\epsilon)^{\nu} \right) \lesssim K_n$; 
(ii). for any $h \in \cH$, there exists $1 \leq j^* \leq d$ such that 
$\sigma(h) = \sigma(h_{j^*})$, and
\begin{align*}
\max\left\{\|h - h_{j^*} \|_{P^r,2}, \;\; \|h - h_{j^*} \|_{P^r,4}^2 \right\} \;\leq \; \epsilon \|1 + H^2\|_{P^r,2}.
\end{align*} 
Define 
 $
\bMn^{\#,\epsilon} := \max_{1 \leq j \leq d} \bU_{n,*}^{\#}(h_j), \;\;
\widetilde{\bM}_n^{\epsilon} := \max_{1 \leq j \leq d} W_P(h_j)$.
Then
\[
\bMn^{\#,\epsilon} \leq \bMn^{\#} \leq  \bMn^{\#,\epsilon} + \max_{m \in [M]}\|\bU_{n,*}^{\#}\|_{\partial \cH_{m,\epsilon}}
\text{ and } \widetilde{\bM}_n^{\epsilon} \leq \widetilde{\bM}_n \leq \widetilde{\bM}_n^{\epsilon} + \max_{m \in [M]} \|W_P\|_{\partial \cH_{m,\epsilon}}.\]

Fix $t \in \bR$. For some $\Delta > 0$ to be determined, denote
\[
\Delta' := C' N^{-1/2}B_n K_n^{1/2} + C' \Delta^{-1} \chi_{n,*}^{\#}.
\]
where $C', \chi_{n,*}^{\#}$ are defined in Lemma \ref{lemma:boostrap_size_He}.
We have shown in the proof of Theorem \ref{thm:GAR_sup_incomplete_Uproc} that
\begin{align*}
\Pro(\widetilde{\bM}_n \leq t) \; \leq \; 
\Pro(\widetilde{\bM}_n^{\epsilon}  \leq t -\Delta' \underline{\gamma}_{*}^{1/2}) + C \Delta' K_n^{1/2}.
\end{align*}
Then,  by Theorem \ref{thrm:appr_U_full}, with probability at least $1 - C \varrho_n$,
\begin{align*}
\Pro(\widetilde{\bM}_n \leq t)  
\;&\leq \;
\Pro_{\vert \cD_n}\left(\bMn^{\#,\epsilon}  \leq t -\Delta' \underline{\gamma}_{*}^{1/2} \right) + C \varrho_n +  C \Delta' K_n^{1/2}.
\end{align*}
Finally, due to Lemma \ref{lemma:boostrap_size_He} and union bound, with probability at least $1 - C \varrho_n - C\Delta 
-C\Delta^2$,
\begin{align*}
\Pro(\widetilde{\bM}_n \leq t)  \;&\leq \;
\Pro_{\vert \cD_n}\left(\bMn^{\#}  \leq t \right) 
+\Pro_{\vert \cD_n}\left( \max_{m \in [M]}\|\bU_{n,*}^{\#}\|_{\partial \cH_{m,\epsilon}} \geq \Delta' \underline{\gamma}_{*}^{1/2} \right) 
+ C \varrho_n + C \Delta' K_n^{1/2} \\
\;&\leq \;
\Pro_{\vert \cD_n}\left(\bMn^{\#}  \leq t \right) 
+n^{-1}+ C \varrho_n + C \Delta' K_n^{1/2} \\
\;&\leq \;
\Pro_{\vert \cD_n}\left(\bMn^{\#}  \leq t \right) 
 + C \varrho_n
 + C \Delta^{-1} \chi_{n,*}^{\#} K_n^{1/2}. 
\end{align*}
Now let $\Delta = (\chi_{n,*}^{\#})^{1/2} K_n^{1/4}$. Since $\varrho_n \leq 1$, we have that with probability at least $1 - C \varrho_n$
\begin{align*}
\Pro(\widetilde{\bM}_n \leq t)  
\;&\leq \;
\Pro_{\vert \cD_n}\left(\bMn^{\#}  \leq t \right)+ C\varrho_n.
\end{align*}
By a similar argument (using the bound in Lemma \ref{lemma:boostrap_size_He} for $\max_{m \in [M]} \|W_P\|_{\partial \cH_{m,\epsilon}}$), 
$$
\Pro_{\vert \cD_n}\left(\bMn^{\#}  \leq t \right) \; \leq \; \Pro(\widetilde{\bM}_n \leq t)  
+ C \varrho_n, 
$$ 
with probability at least $1 - C\varrho_n$. Thus the proof is complete.
\end{proof}

\subsection{Proof of Lemma \ref{lemma:boostrap_size_He}}\label{proof:lemma:boostrap_size_He}
In this section, the constants may depend on $r,q,c_0, C_0, \ubar{\sigma}$, and  vary from line to line. Recall the definitions of $\bU_{n,A}^{\#}, \bU_{n,B}^{\#}, \bU_{n,*}^{\#}$ in \eqref{def:bootstrap_A_and_B} and \eqref{def:bootstrap_combined}.

\begin{proof}
\noindent{\underline{First claim}}. In the proof of Lemma \ref{lemma:size_H_discretize}, we have shown that for $m \in [M]$,
\begin{align*}
\Exp\left[\|W_P(h)\|_{\partial \cH_{m,\epsilon}} \right] \; \lesssim \; N^{-1} K_n^{1/2}.
\end{align*}
Further, by definition of $\partial \cH_{m,\epsilon} $ \eqref{def:H_m_epsilon}, we have
\begin{align*}
\sup_{h \in \partial \cH_{m,\epsilon} }\Exp\left[ (W_P(h))^2\right] & =
\sup_{h \in \partial \cH_{m,\epsilon} }\left(
r^2 \Var(P^{r-1}h(X_1)) + \alpha_n \Var(h(X_1^r))
\right)  \\
 & \lesssim
\sup_{h \in \partial \cH_{\epsilon}}\|h\|_{P^r,2}^2 \leq N^{-2}.
\end{align*}
By the Borell-Sudakov-Tsirelson concentration inequality \cite[Theorem 2.2.7]{gine2016mathematical}, for $m \in [M]$,
\begin{align*}
\Pro\left(\|W_P\|_{\partial \cH_{m,\epsilon}} \geq \Exp\left[\|W_P\|_{\partial \cH_{m,\epsilon}} \right]  +
\sqrt{2\sup_{h \in \partial \cH_{m,\epsilon}}\Exp\left[ (W_P(h))^2\right]  \log(Mn)}
\right) \leq M^{-1} n^{-1}.
\end{align*}
Thus by union bound and due to \ref{cond:MB},
\[
\Pro\left(\max_{m \in [M]}\|W_P\|_{\partial \cH_{m,\epsilon}} \geq C N^{-1} K_n^{1/2}\right) \leq   n^{-1},
\]
which  completes the proof of the first claim.

\vspace{0.5cm}
\noindent{\underline{Second claim}}.
By the Borell-Sudakov-Tsirelson concentration inequality \cite[Theorem 2.2.7]{gine2016mathematical}, for $m \in [M]$,
\begin{align*}
\Pro_{\vert \cD_n'}\left(
\|\bU_{n,*}^{\#}(h)\|_{\partial \cH_{m,\epsilon}} \; \geq \;
\Exp_{\vert \cD_n'} \left[ \|\bU_{n}^{\#}(h)\|_{\partial \cH_{m,\epsilon}} \right] +
\sqrt{2\Sigma_{n,*}^{(m)} \log(Mn)}
\right) \leq M^{-1}n^{-1}, 
\end{align*}
where $\Sigma_{n,*}^{(m)} := \sup_{h \in \partial \cH_{m,\epsilon}} \Exp_{\vert \cD_n'}\left[ \left( \bU_{n,*}^{\#}(h)\right)^2 \right]$.
Conditioned on $\cD_n'$, for $ m \in [M]$,
\begin{align*}
&\Exp_{\vert \cD_n'} \left[ \|\bU_{n,*}^{\#}(h)\|_{\partial \cH_{m,\epsilon}} \right] \leq r \Exp_{\vert \cD_n'} \left[ \|\bU_{n,A}^{\#}(h)\|_{\partial \cH_{m,\epsilon}} \right]
+ \alpha_n^{1/2} \Exp_{\vert \cD_n'} \left[ \|\bU_{n,B}^{\#}(h)\|_{\partial \cH_{m,\epsilon}} \right] \\
& \Sigma_{n,*}^{(m)} \leq r^2 \Sigma_{n,A}^{(m)} + \alpha_n \Sigma_{n,B}^{(m)}.
\end{align*}
where $\Sigma_{n,A}^{(m)}$ and $\Sigma_{n,B}^{(m)}$ are defined in Lemma \ref{lemma:aux_UB_bootstrap_size} and \ref{lemma:Sigma_nA_bound} (both ahead) respectively.

Recall that $\underline{\gamma}_{*} := \inf_{h \in \cH} \gamma_*(h)$ in \eqref{min_variance}. In particular, due to \eqref{MT0} and \eqref{MT4},
\[
\underline{\gamma}_{*} \geq \max\left\{ r^2 \ubar{\sigma}^2, \;\; \alpha_n c_0 B_n^{2} D_n^{-2}
\right\}
\]
Then by Lemma \ref{lemma:aux_UB_bootstrap_size} (ahead), for any $\Delta > 0$, with probability at least $1 - C N^{-1} - C\Delta - C\Delta^2$,  for each $m \in [M]$,
\begin{align*}
&C^{-1} \Delta \underline{\gamma}_{*}^{-1/2} \alpha_n^{1/2} \left( \Exp_{\vert \cD_n'}\left[\|\bU_{n,B}^{\#}\|_{\partial \cH_{m,\epsilon}} \right] +
(\Sigma_{n,B}^{(m)} \log(Mn))^{1/2} 
\right) \leq \\ 
&M^{1/q}N^{-1/2+1/q} B_n^{1-2/q} D_n^{2/q} K_n^2+ 
n^{-1/2+1/(2q)} D_n  K_n^{3/2} + n^{-3/4+1/(2q)} D_n^{2-1/q} K_n^{2}. 
\end{align*}
Further, by Lemma \ref{lemma:Sigma_nA_bound}
and \ref{lemma:U_nA_bootstrp_size_expect} (both ahead), for any $\Delta > 0$, with probability at least $1 - C \Delta^2 - C N_2^{-1}$, for each $m \in [M]$,
\begin{align*}
&C^{-1} \Delta \underline{\gamma}_{*}^{-1/2} \left( \Exp\left[\|\bU_{n,A}^{\#}\|_{\partial \cH_{m,\epsilon}} \right] +
(\Sigma_{n,A}^{(m)} \log(Mn))^{1/2} 
\right) \\
&\leq   (\Delta N^{-1/2} B_n K_n^{1/2}) + 
N_2^{-1/2} B_n K_n^{3/2}  
 + N_2^{-1+1/q}{M^{1/q} B_n^{2-2/q} D_n^{2/q-1} K_n^2}\\
& \quad + n^{-1/2+1/q} D_n  K_n + n^{-3/4+1/q} D_n^{2-2/q} K_n^{3/2} + n^{-1+1/q} D_n^{3-2/q} K_n^2.
\end{align*}
Combining above results and by union bound, for any $\Delta >0$, with probability at least $1 - C' (N \wedge N_2)^{-1} - C' \Delta - C' \Delta^2$,
\begin{align*}
\Pro_{\vert \cD_n'}\left( \max_{m \in [M]}
\|\bU_{n,*}^{\#}(h)\|_{\partial \cH_{m,\epsilon}} \; \geq \; C' \underline{\gamma}_{*}^{1/2} N^{-1/2} B_n K_n^{1/2} + 
C' \underline{\gamma}_{*}^{1/2}  \Delta^{-1}\chi_{n,*}^{\#}
\right) \leq n^{-1}, 
\end{align*}
which completes the proof.
\end{proof}

Recall the definition of $\bU_{n,B}^{\#}$ in \eqref{def:bootstrap_A_and_B}.

\begin{lemma}\label{lemma:aux_UB_bootstrap_size}
Let $\epsilon^{-1} := N \|1 + H^2\|_{P^r,2}$, and
denote 
$\Sigma_{n,B}^{(m)} := \sup_{h \in \partial \cH_{m,\epsilon}} \Exp_{\vert \cD_n'}\left[ \left( \bU_{n,B}^{\#}(h)\right)^2 \right]$ for $m \in [M]$. Assume \eqref{log_epsilon}, \ref{cond:PM}, \ref{cond:VC}, \ref{cond:MB}, \eqref{MT2}, \eqref{MT3}, and \eqref{MT4} hold. 
Then there exists a constant $C$, depending only on $q,r, c_0$, such that for any $\Delta > 0$, with probability at least $1 - C\Delta - C\Delta^2 -CN^{-1}$, the following two events hold:
\begin{align*}
C^{-1}\; \Delta\; \max_{m \in [M]} \Exp_{\vert \cD_n'} \left[ \|\bU_{n,B}^{\#}\|_{\partial \cH_{m,\epsilon}} \right] \leq    & M^{1/q} N^{-1/2+1/q} B_n^{2-2/q} D_n^{2/q-1} K_n^2+ \\
&n^{-1/2+1/(2q)} B_n  K_n^{3/2} +
n^{-3/4+1/(2q)}  B_n D_n^{1-1/q} K_n^2. \\
C^{-1} \; \Delta^2\;  \max_{m \in [M]} \Sigma_{n,B}^{(m)}  \leq &   N^{-1+2/q}  M^{2/q} B_n^{4-4/q} D_n^{4/q-2} K_n^2 +\\
& n^{-1+1/q} B_n^{2} K_n +
n^{-3/2+1/q}  B_n^2D_n^{2-2/q} K_n^2.
\end{align*}
\end{lemma}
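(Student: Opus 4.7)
The plan is to exploit the fact that, conditional on $\cD_n'$, the process $\bU_{n,B}^{\#}$ is centered Gaussian with covariance $\widehat{\gamma}_B$, so both quantities of interest are driven by the random ``variance'' $\widehat{\gamma}_B(h,h)$. I will first bound $\Sigma_{n,B}^{(m)}$ in expectation uniformly in $m$, then pass to the expected supremum via an entropy integral, and finally convert both expectation bounds to high-probability bounds by Markov's inequality.

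For the second moment, observe that for $h \in \partial \cH_{m,\epsilon}$,
\[
\widehat{\gamma}_B(h,h) \;\leq\; (\widehat{N}^{(m)})^{-1} \sum_{\iota \in I_{n,r}} Z_{\iota}^{(m)} h^2(X_\iota),
\]
the centering term $(U'_{n,N}(h))^2$ being nonnegative only in the subtracted direction. By Lemma \ref{lemma:Bernstein} and \ref{cond:MB}, the event $\{\max_{m \in [M]} N/\widehat{N}^{(m)} \leq C\}$ has probability at least $1 - C N^{-1}$, so up to that event it suffices to bound the expectation of $\max_m \sup_{h \in \partial \cH_{m,\epsilon}} N^{-1} \sum_{\iota} Z_\iota^{(m)} h^2(X_\iota)$. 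Decomposing $Z_\iota^{(m)} = (Z_\iota^{(m)} - p_n) + p_n$ splits this into (i) an incomplete $U$-process of the squared class $(\partial \cH_{\epsilon})^2$, which by Lemma \ref{discretization_lemma} is VC-type with envelope $4H^2$, and (ii) the complete $U$-process $U_n(h^2)$ on $\partial \cH_\epsilon$. For (i) I apply Theorem \ref{thm:lmax_sampling_proc} with $\sigma_r^2 \leq \sup_{h \in \partial \cH_\epsilon} \|h\|_{P^r,4}^4 \leq N^{-2}$ and $\|\widetilde\cT\|_{\Pro,2} \lesssim M^{1/q} N^{1/q} \|H^2\|_{P^r,q}$; the moment bounds \eqref{MT3} convert $\|H^2\|_{P^r,q}$ and $\|H^2\|_{P^r,2}$ into the desired powers of $B_n$ and $D_n$, and use $\Delta$ from Lemma \ref{lemma:discr_second_order} for the complete-part envelope term. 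For (ii) I invoke Lemma \ref{lemma:discr_second_order} directly. Putting these together and dividing by $\alpha_n^{-1}$ normalization (with $\underline{\gamma}_*^{-1} \lesssim \alpha_n^{-1} D_n^2 B_n^{-2}$ used only to simplify the statement), Markov's inequality with threshold scaled by $\Delta^2$ yields the announced bound on $\max_m \Sigma_{n,B}^{(m)}$.

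For the expected supremum, conditional on $\cD_n'$ the Gaussian process $\bU_{n,B}^{\#}$ has intrinsic pseudo-metric $d_B(h,h') = \sqrt{\widehat\gamma_B(h-h',h-h')} \leq \|h - h'\|_{\widehat Q_m,2}$, where $\widehat Q_m$ is the incomplete empirical probability measure on $I_{n,r}$. Since $(\partial \cH_{m,\epsilon}, 2H)$ is VC-type with characteristics $(A, 2\nu)$ (Lemma \ref{discretization_lemma}), Dudley's entropy integral applied conditionally gives
\[
\Exp_{\vert \cD_n'}\bigl[\|\bU_{n,B}^{\#}\|_{\partial \cH_{m,\epsilon}}\bigr] \;\lesssim\; \|2H\|_{\widehat Q_m,2}\, \bar J\!\left(\widehat V_m/\|2H\|_{\widehat Q_m,2}\right),
\]
where $\widehat V_m = \sup_{h \in \partial \cH_{m,\epsilon}} \|h\|_{\widehat Q_m,2} = \sqrt{\Sigma_{n,B}^{(m)}}$. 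Using Corollary \ref{cor:aux_lemmas} (with $\cF = \partial \cH_{\epsilon}$, envelope $2H$, $\sigma_r = N^{-1}$ and $\delta_r \gtrsim (NM^{1/2}\|H\|_{P^r,2})^{-1}$, so that $\bar J(\delta_r) \lesssim \delta_r K_n^{1/2}$ by \eqref{log_epsilon}), together with the bound on $\max_m\|H\|_{\widehat Q_m,2}^2$ from the same corollary, converts this into the same three-term structure that appears in the statement. A final application of Markov's inequality with threshold $\Delta$, combined with a union bound over $m \in [M]$ using $\log(2M) \lesssim K_n$, produces the claimed high-probability inequality.

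The main obstacle is the careful bookkeeping of exponents in $B_n, D_n, M^{1/q}, N^{-1/2+1/q}$ and powers of $K_n$ across the two regimes: the sampling-induced term coming from Theorem \ref{thm:lmax_sampling_proc} must fuse with the Hoeffding-decomposition terms from Lemma \ref{lemma:discr_second_order} without double counting. In particular, Theorem \ref{thm:lmax_sampling_proc} contributes the $\|\widetilde\cT\|_{\Pro,2}$-factor $M^{1/q} N^{1/q} \|H^2\|_{P^r,q}$, which under \eqref{MT3} equals $M^{1/q} N^{1/q} B_n^{2-4/q} D_n^{4/q-2}$, and this must multiply the right factors from the entropy integral to produce exactly $N^{-1/2+1/q} M^{1/q} B_n^{2-2/q} D_n^{2/q-1} K_n^2$ in the $\Exp[\|\bU_{n,B}^{\#}\|]$ bound (and its square in the $\Sigma_{n,B}^{(m)}$ bound). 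Once the matching of moment parameters is done, the high-probability conversion is routine via the three failure events of Markov bounds and the sampling-count event from Lemma \ref{lemma:Bernstein}.
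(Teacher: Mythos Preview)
Your proposal follows the paper's overall architecture closely—conditional Gaussian structure, entropy integral, Corollary \ref{cor:aux_lemmas}, Lemma \ref{lemma:discr_second_order}, Markov, and Lemma \ref{lemma:Bernstein}—but there is one concrete gap and one loose end.

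For the bound on $\max_m \Sigma_{n,B}^{(m)}$, your plan to apply Theorem \ref{thm:lmax_sampling_proc} to the \emph{squared} class $(\partial \cH_\epsilon)^2$ does not close: the $\Delta$ in that theorem, for $\cF = (\partial \cH_\epsilon)^2$, is $\Exp\bigl[\sup_{h\in\partial\cH_\epsilon} U_n(h^4)\bigr]$, not $\Exp\bigl[\sup_h U_n(h^2)\bigr]$. Lemma \ref{lemma:discr_second_order} controls only the latter, and extending its proof to $h^4$ would require $\|h\|_{P^r,8}$ to be small on $\partial\cH_\epsilon$, whereas the discretization Lemma \ref{discretization_lemma} guarantees only $\|h\|_{P^r,2}$ and $\|h\|_{P^r,4}$ small. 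The paper avoids this detour entirely: it simply notes that $\max_m \Sigma_{n,B}^{(m)} \leq (\max_m N/\widehat{N}^{(m)})\,\widehat{V}_n^2$ and invokes the second inequality of Corollary \ref{cor:aux_lemmas} directly, which bounds $\Exp[\widehat{V}_n^2]$ in terms of $\Delta = \Exp[\sup_h U_n(h^2)]$. That corollary was derived via the contraction principle (Lemma \ref{lemma:max_contraction}), precisely so that one never needs to pass to the squared class.

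For the expected-supremum bound, your closing ``union bound over $m\in[M]$'' would cost a factor $M$, not $\log(2M)$, and is in fact unnecessary. The paper bounds the conditional $\psi_2$ norm of $\sqrt{\widehat{N}^{(m)}/N}\,\|\bU_{n,B}^{\#}\|_{\partial\cH_{m,\epsilon}}$ by $\|H\|_{\widehat{Q},2}\,\bar J(\widehat{V}_n/\|H\|_{\widehat{Q},2})$, which does not depend on $m$ (after replacing $\widehat{Q}_m$ by $\widehat{Q}=\sum_m\widehat{Q}_m$); the $\sqrt{\log(2M)}$ then enters via the $\psi_2$ maximal inequality over $m$, and a single Markov bound (not a union bound) finishes. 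Your own entropy-integral step supports exactly this once you use that $\tau\mapsto\bar J(\tau)/\tau$ is non-increasing to dominate $\|H\|_{\widehat{Q}_m,2}\bar J(\widehat{V}_m/\|H\|_{\widehat{Q}_m,2})$ by the $m$-free quantity; the union bound should be dropped.
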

\begin{proof}
\noindent{\underline{First event}}. Note that for   
$m \in [M]$, and $h_1,h_2 \in \partial \cH_{m,\epsilon}$,
\begin{align*}
&\|\bU_{n,B}^{\#}(h_1) - \bU_{n,B}^{\#}(h_2)\|_{\psi_2\vert \cD_n'}^2 \; 
\lesssim \;
\frac{1}{\widehat{N}^{(m)}}\sum_{\iota \in I_{n,r}} Z_{\iota}^{(m)}(h_1(X_{\iota})- h_2(X_{\iota}) - U_{n,N}'(h_1) + U_{n,N}'(h_2) )^2\\
\; = \;&
\frac{1}{\widehat{N}^{(m)}}\sum_{\iota \in I_{n,r}} Z_{\iota}^{(m)}(h_1(X_{\iota})- h_2(X_{\iota}))^2 - (U_{n,N}'(h_1) - U_{n,N}'(h_2) )^2\\
\leq \;&
\left({N}/{\widehat{N}^{(m)}} \right) \|h_1 - h_2\|_{\widehat{Q}_m,2}^2,
\end{align*}
where recall that $\widehat{Q}_m$ is defined in \eqref{def:two_random_measures}. Recall that 
$\widehat{V}_n := \max_{m \in [M]}\sup_{h \in \partial \cH_{\epsilon}} \|h\|_{\widehat{Q}_m,2}$ (see \eqref{def:V_hat_tilde} with $\cF$ replaced by $\partial \cH_{\epsilon}$), that
$\widehat{Q}$, which dominates $\widehat{Q}_m$ for $m \in [M]$, is defined in \eqref{def:two_random_measures}, and that $\partial \cH_{\epsilon}$ is defined in \eqref{def:H_all_epsilon}. Then by the entropy integral bound \cite[Corollary 2.2.5]{van1996weak},
\begin{align*}
&\left\| 
\sqrt{\widehat{N}^{(m)}/{N}}\, \|\bU_{n,B}^{\#}(h)\|_{ \partial \cH_{m,\epsilon}}\right\|_{\psi_2 \vert \cD_n'}\,
 \; \lesssim \;
\int_0^{\widehat{V}_n} \sqrt{1 + \log N(\partial\cH_{\epsilon}, \|\cdot\|_{\widehat{Q}_m,2}, \tau)} d\,\tau \\
\; &\lesssim \; 
\int_0^{\widehat{V}_n} \sqrt{1 + \log N(\partial\cH_{\epsilon}, \|\cdot\|_{\widehat{Q},2}, \tau)} d\,\tau 
\;\;\lesssim \; \|H\|_{\widehat{Q},2} \bar{J}(\widehat{V}_n/\|H\|_{\widehat{Q},2}).
\end{align*}
By maximal inequality \cite[Lemma 2.2.2]{van1996weak} and Corollary \ref{cor:aux_lemmas}, 
\begin{align*}
&\Exp\left[\max_{m \in [M]}
\sqrt{\widehat{N}^{(m)}/{N}}\, \|\bU_{n,B}^{\#}\|_{\partial \cH_{m,\epsilon}} 
\right] 
\;\lesssim\; \sqrt{\log(2M)} \Exp \left[ \|H\|_{\widehat{Q},2} \bar{J}(\widehat{V}_n/\|H\|_{\widehat{Q},2}) \right] \lesssim \\
&\sqrt{M \log(2M) }\bar{J}(\delta_r)\|H\|_{P^r,2} + \frac{ \log(2M) M^{1/q} \|H\|_{P^r,q} }{N^{1/2-1/q}} \frac{\bar{J}^2(\delta_r)}{\delta_r^2} \; + \;
\sqrt{\Delta' \log(2M) } \frac{  \bar{J}(\delta_r)}{\delta_r},
\end{align*}
where due to definition of $\partial \cH_{\epsilon}$, we may take
\begin{align*}
&\sigma_{r} := N^{-1}, \quad 
\delta_r := \sigma_{r}/\left( \sqrt{M} \|2  H \|_{P^r,2}\right) \geq (2N M^{1/2}\|H\|_{P^r,2})^{-1} ,\\ 
&\Delta' := \Exp\left[ \sup_{h \in \partial \cH_{\epsilon}}|I_{n,r}|^{-1} \sum_{\iota \in I_{n,r}} h^2(X_{\iota})
\right].
\end{align*}
Due to Theorem \ref{thm:lmax_sampling_proc}, \ref{cond:MB}, and \eqref{log_epsilon},
$\bar{J}(\delta_r)/\delta_r \lesssim K_n^{1/2}$. Thus due to Lemma \ref{lemma:discr_second_order}, \ref{cond:MB}, \eqref{MT3} and \eqref{MT4}, 
\begin{align*}
&\Exp\left[\max_{m \in [M]}
\sqrt{\widehat{N}^{(m)}/{N}}\, \|\bU_{n,B}^{\#}\|_{\partial \cH_{m,\epsilon}} 
\right]  
\lesssim N^{-1} K_n  + M^{1/q} N^{-1/2+1/q} B_n^{2-2/q} D_n^{2/q-1} K_n^2   \\
&  + B_n D_n^{-1} \left(
n^{-1/2+1/(2q)} D_n K_n^{1/2} +
n^{-3/4+1/(2q)}  D_n^{2-1/q} K_n  \right) K_n.
\end{align*}
Then the proof for the first inequality is complete due to Markov inequality and Lemma \ref{lemma:Bernstein}.

\vspace{0.2cm}
\noindent{\underline{Second event}}. Observe that
\begin{align*}
\max_{m \in [M]}\Sigma_{n,B}^{(m)} \;&=\; \max_{m \in [M]}\sup_{h \in \partial \cH_{m,\epsilon}} (\widehat{N}^{(m)})^{-1} \sum_{\iota \in I_{n,r}} Z_{\iota}^{(m)}\left(
h(X_{\iota}) - U_{n,N}'(h))\right)^2 \\
\;&=\;
\max_{m \in [M]}\sup_{h \in \partial \cH_{m,\epsilon}} \left((\widehat{N}^{(m)})^{-1} \sum_{\iota \in I_{n,r}} Z_{\iota} h^2(X_{\iota}) - (U_{n,N}'(h))^2\right) \\
\;&\leq\; \left(N/\widehat{N}^{(m)}\right) \max_{m \in [M]}\sup_{h \in \partial \cH_{m,\epsilon}}\|h\|_{\widehat{Q}_m,2}^2 =\left(N/\widehat{N}^{(m)}\right) (\widehat{V}_n)^2.
\end{align*}
Due to Corollary \ref{cor:aux_lemmas}, 
Lemma \ref{lemma:discr_second_order}, \eqref{MT3}, \eqref{MT4}, \ref{cond:MB}, and above calculations,
\begin{align*}
\Exp\left[\widehat{V}_n^2\right]
\lesssim & B_n^2 D_n^{-2} \left(n^{-1+1/q} D_n^{2} K_n +
n^{-3/2+1/q}  D_n^{4-2/q} K_n^2 
\right) \\
&+N^{-2} + N^{-1+2/q} M^{2/q} B_n^{4-4/q} D_n^{4/q-2} K_n^2 .
\end{align*}
Then the proof is complete by Markov inequality and Lemma \ref{lemma:Bernstein}.
\end{proof}

Recall the definition of $U_{n,A}^{\#}$ in \eqref{def:bootstrap_A_and_B}.

\begin{lemma} \label{lemma:Sigma_nA_bound}
Let $\epsilon^{-1} := N \|1 + H^2\|_{P^r,2}$, and 
denote 
$\Sigma_{n,A}^{(m)} := \sup_{h \in \partial \cH_{m,\epsilon}} \Exp_{\vert \cD_n'}\left[ \left( \bU_{n,A}^{\#}(h)\right)^2 \right]$ for $m \in [m]$. Assume \eqref{log_epsilon}, \ref{cond:PM},  \ref{cond:VC},\ref{cond:MB}, \eqref{MT1}, \eqref{MT2}, \eqref{MT3}, and \eqref{MT5} hold. 
Then there exists a constant $C$, depending only on $q,r$, such that for any $\Delta > 0$, with probability at least $1 - C\Delta^2 -CN_2^{-1}$, 
\begin{align*}
C^{-1} \; \Delta^2\; \max_{m \in [M]} \Sigma_{n,A}^{(m)} \leq & N_2^{-1}{B_n^2 K_n^2}
+ N_2^{-2+2/q}{M^{2/q} B_n^{4-4/q} D_n^{4/q-2} K_n^3}\\
&+  n^{-1+2/q} D_n^2 K_n+ n^{-3/2+2/q} D_n^{4-4/q} K_n^2 + n^{-2+2/q} D_n^{6-4/q} K_n^3.
\end{align*}
\end{lemma}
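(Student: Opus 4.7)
First I would reduce the problem to bounding $\sup_{h \in \partial\cH_{m,\epsilon}} n^{-1}\sum_{k=1}^n \bG^{(k)}(h)^2$ in expectation: since conditional on $\cD_n'$ the variable $\bU_{n,A}^{\#}(h)$ is centered Gaussian with variance $n^{-1}\sum_k(\bG^{(k)}(h)-\overline{\bG}(h))^2 \le 2n^{-1}\sum_k \bG^{(k)}(h)^2$, a high-probability bound on the expectation of the latter supremum translates into the deviation inequality via Markov applied at threshold $\sim \Delta^{2}$ times the expected value, with the residual $C N_2^{-1}$ reserved for a Bernstein event below.

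Next I would apply Lemma~\ref{lemma:Bernstein} to replace the ratio $N_2/\widehat N_2^{(k,m)}$ by a constant, uniformly in $k$ and $m$, on an event of probability at least $1-CN_2^{-1}$. On that event,
\[
\bG^{(k)}(h)^2 \;\lesssim\; \cR_k(h)^2 \;+\; \bigl(P^{r-1}h(X_k)\bigr)^2 \;+\; \cF_k(h)^2,
\]
where $\cR_k(h):=N_2^{-1}\sum_{\iota\in \kjack}(Z_\iota^{(k,m)}-q_n)h(X_{\iota^{(k)}})$ is the stratified-sampling fluctuation given $X_1^n$, and $\cF_k(h):=|I_{n-1,r-1}|^{-1}\sum_{\iota\in \kjack}h(X_{\iota^{(k)}})-P^{r-1}h(X_k)$ is the centered $(r-1)$-dimensional complete $U$-statistic on $[n]\setminus\{k\}$ with frozen first argument $X_k$. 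Pushing the supremum over $h\in\partial\cH_{m,\epsilon}$ inside the average over $k$ leaves three independent contributions that match the three groups of terms in the stated bound.

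Each contribution can then be controlled by a tool already developed earlier in the paper. The sampling piece $n^{-1}\sum_k \Exp[\sup_h \cR_k(h)^2]$ is, conditionally on $X_1^n$, the squared supremum of a stratified multiple incomplete $U$-process of order $r-1$; Theorem~\ref{thm:lmax_sampling_proc} applied with diameter $\sigma_{r-1}=N^{-1}$, envelope $2H$, $M$ strata, budget $N_2$, and entropy factor $\bar J(\delta)\lesssim \delta K_n^{1/2}$ (from \ref{cond:VC} together with Lemma~\ref{discretization_lemma}), combined with \eqref{MT3}, produces the $N_2^{-1}B_n^2K_n^2$ and $N_2^{-2+2/q}M^{2/q}B_n^{4-4/q}D_n^{4/q-2}K_n^3$ terms. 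The linear piece $\sup_h n^{-1}\sum_k(P^{r-1}h(X_k))^2$ is the empirical $L^2$ norm over $\{(P^{r-1}h)^2:h\in\partial\cH_{m,\epsilon}\}$, handled by Theorem~\ref{thm:multi-level} at $\ell=1$ with envelope $4(P^{r-1}H)^2$ and moment control \eqref{MT1}, yielding the $n^{-1+2/q}D_n^2K_n$ term. The Hoeffding remainder $n^{-1}\sum_k\Exp[\sup_h\cF_k(h)^2]$ has exactly the structure of $T_1$ in Lemma~\ref{lemma:jacknife_complete_part}; re-running that argument for the VC class $\partial\cH_{m,\epsilon}$ using \eqref{MT2} and \eqref{MT5} with entropy factor $K_n$ delivers the $n^{-3/2+2/q}D_n^{4-4/q}K_n^2$ and $n^{-2+2/q}D_n^{6-4/q}K_n^3$ terms.

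The main obstacle will be the sampling piece: the supremum over $h$ has to be taken inside the average over $k$, so Theorem~\ref{thm:lmax_sampling_proc} must be invoked at the second-moment level rather than the first, and one must carefully track how the envelope moments of $H$ and of $H^2$ combine with the $\log(2M)$ factor coming from stratification, so that only powers of $K_n$ (and no free $\log n$) survive in the final rate. Once these book-keeping issues are resolved, the three expected-value bounds add up, Markov's inequality at level $C\Delta^{2}$ converts the expectation bound into the stated deviation bound, and the Bernstein event from Lemma~\ref{lemma:Bernstein} contributes the remaining $CN_2^{-1}$ term.
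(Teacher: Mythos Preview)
Your proposal is correct and follows essentially the same route as the paper: the same Bernstein event via Lemma~\ref{lemma:Bernstein}, the same three-way decomposition of $\bG^{(k)}(h)$ into a sampling fluctuation, the complete jackknife remainder, and the linear piece $P^{r-1}h(X_k)$, and the same closing step via Markov's inequality at level $\Delta^2$. The only notable difference is the tool you propose for the sampling piece: you suggest invoking Theorem~\ref{thm:lmax_sampling_proc} ``at the second-moment level'', and correctly flag the book-keeping there as the main obstacle; the paper instead applies the purpose-built non-local second-moment bound of Lemma~\ref{lemma:sampling_higher_order} (conditionally on $X_k$), which delivers the $N_2^{-1}B_n^2K_n^2$ and $N_2^{-2+2/q}M^{2/q}B_n^{4-4/q}D_n^{4/q-2}K_n^3$ terms directly and sidesteps the obstacle you anticipated.
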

\begin{proof}
Observe first that by definition of $\partial \cH_{\epsilon}$ in \eqref{def:H_all_epsilon},
$\sup_{h \in \cH_{\epsilon}} \left\{ \|h\|_{P^r,2} \vee \|h\|_{P^r,4}^2 \right\} \leq N^{-1}$.
By definition of $U_{n,A}^{\#}$, for $m \in [M]$ and $h \in \partial \cH_{m,\epsilon}$,
\begin{align*}
&\Sigma_{n,A}^{(m)} = \sup_{h \in \partial \cH_{m,\epsilon}} n^{-1} \sum_{k=1}^{n} \left( \bG^{(k)}(h) - \bar{\bG}(h) \right)^2 
\leq \sup_{h \in \partial \cH_{m,\epsilon}} n^{-1} \sum_{k=1}^{n} \left( \bG^{(k)}(h)  \right)^2 \\
&\leq 4 \left(\max_{1 \leq k \leq n} N_2/\widehat{N}_2^{(k,m)} \right)^2 \left( I + II +  III \right), \;\; \text{ where } \\
&I := \max_{m \in [M]}\sup_{h \in \partial \cH_{m,\epsilon}} n^{-1} \sum_{k=1}^{n} \left( N_2^{-1} \sum_{\iota \in \kjack} (Z_{\iota}^{(k,m)} -q_n)h(X_{\iota^{(k)}})  \right)^2, \\
&II := \sup_{h \in \partial \cH_{\epsilon}} n^{-1} \sum_{k=1}^{n} \left( |\kjack|^{-1} \sum_{\iota \in \kjack} \left(h(X_{\iota^{(k)}}) -P^{r-1}h(X_k) \right)  \right)^2,\\
&III := \sup_{h \in \partial \cH_{\epsilon}}n^{-1} \sum_{k=1}^{n} \left(P^{r-1}h(X_k) \right)^2.
\end{align*}
As a result, $ \max_{ m \in [M]} \Sigma_{n,A}^{(m)}  \lesssim 
\left(\max_{k \in [n], m \in [M]} N_2/\widehat{N}_2^{(k,m)} \right)^2 (I + II +III)
$.

Due to Lemma \ref{lemma:Bernstein} and \ref{cond:MB}, 
\begin{align}\label{aux_Sigma_nA_bernstein}
\Pro\left(\max_{k \in [n], m \in [M]} N_2/\widehat{N}_2^{(k,m)} \leq C \right) \geq 1 - CN_2^{-1}.
\end{align}

By Lemma \ref{lemma:sampling_higher_order} and due to \ref{cond:VC} and \ref{cond:MB}, for each $k \in [n]$, 
\begin{align*}
&\Exp_{\vert X_k} \left[ \max_{m \in [M]}\sup_{h \in \partial \cH_{m,\epsilon}} \left( N_2^{-1} \sum_{\iota \in \kjack} (Z_{\iota}^{(k,m)} -q_n)h(X_{\iota^{(k)}})  \right)^2\right] \\
&\lesssim  N_2^{-1}K_n
\left(
K_n \|H(X_k,\cdot)\|_{P^{r-1},2}^2 
+ \frac{K_n^2 M^{2/q} \|H(X_k,\cdot)\|_{P^{r-1},q}^2}{N_2^{1-2/q}}
\right)
\end{align*}
Then due to \eqref{MT3}, we have
\begin{align}\label{aux_Sigma_nA_I}
\Exp\left[ I \right] \lesssim 
N_2^{-1}{B_n^2 K_n^2}
+ N_2^{-2+2/q}{M^{2/q} B_n^{4-4/q} D_n^{4/q-2} K_n^3}.
\end{align}

From the proof of \cite[Theorem 3.1]{chen2017jackknife} and similar to the proof for Lemma \ref{lemma:jacknife_complete_part},  
\begin{align*}
&\Exp\left[ II\right] \;\lesssim \;  \sum_{\ell=2}^{r-1} n^{-\ell} \|P^{r-\ell-1}H\|_{P^{\ell+1},2}^2 K_n^{\ell} + n^{-1} \sup_{h \in \partial \cH_{\epsilon}} \|P^{r-2}h\|_{P^2,2}^2  \\
&  + n^{-3/2} \sup_{h \in \partial \cH_{\epsilon}} \|(P^{r-2}h)^2\|_{P^2,2} K_n^{1/2} + n^{-2+2/q} \|(P^{r-2}H)^2\|_{P^2,q/2} K_n \\
& + n^{-2}\|(P^{r-2}H)^2\|_{P^2,2} K_n  \\
&+ n^{-1}  \sup_{h \in \partial \cH_{\epsilon}}  \| (P^{r-2}h)^{\bigodot 2} \|_{P^2,2} K_n  + n^{-3/2+2/q} \|(P^{r-2} H)^{\bigodot 2}\|_{P^2,q/2} K_n^2  \\
&+ n^{-3/2} \sup_{h \in \partial \cH_{\epsilon}} \|(P^{r-2}h)^2\|_{P^2,2} K_n^{3/2}  + n^{-2+2/q}\|(P^{r-2}H)^2\|_{P^2,q/2} K_n^3,
\end{align*}
Thus due to the definition of $\partial \cH_{\epsilon}$, \eqref{log_epsilon}, \eqref{MT5}, and \eqref{MT2},
\begin{align}\label{aux_Sigma_nA_II}
&\Exp\left[ II\right] \;\lesssim \;  n^{-3/2+2/q} D_n^{4-4/q} K_n^2 + n^{-2+2/q} D_n^{6-4/q} K_n^3.
\end{align}

By \cite[Theorem 5.2]{chernozhukov2014gaussian}, and due to \ref{cond:VC}, \eqref{log_epsilon},  and \eqref{MT1},
\begin{align}\label{aux_Sigma_nA_empirical_process}
\begin{split}
&\Exp[III] = \Exp\left[ \sup_{h \in \partial \cH_{\epsilon}}
 n^{-1} \sum_{k=1}^{n} \left(P^{r-1}h(X_k) \right)^2 \right] \\
 &\lesssim n^{-1/2} N^{-1} K_n^{1/2} + n^{-1+2/q} \| (P^{r-1}H)^2\|_{P,q/2} K_n \lesssim n^{-1+2/q} D_n^2 K_n.
 \end{split}
\end{align}

Then the proof is complete by  Markov inequality,  \eqref{aux_Sigma_nA_bernstein},
 \eqref{aux_Sigma_nA_I},  \eqref{aux_Sigma_nA_II}, and \eqref{aux_Sigma_nA_empirical_process}.
\end{proof}

\begin{lemma} \label{lemma:U_nA_bootstrp_size_expect}
Let $\epsilon^{-1} := N \|1 + H^2\|_{P^r,2}$, and recall the definition of $\Sigma_{n,A}^{(m)}$ in Lemma \ref{lemma:Sigma_nA_bound}. Assume the conditions in Lemma \ref{lemma:Sigma_nA_bound} hold. 
Then there exists a constant $C$, depending only on $r$, such that with probability at least $1-CN_2^{-1}$,
\begin{align*}
C^{-1}  \max_{m \in [M]}\Exp_{\vert \cD_n'} \left[ \|U_{n,A}^{\#}\|_{\partial \cH_{m,\epsilon}} \right] \leq    & N^{-1/2} B_n K_n^{1/2}  + 
\left(\max_{m \in [M]} \Sigma_{n,A}^{(m)} \right)^{1/2}K_n^{1/2}.
\end{align*}
\end{lemma}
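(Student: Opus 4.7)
The plan is to apply Dudley's entropy integral to the process $\{U_{n,A}^{\#}(h) : h \in \cH\}$, which, conditional on $\cD_n'$, is a centered Gaussian process. First I would identify the intrinsic pseudometric
\[
d_m^{2}(h_1,h_2) \;=\; n^{-1}\sum_{k=1}^{n}(\bG^{(k)}(h_1-h_2) - \bar{\bG}(h_1-h_2))^{2} \;\leq\; n^{-1}\sum_{k=1}^{n}(\bG^{(k)}(h_1-h_2))^{2},
\]
and apply Cauchy--Schwarz to each $\bG^{(k)}$ followed by the high-probability control $\max_{k,\ell}N_2/\widehat{N}_2^{(k,\ell)} \leq C$ (via Lemma \ref{lemma:Bernstein} and \ref{cond:MB}, holding with probability at least $1-CN_2^{-1}$). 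This would yield $d_m(h_1,h_2) \leq C\|h_1-h_2\|_{\widetilde{Q}_m,2}$ for the explicit empirical measure $\widetilde{Q}_m := n^{-1}\sum_k N_2^{-1}\sum_\iota Z^{(k,m)}_\iota \delta_{X_{\iota^{(k)}}}$ on $S^r$, converting the abstract pseudometric into a data-dependent $L^2$-norm amenable to VC-type covering bounds.

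Second, I would invoke the standard Gaussian maximal inequality (Dudley) conditional on $\cD_n'$:
\[
\Exp_{|\cD_n'}\bigl[\|U_{n,A}^{\#}\|_{\partial \cH_{m,\epsilon}}\bigr] \;\lesssim\; \int_0^{D_m}\sqrt{1 + \log N(\partial\cH_{m,\epsilon}, d_m, \epsilon)}\,d\epsilon,
\]
with $D_m \leq \sqrt{\Sigma_{n,A}^{(m)}}$. By Lemma \ref{discretization_lemma}, the class $(\partial\cH_{m,\epsilon}, 2H)$ is VC-type with characteristics $(A,2\nu)$, and combined with $d_m \leq C\|\cdot\|_{\widetilde{Q}_m,2}$ this gives $\log N(\partial\cH_{m,\epsilon}, d_m, \epsilon) \leq 2\nu\log(2CA\|H\|_{\widetilde{Q}_m,2}/\epsilon)$. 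The integral can then be reformulated in terms of the uniform entropy integral $\bar{J}$ of Theorem \ref{thm:lmax_sampling_proc}, producing a bound of the form $\|H\|_{\widetilde{Q}_m,2}\,\bar{J}\bigl(D_m/\|H\|_{\widetilde{Q}_m,2}\bigr)$.

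Third, I would evaluate this expression using the pointwise estimate $\bar{J}(\tau) \lesssim \tau\sqrt{\nu\log(A/\tau)}$ together with the working assumptions $\log(B_n)+\log(D_n)\leq C_1\log(n)$ and \eqref{log_epsilon}, which together force the logarithm to be of order $K_n$. The main term $\sqrt{\Sigma_{n,A}^{(m)}}\,K_n^{1/2}$ emerges from the upper part of the integral. The envelope piece $N^{-1/2}B_n K_n^{1/2}$ is a fallback bound arising from the worst-case diameter $\sup_{h\in\partial\cH_\epsilon}\|h\|_{P^r,2} \leq N^{-1}$ (by construction of $\partial\cH_\epsilon$) and the envelope control $\|H\|_{\widetilde{Q}_m,2} \leq CB_n$ available on the sampling-ratio event via \eqref{MT3}; this takes care of the case where $\sqrt{\Sigma_{n,A}^{(m)}}$ is itself very small. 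Taking the maximum over $m \in [M]$ costs nothing extra, since the claim bounds a maximum of conditional expectations rather than a conditional expectation of a maximum.

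The hard part will be controlling $\|H\|_{\widetilde{Q}_m,2}$ uniformly in $m$ without paying any additional probability beyond the $CN_2^{-1}$ already incurred for the Bernstein event. Because $\|H\|_{\widetilde{Q}_m,2}$ enters the bound only inside a logarithm, a deterministic-in-$m$ inequality $\|H\|^2_{\widetilde{Q}_m,2} \leq C'\|H\|^2_{P^r,2}$ valid on $\mathcal{E}$ (or a crude moment control using \eqref{MT3}) should suffice to collapse the logarithmic factor to $O(K_n)$ and complete the argument.
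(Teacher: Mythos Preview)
Your plan is correct and follows the same template as the paper: bound the conditional sub-Gaussian pseudometric by an empirical $L^2$-norm, invoke the VC-type covering of $(\partial\cH_{m,\epsilon},2H)$, and run Dudley's entropy integral with a max'd diameter. The one substantive difference is in the Cauchy--Schwarz step. You retain the $m$-dependent measure $\widetilde{Q}_m = n^{-1}\sum_k N_2^{-1}\sum_\iota Z^{(k,m)}_\iota \delta_{X_{\iota^{(k)}}}$ and use the Bernstein event to extract the constant $C$. The paper instead applies the cruder inequality $(\widehat{N}_2^{(k,m)})^{-1}\sum_\iota Z^{(k,m)}_\iota (h_1-h_2)^2 \leq \sum_{\iota\in\kjack}(h_1-h_2)^2$ (an average of nonnegatives is at most their sum), obtaining $d_m(h_1,h_2)\leq\|h_1-h_2\|_{\overline{Q},2}$ with the $m$-\emph{independent} measure $\overline{Q}=n^{-1}\sum_k\sum_{\iota\in\kjack}\delta_{X_{\iota^{(k)}}}$, deterministically---no Bernstein required. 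The price is that $\overline{Q}$ has total mass $\sim n^{r-1}$, but that only contributes $\log(n^{r-1})\lesssim K_n$ inside the entropy integral. This move also dissolves the ``hard part'' you flag: the paper needs only Markov on the single variable $\|H\|_{\overline{Q},2}$ (giving the $1-N_2^{-1}$ event), whereas your claim that $\|H\|_{\widetilde{Q}_m,2}\leq CB_n$ holds on the sampling-ratio event is not quite right---that event controls $N_2/\widehat{N}_2^{(k,m)}$ but says nothing about the empirical average of $H^2$; you would still need your ``crude moment control'' fallback, which is precisely the paper's Markov step. Finally, the paper makes your ``fallback'' heuristic precise by taking the diameter to be $N^{-1/2}\|H\|_{P^r,2}\vee(\Sigma_{n,A}^{(m)})^{1/2}$, so that the log-argument $A\|H\|_{\overline{Q},2}/D_m\leq An^{r-1}N^{1/2}$ stays polynomial in $n$; this is exactly where the additive $N^{-1/2}B_nK_n^{1/2}$ originates.
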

\begin{proof}
Define $d_m(h_1, h_2) :=
\|\bU_{n,A}^{\#}(h_1) - \bU_{n,A}^{\#}(h_2)\|_{\psi_2 \vert \cD_n'}$ for $m \in [M]$ and $h_1, h_2 \in \partial \cH_{m,\epsilon}$.
By Cauchy-Schwarz inequality, for $m \in [M]$ and  $h_1,h_2 \in \partial \cH_{m,\epsilon}$,
\begin{align*}
&d^2_m(h_1, h_2) 
\;\lesssim\;n^{-1} \sum_{i = 1}^{n} \left(\bG^{(k)}(h_1) - \bG^{(k)}(h_2) - \overline{\bG}(h_1) + \overline{\bG}(h_2) \right)^2 \\
\; \leq \;&
n^{-1}\sum_{i = 1}^{n} \left(\bG^{(k)}(h_1) - \bG^{(k)}(h_2) \right)^2 \\
\; \leq \;&
n^{-1}\sum_{i = 1}^{n} (\widehat{N}_2^{(k,m)})^{-2}
\left(  \sum_{\iota \in \kjack} (Z_{\iota}^{(k,m)})^2\right) \left( \sum_{\iota \in \kjack} \left( h_1(X_{\iota^{(k)}}) - h_2(X_{\iota^{(k)}}) \right)^2 \right)
\\
\;\leq \;&
  n^{-1}  \sum_{k=1}^{n} 
\sum_{\iota \in \kjack} \left( h_1(X_{\iota^{(k)}}) - h_2(X_{\iota^{(k)}}) \right)^2 \\
\;= \;& \|h_1-h_2\|_{\overline{Q},2}^2,
\end{align*}
where $\overline{Q} := n^{-1}  \sum_{k=1}^{n} 
\sum_{\iota \in \kjack}  \delta_{X_{\iota^{(k)}}}$ is a random measure on $\cS^r$.
Since $(\partial \cH_{m,\epsilon}, 2H)$ is a VC type class with characteristics $(A,2\nu)$, we have for $\tau > 0$, 
\begin{align*}
N(\partial \cH_{m,\epsilon}, d_m(\cdot,\cdot), \tau \|2H\|_{\overline{Q},2} )
\leq N(\partial \cH_{m,\epsilon}, \|\cdot\|_{\overline{Q},2}, \tau \|2H\|_{\overline{Q},2} )
\leq (A/\tau)^{2\nu}.
\end{align*}
By Markov inequality and since $N_2 \leq |I_{n-1,r-1}|$,
\[
\Pro(\mathcal{E}') \geq 1 - N_2^{-1}, \; \text{ where } \; 
\mathcal{E}' := \left\{
\|H\|_{\overline{Q},2} \leq   n^{r-1} \|H\|_{P^r,2} \right\}.
\]
By entropy integral bound \cite[Theorem 2.3.7]{van1996weak}, if we use $2\left[N^{-1/2} \|H\|_{P^r,2} \vee (\Sigma_{n,A}^{(m)})^{1/2}\right]$ as a $d_m$-diameter for $\partial \cH_{m,\epsilon}$, we have on the event $\cE'$,
for $m \in [M]$,
\begin{align*}
&\Exp_{\vert \cD_{n}'} \left[ \| \bU_{n,A}^{\#}\|_{\partial \cH_{m,\epsilon}} \right]   \lesssim \int_0^{ N^{-1/2} \|H\|_{P^r,2} \vee (\Sigma_{n,A}^{(m)})^{1/2}} \sqrt{\nu \log(A \|H\|_{\bar{Q},2}/\tau)} d\tau  \\
& \lesssim  \left(  N^{-1/2} \|H\|_{P^r,2} \vee (\Sigma_{n,A}^{(m)})^{1/2} \right) \sqrt{\nu \log\left( A  n^{r-1} N^{1/2} \right)} \\
&\lesssim  \left(  N^{-1/2} \|H\|_{P^r,2} \vee (\Sigma_{n,A}^{(m)})^{1/2} \right) K_n^{1/2}.
\end{align*}
Then the proof is complete due to  \eqref{MT3}. 
\end{proof}

\section{More simulation results} \label{app:simulations}
We present more simulation results in the same setup as in Section \ref{sec:simulation}, except that in Subsection \ref{subsec:asym_ht}, the distribution of $\varepsilon$ in \eqref{eqn:nonparametric_regression}  has a different distribution than the centered Gaussian. 

\subsection{A figure for the locally convex regression function in \eqref{alt:local_conv}} \label{app:figure}
In Figure \ref{fig:FS_conv_dip}, we plot the regression function $f$ in \eqref{alt:local_conv} with $c_2 = 0.2, \omega_2 = 0.15$, together with one realization of dataset with the sample size $n = 1000$ and the variance $\sigma^2 = 0.2^2$. 

\begin{figure}[htbp!]
    \centering
\includegraphics[width=0.55\textwidth]{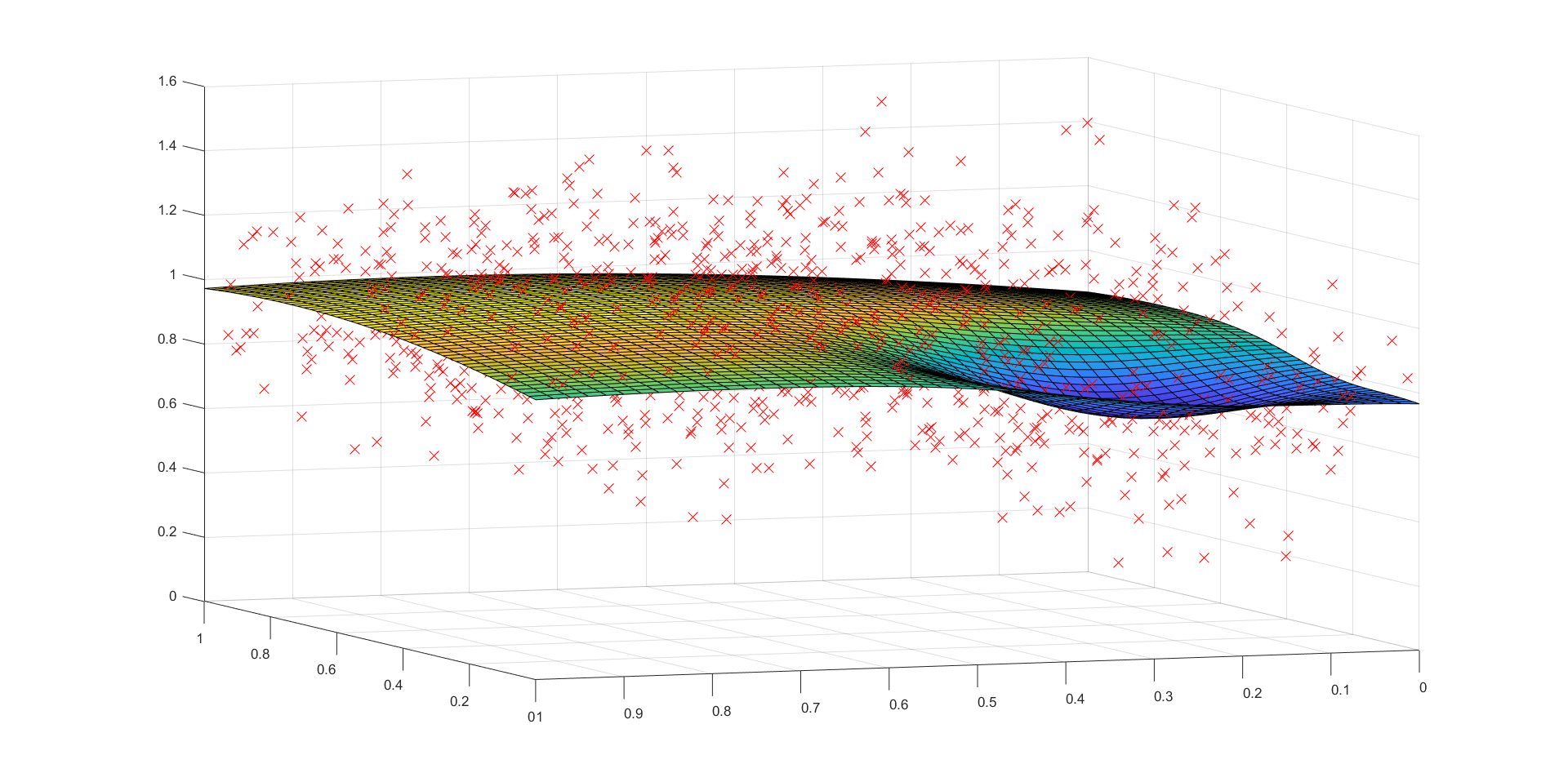} 
    \caption{The regression function $f$ in  \eqref{alt:local_conv} with  a realization of the data points.}%
    \label{fig:FS_conv_dip}%
\end{figure}

\subsection{Rejection probabilities for strictly concave functions}\label{app:d2_strictly_concave}
In this subsection, we present the rejection probabilities when the regression function is strictly concave, under the simulation setup in Section \ref{sec:simulation} with $d=2$.

In Table \ref{tab:strictlyConcave_size} (resp.~Table \ref{tab:multibn_size}), we list the rejection probabilities of our proposed procedure, with a single bandwidth $b_n = 0.5$ (resp. multiple bandwidths  $\cB_n = \{0.6,0.8,1\}$), for $f \in H_0$ in \eqref{alt:poly} with $0< \kappa_0 \leq 1$. 
As expected, the rejection probabilities becomes smaller as $\kappa_0$ decreases and $f$ becomes more ``concave".

\begin{table}[htbp!]
\begin{tabular}{cccccccc}
\hline
 $\kappa_0 =$    & $1$ & $0.95$   & $0.90$   & $0.85$    & $0.80$    & $0.75$    & $0.70$    \\ \hline
ID & 7.6 & 6.5 & 4.6 & 3.9 & 3.0 & 1.9 & 1.7  \\ \hline
SG & 8.6 & 6.0 & 5.0 & 3.4 & 2.9 & 2.1 & 1.5\\ \hline
\end{tabular}
\caption{The rejection probabilities (in percentage) of the proposed procedure  for concave functions $f \in H_0$ in \eqref{alt:poly} with varying $\kappa_0$ at level \textit{$10\%$} for $n = 1000$, $b_n = 0.5$, $\sigma = 0.2$.}
\label{tab:strictlyConcave_size}
\end{table}

\begin{table}[htbp!]
\begin{tabular}{cccccccc}
\hline
 $\kappa_0 =$    & $1$  & $0.95$   & $0.90$   & $0.85$    & $0.80$    & $0.75$    & $0.70$    \\ \hline
ID & 8.4 & 7.6 & 6.1 & 4.7 & 3.8 & 2.4 & 2.2  \\  \hline
\end{tabular}
\caption{The rejection probabilities (in percentage) of the proposed procedure with multiple bandwidth, $\{\cH_{b}^{\text{id}}: b \in \{0.6,0.8,1\}\}$, for concave functions $f \in H_0$ in \eqref{alt:poly} with varying $\kappa_0$ at level $10\%$ with $n = 1000$,  $\sigma = 0.5$.}
\label{tab:multibn_size}
\end{table}

\subsection{Asymmetric and heavy tailed noise.} \label{subsec:asym_ht}
In this subsection, we consider the simulation setup in Section \ref{sec:simulation} with $d=2$, but under asymmetric and heavy tailed noise.

The use of $\cH^{\text{sg}}$ requires the conditional distribution of $\varepsilon$ to be symmetric about zero, but otherwise allows $\varepsilon$ to have a heavy tail. On the other hand, to use $\cH^{\text{id}}$, our theory requires $\varepsilon$ to a light tail, but otherwise imposes no additional restriction. Specifically, we consider the following two types of error distributions. \\

\noindent \underline{\it Asymmetric distribution.} In the first, the distribution of $\varepsilon$ is a mixture of two Gaussian distributions: $G_{\varepsilon} := 0.5\times N(-0.1, 0.06) + 0.5\times N(0.1,0.24)$, under which $\Pro(\varepsilon < 0) > 0.5$. Further, if $(\Lambda_1,\ldots, \Lambda_{d+1})$ has a uniform distribution on the simplex $\{(v_1,\ldots,v_{d+1}): \sum_{i=1}^{d} v_i = 1, v_i >0\}$, and $\varepsilon_1,\ldots, \varepsilon_{d+2}$ are i.i.d.~with common distribution $G_{\varepsilon}$, then 
$\Pro(S < 0) < 0.5$, where $S := \sum_{i=1}^{d+1} \Lambda_i \varepsilon_i - \varepsilon_{d+2}$. The density of $G_{\varepsilon}$ and the histogram of $S$ are plotted in Figure \ref{fig:asym_noise}. In this case, $\cH^{\text{sg}}$  fails to achieve the prescribed level as  $P^r h_v^{\text{sg}} > 0$ for $v \in \sV_n$, while $\cH^{\text{id}}$ is  valid, which agrees with  results in Table \ref{tab:ht_asym}. \\

\noindent \underline{\it Heavy tail distribution.} In the second, the distribution of $\varepsilon$ is $0.2/\sqrt{3}\times t_3$, where $t_3$ means standard $t$ distribution with $3$ degree of freedom. From Table \ref{tab:ht_asym}, $\cH^{\text{id}}$ still achieves valid size, but its power is significantly smaller than that of  $\cH^{\text{sg}}$.  \\

 In Figure \ref{fig:asym_ht_bootstrap}, we plot the probability of rejection for $\cH^{\text{sg}}$ under the asymmetric noise ($\varepsilon \sim G_{\varepsilon}$), and 
 $\cH^{\text{id}}$ under the heavy tail, when the regression function is linear. As we can see, both tests are slightly undersized, but the approximation by bootstrap is reasonable over all levels, as predicted by our theory.
 
 \begin{figure}[htbp!]
 \centering
\includegraphics[width =0.4\textwidth]{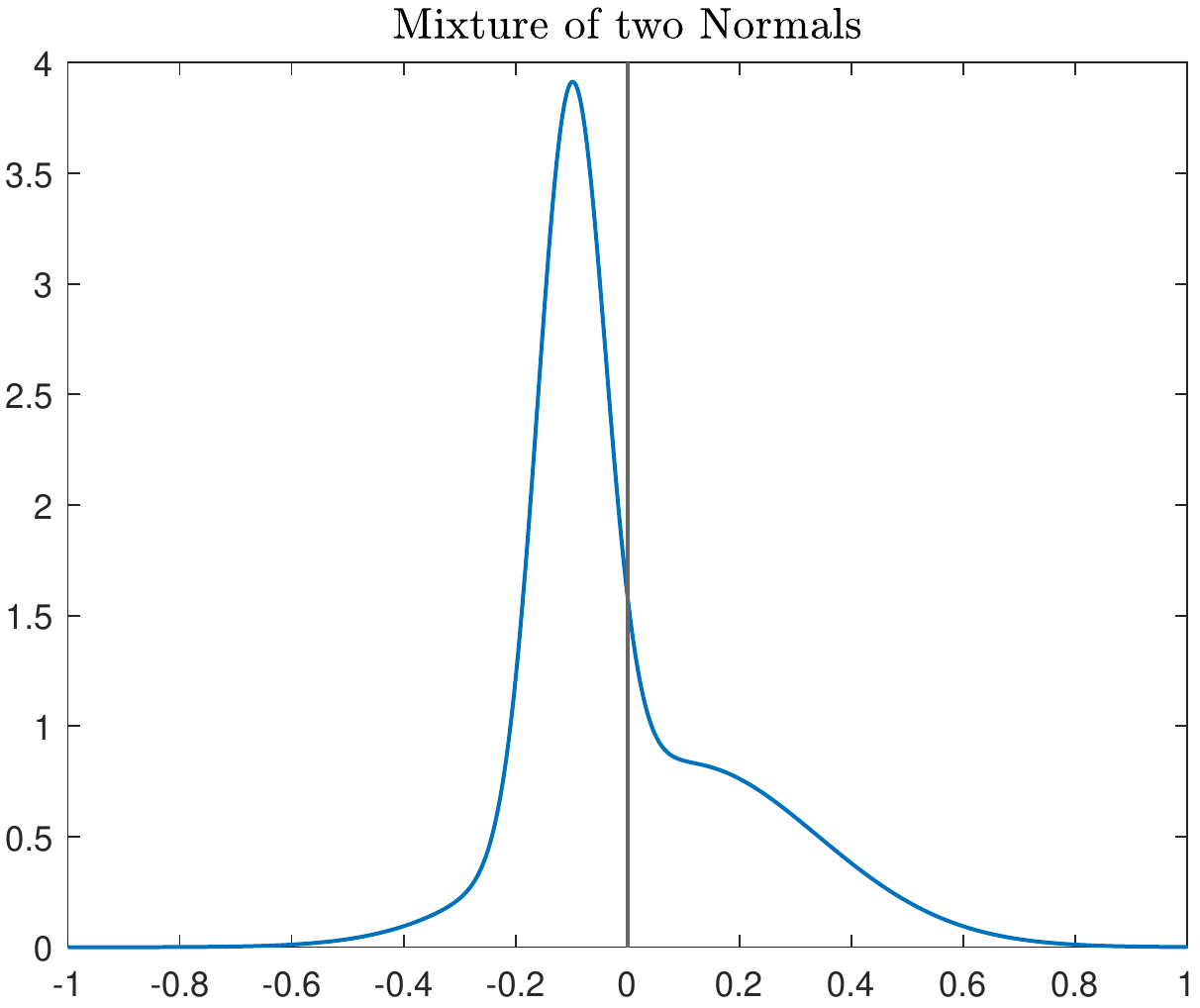}  \hspace{0.2cm}
\includegraphics[width =0.4\textwidth]{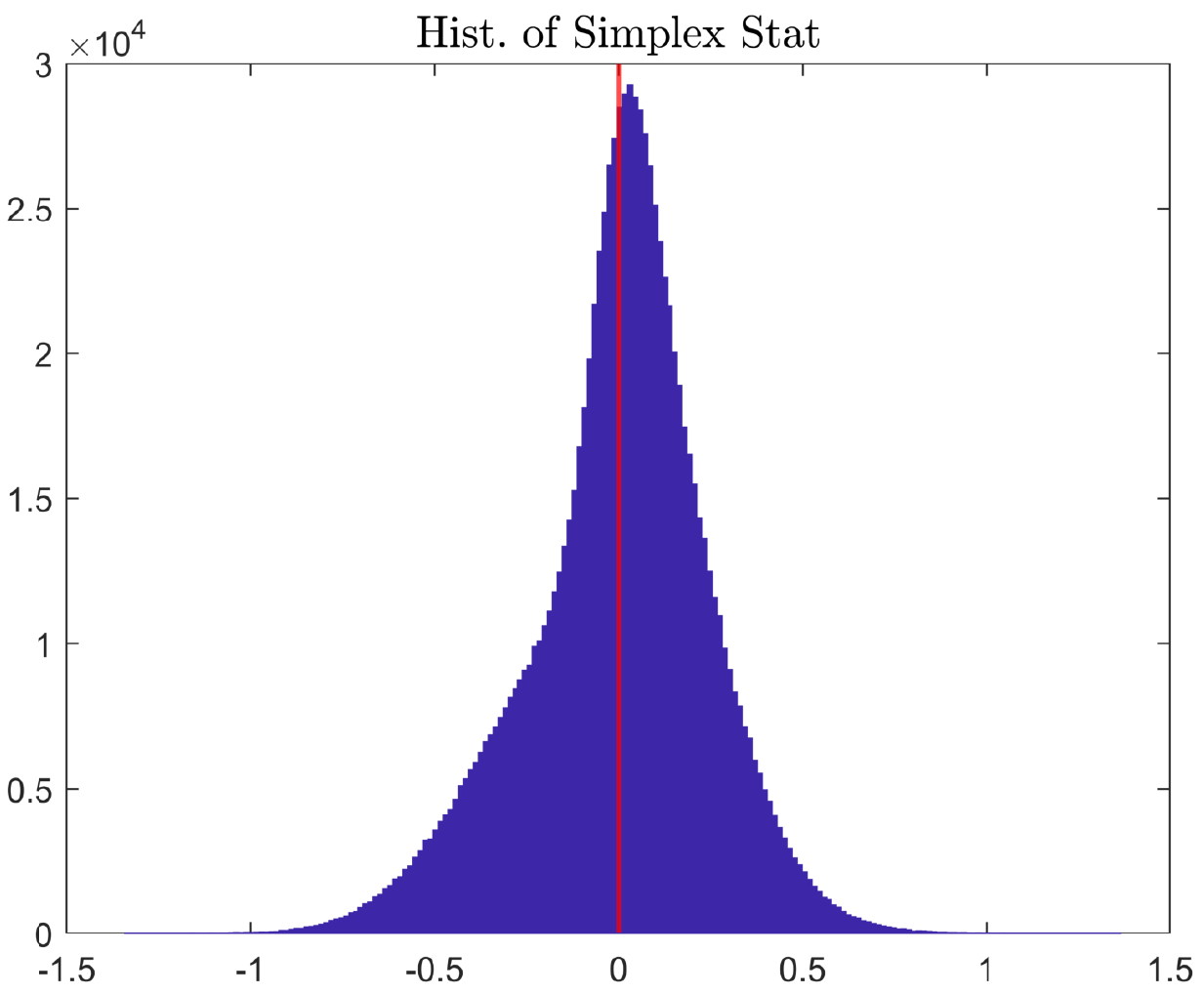} 
\caption{The left  is the density of $G_{\epsilon}$, while the right  the histogram for $S$.} 
\label{fig:asym_noise}
\end{figure}

\begin{table}[htbp!]
\begin{tabular}{ccccccc}
\hline
$\kappa_0 = 1$ (size) & HT  & Asymm  & \hspace{0.3cm} & $\kappa_0 = 1.5$ (alternative) & HT   & Asymm \\ \hline
ID           & 7.4 & 7.5    &  & ID             & 60.4 & 60.3  \\ \hline
SG           & 9.3 & 89.3**  &  & SG             & 80.9 & ---   \\ \hline
\end{tabular}
\caption{The rejection probability (in percentage) at level  $10\%$ for $d = 2$, $n = 1000$,  polynomial 
$f$ \eqref{alt:poly}, and two types of noise distributions. ``HT" is for heavy tail, while ``Asymm"  for asymmetric. '**' indicates the serious size inflation, and thus the corresponding power is irrelevant (`---').}
\label{tab:ht_asym}
\end{table}

 \begin{figure}[htbp!]
\includegraphics[width =0.48\textwidth]{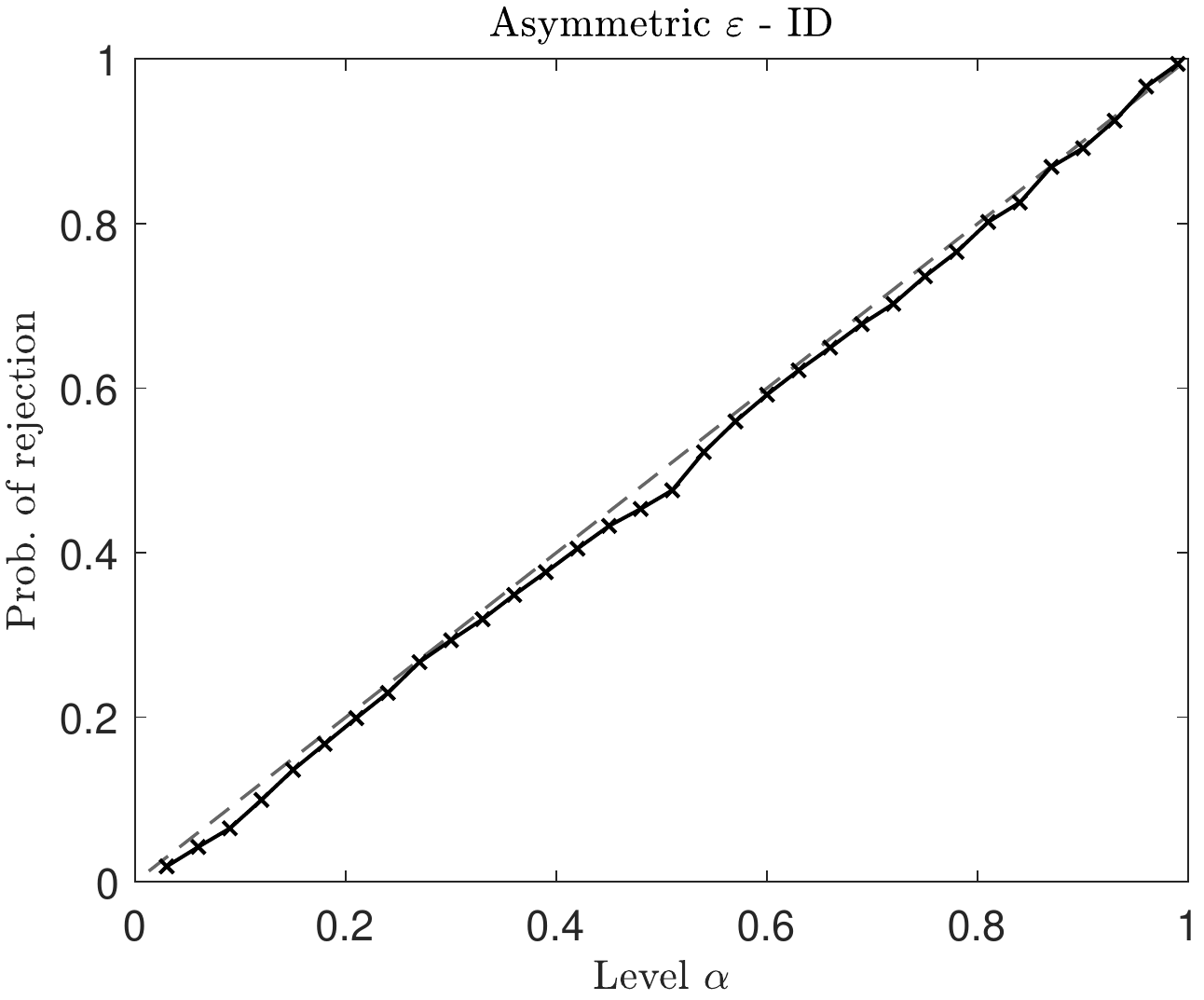} 
\includegraphics[width =0.48\textwidth]{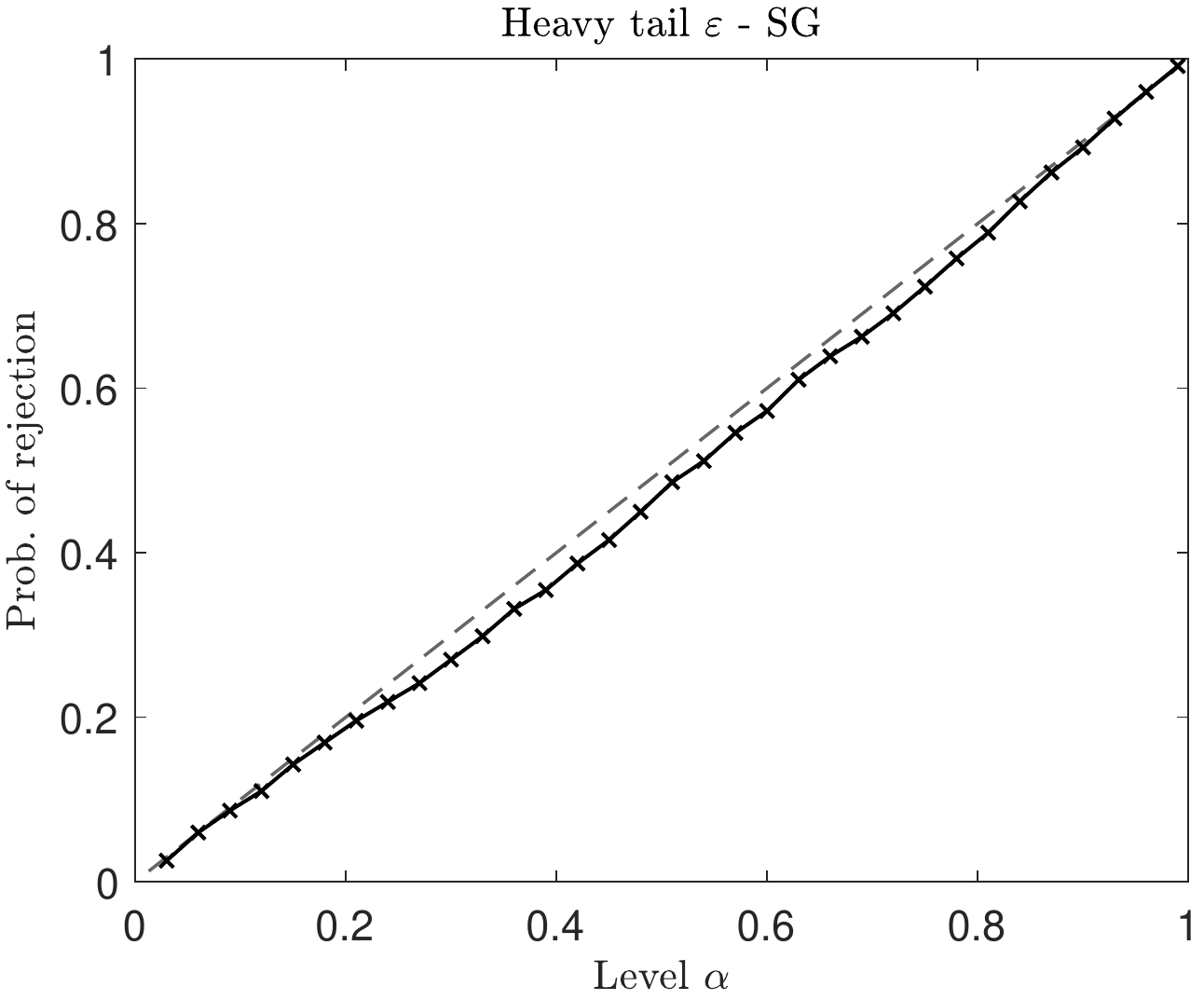} 
\caption{The x axis is prescribed level. The dashed line is $y=x$, and the solid line with crosses is the actual probability of rejection under a linear regression function.} 
\label{fig:asym_ht_bootstrap}
\end{figure}

\subsection{Size validity - Gaussian noise}

In Table \ref{tab:sv_d3}, \ref{tab:d4_poly} and \ref{tab:sv_d2_n1500}, we list the size for different bandwidth $b_n$ and error variance $\sigma^2$ at levels $5\%$ and $10\%$ for $d = 3$ and $d = 4$ (the column with $\kappa_0 = 1$), as well as $d = 2, n = 1500$. 
In Figure \ref{fig:d3_d4}, we also show the probability of rejection  over all levels ranging from $(0, 1)$.
Similar to $d = 2$,  the proposed procedure is consistently on the conservative side.

\begin{table}[htbp!]
\begin{tabular}{cccccccc}
\hline
$d=3$         & \multicolumn{3}{c}{$n=500$}       &  & \multicolumn{3}{c}{$n=1000$}       \\ \cline{2-4} \cline{6-8}
$\sigma =0.1$ & $b_n=0.7$ & $b_n=0.65$ & $b_n=0.6$ &  & $b_n=0.65$ & $b_n=0.6$ & $b_n=0.55$ \\ \hline
Level = 5\%     & 2.5       & 2.3        & 1.8       &  & 3.2        & 2.8       & 2.5        \\ 
Level = 10\%    & 6.3       & 4.8        & 3.3       &  & 7.1        & 7.1       & 5.2        \\ \hline
              &           &            &           &  &            &           &            \\ 
$\sigma =0.2$    & $b_n=0.7$ & $b_n=0.65$ & $b_n=0.6$ &  & $b_n=0.65$ & $b_n=0.6$ & $b_n=0.55$ \\ \hline
Level = 5\%     & 2.9       & 1.4        & 1.2       &  & 3.5        & 3.8       & 1.8        \\ 
Level = 10\%    & 6.8       & 4.3        & 4.1       &  & 7.4        & 8.1       & 5.5        \\ \hline
\end{tabular}

\vspace{0.3cm}
\begin{tabular}{cccc}
\hline
$d=3, n =1500, \sigma= 0.2$ & $b_n =0.6$ & $b_n =0.55$ & $b_n =0.5$ \\ \hline
Level = 5 \%                  & 4.0        & 3.4        & 2.1        \\ \hline
Level = 10\%                  & 8.2        & 7.2        & 7.1        \\ \hline
\end{tabular}

\caption{Size validity using $\cH^{\text{sg}}$ for $d=3$ under the Gaussian noise. The sizes are in the unit of percentage.}
\label{tab:sv_d3}
\end{table}

\begin{table}[htbp]
\begin{tabular}{cccc}
\hline
$\kappa_0$      & 1 (size)  & 1.2  & 1.5  \\ \hline
Level $5\%$  & 3.4 & 23.6 & 83.3 \\ \hline
Level $10\%$ & 6.8 & 37.7 & 95.1 \\ \hline
\end{tabular}
\caption{The rejection probability using $\cH^{\text{sg}}$ (in percentage) at level $5\%$ and $10\%$ for $d = 4$, $n =2000$, $b_n = 0.7$, Gaussian noise $N(0,0.2^2)$, and polynomial regression function \eqref{alt:poly}. Note that $\kappa_0 = 1$ corresponds to a linear function.}
\label{tab:d4_poly}
\end{table}

\begin{table}[htbp!]
\begin{tabular}{cccccccc}
\hline
$d =2 , n=1500$       & \multicolumn{3}{c}{Level 5\%}        &  & \multicolumn{3}{c}{Level 10\%}       \\ \cline{2-4} \cline{6-8}
$\sigma = 0.2$ & $b_n=0.45$ & $b_n=0.4$ & $b_n=0.35$ &  & $b_n=0.45$ & $b_n=0.4$ & $b_n=0.35$ \\ \hline
ID             & 3.9        & 3.6       & 3.1        &  & 8.4        & 7.4       & 6.9        \\ \hline
SG             & 4.0        & 3.9       & 2.6        &  & 7.4        & 8.6       & 7.3        \\ \hline
\end{tabular}
\caption{Size validity using $\cH^{\text{sg}}$ for $d=2, n= 1500$ under the Gaussian noise. The sizes are in the unit of percentage.}
\label{tab:sv_d2_n1500}
\end{table}

 \begin{figure}[htbp!]
\includegraphics[width =0.48\textwidth]{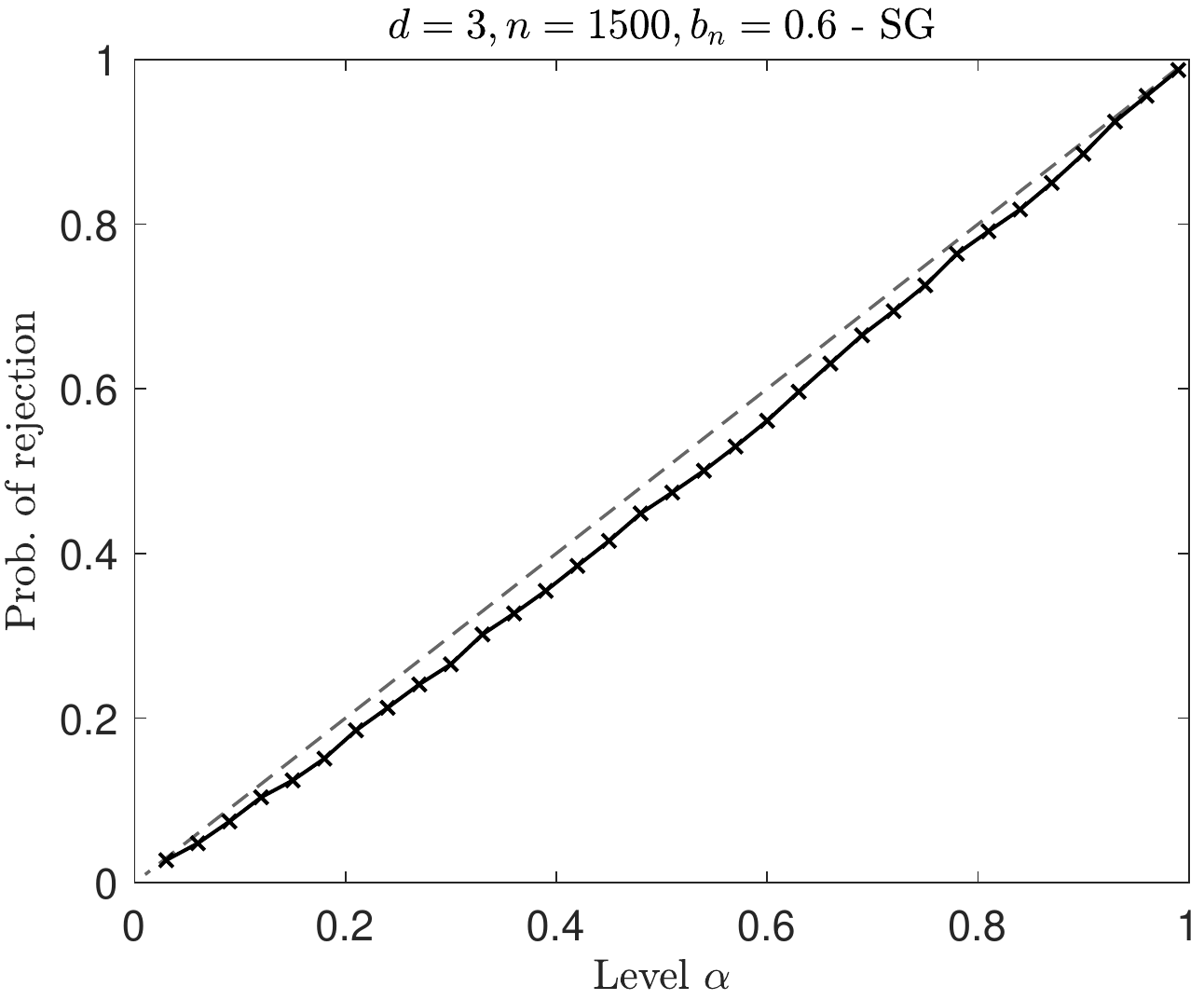} 
\includegraphics[width =0.48\textwidth]{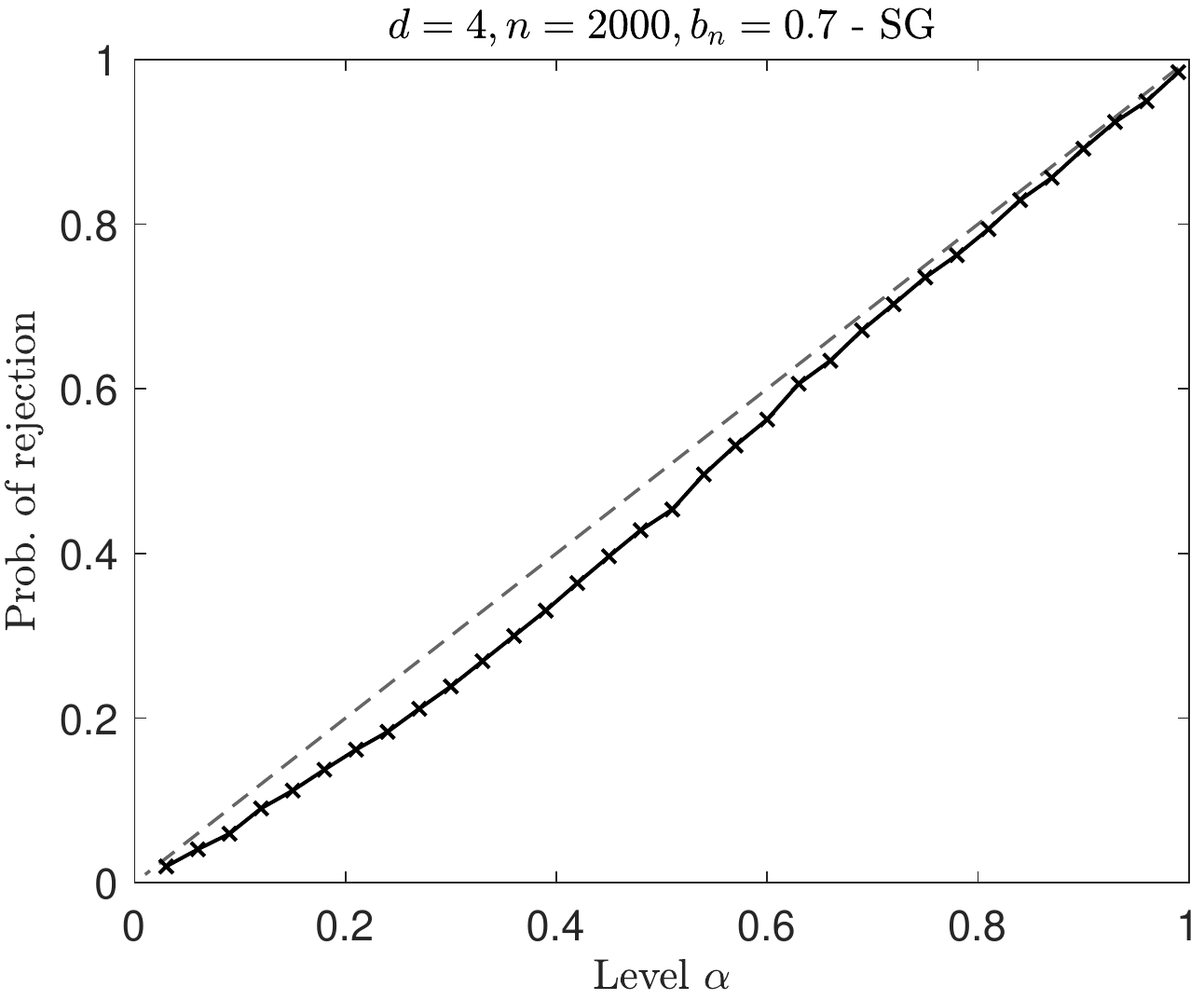} 
\caption{The x axis is prescribed level. The dashed line is $y=x$, and the solid line with crosses is the actual rejection probability    using $\cH^{\text{sg}}$ under a linear regression function and $\varepsilon \sim N(0,0.2^2)$.} 
\label{fig:d3_d4}
\end{figure}

\subsection{Power - Gaussian Noise}
For the polynomial regression functions \eqref{alt:poly}, in Table \ref{tab:d3_power} and \ref{tab:d4_poly}, we list the power using $\cH^{\text{sg}}$ for $d = 3$ and $d = 4$. In Table \ref{tab:d3_power}, the power for locally convex regression functions \eqref{alt:local_conv} and $d=3$ is also listed.


\begin{table}[htbp!]
\subfloat[Polynomial $f$ \eqref{alt:poly} for  $\cH^{\text{sg}}$, varying $\kappa_0$, and $\varepsilon \sim N(0,\sigma^2)$ ]{
\begin{tabular}{ccccccc}
\hline
$d=3$                   &  & \multicolumn{2}{c}{Level 5\%}           &  & \multicolumn{2}{c}{Level 10\%}          \\ 
\cline{2-4} \cline{5-7} 
$(\kappa_0, \sigma) \; /\; (n, b_n)$                      &  & $(500, 0.7)$ & $(1000, 0.65)$ &  & $(500, 0.7)$ & $(1000, 0.65)$ \\ \hline
$(1.2, 0.1)$ &  & 29.7              & 50.4                &  & 51.5              & 73.7                \\ 
$(1.2, 0.2)$  &  & 9.5               & 18.2                &  & 22.6              & 34.1                \\ 
$(1.5,  0.1)$ &  & 91.4              & 100                 &  & 98.5              & 100                 \\ 
$(1.5, 0.2)$  &  & 38.7              & 65.8                &  & 61.2              & 84.7                \\ \hline
\end{tabular}
}\\
\subfloat[
Locally convex $f$ \eqref{alt:local_conv} 
for  $\cH^{\text{sg}}$, varying $(c_2, \omega_2)$, and $\varepsilon \sim N(0,0.2^2)$.
]{
\begin{tabular}{ccccc}
\hline
$d = 3, n =1500, b_n = 0.55$            & \multicolumn{2}{c}{Level 5\%}       & \multicolumn{2}{c}{Level 10\%}      \\ \cline{2-5} 

            & $\omega_2 = 0.15$ & $\omega_2 = 0.2$ & $\omega_2 = 0.15$ & $\omega_2 = 0.2$ \\ \hline
$c_2 = 0.2$ & 12.0              & 12.6             & 21.0              & 17.9             \\
$c_2 = 0.3$ & 35.3              & 32.7             & 46.5              & 44.5             \\ 
$c_2 = 0.4$ & 64.4              & 60.0             & 73.9              & 69.2             \\ 
$c_2 = 0.5$ & 79.7              & 82.4             & 86.6              & 88.1             \\ 
$c_2 = 0.6$ & 89.9              & 91.1             & 95.0              & 94.2             \\ \hline
\end{tabular}
}

\caption{The rejection probability  using $\cH^{\text{sg}}$ (in percentage) at level $5\%$ and $10\%$ for $d = 3$.}
\label{tab:d3_power}
\end{table}

\section{Proofs and discussions for concavity test}

\subsection{Proof of Theorem \ref{thm:concavity_master} - Identity kernel}
\label{proof:thm:id_kernel}
We will prove Theorem \ref{thm:concavity_master} for $\cH^{\text{id}}$ under the following condition \ref{C:id_noise}, instead of \ref{C:id_noise_prime}, and apply Theorem \ref{thm:GAR_sup_incomplete_Uproc} and  \ref{thm:uproc_bootstrap}, instead of Theorem \ref{thm:calibration_inf}. \\

\noindent \mylabel{C:id_noise}{(C6-id)}. 
Assume that 
$\sup_{v \in \cV^{b_n}} |f(v)| \leq C_0$, and that for some $\beta > 0$,
\begin{equation}
\label{equ:id_noise}
\inf_{v \in \cV^{b_n}} \Var\left(\varepsilon \vert V = v
\right) \geq 1/C_0, \qquad
\sup_{v \in \cV^{b_n}} \|\epsilon\|_{\psi_{\beta} \vert V = v} \leq C_0.
\end{equation}

\begin{remark}
Clearly, \ref{C:id_noise_prime} implies \ref{C:id_noise}, since \ref{C:id_noise} only requires $\varepsilon$ to have a light tail, instead of being bounded. Further, under the condition \ref{C:id_noise}, 
in Theorem \ref{thm:concavity_master}, Corollary \ref{cor:size_valid} and Corollary \ref{cor:power}, 
the constant $C$  may thus depend on $\beta$.
\end{remark}

\begin{proof}[Proof of Theorem \ref{thm:concavity_master} under \ref{C:id_noise} - Identity kernel] 
First, let
\begin{align}\label{q:id_kernel}
q := \max\left\{ \frac{4r}{\kappa}, \;\;
2 + \frac{2C_0 + 2}{\kappa \wedge \kappa'}, \;\; \frac{2 + C_0}{3}, \;\; 6
\right\} + 1.
\end{align}
By \ref{C:M} and \ref{C:bn}, $M \leq n^{C_0}$ and 
$b_n^{-d/2} \leq C_0 n^{(1-1/C_0)/3} \leq C_0 n^{1/3}$.
Then due to Theorem \ref{thm:GAR_sup_incomplete_Uproc}, Theorem \ref{thm:uproc_bootstrap}, and the definition of $q$ in \eqref{q:id_kernel}, it suffices to verify that \eqref{MT1}-\eqref{MT5} holds with  above $q$, and 
\begin{align}\label{concav_B_D}
 D_n :=  C b_n^{-d/2}, \; B_n :=  C b_n^{-dr/2}, \;
 K_n \leq C \log(n),
\end{align}
in addition to  \ref{cond:PM}, \ref{cond:VC}, \ref{cond:MB}, and \eqref{MT0}.



Due to \ref{C:kernel}  and \ref{C:id_noise}, 
we consider the envelope function(s) $H_n: \bR^{(d+1)\times r} \to \bR$ for $\cH^{\text{id}}$:
\begin{align} \label{equ:H}
\begin{split}
H_n(x_1,\ldots,x_r) := &\left(2C_0 + \sum_{i=1}^{r}|y_i - f(v_i)|  \right) C_0^r \times\\ 
&b_n^{-d(r-1/2)} \prod_{1 \leq i < j \leq r}  \mathbbm{1} \left\{ \frac{|v_i - v_{j}|}{b_n}  \in (-1, 1)^d \right\}
\prod_{i=1}^{r} \mathbbm{1}\left\{ v_i \in \cV^{b_n}
\right\},
\end{split}
\end{align}
where  for $i \in [r]$, $x_i := (v_i,y_i)$ with $v_i \in \bR^d$ and $y_i \in \bR$.\\

\noindent \underline{Verify \ref{cond:PM}, \ref{cond:VC}, \ref{cond:MB}, and \eqref{MT0}.}
By \cite[Proposition 3.6.12]{gine2016mathematical}, if $L(\cdot)$ is of bounded variation (see \ref{C:kernel}),  $(\cH^{\text{id}}, H_n)$ is a VC type class for  some absolute constants $(A,\nu)$, which also implies that $K_n \leq C_0 \log(n)$. The conditions \ref{cond:PM}, \ref{cond:MB}, and \eqref{MT0} are satisfied due to \ref{C:kernel}, \ref{C:M}, and \ref{C:non_degenerate} respectively.\\

\noindent \underline{Verify the bounds involving $H_n$ in \eqref{MT1}-\eqref{MT5}.} 
Due to \ref{C:id_noise}, for $q$ in \eqref{q:id_kernel} and $1 \leq s \leq 4$,
there exists a constant $C$ depending on $\beta, C_0, r, \kappa, \kappa'$ such htat
\begin{align}\label{aux:bdd_moments}
\Exp\left[ |\epsilon|^t \vert V = v \right] \leq C, \text{ for any } v \in \cV^{b_n}, \text{ and } t \leq 4q.
\end{align}
Then for $1 \leq s \leq 4$  and  $1 \leq \ell \leq r$, due to \ref{C:X},
\begin{align*}
&P^{r-\ell}H_n^s  \lesssim 
(1 + \sum_{i=1}^{\ell}|\varepsilon_i|^s) b_n^{-sd(r-1/2)} \;\;\times \\
&\int  \prod_{1\leq i < j \leq r}  \mathbbm{1}\left\{ \frac{|v_i - v_{j}|}{b_n}  \in (-1, 1)^d \right\} \prod_{i=1}^{r} \mathbbm{1}\{v_i \in \cV^{b_n}\}p(v_i)
d{v_{\ell+1}} \ldots d{v_r} \\
& \lesssim (1 + \sum_{i=1}^{\ell}|\varepsilon_i|^s)  b_n^{-sd(r-1/2)} b_n^{d (r-\ell)} \prod_{1 \leq i < j \leq \ell} \mathbbm{1}\left\{ \frac{|v_i - v_{j}|}{b_n}  \in (-1, 1)^d \right\}\prod_{i=1}^{\ell} \mathbbm{1}\{v_i \in \cV^{b_n}\}.
\end{align*}
Now again due to \eqref{aux:bdd_moments} and \ref{C:X}, for $1 \leq s \leq 4$  and  $1 \leq \ell \leq r$,
\begin{align*}
&\| P^{r-\ell}H_n^s \|_{P^{\ell}, q}
 \lesssim b_n^{- sd(r-1/2)} b_n^{ d (r-\ell)} b_n^{d(\ell - 1)/q}
 =  b_n^{-d\left(
r(s-1) + \ell(1-1/q) -(s/2-1/q)
\right)}, \\
&\| P^{r-\ell}H_n^s \|_{P^{\ell}, 2}
 \lesssim b_n^{- sd(r-1/2)} b_n^{ d (r-\ell)} b_n^{d(\ell - 1)/2}
 =  b_n^{-d\left(
r(s-1) + \ell/2 -(s/2-1/2)
\right)}.
\end{align*}
Recall in \eqref{concav_B_D} that $D_n =  C b_n^{-d/2}$ and $B_n =  C b_n^{-dr/2}$.
Then the bounds involving $H_n$ in \eqref{MT1}-\eqref{MT5} are verified, except for $\|(P^{r-2}H)^{\bigodot 2}(x_1, x_2)\|_{P^2,q/2}$ in \eqref{MT5}, on which we now focus. 
With $\ell = 2, s = 1$, we have
\begin{align*}
&(P_n^{r-2}H)^{\bigodot 2}(X_1, X_2) \lesssim  \\
&(1 + |\varepsilon_1|  + |\varepsilon_2|) b_n^{-2d(r-1/2)} b_n^{2d(r-2)}  \mathbbm{1}\{V_1 \in \cV^{b_n}\} 
 \mathbbm{1}\{V_2 \in \cV^{b_n}\} 
 \;\times \\
&\int
\mathbbm{1}\left\{ \frac{|v_3 - V_1|}{b_n} \in (-1,1)^d\right\} \left\{ \frac{|v_3 - V_2|}{b_n} \in (-1,1)^d\right\} 
 \mathbbm{1}\{v_3 \in \cV^{b_n}\} 
p(v_3) dv_3 \\
&\lesssim 
(1 + |\varepsilon_1| + |\varepsilon_2|) b_n^{-2d}  \mathbbm{1}\left\{ \frac{|V_1 - V_2|}{b_n} \in (-2,2)^d\right\} 
\mathbbm{1}\{V_1 \in \cV^{b_n}\} 
 \mathbbm{1}\{V_2 \in \cV^{b_n}\},
\end{align*}
which implies that $\|(P^{r-2}H)^{\bigodot 2}(x_1, x_2)\|_{P^2,q/2} \leq D_n^{4-4/q}$.\\

\noindent \underline{Verify the \textbf{upper} bounds involving $\{h_v^{\text{id}}: v \in \cV \}$ in \eqref{MT1}-\eqref{MT5}.} We will write $h_v$ for $h_v^{\text{id}}$ to simplify notations. Due to \eqref{aux:bdd_moments}, \ref{C:X}, and \ref{C:id_noise}, for $1 \leq s \leq 4$ ,  $0 \leq \ell \leq r$, and 
$v \in \cV$,
\begin{align*}
&P^{r-\ell}|h_v|^s \lesssim (1 + \sum_{i=1}^{\ell}|\varepsilon_i|^s) b_n^{-sd(r-1/2)} b_n^{d(r-\ell)}
\prod_{i=1}^{\ell} \left|L\left(\frac{v - V_i}{b_n} \right) \right|^s,
\end{align*}
which  implies that for $q' \in \{2,3,4,q\}$,
\begin{align*}
&\|P^{r-\ell}|h_v|^s\|_{P^{\ell},q'} \lesssim   b_n^{-sd(r-1/2)} b_n^{d(r-\ell)} b_n^{d\ell/q'} = b_n^{-d\left(r(s-1) + \ell(1-1/q') - s/2 \right)}.
\end{align*}
Recall in \eqref{concav_B_D} that $D_n =  C b_n^{-d/2}$ and $B_n =  C b_n^{-dr/2}$. Then the upper bounds involving $\{h_v^{\text{id}}: v \in \cV \}$ in \eqref{MT1}-\eqref{MT5} are verified.\\

\noindent \underline{Verify the \textbf{lower} bound in \eqref{MT4}.} By definition of $w(\cdot)$ in \eqref{def:id_kernel} and due to \ref{C:id_noise},
\begin{align*}
& \Var\left( h_v(X_1^r) \,\vert\, V_1^r\right) 
= b_n^{-2d(r-1/2)} \prod_{i=1}^{r} L^2\left(\frac{v - V_i}{b_n}\right) \times \\ &\sum_{j=1}^{r} \mathbbm{1}\left\{V_1^r \in \calS_j \right\} 
\Exp\left[
\left( \sum_{ i \in [r]\setminus\{j\}}  \tau^{(j)}_i(v_1^r)\, \varepsilon_i - \varepsilon_j \right)^2 \;\vert \; V_1^r\right] \\
&\geq  \frac{1}{C_0} b_n^{-2d(r-1/2)} \prod_{i=1}^{r} L^2\left(\frac{v - V_i}{b_n}\right)  \mathbbm{1}\left\{V_1^r \in \calS \right\}.
\end{align*}
Then for $v \in \cV$, due to \ref{C:X} and $\{(v_1,\ldots,v_r) \in \calS\} = \{ (\frac{v-v_1}{b_n},\ldots,  \frac{v-v_r}{b_n}) \in \calS\}$, we have
\begin{align*}
&\Var\left( h_v(X_1^r) \right) \geq 
\Exp\left[ \Var\left( h_v(X_1^r) \,\vert\, V_1^r\right) \right] \\
& \geq 
\frac{1}{C_0} b_n^{-2d(r-1/2)} b_n^{dr} \int  \prod_{i=1}^{r} L^2\left( u_i\right)  \mathbbm{1}\left\{u_1^r \in \calS \right\}
\prod_{i=1}^{r} p\left( v - b_n u_i\right) du_1\ldots du_r \\
& \geq 
\frac{1}{C_0^{r+1}} b_n^{-dr + d} \int  \prod_{i=1}^{r} L^2\left( u_i\right)  \mathbbm{1}\left\{u_1^r \in \calS\right\}
 du_1\ldots du_r,
\end{align*}
which verifies the lower bound in \eqref{MT4} due to \ref{C:kernel} and the definitions of $B_n, D_n$ in \eqref{concav_B_D}.
\end{proof}

\subsection{Proof of Theorem \ref{thm:concavity_master} - Sign kernel}
\label{proof:thm:sg_kernel}
We will prove Theorem \ref{thm:concavity_master} for $\cH^{\text{sg}}$ under the following condition \ref{C:sg_noise}, instead of \ref{C:sg_noise_prime}.\\

\noindent \mylabel{C:sg_noise}{(C6-sg)}. Assume that 
$|f(v) - f(v')| \leq 1/C_0$ for $v,v' \in \cV^{b_n}$ such that $ \|v - v'\|_\infty \leq 1/C_0$; $|b_n| \leq 1/C_0$ for $n \geq C_0$; for each $v \in \cV^{b_n}$, 
conditional on $V = v$,   $\varepsilon$ has a symmetric distribution, i.e., $\Pro\left( \epsilon > t \vert V = v \right) = \Pro\left( \epsilon < -t \vert V = v \right)$ for any $t > 0$, and that
\begin{equation}
\label{equ:sg_noise}
\inf_{v \in \cV^{b_n}} \Pro\left( |\varepsilon| \geq C_0^{-1} \vert V = v
\right) \geq C_0^{-1}.
\end{equation}

\begin{remark}
Although \ref{C:sg_noise_prime} does not implies \ref{C:sg_noise}, the only difference is in the proof of Lemma \ref{lemma:sg_gamma_B_lower}, which follows from similar but simpler arguments if we work with the condition \ref{C:sg_noise_prime}.
\end{remark}

\begin{proof}[Proof of Theorem \ref{thm:concavity_master} under  \ref{C:sg_noise_prime} - Sign kernel] 
Now we focus on the class $\cH^{\text{sg}}$ and apply Theorem \ref{thm:calibration_inf}. Due to \ref{C:kernel}, 
we consider the envelope function(s) $H_n: \bR^{(d+1)\times r} \to \bR$ for $\cH^{\text{sg}}$:
\begin{align*} 
\begin{split}
H_n(x_1,\ldots,x_r) := C_0^r b_n^{-d(r-1/2)} \prod_{1 \leq i < j \leq r}  \mathbbm{1} \left\{ \frac{|v_i - v_{j}|}{b_n}  \in (-1, 1)^d \right\}
\prod_{i=1}^{r} \mathbbm{1}\left\{ v_i \in \cV^{b_n}
\right\},
\end{split}
\end{align*}
where  for $i \in [r]$, $x_i := (v_i,y_i)$ with $v_i \in \bR^d$ and $y_i \in \bR$. The other conditions in Theorem \ref{thm:calibration_inf}, except for the lower bound in \eqref{MT4} (about $\Var(h(X_1^r))$), can be verified in the same way as for $\cH^{\text{id}}$ (see Subsection \ref{proof:thm:id_kernel}) with $D_n = C b_n^{-d/2}$; since $H_n$ and $\{h_v^{\text{sg}} : v \in \cV\}$ are bounded, the arguments are simpler, and thus omitted. Now we focus on verifying the lower bound in \eqref{MT4}, and we will write $h_v$ for $h_v^{\text{sg}}$ to simplify notations.

By Lemma \ref{lemma:sg_gamma_B_lower} (ahead), there exists a constant $C$ only depending on $C_0$ such that for any $(u_1,\ldots,u_r) \in (-1/2,1/2)^{r} \cap \calS$ and $v \in \cV$, 
\begin{align*}
\textup{Var}\left( \left. w(X_1,\ldots, X_r) \; \right\vert \; V_1 = v - b_n u_1, \;\ldots,\; V_r = v - b_n u_r
\right) \geq C^{-1}.
\end{align*}

By definition of $w(\cdot)$ in \eqref{def:id_kernel},
\begin{align*}
& \Var\left( h_v(X_1^r) \vert V_1^r\right) 
= b_n^{-2d(r-1/2)} \prod_{i=1}^{r} L^2\left(\frac{v - V_i}{b_n}\right)
\textup{Var}\left( \left. w(X_1,\ldots, X_r) \; \right\vert \; V_1^r \right)
\end{align*}
Thus due to \ref{C:sg_noise}, we have
\begin{align*}
&\Var\left( h_v(X_1^r) \right) \geq 
\Exp\left[ \Var\left( h_v(X_1^r) \vert V_1^r\right) \right] = b_n^{-2d(r-1/2)} b_n^{dr} \times \\
&\int 
\left( \prod_{i=1}^{r} L^2\left(u_i\right) p(v - b_n u_i) \right)
\textup{Var}\left( \left. w(X_1,\ldots, X_r) \; \right\vert \; V_i = v - b_n u_i \text{ for } i \in [r] \right) du_1\ldots du_r \\
& \geq C_0^{-r} b_n^{-dr + d} C^{-1}  
\int  \left( \prod_{i=1}^{r} L^2\left(u_i\right) \right) \mathbbm{1}\{u_1^r \in \calS\} du_1\ldots du_r,
\end{align*}
which verifies the lower bound in \eqref{MT4} due to \ref{C:kernel}.
\end{proof}


\begin{lemma} \label{lemma:sg_gamma_B_lower}
Assume \ref{C:sg_noise} holds. There exists a constant $C$ only depending on $C_0$ such that for any $(u_1,\ldots,u_r) \in (-1/2,1/2)^{r} \cap \calS_r$ and $v \in \cV$, 
\begin{align*}
\textup{Var}\left( \left. \textup{sign}\left(\sum_{i=1}^{r-1} \tau_i^{(r)}(u_1^r) Y_i  - Y_r \right) \; \right\vert \; V_1 = v - b_n u_1, \;\ldots,\; V_r = v - b_n u_r
\right) \geq C^{-1}.
\end{align*}
\end{lemma}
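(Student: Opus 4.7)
The plan is to reduce the conditional variance bound to matching lower bounds on $\Pro(W > 0 \mid V_1^r)$ and $\Pro(W < 0 \mid V_1^r)$, where $W := w(X_1, \ldots, X_r)$ and $V_i = v - b_n u_i$. Writing $p_{+} := \Pro(W > 0 \mid V_1^r)$ and $p_{-} := \Pro(W < 0 \mid V_1^r)$, a direct expansion gives
\[
\mathrm{Var}\bigl(\mathrm{sign}(W) \mid V_1^r \bigr) = (p_{+} + p_{-}) - (p_{+}-p_{-})^2 = p_{+}(1-p_{+}) + p_{-}(1-p_{-}) + 2 p_{+} p_{-} \geq 2 p_{+} p_{-},
\]
so it suffices to prove $\min(p_{+},p_{-}) \geq c(C_0)$ for some positive constant.

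Conditional on $V_1^r$, I plan to decompose $W = \mu + \xi$ into a deterministic part $\mu := \sum_{i=1}^{r-1} \tau_i^{(r)}(u_1^r) f(V_i) - f(V_r)$ and a stochastic part $\xi := \sum_{i=1}^{r-1} \tau_i^{(r)}(u_1^r) \varepsilon_i - \varepsilon_r$. Since $u_i \in (-1/2,1/2)^d$ and $b_n \leq 1/C_0$ by \ref{C:sg_noise}, we have $\|V_i - V_j\|_\infty \leq b_n \leq 1/C_0$ for all $i,j$, so the Lipschitz condition in \ref{C:sg_noise} together with $\sum_{i=1}^{r-1} \tau_i^{(r)} = 1$ yields $|\mu| \leq 1/C_0$. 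I further split $\xi = Z - \varepsilon_r$ with $Z := \sum_{i=1}^{r-1} \tau_i^{(r)} \varepsilon_i$ and observe that, conditional on $V_1^r$, $Z$ and $\varepsilon_r$ are independent, each symmetric about zero (the former as a weighted sum of conditionally independent symmetric variables). Because $|\mu| \leq 1/C_0$, a lower bound $\Pro(\xi > 1/C_0 \mid V_1^r) \geq c$ immediately yields $p_{+} \geq c$, and the matching bound for $p_{-}$ follows by applying the same argument to $-\xi$.

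The key inequality to establish is
\[
\Pro(\xi > 1/C_0 \mid V_1^r) \geq \Pro(Z > 0 \mid V_1^r) \cdot \Pro(\varepsilon_r \leq -1/C_0 \mid V_r) \geq \frac{1}{2C_0} \cdot \frac{1}{2 C_0} = \frac{1}{4 C_0^2},
\]
where the second factor uses symmetry of $\varepsilon_r$ together with \eqref{equ:sg_noise}, and the first uses symmetry of $Z$ in the form $\Pro(Z > 0 \mid V_1^r) = \tfrac{1}{2}(1 - \Pro(Z = 0 \mid V_1^r))$ together with the uniform atom bound
\[
\sup_{t \in \bR} \Pro(Z = t \mid V_1^r) \leq 1 - 1/C_0.
\]
Establishing this uniform atom bound is the main obstacle: the naive splitting via conditional independence produces only the nonstrict inequality $\xi \geq 1/C_0$, and the strict version $\xi > 1/C_0$ can fail on the diagonal atom $\{Z = 0,\; \varepsilon_r = -1/C_0\}$, which is precisely what must be ruled out in the worst case $\mu = -1/C_0$.

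To handle this I plan to pick $j^{*} \in \argmax_{i \in [r-1]} \tau_i^{(r)}$, so that $\tau_{j^{*}}^{(r)} \geq 1/(r-1) > 0$, and to condition on the remaining noise variables $\{\varepsilon_i\}_{i \neq j^{*}}$; then $\{Z = t\}$ reduces to $\{\varepsilon_{j^{*}} = t'\}$ for a value $t'$ that is deterministic under the conditioning. For each fixed $t' \in \bR$, the bound $\Pro(\varepsilon_{j^{*}} = t' \mid V_{j^{*}}) \leq 1 - 1/C_0$ holds in both cases: if $t' = 0$, it follows directly from $\Pro(|\varepsilon_{j^{*}}| \geq 1/C_0 \mid V_{j^{*}}) \geq 1/C_0$ in \eqref{equ:sg_noise}; if $t' \neq 0$, symmetry of $\varepsilon_{j^{*}}$ about zero forces $\Pro(\varepsilon_{j^{*}} = t' \mid V_{j^{*}}) \leq 1/2 \leq 1 - 1/C_0$, provided we assume $C_0 \geq 2$ without loss of generality. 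Combining these estimates yields $\min(p_{+}, p_{-}) \geq 1/(4 C_0^2)$ and hence $\mathrm{Var}(\mathrm{sign}(W) \mid V_1^r) \geq 1/(8 C_0^4) =: C^{-1}$, completing the proof.
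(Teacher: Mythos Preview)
Your proof is correct and follows the same overall skeleton as the paper's: bound the deterministic shift $|\mu|\le 1/C_0$ via the Lipschitz condition, then show $\Pro(\xi>1/C_0\mid V_1^r)$ and $\Pro(\xi<-1/C_0\mid V_1^r)$ are bounded below by a constant depending only on $C_0$. The difference lies in how the noise sum $Z=\sum_{i<r}\tau_i^{(r)}\varepsilon_i$ is handled. The paper simply uses the product event $\{\varepsilon_1\le 0,\ldots,\varepsilon_{r-1}\le 0,\ \varepsilon_r\ge 1/C_0\}$, on which $\xi\le -1/C_0$; by symmetry and independence this has probability at least $(1/2)^{r-1}\cdot 1/(2C_0)=1/(2^{r}C_0)$, so no atom argument is needed but the constant depends on $r$. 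You instead exploit the symmetry of $Z$ to write $\Pro(Z>0)=\tfrac12(1-\Pro(Z=0))$ and then bound the atom $\Pro(Z=0)$ via conditioning on all but the largest-weight $\varepsilon_{j^*}$; this yields the $r$-free constant $1/(4C_0^2)$ at the price of a short extra argument. Both routes are valid.

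One further point in your favor: your variance identity $\mathrm{Var}(\mathrm{sign}\,W)=(p_++p_-)-(p_+-p_-)^2\ge 2p_+p_-$ is the correct general formula. The paper writes $\mathrm{Var}(\mathrm{sign}\,W)=1-(2p_+-1)^2$, which tacitly assumes $\Pro(W=0)=0$; since \ref{C:sg_noise} does not force $\varepsilon$ to be atomless, your expansion is the cleaner one.
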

\begin{proof}
Fix any $v \in \cV$ and  $(u_1,\ldots,u_r) \in (-1/2,1/2)^{r} \cap \calS_r$. Note that
\begin{align*}
&\textup{Var}\left( \left. \textup{sign}\left(\sum_{i=1}^{r-1} \tau_i^{(r)}(u_1^r) Y_i  - Y_r \right) \; \right\vert \; V_k = v - b_n u_k, \text{ for } k \in [r]
\right) \\
= & 1 - \left(2\Pro\left( \left. \sum_{i=1}^{r-1} \tau_i^{(r)}(u_1^r) Y_i  - Y_r > 0 \; \right\vert \; V_k = v - b_n u_k, \text{ for } k \in [r] \right)- 1 \right)^2.
\end{align*}
Since $Y_i = f(v - b_n u_i) + \varepsilon_i$ for $i \in [r]$, due to \ref{C:sg_noise}, for $n \geq C_0$,
\begin{align*}
&\Pro\left( \left. \sum_{i=1}^{r-1} \tau_i^{(r)}(u_1^r) Y_i  - Y_r > 0 \; \right\vert \; V_k = v - b_n u_k, \text{ for } k \in [r] \right) \\
\leq & 
\Pro\left( \left. \sum_{i=1}^{r-1} \tau_i^{(r)}(u_1^r) \varepsilon_i  - \varepsilon_r > -1/C_0 \; \right\vert \; V_k = v - b_n u_k, \text{ for } k \in [r]\right) \\
= & 1 - \Pro\left( \left. \sum_{i=1}^{r-1} \tau_i^{(r)}(u_1^r) \varepsilon_i  - \varepsilon_r \leq -1/C_0 \; \right\vert \;  V_k = v - b_n u_k, \text{ for } k \in [r] \right) \\
\leq & 1 - \left(\prod_{i=1}^{r-1} \Pro\left( \epsilon_i \leq 0 \vert V_i = v - b_n u_i\right)  \right) \Pro\left(\epsilon_r \geq 1/C_0 \vert V_r = v - b_n u_r
\right)  \leq  1 - \frac{1}{2^r C_0}.
\end{align*}
By a similar argument, $\Pro\left( \left. \sum_{i=1}^{r-1} \tau_i^{(r)}(u_1^r) Y_i  - Y_r > 0 \; \right\vert \;V_k = v - b_n u_k, \text{ for } k \in [r]\right) \geq \frac{1}{2^r C_0}$. Then the proof is complete.
\end{proof}


%

\subsection{Proofs related to the power of the proposed concavity test} \label{proof:cor:power}

\begin{proof}[Proof of Corollary \ref{cor:power}]  
By Theorem \ref{thm:uproc_bootstrap} (the conditions have been verified in the proof of Theorem \ref{thm:concavity_master}), with probability at least $1 - C n^{-1/C}$,
\[\Pro_{\vert \cD_n'}\left( \widetilde{\bM}_n \geq q_{\alpha}^{\#} \right) > \alpha - C n^{-1/C}.
\]
In the proof of Theorem \ref{thm:concavity_master}, we have shown that
\begin{align}\label{power:aux1}
\sup_{v \in \cV} \left\{ r^2 \gamma_A(h^{*}_{v}) + \alpha_n \gamma_B(h^{*}_{v}) \right\} 
\leq C + C \alpha_n b_n^{-dr + d} 
= C + C n^{1-\kappa} b_n^{d}. 
\end{align}
Then due to Lemma \ref{lemma:WP_size}, 
\begin{align} \label{power:aux2}
\Pro \left(q_{\alpha}^{\#} \geq C K_n^{1/2} (1 + n^{(1-\kappa)/2} b_n^{d/2})\right) \leq Cn^{-1/C}.
\end{align}
Observe that
\begin{align*}
\Pro\left(
\sup_{v \in \cV} \sqrt{n} U_{n,N}'(h^{*}_v) \;\; \geq \;\; q^{\#}_\alpha\right)
\geq \Pro\left(
\sqrt{n} \left(U_{n,N}'(h^{*}_{v_n}) - P^r h^{*}_{v_n} \right) \;\; \geq \;\; q^{\#}_\alpha -\sqrt{n} P^r h^{*}_{v_n} \right).
\end{align*}
By Theorem \ref{thm:GAR_complete_hd_U} with $d = 1$ (the conditions have been verified in the proof of Theorem \ref{thm:concavity_master}), we have
\begin{align*}
\Pro\left(
\sup_{v \in \cV} \sqrt{n} U_{n,N}'(h^{*}_v) \;\; \geq \;\; q^{\#}_\alpha\right)
&\geq \Pro\left(
Y \;\; \geq \;\; q^{\#}_\alpha -\sqrt{n} P^r h^{*}_{v_n} \right) -  Cn^{-1/C},
\end{align*}
where $Y \sim N(0, r^2 \gamma_A(h^{*}_{v_n}) + \alpha_n \gamma_B(h^{*}_{v_n}))$ and is independent of $\cD_n'$. 
Then due to \eqref{power:aux1} and \eqref{power:aux2}, and 
$\kappa'' > (1-\kappa)/2$, we have with probability at least $1 - Cn^{-1/C}$,
\[
\frac{\sqrt{n} P^r h^{*}_{v_n} - q_\alpha^{\#}}{\sqrt{\Var(Y)}} \geq C^{-1} n^{1/C},
\]
which completes the proof.
\end{proof}

\begin{proof}[Proof of Corollary \ref{cor:power_smooth}]  

By the definition of $w(\cdot)$ in \eqref{def:id_kernel} and a change-of-variable, we have
\begin{align}\label{power_exp_formula}
\begin{split}
P^r h_{v_0}^{\text{id}} = b_n^{d/2} \int &\left( \sum_{j=1}^{r} 
\left(\sum_{ i \in [r]\setminus\{j\}}  \tau^{(j)}_i(u_1^r)\, f(v_0 - b_n u_i) - f(v_0 - b_n u_j) \right) \mathbbm{1}\left\{u_1^r \in \calS_{j} \right\} \right) \\
&\prod_{i=1}^{r} L(u_i) p(v - b_n u_i) du_1 \ldots du_r
\end{split}
\end{align}
By Taylor's Theorem, if $\|u\|_{\infty} < 1/2$,
\begin{align*}
f(v_0 - b_n u) = f(v_0) - b_n \nabla^T f(v_0) u 
+ \frac{b_n^2}{2} u^T \nabla^2 f(v_0) u
+ R(u, b_n),  
\end{align*}
where $\nabla f(v_0)$  and $\nabla^2 f(v_0)$ are the gradient and the Hessian matrix of $f$ at $v_0$ respectively, and 
\[
|R(u, b_n)| \leq Cb_n^2 \mathcal{R}(b_n),  \;\; \text{ where }
\mathcal{R}(b_n) :=
\max_{\|\xi\|_{\infty} \leq b_n/2} \|\nabla^2 f(v_0) - \nabla^2 f(v_0 - \xi)\|_{\text{op}},
\]
with $\|\cdot\|_{\text{op}}$ being the operator norm of a matrix.
As a result, due to \ref{C:kernel} and \ref{C:X},
\begin{align*}
P^r h_{v_0}^{\text{id}} \geq & \left( C^{-1}b_n^{2+d/2} \int \left( \sum_{j=1}^{r} 
\left(\sum_{ i \in [r]\setminus\{j\}}  \tau^{(j)}_i(u_1^r)\, u_i^T \nabla^2 f(v_0) u_i - u_j^T \nabla^2 f(v_0) u_j \right) \mathbbm{1}\left\{u_1^r \in \calS_{j} \right\} \right) \right.\\
&\left.\prod_{i=1}^{r} L(u_i) du_1 \ldots du_r \right)
\;-\; C b_n^{2+d/2} \mathcal{R}(b_n).
\end{align*}
Since $f$ is twice continuously differentiable at $v_0$ and $\nabla^2 f(v_0)$ is positive definite, we have $\mathcal{R}(b_n) \to 0$ as $b_n \to 0$, and   
$\liminf_{n \to \infty} P^r h^{\text{id}}_{v_0}/\left(b_n^{2+d/2} \right) > 0$.
\end{proof}
\vspace{0.1cm}

\begin{proof}[Proof of Corollary \ref{cor:power_pieceAffine}] 
Since $f_n$ is a convex function, due to
the condition \ref{C:X} and in view of \eqref{power_exp_formula}, we have $P^r h_{v_n}^{\text{id}}$ is \textit{lower} bounded by
\begin{align*}
C_0^{-1} b_n^{d/2} \int &\left( \sum_{j=1}^{r} 
\left(\sum_{ i \in [r]\setminus\{j\}}  \tau^{(j)}_i(u_1^r)\, f_n(v_n - b_n u_i) - f_n(v_n - b_n u_j) \right) \mathbbm{1}\left\{u_1^r \in \calS_{j} \right\} \right) \prod_{i=1}^{r} L(u_i) d u_i.
\end{align*}
Now for $i \in [r]$, since $f_{n,1}(v_n) = f_{n,2}(v_n)$, we have
$$
f_n(v_n - b_n u_i) = f_n(v_n) - b_n g_n(u_i) + b_n \|\theta_{n,1}-\theta_{n,2}\|_2\; h_n(u_i), 
$$
where
$g_n(u) := \theta_{n,1}^T u$ and 
$h_n(u) := \max \{0,\; (\theta_{n,1}-\theta_{n,2})^T u/\|\theta_{n,1}-\theta_{n,2}\|_2 \}$.

Since $g_n$ is linear,  we have
\begin{align*}
P^r h_{v_n}^{\text{id}} \geq 
C_0^{-1}b_n^{1+d/2} \|\theta_{n,1}-\theta_{n,2}\|_2 \int &\left( \sum_{j=1}^{r} 
\left(\sum_{ i \in [r]\setminus\{j\}}  \tau^{(j)}_i(u_1^r)\, h_n(u_i) - h_n(u_j)\right) \mathbbm{1}\left\{u_1^r \in \calS_{j} \right\} \right) \\&\prod_{i=1}^{r} L(u_i) d u_i.
\end{align*}
Now pick an arbitrary $\theta_* \in \bR^d$ such that 
$\|\theta_*\|_2 = 1$, 
and define
$h_{*}(u) = \max\{0, \theta_*^T u\}$. Then due to rotation invariance,
\begin{align*}
P^r h_{v_n}^{\text{id}} \geq 
C_0^{-1}b_n^{1+d/2} \|\theta_{n,1}-\theta_{n,2}\|_2 \int &\left( \sum_{j=1}^{r} 
\left(\sum_{ i \in [r]\setminus\{j\}}  \tau^{(j)}_i(u_1^r)\, h_*(u_i) - h_*(u_j)\right) \mathbbm{1}\left\{u_1^r \in \calS_{j} \right\} \right) \\&\mathbbm{1}\left\{u_1^2 + \ldots + u_r^2 \leq 1/4 \right\}  \prod_{i=1}^{r} L(u_i) d u_i,
\end{align*}
which completes the proof as the integral does not depend on $n$.
\end{proof}

\subsection{$L_2$ minimax separation rate  and the FS test} \label{subsec:minimax_FS}
We follow the definition of $L_2$ minimax separation rate  in \cite{juditsky2002}. We will ignore logarithmic factors in describing  rates. 

First, consider the univariate case, i.e., $d=1$. Denote $\cC$ the collection of \textit{concave} functions on $S = [0,1]$, and $\cS_{s}$ the collection of H\"{o}lder continuous functions with smoothness parameter $s$ (see \cite{juditsky2002} for precise definitions). For any  $f \in \cS_{s}$, define 
\[
\Phi_2(f) = \inf\left\{\|f-g\|_2:  g \in \cC \cap \cS_s \right\},\;\; \text{ where }\;
\|h\|_2^{2}:=
\int_0^1 |h(x)|^2 dx.
\]
Fix some $p \in (0,1/8)$. Define the $L_2$ minimax separation rate $\cR_n^*$ to be the smallest $\epsilon >0$ such that there exits a test $\cT$ for which
$$
\sup_{f \in \cC \cap \cS_{s}}\bP_f(\cT \text{ rejects } H_0) + 
\sup_{f \in \cS_{s}: \Phi_2(f) \geq \epsilon}
\bP_f(\cT \text{ accepts } H_0) \; \leq\; p.
$$
In \cite{juditsky2002}, which considers the Gaussian white noise model, it is shown that if $s \geq 2$,  then $\cR_n^* = C n^{-s/(2s+1)}$, which is of the same order as the minimax rate of estimating $f \in \cS_s$ itself. If $s < 2$, based on the remark following \cite{juditsky2002}[Theorem 1], $\cR_n^*=C n^{-((2/5) \wedge (2s/(4s+1)))}$. Specifically, for $f \in \cS_s$, denote $f^*$ the projection of $f$ onto $\cC$, i.e. $f^* \in \cC \cap \cS_s$ such that $\Phi_2(f) = \|f-f^*\|_2 =  \|f\|_2 - \|f^*\|_2$; here, $\|f\|_2$ can be estimated at the rate of $n^{-2s/(4s+1)}$ \cite{lepski1999estimation}, and $\|f^*\|_2$ at the rate of $n^{-2/5}$ \cite{10.1214/20-EJS1714}[Corollary 4.2] (see also \cite{guntuboyina2015global}[Theorem 6.1]).

For $d \geq 2$, to the best of our knowledge, the minimax separation rate is not known in the literature. In view of the results for $d=1$, for smooth functions  (say H\"{o}lder continuous with smoothness parameter $s \geq 2$ \cite{belloni2015some}[Section 3.1]), it is reasonable to \textit{conjecture} that $\cR_n^* = C n^{-s/(2s+d)}$, the minimax rate of estimating $f$ itself. For $s < 2$, although an upper bound on the rate of recovering $\|f^*\|_2$ is established recently in \cite{10.1214/20-EJS1714}[Corollary 4.2] for $d \geq 2$,  the rate of estimating $\|f\|_2$ seems unknown for $d \geq 2$.\\

\noindent \underline{\it FS test \cite{fang2020projection}.}
\cite{fang2020projection} proposes a projection framework for testing shape restrictions including concavity. Specifically, \cite{fang2020projection} proposes to  first estimate the regression function $f$ by \textit{unconstrained, nonparametric} methods, say, by sieved B-splines, and then evaluate and calibrate the $L^2$ distance between the estimator  and the space of concave functions.

Denote $k_n$ the number of B-spline terms of order $s_0$ used to estimate $f$ without constraint \cite{fang2020projection}. There are two important components in the FS test \cite{fang2020projection}: under-smoothing and strong approximation. For H\"{o}lder continuous functions with smoothness parameter $s$, under-smoothing requires that
$k_n \gg  n^{d/(2(s\wedge s_0)+d)}$, while the strong approximation requires $k_n \ll n^{-1/5}$ \cite{belloni2015some}[Theorem 4.4] (see also \cite{chernozhukov2013intersection}[Example 5]).

Thus if $s \wedge s_0 \leq 2d$, then there is \textit{no choice} of $k_n$ that can simultaneously  meet these two requirements. In particular, if $d=2$, it requires the smoothness parameter $s > 4$ \textit{and} the order of B-splines $s_0 > 4$. This explains why the FS test fails to control  the size properly for concave, piecewise affine functions in Section \ref{sec:simulation}.

On the other hand, if $s \wedge s_0 > 2d$ and
the under-smoothing is moderate (say by a logarithmic amount), then the FS test achieves the separation rate $n^{-s/(2s+d)}$ \cite{fang2020projection}[Theorem 3.2] (ignoring logarithmic factors), which is minimax for smooth functions for $d=1$ and may be minimax for $d \geq 2$ as discussed above.


\subsection{An algorithm without stratification} \label{subsec:without_partitioning}

In this section, we present an algorithm to compute the test statistics $U_{n,N}'(h_v^{(*)})$ over $\sV_n$ without stratification. It has a similar computational complexity as Algorithm \ref{alg:test_stat_gamma_B} in theory with $d$ fixed,  but it is not computationally feasible since the multiplicative constant is of order $2^{dr}$. We adopt the notations in Section \ref{sec:local_simplex_stat}.

Without partitioning, there is a single sampling plan $\{Z_{\iota}: \iota \in I_{n,r}\}$.  The key insight is that for $\iota \in I_{n,r}$ and $v \in \sV_n$,
\[
h_v(X_{\iota}) = 0, \text{ if } V_j \not\in \cN(V_{j'},2b_n) \text{ for some } j,j' \in \iota.
\]
As a result, it suffices to focus on those $r$-tuples such that their feature vectors are within $2b_n$-neighbourhood of each other (in $\|\cdot\|_{\infty}$).

The pseudocode  is listed in Algorithm \ref{alg:test_stat_no_partition}, whose  computational complexity is
\begin{align*}
n *\; \left( \binom{n (2b_n)^{d} }{r-1} \frac{n^{\kappa} b_n^{-dr}}{|I_{n,r}|} \log(n)\right) *\; |\sV_n| b_n^{d}\;\lesssim \; 2^{dr} |\sV_n|
 n^{\kappa} \log(n) b_n.
\end{align*}
The factor $2^{dr}$ is due to the fact that we need to focus on a $2b_n$-neighbourhood, instead of a $b_n$-neighbourhood as in Algorithm \ref{alg:test_stat_gamma_B}.

\begin{algorithm}[htbp!]
  \KwIn{Observations $\{X_i = (V_i, Y_i) \in \bR^{d+1}: i \in [n]\}$, budget $N$, kernel $L(\cdot)$, bandwidth $b_n$,  query points $\sV_n$.}
  
  \KwOut{ ${U}_{n,N}'$ a list of length $|\sV_n|$}
  
   \KwSty{Initialization:} $p_n = N/\binom{n}{r}$, $\widehat{N} = 0$, ${U}_{n,N}'$  set  zero \;
   Compute $A$, a  length $n$ list, where $A[i] = \{j > i: V_j \in \cN(V_i, 2b_n)\}$ for $i \in [n]$ \;
   Compute $B$, a  length $n$ list, where $B[i] = \{v \in \sV_n: v \in \cN(V_i, b_n)\}$ for $i \in [n]$ \;

\For{$i \leftarrow 1$ \KwTo $n$ }{
 Generate $T_{1} \; \sim \; \text{Binomial}(\binom{|A[i]|}{r-1}, \;p_n), \quad  T_{2} \; \sim \; \text{Binomial}(\binom{n-i}{r-1}-\binom{|A[i] |}{r-1}, \;p_n)$\;
$\widehat{N} \leftarrow \widehat{N} + T_{1} + T_{2}$\;
Sample $T_{1}$ terms $\{\iota_\ell': 1 \leq \ell \leq T_{1}\}$ without replacement from  $\left\{(s_1,\ldots,s_{r-1}): s_j \in A[i] \text{ for each } j \in [r-1] \right\}$.\;
	\For{$\ell \leftarrow 1$ \KwTo $T_1$}{	
	    $\iota \leftarrow \{i\} \cup \iota_{\ell}'$\;
		\For{$ v\in B[i]$}{
			 ${U}_{n,N}'[v] \leftarrow {U}_{n,N}'[v] +h_v^*(X_{\iota})$\;
		}
	}
}
${U}_{n,N}' \leftarrow {U}_{n,N}'/\widehat{N}$ \tcc{Operations are element-wise}
   \caption{Algorithm to compute $U_{n,N}'$  for the concavity test without partitioning.}
   \label{alg:test_stat_no_partition}
\end{algorithm}

\end{appendix}

\end{document}